\DeclareDocumentCommand\sobolev{m m o} {H^{#1}_{#3}(#2 \IfNoValueF{#3})}
\DeclareDocumentCommand\lp{m m o} {L^{#1}\IfNoValueF{#3}{_{#3}}\left(#2\right)}
\DeclareDocumentCommand\cont{o m o} {C\IfNoValueF{#1}{^{#1}}(#2\IfNoValueF{#3}{;#3})}
\DeclareDocumentCommand\contc{o m o} {C_c\IfNoValueF{#1}{^{#1}}(#2\IfNoValueF{#3}{;#3})}
\DeclareDocumentCommand\norm{s m o} {\IfBooleanTF{#1}{\|#2\|}{\left\|#2\right\|}\IfNoValueF{#3}{_{#3}}}
\DeclareDocumentCommand\seminorm{s m o} {\IfBooleanTF{#1}{\|#2\|}{\left\|#2\right\|}\IfNoValueF{#3}{_{#3}}}
\DeclareDocumentCommand\ip{s m m o} {\IfBooleanTF{#1}{( #2,#3 )}{\left( #2,#3 \right)}\IfNoValueF{#4}{_{#4}}}
\DeclareDocumentCommand\eip{s m m o} {\IfBooleanTF{#1}{\langle #2,#3 \rangle}{\left\langle #2,#3 \right\rangle}\IfNoValueF{#4}{_{#4}}}
\DeclareDocumentCommand\abs{s m o} {\IfBooleanTF{#1}{|#2|}{\left|#2\right|}\IfNoValueF{#3}{_{#3}}}
\DeclareDocumentCommand\vecnorm{s m o} {\IfBooleanTF{#1}{|#2|}{\left|#2\right|}\IfNoValueF{#3}{_{#3}}}
\DeclareDocumentCommand\wass{s o m m} {W_{\IfNoValueF{2}{#2}}\IfBooleanTF{#1}{(#3, #4)}{\left(#3, #4\right)}}
\DeclareDocumentCommand\bigo{s o m} {\mathcal O\IfNoValueF{#2}{_{#2}}\IfBooleanTF{#1}{(#3)}{\left(#3\right)}}
\DeclareMathOperator{\cond}{cond}
\DeclareMathOperator{\e}{e}
\DeclareMathOperator{\sign}{sign}
\DeclareMathOperator*{\trace}{tr}
\DeclareMathOperator*{\argmin}{arg\,min}
\renewcommand{\t}{\mathsf T}
\newcommand{\hessian}{\operatorname{D}^2}
\newcommand{\proba}{\mathbf P}
\newcommand{\laplacian}{\Delta}
\newcommand{\dummy}{\mathord{\color{black!33}\bullet}}%
\newcommand{\expect}{\mathbf{E}}
\newcommand{\mat}[1]{#1}
\newcommand{\nat}{\mathbf N}
\newcommand{\real}{\mathbf R}
\newcommand{\vect}[1]{\boldsymbol{\mathbf #1}}
\newcommand{\grad}{\nabla}
\renewcommand{\d}{\mathrm d}
\newcommand{\pos}{\mathcal S_{++}}
\DeclareDocumentCommand \derivative{s m o m}{%
    \def\@der{\IfBooleanTF{#1}{\mathrm{d}}{\partial}}
    \def\@default{%
        \mathchoice{%
                \frac{%
                    \@der\ifnum\pdfstrcmp{#2}{1}=0\else^{#2}\fi {\IfNoValueTF{#3}{}{#3}}
                }{%
                    \@for\@token:={#4}\do{\@der \@token}
                }
            } {%
                \@for\@token:={#4}\do{\@der_{\@token}\ifnum\pdfstrcmp{#2}{1}=0\else^{#2}\fi} \IfNoValueTF{#3}{}{#3}
            } {} {}
    }
    \IfBooleanTF{#1}{\IfNoValueTF{#3}{\@default}{%
                #3%
                \ifnum\pdfstrcmp{#2}{1}=0'\else%
                \ifnum\pdfstrcmp{#2}{2}=0''\else%
                \ifnum\pdfstrcmp{#2}{3}=0'''\else%
                \ifnum\pdfstrcmp{#2}{4}=0^{(iv)}\else^{(#2)}\fi\fi\fi\fi
            }
        }{\@default}
}
\definecolor{darkred}{rgb}{0.5,0,0}
\definecolor{darkgreen}{rgb}{0,0.5,0}
\definecolor{darkblue}{rgb}{0,0,.5}
\theoremstyle{plain}
\newtheorem{assumption}{Assumption}
\newtheorem{theorem}{Theorem}[section]
\newtheorem{lemma}[theorem]{Lemma}
\newtheorem{corollary}[theorem]{Corollary}
\newtheorem{proposition}[theorem]{Proposition}
\newtheorem{remark}{Remark}[section]
\numberwithin{equation}{section}
\crefname{lemma}{Lemma}{Lemmas}
\crefname{remark}{Remark}{Remarks}
\crefname{assumption}{Assumption}{Assumptions}
\crefname{proposition}{Proposition}{Propositions}
\crefname{section}{Section}{Sections}
\crefname{subsection}{Subsection}{Subsections}
\crefname{equation}{}{}
\Crefname{equation}{Equation}{Equations}
\pgfplotsset{compat=1.14}
\newcommand{\email}[1]{\href{#1}{#1}}
\title{Consensus Based Sampling}
\author{%
    J. A. Carrillo\thanks{%
        Mathematical Institute, University of Oxford, Oxford OX2 6GG, UK (\email{carrillo@maths.ox.ac.uk}). Corresponding author.
    },
    F. Hoffmann\thanks{%
        Hausdorff Center for Mathematics, Rheinische Friedrich-Wilhelms-Universität, Bonn 53115, Germany (\email{franca.hoffmann@hcm.uni-bonn.de}).
    },
    A. M. Stuart\thanks{%
        Department of Computing and Mathematical Sciences, Caltech, Pasadena, CA 91125, USA (\email{astuart@caltech.edu}).
    },
    U. Vaes\thanks{%
        MATHERIALS team, Inria Paris, Paris 75012, France (\email{urbain.vaes@inria.fr}).
    }
}
\newcommand{\Lhess}{L}
\newcommand{\lhess}{\ell}
\newcommand{\Uhess}{U}
\newcommand{\uhess}{u}
\newcommand{\normal}{\mathsf{N}}
\newcommand{\cN}{\normal}
\renewcommand{\leq}{\leqslant}
\renewcommand{\geq}{\geqslant}
\renewcommand{\le}{\leqslant}
\renewcommand{\ge}{\geqslant}
\begin{document}
\maketitle

\begin{abstract}
    We propose a novel method for sampling and optimization tasks based on a stochastic interacting particle system. We explain how this method can be used for the following two goals:
    (i) generating approximate samples from a given target distribution;
    (ii) optimizing a given objective function. The approach is derivative-free and affine invariant, and is therefore well-suited for solving inverse problems defined by complex
    forward models: (i) allows generation of samples from the Bayesian posterior and (ii) allows determination of the maximum a posteriori estimator.
    We investigate the properties of the proposed family of methods in terms of various parameter choices, both analytically and by means of numerical simulations. The analysis and numerical
    simulation establish that the method has potential for
    general purpose optimization tasks over Euclidean space;
    contraction properties of the algorithm are established under suitable conditions, and computational experiments demonstrate
    wide basins of attraction for various specific problems.
    The analysis and experiments
    also demonstrate the potential for the sampling methodology
    in regimes in which the target distribution is unimodal
    and close to Gaussian; indeed we prove that the method recovers
    a Laplace approximation to the measure in certain
    parametric regimes and provide numerical evidence that this
    Laplace approximation attracts a large set of initial conditions in a number of examples.
\end{abstract}

{\bf Keywords.- }stochastic interacting particle systems, sampling, optimization

\section{Introduction}%

\subsection{Background}%
We consider the inverse problem of finding $\theta$ from $y$ where
\begin{equation}
    \label{eq:inverse_problem}
    y = G(\theta) + \eta.
\end{equation}
Here $y \in \real^K$ is the \emph{observation},
$\theta \in \real^d$ is the \emph{unknown parameter},
$G:\real^d\to\real^K$ is the \emph{forward model} and $\eta$ is the \emph{observational noise}.
We adopt the Bayesian approach to inversion~\cite{kaipio2006statistical} and assume that the parameter and the noise are independent and normally distributed:
$\theta \sim \normal(0, \mat \Sigma)$ and $\eta \sim \normal(0, \mat \Gamma)$.
By~\eqref{eq:inverse_problem} and Bayes' formula,
the posterior density (i.e., the conditional probability density function of $\theta$ given $y$) equals
\begin{equation}
    \label{eq:post}
    \rho(\theta) = \frac{\exp\bigl(-f(\theta) \bigr)}{\int_{\real^{d}}\exp\bigl(-f(\theta) \bigr) \, \d \theta},
\end{equation}
where
\begin{equation}
    \label{eq:added2}
    f(\theta) := \Phi(\theta; y) + \frac{1}{2} \vecnorm{\theta}[\mat \Sigma]^2,
    \qquad
    \Phi(\theta; y) = \frac{1}{2} \vecnorm{y - G(\theta)}[\mat \Gamma]^2.
\end{equation}
In the foregoing and in what follows, we adopt
the following notation: for a positive definite matrix $A$,
\[
    \eip*{\dummy}{\dummy}[A] = \eip*{\dummy}{A^{-1}\dummy}, \qquad \vecnorm{\dummy}[A]^2 = \eip{\dummy}{\dummy}[A].
\]
We also define the matrix norm $\norm{B}_{A}=\norm{A^{-1/2}BA^{-1/2}}$
(noting that this is not the induced matrix norm from
vector norm $|\dummy|_{A}$).

Solving inverse problems in the Bayesian framework can be
prohibitively expensive because of the need to characterize
an entire probability distribution. One approach to this
is simply to seek the point of maximum posterior probability,
the MAP point \cite{kaipio2006statistical,dashti2013map}, defined by
\begin{equation}
    \label{eq:MAP}
    \theta^*={\rm argmin}_{\theta}\,\,f(\theta).
\end{equation}
However, this essentially reduces the solution of the inverse problem
to a classical optimization approach \cite{engl1996regularization} and
fails to capture uncertainty. A compromise between a fully Bayesian
approach and the classical optimization approach is to seek
a Gaussian approximation of the measure
\cite{lu2017gaussian}. By the Bernstein--von Mises theorem (and its extensions) \cite{van2000asymptotic},
the posterior is expected to be well approximated by a
Gaussian density in the large data limit, if the parameter is
identifiable in the infinite data setting; a Gaussian approximation
is also expected to be good if the forward map is close
to linear. For these reasons, use of the
Laplace method \cite{shun1995laplace}
to obtain a Gaussian approximation of the
posterior density is often viewed
as a useful approach in many application domains.

Many inverse problems arising in applications are defined by complex forward models $G$, often available only as a black box, and in particular adjoints and derivatives may not be readily available.
Consensus-based approaches are proving to be interesting and viable derivative-free techniques for optimization \cite{pinnau2017consensus,carrillo2018analytical,carrillo2019consensus}. The focus of this paper is on developing
consensus-based sampling of the posterior distribution for Bayesian
inverse problems and,
in particular, on the study of such methods in the context of
Gaussian approximation of the posterior.

The computational methodology we introduce applies to arbitary measures with negative log density $f$, and is not restricted
to the choice in \eqref{eq:added2} resulting from the
inverse problem \eqref{eq:inverse_problem}.
Some of our analysis, however, is specific to the
inverse problem in the case where $G$ is linear.
The proposed methodology is potentially useful for
the solution of complex problems for which the evaluation
of $f$ or $G$ is expensive, and derivatives of $f$
and $G$ are not available, or noisy and not useable.
In this sense the proposed methodology is competitive
with state-of-the-art ensemble Kalman methods for
inverse problems, which are also of particular value
for derivative-free sampling when $G$ is expensive to
evaluate. The fact that the analysis of the accuracy of the
proposed sampling method is confined to unimodal distributions
which are close to Gaussian is also a limitation of
ensemble Kalman methods. Our work thus provides impetus
for further innovation in the analysis and design of particle-based, derivative-free sampling methods.

\subsection{Literature Review}%
\label{sub:literature_review}

Systematic procedures to sample probability measures have their
roots in statistical physics and the 1953 paper of
Metropolis et al \cite{metropolis1953equation}. In 1970 Hastings
recognized this work as a special case of what is now known as
the Metropolis-Hastings methodology \cite{hastings1970monte}. These
methods in turn may be seen as part of the broader
Markov Chain Monte Carlo (MCMC) approach to sampling \cite{brooks2011handbook}. In 2006, sequential Monte Carlo (SMC)
methods, based on creating a homotopy deforming the initial (simple
to sample) measure into the desired target measure, were introduced \cite{del2006sequential}; in practice
these methods work best when entwined with MCMC kernels. These
SMC methods introduce the idea of using the evolution of a
system of interacting particles
to approximate the desired target measure; the large
particle limit of this evolution captures the homotopy from the
initial measure to the target measure.
In a parallel development, the mathematical physics community has
developed a large body of understanding of interacting particle systems, and their mean field limits, initially
primarily for models on a countable state space \cite{liggett2012interacting,swart2017course} and more recently
for models in uncountable state space \cite{Snitzman,carrillo2010particle,BCC,BCC2,Jabin-Wang}.
Studying interactions between sampling, collective dynamics of
particles and mean field limits holds considerable promise as
a direction for finding improved sampling algorithms for specific
classes of problems and is an active area of research
\cite{reich2011dynamical,yang2013feedback,bunch2016approximations,van2019particle}.

The focus of this work is on sampling measure \eqref{eq:post},
or optimizing objective function \eqref{eq:MAP}, by means of algorithms which only involve black box
evaluation of $G$.
While some MCMC and SMC methods are of this type, the Metropolis
algorithm being a primary example, the use of collective dynamics of
particles opens the door to a wider range of methods to solve
inverse problems in this setting.
There are two primary classes
of methods emerging in this context: those arising from consensus
forming dynamics \cite{pinnau2017consensus} and those arising from ensemble Kalman methods \cite{reich2015probabilistic}.

Iterative ensemble Kalman methods for inverse problems were introduced
in \cite{Chen2012,emerick2013investigation}. Similar ideas are
also implicit in the work of
Reich \cite{reich2011dynamical} who studies
state estimation sequential data assimilation, rather than the inverse problem; however, what is termed
the ``analysis'' step in sequential data assimilation corresponds to
solving a Bayesian inverse problem. These iterative ensemble Kalman methods are similar to SMC
in that they seek to map the prior to the posterior in
finite continuous time or in a finite number of steps.
Reich also introduced continuous time analysis of ensemble Kalman
methods for state estimation in \cite{bergemann2010localization,bergemann2012ensemble}, naming
the resulting algorithm the ensemble Kalman Bucy filter (EnKBF);
the ensemble Kalman approach to inverse problems introduced
in \cite{Chen2012,emerick2013investigation} may be studied
using the EnKBF leading to a clear link with SMC methods
in continuous time. An alternative
Kalman methodology (ensemble Kalman inversion -- the EKI)
for the optimization approach to the
inverse problem, which involves iteration to
infinity, was introduced and studied in \cite{iglesias2013ensemble,iglesias2016regularizing}
in discrete time and in \cite{schillings2017analysis,schillings2018convergence}
in continuous time; the idea of using ensemble methods
for optimization rather than sampling was anticipated
in \cite{reich2011dynamical}.
The ensemble based optimization approach was generalized to approximate sampling of the
Bayesian posterior solution to the inverse problem in
\cite{garbuno2020interacting} (the ensemble Kalman
sampler -- the EKS), and studied further in \cite{carrillo2019wasserstein,garbuno2020affine,nusken2019note}.

The idea of consensus based optimization may be seen as a variation of particle
swarm optimization methods \cite{dorigo2005ant,kennedy2010particle} which are
themselves related to Cucker-Smale dynamics for collective behavior
and opinion formation \cite{To06,CS07,HL,BCC,carrillo2010particle,MT14}. These
dynamical systems model the
tendency of the constituent particles to align (consensus in velocity) or to concentrate in
certain variables modelling averaged quantities (consensus in position or
opinion), and they have been extensively studied in terms of long time
asymptotics leading to consensus \cite{CFRT10,MT14}. Consensus Based
Optimization (CBO) was introduced in \cite{pinnau2017consensus} based on the
following simple idea: particles are explorers in the landscape of the graph of
the function $f(\theta)$ to be minimized, they are able to exchange information
instantaneously, and they redirect their movement towards the location of a
consensus position in parameter space that is a weighted average of the
explorer's parameter values relative to the Gibbs measure associated to the
function $f$, $\frac1Z e^{-f(\theta)}$. Noise is introduced for suitable
exploration in parameter space but the strength of the noise is reduced
according to the distance to the consensus parameter values. These effects lead
to concentration in parameter space at the global minimum of the function, as
proven in \cite{carrillo2018analytical} for the mean-field limit PDE and in
\cite{ha2020convergence} for the particle system under certain conditions on
$f$ and the parameters of the model. The original CBO method has been recently
improved so as to be efficient for high-dimensional optimization problems \cite{carrillo2019consensus},
such as those arising in machine learning,
by adding coordinate-wise noise terms and introducing ideas from random batch methods \cite{jin2018random} for computing
stochastic particle systems efficiently. Furthermore, these ideas have been recently used
to solve constraint problems on the sphere \cite{fornasier2020consensus,fornasier2020consensusbased,fornasier2021consensusbased}.
There are other approaches to the use of interacting particles system in
optimization, including the use of individual gradient dynamics coupled through
a graph Laplacian
\cite{borovykh2021stochastic,borovykh2020stochastic,borovykh2020interact,kantas2019sharp}.

The development of the EKI into the EKS suggests a parallel
development of CBO into a sampling methodology.  In this paper we
pursue this idea and develop Consensus Based Sampling (CBS).
A key property of the EKS is that it is affine invariant \cite{goodman2010ensemble}
as shown in the paper \cite{garbuno2020affine} where the
Affine Invariant Interacting Langevin Dynamics (ALDI) algorithm is introduced; relatedly, in the mean
field limit, the rate of
convergence to the posterior is the same for
all Gaussian posterior distributions
\cite{garbuno2020interacting}. We will show identical properties for
the CBS algorithm. Our focus is on unimodal distributions and
obtaining Gaussian approximations to the target distribution. We note, however, that there are recent forays into the use of ensemble Kalman methods for the sampling of multimodal distributions \cite{reich2021fokker,birthdeath}.
Furthermore there is also recent work extending ensemble Kalman methods to inverse
problems beyond the setting of additive Gaussian noise; more
complex loss functions, such as cross-entropy and
those arising in logistic regression \cite{Kovachki_2019,pidstrigach2021affine} are considered. And finally, recent work
shows that ensemble methods automatically smooth noisy
likelihood functions, essentially denoising rough energy
landscapes~\cite{duncan2021ensemble}.
Similar developments for the CBS methodology proposed here would also be of interest.
Like the ensemble Kalman sampler, the CBS approach is only exact for Gaussian problems
and in the mean field limit. However recently developed methods
based on multiscale stochastic dynamics provide
a refineable methodology for sampling
from non-Gaussian distributions ~\cite{pavliotis21derivative}; methods such as CBS or EKS
may be used to precondition these
multiscale stochastic dynamics algorithms, making them more
efficient.
Alternatively, the CBS method may be used in the calibration step
employed within the calibrate-emulate-sample methodology
introduced in \cite{cleary2020calibrate}.
Thus, the methods developed in this paper potentially form  an
important component in an efficient and rigorously justifiable
approach to solving Bayesian inverse problems.

\subsection{Our Contributions}%
\label{sub:our_contributions}

We introduce CBS as a method to approximate probability distributions
of the form \eqref{eq:post}, or to find the MAP estimator \eqref{eq:MAP}.
The method requires $G$ only as a black-box (it is derivative-free) and hence is
of potential use for large-scale inverse problems.
We study the proposed algorithm in settings where the posterior is Gaussian or close to Gaussian. We reemphasize that
the computational methodology does not require the specific choice of $f$ in \eqref{eq:added2}, it applies to arbitary measures with negative log density $f$,
up to an additive constant; however some of our analysis exploits the specific form
in \eqref{eq:added2} in the case where $G$ is linear.
We show the following:

\begin{itemize}
\item in the case of linear $G$, and in the mean field limit, parameters
can be chosen in the algorithm so that, if initiated at a Gaussian,
successive iterates remain Gaussian and converge to the Gaussian posterior
\eqref{eq:post};
\item in the case of linear $G$, and in the mean field limit, parameters
can be chosen in the algorithm so that, if initiated at a Gaussian,
successive iterates remain Gaussian and converge to a Dirac located at the MAP point $\theta_*$ given by \eqref{eq:MAP};

\item the CBS method is affine invariant and, in the case of linear $G$ and in the mean field limit,
     converges at the same rate across all linear inverse problems defined by \eqref{eq:post};
     for linear~$G$, we obtain sharp convergence rates that are explicit in terms of all parameters of the method;

    \item in the case of nonlinear $G$, and in the mean field limit, parameters can be chosen in the algorithm so that it has a steady
    state solution which is Gaussian, close to the Laplace approximation of the posterior \eqref{eq:post} and the algorithm is a local contraction mapping in the neighbourhood
    of the steady state; we make explicit the dependence of this approximation, and its rate of attraction, on the parameters of the method;
\item we present numerical results illustrating the foregoing theory and, more generally, demonstrating the viability of the CBS scheme for sampling posterior distributions and for finding MAP estimators.
\end{itemize}

The results are in arbitrary dimension $d$, with the
exception of the results concerning the Laplace approximation
which are restricted to $d=1$. There are no intrinsic
barriers to extending the Laplace approximation
results to arbitrary dimension, but doing so will be technically
involved and would lose the focus of the paper.

In \cref{sec:presentation_of_the_method} we introduce the method,
including its continuous time limit, and mean field limits in both discrete
and continuous time; we establish its properties in the Gaussian setting.
\Cref{sec:results} contains analysis of the method beyond the
Gaussian setting, deriving conditions for convergence to an approximation
of the MAP estimator when in optimization mode, and for convergence to the
Laplace approximation of the target measure when in sampling mode.
In \cref{sec:numerical_experiments} we provide
the numerical experiments.
Proofs of most of the theoretical results in \cref{sec:presentation_of_the_method,sec:results} are presented in \cref{sec:proof_of_the_main_results}.

\section{Presentation of the Method}%
\label{sec:presentation_of_the_method}

We propose a novel method for sampling and optimization tasks based on a system of interacting particles.
Our goals are the following:
\begin{enumerate}
    \item[(1)] Sampling: to generate approximate samples from the posterior distribution~\eqref{eq:post};
        this allows to understand the distribution of parameters taking into account both model \eqref{eq:inverse_problem} and the available data $y$.
    \item[(2)] Optimization: to find the minimizer of $f(\dummy)$, which corresponds to the MAP point~\eqref{eq:MAP},
        the most likely parameter $\theta$ given the data $y$ and the model relating them.
\end{enumerate}
In order to introduce the approach,
we start by defining the mean-field limits of the algorithms,
in discrete and continuous time; later we explain how particle approximations of the mean-field limit lead to implementable algorithms.
We will be interested in the following McKean difference equation:
given parameters $\lambda > 0$, $\beta>0$ and $\alpha \in [0, 1)$,
\begin{align}
    \label{eq:mean_field_sdes}
    \left\{
    \begin{aligned}
    &\theta_{n+1} = \mathcal M_{\beta}(\rho_n) + \alpha \bigl(\theta_n - \mathcal M_{\beta}(\rho_n)\bigr)
    +  \sqrt{(1 - \alpha^2) \, \lambda^{-1} \mathcal C_{\beta} (\rho_n)} \, \vect \xi_n, \\
    &\rho_n = {\rm Law}(\theta_n).
    \end{aligned}
    \right.
\end{align}
where $\vect \xi_n$, for  $n \in \{0, 1, \dotsc\}$ are independent $\normal(\vect 0, I_d)$ random variables, and
$\mathcal M_{\beta}, \mathcal C_{\beta}$ denote respectively the mean and variance for a suitable reweighting of measures:
\begin{subequations}
\label{eq:mom}
    \begin{align}
 &{\mathcal M}_{\beta}: \rho \mapsto \mathcal M( L_{\beta} \rho)\,,\quad
 {\mathcal C}_{\beta}: \rho \mapsto \mathcal C( L_{\beta} \rho)\,,\quad
 L_{\beta}: \rho \mapsto \frac{\rho \e^{-\beta f}}{\int \rho \e^{-\beta f}}\,,\\
 &\mathcal M(\mu)=\int \theta\mu(d\theta)\,,\quad
 \mathcal{C}(\mu)=\int \bigl(\theta-\mathcal M(\mu)\bigr) \otimes\bigl(\theta-\mathcal M(\mu)\bigr) \mu(d\theta)\,.
\end{align}
\end{subequations}
Letting $\alpha=\exp(-\Delta t)$ and viewing $\theta_n$ as a discrete time approximation of a continuous time process $\theta(t)$ at time $t=n\Delta t$, we find
that the $\Delta t \to 0$ continuous-time limit associated with these dynamics is the following McKean SDE:
\begin{align}
    \label{eq:mean_field_sdes2}
    \left\{
        \begin{aligned}
        &\d \theta_t = - \bigl(\theta_t - \mathcal M_{\beta}(\rho_{t})\bigr) \, \d t + \sqrt{2 \lambda^{-1} \mathcal C_{\beta}(\rho_{t})} \, \d \vect{W}_t,\\
        &\rho_t = {\rm Law}(\theta_t).
        \end{aligned}
    \right.
\end{align}
where $\vect{W}_t$ denotes a standard Brownian motions in $\real^d$.
We refer to the two familes of methods as
 \emph{Consensus Based Sampling} (CBS) methods, parameterized by $\alpha,\beta$ with the ranges $\alpha\in[0,1)$ corresponding to \eqref{eq:mean_field_sdes} and $\alpha=1$ corresponding to \eqref{eq:mean_field_sdes2}.
Recall that $\beta>0$. We will
 focus on two choices of $\lambda$: (i) the choice $\lambda=1$, when the method is used to minimize $f(\dummy)$, which will be referred to as CBS-O($\alpha$,$\beta$); and (ii)  $\lambda=(1+\beta)^{-1}$ when the method is
 used for sampling the target distribution $e^{-f(\dummy)}$, which will be referred to as CBS($\alpha$,$\beta$).

In \cref{sub:notation}, we introduce the notation used throughout the paper.
In \cref{sec:mot} we give motivation for the mean field stochastic dynamical systems \eqref{eq:mean_field_sdes} and \eqref{eq:mean_field_sdes2}.
In \cref{sec:properties} we describe key properties of the mean field models,
and in \cref{sub:convergence_to_equilibrium},
we establish convergence to equilibrium for~\eqref{eq:mean_field_sdes} and~\eqref{eq:mean_field_sdes2} in the setting
where the forward model $G$ is linear and the law of the initial condition is Gaussian.
\Cref{sec:notation} introduces particle approximations to the mean field limit.

\subsection{Notation}
\label{sub:notation}
In what follows, we denote by $g(\dummy;\vect m, C)$ the density of the Gaussian random variable~$\normal(\vect m, C)$:
\begin{equation}
\label{eq:gnote}
    g(\theta;\vect m, C)
    =\frac{1}{\sqrt{(2\pi)^d \det(C)}}\exp\left(-\frac{1}{2} \vecnorm{\theta-\vect m}[C]^2\right)\,.
\end{equation}
We also use the short-hand notation
\begin{align}
    \label{eq:mbeta_and_cbete}%
    \vect m_{\beta}(\vect m, \mat C) &:=\mathcal M_{\beta}\bigl(g(\dummy; \vect m, \mat C)\bigr), \qquad
    \mat C_{\beta}(\vect m, \mat C)  :=\mathcal  C_{\beta}\bigl(g(\dummy; \vect m, \mat C)\bigr).
\end{align}
More generally, we frequently denote $\vect m_n =\mathcal{M}(\rho_n)$ and $\mat C_n = \mathcal{C}(\rho_n)$ for the standard mean and covariance calculated with respect to a probability measure $\rho_n$.
For a matrix $A\in\real^{d\times d}$,
we denote by $\norm{A}$ the operator norm induced by the Euclidean vector norm,
and by $\norm{A}_{\rm F}$ the Frobenius norm\footnote{%
    The Frobenius norm on matrices should not to be confused with the norm $|\vect u|_A:= \langle \vect u, A^{-1}\vect u \rangle^{\frac12}$ on vectors defined previously.
}. Sometimes, we will make use of the shorthand notation $\norm{A}_B:=\norm{B^{-1/2}AB^{-1/2}}$ for a given invertible matrix $B\in\real^{d\times d}$.
We let $\nat :=\{0,1,2,3,\dots\}$
and $\nat_{> 0} :=\{1,2,3,\dots\}$,
and we denote by $\pos^d$ the set of symmetric strictly positive definite matrices in $\real^{d \times d}$.
For symmetric matrices $X$ and $Y$,
the notation $X \succcurlyeq Y$ (resp. $X \preccurlyeq Y$) means that
$X - Y$ is positive semidefinite (resp. negative semidefinite).

\subsection{Motivation}
\label{sec:mot}

The mean-field model \eqref{eq:mean_field_sdes} contains
a number of tuneable parameters. In this section we give intuition about the role of these parameters in effecting approximate sampling or optimization for the inverse problem defined by \eqref{eq:inverse_problem}. We motivate sampling primarily through the discrete time mean field model and optimization primarily through the continuous time mean field model. However both discrete and continuous time models apply to optimization and to sampling.
In practice, the mean field SDEs in this subsection can be made into algorithms by
invoking finite particle approximations,
as described in Subsection \ref{sec:notation}.

\subsubsection{Sampling.}\label{sec:sampling}

Let $G(\dummy)=G\dummy$ be a linear map so that the posterior distribution given by \eqref{eq:post} is
Gaussian, and denote this Gaussian by $\cN(\vect a, \mat A)$.
The mean $\vect a$ and covariance $\mat A$ may be identified
by completing the square in \eqref{eq:post}: $f$ is of the form~$\frac{1}{2} \vecnorm{\theta - \vect a}_{\mat A}^2$,

To motivate the algorithms that are the object of study in this paper we describe parameter choices for which the iteration
\eqref{eq:mean_field_sdes} has equilibrium distribution given by the Gaussian $\cN(\vect a,\mat A)$.
For any choice of forward model $G$,
it can be shown that the evolution of the first and second moments is given by
\begin{subequations}
\label{eq:equations_moments_discrete}%
\begin{align}
    \label{eq:first_moment_discrete}%
    \mathcal M(\rho_{n+1}) &= \alpha \mathcal M(\rho_n) + (1 - \alpha) \mathcal M_{\beta}(\rho_n), \\
    \label{eq:second_moment_discrete}%
    \mathcal C(\rho_{n+1})  &= \alpha^2 \mathcal C(\rho_n) + \lambda^{-1} (1 - \alpha^2) \mathcal C_{\beta}(\rho_n).
\end{align}
\end{subequations}
From these identities it is clear that any fixed point of the
mean and covariance is independent of
$\alpha.$
Further, when the initial distribution $\rho_0$ is Gaussian the systems of equations
\eqref{eq:mean_field_sdes} for $\alpha\in [0,1)$ map Gaussians
into Gaussians.
Computing the relationship between the mean and covariance of the
Gaussian $\rho$ and
the mean and covariance of the
Gaussian $L_{\beta}\rho$
gives
\begin{subequations}
\label{eq:explicit_moments}
\begin{align}
    \label{eq:explicit_first_moment}
   \vect m_{\beta}(\vect m, \mat C)
    &= \left(\mat C^{-1} + \beta \mat A^{-1}\right)^{-1} \left(\beta \mat A^{-1} \vect a + \mat C^{-1} \vect m \right), \\
    \label{eq:explicit_second_moment}
    \mat C_{\beta}(\vect m, \mat C)
    &= \left(\mat C^{-1} + \beta \mat A^{-1}\right)^{-1}\,.
\end{align}
\end{subequations}
Therefore, the mean and covariance of a non-degenerate Gaussian steady state $g(\dummy; \vect m_{\infty}, \mat C_{\infty})$ for \eqref{eq:mean_field_sdes} satisfes
\begin{align*}
    \vect m_{\infty} &= \left(\mat C_\infty^{-1} + \beta \mat A^{-1}\right)^{-1} \left(\beta \mat A^{-1} \vect a + \mat C_\infty^{-1} \vect m_{\infty} \right), \\
    \mat C_\infty &= \lambda^{-1} \left(\mat C_{\infty}^{-1} + \beta \mat A^{-1}\right)^{-1}.
\end{align*}
This has solution
\begin{align*}
    \vect m_{\infty} = \vect a, \qquad
    \mat C_{\infty} = \frac{1 - \lambda}{\lambda \beta} \, \mat A.
\end{align*}
Choosing $\lambda^{-1}=1+\beta$ delivers
a steady state equal to the posterior
distribution. This motivates our choice of $\lambda$ in the sampling case. Furthermore, choosing
$\lambda=1$ is seen to be natural in the
optimization setting: the fixed point of the iteration is then a Dirac at the MAP estimator~$\vect a.$
We will demonstrate that these two distinguished choices of
$\lambda$ work well for sampling and optimization, beyond the setting of a Gaussian posterior $\cN(\vect a, \mat A)$.

\begin{remark}
    [Enlarging the Choice of Parameters.]
The mean-field dynamics \eqref{eq:mean_field_sdes} can be generalized to the form
\begin{align}
    \label{eq:general_evolution}
    \theta_{n+1} = p_1 \theta_n + p_2 \mathcal M(\rho_n) + p_3 \mathcal M_{\beta}(\rho_n) + \sqrt{p_4 \mathcal C (\rho_n) + p_5 \mathcal C_{\beta}(\rho_n)} \, \vect \xi_n,
    \qquad \rho_n = {\rm Law}(\theta_n)\,,
\end{align}
where $(\vect \xi_n)_{n = 0, 1,\dotsc}$ are independent $\normal(\vect 0, I_d)$ random variables.
Given $\beta$, one can ask the following question:
for what values of the parameters $(p_1, p_2, p_3, p_4, p_5)$ does
the dynamics~\eqref{eq:general_evolution} admit the Gaussian $\cN(\vect a,\mat A)$ as an equilibrium distribution?
A calculation analogous to that above shows that
$\normal(\vect a, A)$ is a steady state of~\eqref{eq:general_evolution} if and only if
\begin{subequations}
    \label{eq:genpar}
    \begin{align}
    &p_1 + p_2 + p_3 = 1, \\
    &p_1^2 + p_4 + p_5 (1 + \beta)^{-1} = 1.
    \end{align}
\end{subequations}
Note that these constraints do not guarantee that $\normal(\vect a, A)$ is the only steady state,
and in fact, if $p_1=1$ and $p_2=p_3=p_4=p_5=0$, then any distribution is a steady state.
In this paper, we study only the dynamics~\eqref{eq:mean_field_sdes},
which corresponds to the special case where $p_2=p_4=0$ and
$p_1=\alpha$, $p_3=1-\alpha$ and $p_5=\lambda^{-1}(1-\alpha^2)$,
but it is potentially useful to exploit this wider class of
mean-field models.
\end{remark}

\subsubsection{Optimization.}
We now discuss
the algorithm in optimization mode, through the lens of the continuous time limit.
Another starting point triggering the research in this paper
is the use of systems of interacting particles  for minimizing a target function $f(\theta)$. The papers  \cite{pinnau2017consensus,carrillo2018analytical}
introduce the CBO technique for achieving
this aim by means of particle appoximations of the stochastic dynamical system
\begin{align}
 \dot\theta &=  -(\theta - \bar \theta) + \sigma|\theta - \bar\theta| \,\dot{\vect W}^{(i)},  \qquad
 \bar \theta = \mathcal M_{\beta}(\rho_{t}),
\label{eq:particle_a}
\end{align}
where $\vect W$ is a standard Brownian motion in $\real^d$, $\sigma>0$ is the noise strength and $\rho_t$ is the law of $\theta.$ The  idea behind the CBO method is to think about realizations of $\theta$ as explorers, in the landscape of the  function $f(\theta)$, which can continuously exchange the evaluation of the function~$f$ at their position $\theta$, through $ \mathcal M_{\beta}(\rho_{t}).$  Then, the explorers compute a weighted average of their position in parameter space and direct their relaxation movement towards this average $\bar\theta$; this explains the first term on the right hand side of \eqref{eq:particle_a}. The role of the second term is to impose the property of noise strength decreasing proportionally to the distance of the explorer to the weighted average $\bar \theta$. The choice of the weighted average promotes the concentration towards parameter points $\theta$ leading to smaller values of $f$.
The resulting law of the system converges as $t\to\infty$ towards a Dirac mass concentrated at the MAP point $\theta^*$,
the global minimizer of~$f$,
under certain conditions on $f$; see \cite{carrillo2018analytical,ha2020convergence}.
    The weighted covariance $\bar {\mathcal C}=\mathcal C_{\beta}(\rho_t)$
    provides an alternative to the cooling schedule in \eqref{eq:particle_a}
    by way of using $\bar {\mathcal C}=\mathcal C_{\beta}(\rho_t)$ as the modulation of the noise.
In other words, one could propose as alternative to the CBO method \eqref{eq:particle_a}, the following mean field system
\begin{align}
    \label{eq:cbo_with_new_cooling}
    \dot\theta &= -(\theta - \bar \theta) + \sqrt{2\bar {\mathcal C}} \,\dot{\vect W}.
\end{align}
This gives \eqref{eq:mean_field_sdes2} in the optimization mode $\lambda=1$.
We show in \cref{prop:cv-cont} for the quadratic case, and~\cref{prop:convergence_rate_1d_opti} for the one-dimensional convex case,
that~\eqref{eq:cbo_with_new_cooling} converges precisely to the minimizer of~$f$,
whereas the CBO method usually concentrates to a point in the vicinity of the minimizer,
with an error depending on $\beta$.
On the other hand,
while the CBO dynamics concentrates exponentially fast under rather general assumptions on~$f$,
including the multidimensional non-convex setting~\cite{carrillo2018analytical,carrillo2019consensus},
the dynamics~\eqref{eq:cbo_with_new_cooling} converges algebraically in time
and our proofs concern only simple settings, considering
quadratic or one-dimensional convex functions $f$. Adapting the parameter $\beta$ during the evolution is shown empirically to improve the rate of convergence for \eqref{eq:cbo_with_new_cooling}, see the discussions in \cref{sec:numerical_experiments}; but analysis is needed to understand this property.
Other differences between the methods are that,
unlike CBO, the dynamics~\eqref{eq:cbo_with_new_cooling} is affine invariant (see \cref{ssub:affine_invariance}) and satisfies the invariant subspace property (see \cref{lemma:invariant_subspace}),
although further investigation is necessary to determine whether these two properties are useful in the context of optimization.

In terms of time complexity,
one iteration of (the particle approximations of) either method requires the evaluation of $f$ at all the particles;
thus, in the context of Bayesian inverse problems
where evaluating the forward model is the dominating computational expense,
the methods have a similar computational cost per iteration.
For problems where the dimension of the state space is very large and evaluation of $f$ is cheap,
however, the particle method corresponding to~\eqref{eq:cbo_with_new_cooling} is slightly more expensive than that of~\eqref{eq:particle_a},
as it requires calculating the square root of large matrices $\bar {\mathcal C}$.
We note, however,
that employing a generalized square root as proposed in~\cite{garbuno2020affine} for the ALDI method would help to mitigate this difficulty.

\subsection{Key Properties of the Mean Field Limits}\label{sec:properties}
In this subsection, we summarize key properties of the stochastic dynamics \eqref{eq:mean_field_sdes} and \eqref{eq:mean_field_sdes2}.
We consider, in turn:
(i) the time evolution of the laws;
(ii) the affine invariance;
(iii) the steady states; (iv) the evolution of the first and second moments; and (v) propagation properties for Gaussian initial conditions.

\subsubsection{Evolution Equations for the Law of the Mean Field Dynamical Systems.}
The time evolution of the law of the solution~\eqref{eq:mean_field_sdes} is governed by the following discrete-time dynamics on probability densities:
\begin{equation}
    \label{eq:iterative_scheme_measures}
    \rho_{n+1}(\theta) = \int_{\real^d} g\Bigl(\theta; \mathcal M_{\beta}(\rho_n) + \alpha \bigl(u - \mathcal M_{\beta}(\rho_n)\bigr), (1 - \alpha^2) \, \lambda^{-1} \, \mathcal  C_{\beta}(\rho_n)\Bigr) \, \rho_n(u)\, \d u.
\end{equation}
When $\alpha = 0$, the map~\eqref{eq:iterative_scheme_measures} takes a particularly simple form (recalling notation \ref{eq:gnote}
for a Gaussian):
\begin{equation}
    \label{eq:iterative_scheme_measures_infinite}
    \rho_{n+1} = g\bigl(\theta; \mathcal M_{\beta}(\rho_n),  \lambda^{-1} \, \mathcal  C_{\beta}(\rho_n)\bigr).
\end{equation}

Likewise, the time evolution of the law of the solution to~\eqref{eq:mean_field_sdes2} is governed by the following nonlinear and nonlocal Fokker--Planck equation:
\begin{equation}
    \label{eq:mean_field}
    \derivative{1}[\rho]{t} = \grad \cdot \Bigl( \bigl(\theta - \mathcal M_{\beta}(\rho)\bigr) \rho + \lambda^{-1} \, \mat C_{\beta}(\rho) \, \grad \rho \Bigr).
\end{equation}

\begin{remark}\label{existence and uniqueness}
    We will not discuss here the question of existence and uniqueness of solutions to~\eqref{eq:mean_field},
    and we assume from now on that there exists a unique strong solution to~\eqref{eq:mean_field} for smooth initial data $\rho_0\in\mathcal{P}_2(\real^d)$,
    implying in turn the existence and uniqueness of a solution to~\eqref{eq:mean_field_sdes2}.
    The equation \eqref{eq:mean_field} will be analyzed in
    subsequent work.
\end{remark}

\subsubsection{Affine Invariance.}%
\label{ssub:affine_invariance}
A fundamental property of both~\eqref{eq:mean_field_sdes} and~\eqref{eq:mean_field_sdes2} is that they are affine invariant,
in the sense of~\cite{goodman2010ensemble}; the utility of this concept has been established for MCMC methods in \cite{MR3362507}
and for Langevin based dynamics through the ALDI algorithm in \cite{garbuno2020affine}.
For linear inverse problems with posterior $\cN(\vect a, \mat A)$ this has the consequence that
the rate of convergence is independent of the conditioning of $\mat A$.
We study affine invariance of
\eqref{eq:mean_field_sdes}; a similar reasoning can be employed to show that the continuous-time mean-field dynamics \eqref{eq:mean_field_sdes2}
are also affine invariant.

In order to demonstrate affine invariance for~\eqref{eq:mean_field_sdes},
let $\{\theta_n\}_{n\in \nat}$ denote the solution to~\eqref{eq:mean_field_sdes} with initial condition $\theta_0 \sim \rho_0$,
and let $\rho_n = {\rm Law}(\theta_n)$.
Consider a vector $\vect b \in \real^d$ and an invertible matrix $B \in \real^{d \times d}$ which,
together, define the affine transformation $\theta \mapsto B \theta + \vect b$.
We introduce the following notation:
\[
    \widetilde \theta_n = B \theta_n + \vect b,
    \qquad \widetilde f(\widetilde \theta) = f\bigl(B^{-1} (\widetilde \theta - \vect b)\bigr), \qquad
    \widetilde L_{\beta}: \mu \mapsto \frac{\mu \e^{- \beta \widetilde f}}{\int_{\real^d} \e^{-\beta \widetilde f}}.
\]
We also introduce $\widetilde {\mathcal M}_{\beta}: \mu \mapsto \mathcal M(\widetilde L_{\beta} \mu)$
and $\widetilde {\mathcal C}_{\beta}: \mu \mapsto \mathcal C(\widetilde L_{\beta} \mu)$.
To prove the affine invariance of the scheme~\eqref{eq:mean_field_sdes},
we must show that $\{\widetilde \theta_n\}_{n \in \nat}$ is equal in law to the solution $\{\widehat \theta_n\}_{n \in \nat}$ of
\begin{equation}
    \label{eq:equation_affine_map}
    \widehat \theta_{n+1} = \alpha \widehat \theta_n + (1 - \alpha)\widetilde {\mathcal M}_{\beta}(\widehat \rho_n)
    +  \sqrt{(1 - \alpha^2) \, \lambda^{-1} \widetilde {\mathcal C}_{\beta} (\widehat \rho_n)} \, \widehat {\vect\xi}_n,
    \qquad \widehat \rho_n = {\rm Law}(\widehat \theta_n),
\end{equation}
with initial condition $\widehat \theta_0 = \widetilde \theta_0$ and where $\{\widehat {\vect \xi}_n\}_{n \in \nat}$ are independent $\normal (\vect 0, I_d)$ random variables.
In order to show this,
we apply the affine transformation $\theta \mapsto B \theta + \vect b$ to both sides of~\eqref{eq:mean_field_sdes},
which leads to
\[
    \widetilde \theta_{n+1} = \alpha \widetilde \theta_n + (1 - \alpha) \bigl(B \mathcal M_{\beta}(\rho_n) + \vect b\bigr)
    + B \sqrt{(1 - \alpha^2) \, \lambda^{-1} \mathcal C_{\beta} (\rho_n)} \, \xi_n, \\
    \qquad \rho_n = {\rm Law}(\theta_n).
\]
Now notice that $B \mathcal M_{\beta}(\rho_n) + \vect b = \widetilde {\mathcal M}(\widetilde \rho_n)$, where $\widetilde \rho_n = {\rm Law}(\widetilde \theta_n)$,
that
\[
    B \sqrt{\mathcal C_{\beta} (\rho_n)} \, {\vect\xi}_n = \sqrt{B \mathcal C_{\beta}(\rho_n) B^\t} \, {\vect\xi}_n \quad \text{in law,}
\]
and that $B \mathcal C_{\beta}(\rho_n) B^\t = \widetilde {\mathcal C}_{\beta}(\widetilde \rho_n)$,
which implies that $\{\widetilde \theta_n\}_{n \in \nat}$ is indeed a solution to~\eqref{eq:equation_affine_map}.

\subsubsection{Steady States.}%
The steady states of \eqref{eq:mean_field_sdes} and
\eqref{eq:mean_field_sdes2} coincide,
if they exist, and they are necessarily Gaussian.
Recall the notation \eqref{eq:gnote}. We have:

\label{sub:steady_states}
\begin{lemma}
    \label{lemma:basic_results_steady_states}
    Let probability distribution $\rho_{\infty}$ have finite second moment and be a steady-state solution
    of~\eqref{eq:iterative_scheme_measures} or \eqref{eq:mean_field}. Then
    \begin{equation}
        \label{eq:steady_state}
        \rho_{\infty}(\dummy) = g\bigl(\dummy; \mathcal M_{\beta}(\rho_{\infty}), \lambda^{-1} \mathcal C_{\beta}(\rho_{\infty})\bigr).
    \end{equation}
    Conversely, all probability distributions solving~\eqref{eq:steady_state} are steady states of \eqref{eq:iterative_scheme_measures} and \eqref{eq:mean_field}.
    In particular, all steady states are Gaussian (with the limiting case of Diracs included in the definition) and all Dirac masses are steady states.
\end{lemma}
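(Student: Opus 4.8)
The plan is to prove the two implications separately, treating the discrete-time fixed points of~\eqref{eq:iterative_scheme_measures} and the stationary solutions of~\eqref{eq:mean_field} by different devices, and then to verify the converse and the Dirac claim by direct computation. For the forward direction in discrete time, suppose $\rho_{\infty}$ is a fixed point of~\eqref{eq:iterative_scheme_measures} and abbreviate $\vect m := \mathcal M_{\beta}(\rho_{\infty})$ and $\mat C := \mathcal C_{\beta}(\rho_{\infty})$, which are a fixed vector and matrix once $\rho_{\infty}$ is fixed. The fixed-point relation says precisely that, for $\theta \sim \rho_{\infty}$ and an independent $\xi \sim \normal(\vect 0, I_d)$, the random variable $\vect m + \alpha(\theta - \vect m) + \sqrt{(1-\alpha^2)\lambda^{-1}\mat C}\,\xi$ again has law $\rho_{\infty}$. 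I would pass to characteristic functions: with $\phi(k) := \expect[\e^{\i k \cdot (\theta - \vect m)}]$, this identity becomes the functional equation $\phi(k) = \phi(\alpha k)\,\exp\bigl(-\tfrac12 (1-\alpha^2)\lambda^{-1} k^\t \mat C k\bigr)$. Iterating $n$ times gives $\phi(k) = \phi(\alpha^n k)\,\exp\bigl(-\tfrac12 \lambda^{-1} k^\t \mat C k \sum_{j=0}^{n-1}\alpha^{2j}\bigr)$, and letting $n \to \infty$, using $\alpha \in [0,1)$ so that $\phi(\alpha^n k) \to \phi(\vect 0) = 1$ and $\sum_{j \ge 0}\alpha^{2j} = (1-\alpha^2)^{-1}$, yields $\phi(k) = \exp\bigl(-\tfrac12 \lambda^{-1} k^\t \mat C k\bigr)$. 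Hence $\rho_{\infty} = g(\dummy; \vect m, \lambda^{-1}\mat C)$, which is exactly~\eqref{eq:steady_state}, the degenerate case $\mat C = 0$ corresponding to a Dirac at $\vect m$.

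For the forward direction in continuous time, let $\rho_{\infty}$ solve~\eqref{eq:mean_field} stationarily, set $\vect m = \mathcal M_{\beta}(\rho_{\infty})$ and $\Sigma = \lambda^{-1}\mathcal C_{\beta}(\rho_{\infty})$, and introduce the candidate Gaussian $g_* := g(\dummy; \vect m, \Sigma)$, which satisfies $\Sigma \grad g_* = -(\theta - \vect m)g_*$. Writing $\rho_{\infty} = g_* h$ and substituting into the stationary flux $(\theta - \vect m)\rho_{\infty} + \Sigma \grad \rho_{\infty}$, the two drift contributions cancel and the flux collapses to $g_* \Sigma \grad h$. The steady equation then reads $\grad \cdot (g_* \Sigma \grad h) = 0$; multiplying by $h$ and integrating by parts over $\real^d$ gives $\int_{\real^d} g_* \,(\grad h)^\t \Sigma \grad h \, \d\theta = 0$, and since $g_* > 0$ and $\Sigma \succ 0$ this forces $\grad h \equiv 0$, so $h$ is constant and $\rho_{\infty} = g_*$ after normalization.

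For the converse, suppose $\rho_{\infty}$ solves~\eqref{eq:steady_state}, so it is the Gaussian $g(\dummy; \vect m, \lambda^{-1}\mat C)$ with $\vect m = \mathcal M_{\beta}(\rho_{\infty})$ and $\mat C = \mathcal C_{\beta}(\rho_{\infty})$. For the discrete map, the affine-plus-noise update sends $\normal(\vect m, \lambda^{-1}\mat C)$ to a Gaussian with unchanged mean $\vect m$ and covariance $\alpha^2 \lambda^{-1}\mat C + (1-\alpha^2)\lambda^{-1}\mat C = \lambda^{-1}\mat C$, reproducing $\rho_{\infty}$; for the Fokker--Planck equation the computation above shows the stationary flux vanishes identically. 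Finally, for a Dirac $\delta_{\theta_0}$ the reweighting $L_{\beta}$ leaves it invariant, so $\mathcal M_{\beta}(\delta_{\theta_0}) = \theta_0$ and $\mathcal C_{\beta}(\delta_{\theta_0}) = 0$; the update maps $\delta_{\theta_0}$ to itself, and $\delta_{\theta_0}$ is the weak limit of $g(\dummy; \theta_0, \varepsilon I_d)$ as $\varepsilon \to 0$, justifying its inclusion as a degenerate Gaussian steady state.

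The one genuinely nontrivial point, and the step I expect to be the main obstacle, is concluding full \emph{Gaussianity} rather than merely matching the first two moments in the forward direction. The moment recursions~\eqref{eq:equations_moments_discrete} only pin down $\mathcal M(\rho_{\infty}) = \mathcal M_{\beta}(\rho_{\infty})$ and $\mathcal C(\rho_{\infty}) = \lambda^{-1}\mathcal C_{\beta}(\rho_{\infty})$; it is the characteristic-function fixed-point argument, crucially exploiting $\alpha < 1$ to send $\alpha^n k \to \vect 0$, that upgrades this to a distributional identity in the discrete case. In the continuous case the corresponding delicacy is justifying the integration by parts, where the decay of $g_* h$ and of its flux at infinity must be controlled using the finite-second-moment hypothesis together with the assumed regularity of $\rho_{\infty}$.
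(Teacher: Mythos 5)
Your proposal is correct and rests on the same underlying reduction as the paper: once the steady state $\rho_{\infty}$ is frozen inside $\mathcal M_{\beta}$ and $\mathcal C_{\beta}$, the dynamics become a constant-coefficient Ornstein--Uhlenbeck process (for \eqref{eq:mean_field}) or its exact discretization (for \eqref{eq:iterative_scheme_measures}), and $\rho_{\infty}$ must be an invariant measure of that linear process. The difference is that the paper stops there and cites the known characterization of the OU invariant measure, whereas you prove it: in discrete time via the characteristic-function fixed-point relation $\phi(k)=\phi(\alpha k)\exp\bigl(-\tfrac12(1-\alpha^2)\lambda^{-1}k^\t \mat C k\bigr)$ iterated to the limit (your intermediate display drops the factor $(1-\alpha^2)$, but you restore it when summing the geometric series, so the conclusion stands, and the same computation handles the degenerate case $\mat C=0$, forcing a Dirac), and in continuous time via the energy identity for $h=\rho_{\infty}/g_*$. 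The self-contained discrete argument is a genuine gain and is fully rigorous, exploiting $\alpha<1$ exactly where it matters. The continuous-time step is where your route carries a real debt that the paper's citation silently absorbs: with only a finite second moment assumed, $h$ may grow like a reciprocal Gaussian, so multiplying by $h$ and integrating by parts requires a cutoff argument together with (hypoelliptic) regularity and decay of stationary solutions before the boundary flux can be discarded; moreover the argument presupposes $\Sigma\succ 0$, so the singular-covariance steady states (the Diracs the lemma explicitly includes) need a separate, direct treatment in the continuous-time forward direction. Neither point changes the truth of the statement, but both would have to be supplied to make your continuous-time argument complete at the level of rigor you achieve in discrete time.
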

\begin{proof}

    If $\rho_{\infty}$ is an invariant measure for the law of~\eqref{eq:mean_field_sdes2},
    then $\rho_{\infty}$ must be an invariant measure of the following SDE:
    \begin{equation}
        \label{eq:steady_ou}
        \d \theta_t = - \bigl(\theta_t - \mathcal M_{\beta}(\rho_{\infty})\bigr) \, \d t + \sqrt{2 \lambda^{-1} \mathcal C_{\beta}(\rho_{\infty})} \, \d {\vect W}_t.
    \end{equation}
    Since this is just the Ornstein--Uhlenbeck process,
    we deduce~\eqref{eq:steady_state}.

    Similarly, if $\rho_{\infty}$ is an invariant measure for the law of the discrete-time dynamics~\eqref{eq:mean_field_sdes},
    then~$\rho_{\infty}$ is the invariant measure of the following equation:
    \[
        X_{n+1} = \mathcal M_{\beta}(\rho_{\infty}) + \alpha\bigl(X_n - \mathcal M_{\beta}(\rho_{\infty})\bigr) + \sqrt{(1 - \alpha^2)\lambda^{-1} \mathcal C_{\beta}(\rho_{\infty})} \vect \xi_n,
    \]
    where $(\vect \xi_n)_{n = 0, 1,\dotsc}$ are independent $\normal(\vect 0, I_d)$ random variables.
    Since this equation is an exact discretization of~\eqref{eq:steady_ou},
    we deduce that~\eqref{eq:steady_state} holds.
\end{proof}

\subsubsection{Equations for the Moments.}%
\label{sub:equations_for_the_moments}
The evolution equations for the moments given in~\eqref{eq:equations_moments_discrete} hold regardless of whether $\rho_n$ is Gaussian
but they define closed equations characterizing $\rho_n$ completely in settings where $\rho_0$ is Gaussian.
The evolution of the moments can also be written for the limiting continuous time stochastic dynamical system~\eqref{eq:mean_field_sdes2} obtained when $\alpha\to 1$:
\begin{subequations}
\label{eq:equations_moments}
\begin{align}
    \label{eq:first_moment}%
    \partial_t \bigl(\mathcal M(\rho)\bigr) &= - \mathcal M(\rho) + \mathcal M_{\beta}(\rho), \\
    \label{eq:second_moment}%
    \partial_t \bigl(\mathcal C(\rho)\bigr)  &= - 2 \mathcal C(\rho) + 2 \lambda^{-1} \mathcal C_{\beta}(\rho).
\end{align}
\end{subequations}

\subsubsection{Propagation of Gaussians.}
We show that Gaussianity is preserved along the flow, both in discrete and continuous time.
\begin{lemma}
    \label{lemma:propagation_of_gaussians}
    Let $\lambda \in (0, 1]$ and $\beta>0$.
    \begin{enumerate}
 \item[(i)] Discrete time $\alpha=0.$ The law of ~\eqref{eq:mean_field_sdes}
 is Gaussian for all $n \in \nat.$
        \item[(ii)] Discrete time $\alpha\in(0,1)$. If the initial law $\rho_0$ for~\eqref{eq:mean_field_sdes} is Gaussian,
        then so is the law for any $n \in \nat_{>0}$,
        and the time evolution of the moments $(\vect m_n, C_n)$ of $\rho_n$ is governed by the recurrence relation
        \begin{subequations}
        \label{eq:equations_moments_discrete_gaussian}%
        \begin{align}
            \label{eq:first_moment_discrete_gaussian}%
            \vect m_{n+1} &= \alpha \vect m_n + (1 - \alpha) \vect m_{\beta}(\vect m_n, C_n), \\
            \label{eq:second_moment_discrete_gaussian}%
            C_{n+1}  &= \alpha^2 C_n + \lambda^{-1} (1 - \alpha^2) C_{\beta}(\vect m_n, C_n).
        \end{align}
        \end{subequations}
        with $\vect m_\beta$, $\mat C_\beta$ given by \eqref{eq:mbeta_and_cbete}.
    \item[(iii)] Continuous time $\alpha\to 1$.
        If the initial law $\rho_0$ for~\eqref{eq:mean_field_sdes2} is Gaussian,
        then so is the corresponding law for any $t>0$.
        The time evolution of the moments $\bigl(\vect m(t), \mat C(t)\bigr)$ of the solution is governed by the equation
    \begin{subequations}
        \label{eq:momentprop}
        \begin{align}
            \label{eq:momentprop-a}%
            \dot{\vect m} &= - \vect m + \vect m_\beta(\vect m, \mat C), \\
            \label{eq:momentprop-b}%
            \dot{\mat C} &= - 2 \mat C + 2 \lambda^{-1} \mat C_\beta(\vect m, \mat C).
        \end{align}
    \end{subequations}
    \end{enumerate}
\end{lemma}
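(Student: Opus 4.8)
The plan is to handle the three cases separately: parts (i) and (ii) follow from elementary closure properties of the Gaussian family under affine maps and convolution, while part (iii) is the substantive one, requiring verification that a Gaussian ansatz solves the nonlinear Fokker--Planck equation \eqref{eq:mean_field}, combined with the uniqueness assumption of \cref{existence and uniqueness}.

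For part (i), observe that when $\alpha = 0$ the update \eqref{eq:mean_field_sdes} reduces to $\theta_{n+1} = \mathcal M_{\beta}(\rho_n) + \sqrt{\lambda^{-1}\mathcal C_{\beta}(\rho_n)}\,\vect\xi_n$; since $\mathcal M_{\beta}(\rho_n)$ and $\mathcal C_{\beta}(\rho_n)$ are deterministic functionals of $\rho_n$ and $\vect\xi_n \sim \normal(\vect 0, I_d)$, the variable $\theta_{n+1}$ is Gaussian for every $n$, which is precisely the explicit map \eqref{eq:iterative_scheme_measures_infinite}. For part (ii), I would argue by induction on $n$. Assuming $\rho_n = g(\dummy; \vect m_n, \mat C_n)$ is Gaussian, the functionals $\mathcal M_{\beta}(\rho_n) = \vect m_{\beta}(\vect m_n, \mat C_n)$ and $\mathcal C_{\beta}(\rho_n) = \mat C_{\beta}(\vect m_n, \mat C_n)$ are deterministic by \eqref{eq:mbeta_and_cbete}, so the update writes $\theta_{n+1}$ as an affine image of the Gaussian $\theta_n$ plus the independent Gaussian increment $\sqrt{(1-\alpha^2)\lambda^{-1}\mathcal C_{\beta}(\rho_n)}\,\vect\xi_n$; both operations preserve Gaussianity, and independence gives additivity of covariances, so $\rho_{n+1}$ is Gaussian. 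The recurrences \eqref{eq:equations_moments_discrete_gaussian} are then immediate from the general moment identities \eqref{eq:equations_moments_discrete} upon substituting these two expressions.

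For part (iii), let $(\vect m(t), \mat C(t))$ solve the ODE system \eqref{eq:momentprop} with $(\vect m(0), \mat C(0))$ equal to the mean and covariance of $\rho_0$, and set $\tilde\rho_t := g(\dummy; \vect m(t), \mat C(t))$. Using $\grad g = -\mat C^{-1}(\theta - \vect m)\, g$ and the identities $\mathcal M_{\beta}(\tilde\rho_t) = \vect m_{\beta}(\vect m(t), \mat C(t))$, $\mathcal C_{\beta}(\tilde\rho_t) = \mat C_{\beta}(\vect m(t), \mat C(t))$, the right-hand side of \eqref{eq:mean_field} evaluated at $\tilde\rho_t$ becomes an Ornstein--Uhlenbeck operator with spatially constant (but time-dependent) coefficients applied to a Gaussian. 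I would then differentiate $g(\dummy; \vect m(t), \mat C(t))$ in $t$, collect the terms multiplying $g$ that are constant, linear, and quadratic in $\theta - \vect m$, and match them against those produced by the OU operator; this matching reproduces exactly \eqref{eq:momentprop-a}--\eqref{eq:momentprop-b}, so $\tilde\rho_t$ solves \eqref{eq:mean_field}. Uniqueness then forces $\rho_t = \tilde\rho_t$, giving both Gaussianity and the stated moment equations.

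The main obstacle is the coefficient-matching computation in part (iii), where one must carefully verify that the constant, linear, and quadratic parts of $\partial_t g$ agree with those generated by $\grad\cdot\bigl((\theta - \mathcal M_{\beta})\tilde\rho_t + \lambda^{-1}\mathcal C_{\beta}\grad\tilde\rho_t\bigr)$. Two subtleties warrant care. First, this is a self-consistent (McKean) matching: the coefficients $\vect m_{\beta}(\vect m(t),\mat C(t))$ and $\mat C_{\beta}(\vect m(t), \mat C(t))$ are fixed by the ansatz itself, so the argument is best phrased as ``the linear flow with these prescribed deterministic coefficients preserves Gaussianity and reproduces the moment ODEs,'' and then one closes the loop. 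Second, one needs $\mat C(t) \in \pos^d$ to persist so that the density remains well defined, which relies on local existence for the ODE system and strict positivity of $\mat C_{\beta}$. An equivalent and arguably cleaner route avoids the PDE entirely: freezing the coefficients along the candidate trajectory turns \eqref{eq:mean_field_sdes2} into a time-inhomogeneous linear SDE with additive noise, whose solution is Gaussian with moments solving \eqref{eq:momentprop}; invoking uniqueness identifies this process with the true McKean solution.
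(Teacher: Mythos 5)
Your proposal is correct and takes essentially the same approach as the paper: parts (i) and (ii) are the same elementary closure arguments (the explicit map \eqref{eq:iterative_scheme_measures_infinite} for $\alpha=0$, and sums of independent Gaussians for $\alpha\in(0,1)$), and for part (iii) the paper likewise takes a solution of the moment ODEs \eqref{eq:momentprop}, verifies that the Gaussian ansatz solves \eqref{eq:mean_field} using the identities $\grad_\theta g = -\grad_{\vect m} g$ and $x^\t(\hessian_\theta g)\,y = 2 D_{\mat C} g : x\otimes y$, and closes the loop with the standing uniqueness assumption of \cref{existence and uniqueness}. Your coefficient-matching (or, equivalently, frozen-coefficient linear SDE) formulation is the same computation organized slightly differently, and your remark about persistence of $\mat C(t)\in\pos^d$ is a legitimate point of care that the paper's proof also leaves implicit.
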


\begin{proof}
For the discrete-time dynamics in setting (i),
this follows directly from~\eqref{eq:iterative_scheme_measures_infinite}.
For~(ii) note that,
if $\theta_n \sim \normal(\vect m_n, C_n)$,
then $\theta_{n+1}$, being the sum of Gaussian random variables as given in~\eqref{eq:mean_field_sdes},
is also normally distributed.

In order to show (iii), we consider a solution $\bigl(\vect m(t), \mat C(t)\bigr)$ to the moment equations \eqref{eq:momentprop}.
Then $g\bigl(\theta; \vect m(t), \mat C(t)\bigr)$ solves \eqref{eq:mean_field}.
    To see this, one can verify that general Gaussians $g(\theta; \vect m, \mat C)$ satisfy the relations
    \begin{gather*}
        \nabla_\theta g = -\nabla_{\vect m} g\,, \qquad x^T(\hessian_\theta g) y = 2 D_{\mat C} g : x\otimes y\,,
    \end{gather*}
    for any $x,y\in\real^d$;
    see similar computations in \cite{garbuno2020interacting,carrillo2019wasserstein}.
    The first identity can be checked directly,
    and the second identity follows e.g. from equations~(57) and (61) in \cite{cookbook}. Then
\begin{align*}
    \derivative{1}{t} \Bigl( g\bigl(\theta,\vect m(t), \mat C(t)\bigr) \Bigr)
    &= \nabla_{\vect m} g \cdot \dot{\vect m} + D_{\mat C} g : \dot{\mat C}
    \\&
    = -\nabla_{\vect m} g \cdot \left( \vect m - \vect m_\beta\right) + 2 D_{\mat C} g : \left( \lambda^{-1} \mat C_\beta - \mat C\right)
    \\&
    = \nabla_{\theta} g \cdot \left( \vect m - \vect m_\beta\right)
    + \nabla_\theta \cdot \left(-\mat C\nabla_\theta g \right) + \lambda^{-1}D^2_{\theta} g :  \mat C_\beta
    \\&
    = \grad_\theta \cdot \left( (\theta - \vect m_\beta) g + \lambda^{-1} \, \mat C_{\beta} \, \grad_\theta g \right),
\end{align*}
where we used the explicit expression of $C \grad_{\theta} g$ in the last equation.
\end{proof}

\subsection{Convergence for Gaussian Targets}%
\label{sub:convergence_to_equilibrium}

In this subsection,
we consider the case of a linear forward map in \eqref{eq:inverse_problem},
leading to the posterior distribution being a Gaussian $\cN(\vect a, \mat A)$ where, throughout, we assume that $A$ is strictly positive definite, $A\in\pos^d$.
The corresponding potential $f(\dummy)$ is given by
the quadratic function $f(\theta)=\frac{1}{2} \vecnorm{\theta - \vect a}_{\mat A}^2$. Recall the shorthand notation $\norm{B}_{A}=\norm{A^{-1/2}BA^{-1/2}}$.
Throughout this section, we denote
\[
    k_0= \norm{C_0^{-1}}_{A^{-1}}= \norm*{A^{1/2}C_0^{-1}A^{1/2}}.
\]

The main convergence results of this subsection, \cref{prop:cv-alpha0-main,prop:cv-discrete,prop:cv-cont},
establish the convergence of the moments of the solutions to~\eqref{eq:mean_field_sdes} and~\eqref{eq:mean_field_sdes2}, respectively,
in the case of Gaussian initial conditions.
All results show algebraic convergence in optimization mode ($\lambda=1$) and exponential convergence in sampling mode ($\lambda=(1+\beta)^{-1}$); this is analogous to what is
known about the EKI \cite{schillings2017analysis} and the EKS
\cite{garbuno2020interacting} methods.
We provide in~\cref{tab:cv-rates} an overview of the results we obtain.
Most proofs of the results presented in the rest of this subsection are given in \cref{sub:gaussian_proofs}.

\begin{table}[ht]
    \centering
    \begin{tabular}{|c|c|c|c|c|}
         \hline
         \phantom{$\Big($}
         & \multicolumn{2}{c|}{Sampling} & \multicolumn{2}{c|}{Optimization}
         \\ \hline
         \phantom{$\Big($}
         & Mean & Covariance & Mean & Covariance
         \\ \hline
         $\alpha = 0$
         & \phantom{$\Bigg($}\hspace{-4mm} $\left(\frac{1}{1+\beta}\right)^n$
         & $\left(\frac{1}{1+\beta}\right)^n$
         & $\frac{k_0}{k_0+\beta n}$
         & $\frac{k_0}{k_0+\beta n}$
         \\ \hline
         $\alpha \in (0, 1)$
         & \phantom{$\Bigg($}\hspace{-4mm} $\left(\frac{1+\alpha\beta}{1+\beta}\right)^n$
         & $\left(\frac{1+\alpha^2\beta}{1+\beta}\right)^n$
         & $\left(\frac{k_0+\beta}{k_0+\beta+\beta(1-\alpha^2)n}\right)^{\frac{1}{1+\alpha}}$
         & $\frac{k_0+\beta}{k_0+\beta+\beta(1-\alpha^2)n}$
         \\ \hline
         $\alpha = 1$
         & \phantom{$\Bigg($}\hspace{-4mm} $\e^{-\left(\frac{\beta}{1+\beta}\right)t}$
         & $\e^{-\left(\frac{2\beta}{1+\beta}\right)t}$
         & $\left(\frac{k_0+\beta}{k_0+\beta+2 \beta t}\right)^{\frac{1}{2}}$
         & $\frac{k_0+\beta}{k_0+\beta+2 \beta t}$
         \\ \hline
    \end{tabular}
    \caption{%
        Convergence rates for CBS in sampling and optimization modes,
        in the case of a Gaussian target distribution and a Gaussian initial condition with $C_0 \in \pos^d$.
        This table summarizes the results in \cref{prop:cv-alpha0-main,prop:cv-discrete,prop:cv-cont}. All rates are sharp, see \cref{rmk:sharpness}.
    }
    \label{tab:cv-rates}%
\end{table}

We draw a number of conclusions from these results. Firstly,
in the discrete time setting, smaller choices of $\alpha$ provide a faster rate of convergence, and choosing $\alpha=0$ is therefore the most favorable choice in this regard. Secondly, larger choices of $\beta$ increase the speed of convergence, without limit as $\beta \to \infty$ for $\alpha=0$; in the case $\alpha>0$, increasing $\beta$ is favourable but does not give rates which increase without limit.


\subsubsection{Convergence Analysis for the Discrete-Time Dynamics.}
Using the explicit expression of the weighted moments in the Gaussian case~\eqref{eq:explicit_moments},
we can rewrite the right-hand sides of \cref{eq:equations_moments_discrete_gaussian} as
\begin{align*}
    (\vect m_{n+1}-\vect a) &= \left[\alpha I_d +(1-\alpha) A(A+\beta C_n)^{-1}\right] (\vect m_n -\vect a), \\
    C_{n+1} &=
    \left[\alpha^2 I_d + (1-\alpha^2) \lambda^{-1} A(A+\beta C_n)^{-1}\right] C_n\,.
\end{align*}
Letting $\widetilde {\vect m}_n := A^{-1/2} (\vect m_n - \vect a)$ and $\widetilde{\mat C}_n := \beta A^{-1/2} {\mat C}_n A^{-1/2}$,
we can verify that $(\widetilde {\vect m}_n, \widetilde{\mat C}_n)_{n\in\nat}$ solves
the following recurrence relation:
\begin{subequations}
\label{eq:moments_quadratic_tilde}
\begin{align}
    \label{eq:first_moment_quadratic_discrete_tilde}%
    \widetilde {\vect m}_{n+1} &= \left[\alpha I_d +(1-\alpha) (I_d+\widetilde{\mat C}_{n})^{-1}\right] \widetilde {\vect m}_{n}\,, \\
    \label{eq:second_moment_quadratic_discrete_tilde}%
    \widetilde{\mat C}_{n+1}  &=
    \left[\alpha^2 I_d + (1-\alpha^2) \lambda^{-1} (I_d+\widetilde{\mat C}_n )^{-1}\right]\widetilde{\mat C}_n \,.
\end{align}
\end{subequations}
This is a recurrence relation uniquely solvable given initial conditions $(\widetilde {\vect m}_0, \widetilde {\mat C}_0)$.
We begin by studying the easier case $\alpha=0$, where the convergence of the scheme can be computed explicitly by a direct argument.
\begin{lemma}\label{lem:cv-alpha0}
    Consider the iterative scheme~\eqref{eq:iterative_scheme_measures_infinite} with $\alpha = 0$ and initial conditions $(\vect m_0, \mat C_0) \in \real^d \times \pos^d$.
    Then, for any $\lambda\in(0,1]$ and $\beta>0$, we have
    \[
    \vect m_{n} = \vect a
        + \lambda^n \mat C_{n} \mat C_{0}^{-1} (\vect m_0 - \vect a),\qquad
        C_{n}^{-1} =
        \begin{cases}
            \lambda^n C_{0}^{-1} + (1-\lambda^n) C_{\infty}^{-1}  & \text{if $\lambda \neq 1$}, \\
            C_0^{-1} + n \beta A^{-1} & \text{if $\lambda = 1$}.
        \end{cases}
    \]
\end{lemma}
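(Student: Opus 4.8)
The plan is to exploit the fact that, when $\alpha=0$, the update \eqref{eq:iterative_scheme_measures_infinite} preserves Gaussianity by \cref{lemma:propagation_of_gaussians}(i), so each $\rho_n$ is completely determined by its moments $(\vect m_n, C_n)$. These evolve by the $\alpha=0$ specialization of \eqref{eq:equations_moments_discrete_gaussian}, namely
\[
    \vect m_{n+1} = \vect m_\beta(\vect m_n, C_n), \qquad C_{n+1} = \lambda^{-1} \mat C_\beta(\vect m_n, C_n).
\]
Substituting the explicit Gaussian formulas \eqref{eq:explicit_moments} then reduces the entire statement to solving a first-order linear recursion for the matrices $C_n^{-1}$, after which the mean follows by a short telescoping argument.

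First I would treat the covariance. Formula \eqref{eq:explicit_second_moment} gives $\mat C_\beta(\vect m_n, C_n) = (C_n^{-1} + \beta A^{-1})^{-1}$, so $C_{n+1} = \lambda^{-1}(C_n^{-1} + \beta A^{-1})^{-1}$, and inverting yields the affine recurrence $C_{n+1}^{-1} = \lambda C_n^{-1} + \lambda\beta A^{-1}$. Since $\lambda, \beta > 0$, $A \in \pos^d$ and $C_0 \in \pos^d$, an easy induction keeps every $C_n^{-1}$ symmetric positive definite, so all inverses below are well-defined. For $\lambda = 1$ the recurrence is an arithmetic progression, giving $C_n^{-1} = C_0^{-1} + n\beta A^{-1}$ at once. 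For $\lambda \neq 1$ it is a contraction toward its unique fixed point $P_\star = \frac{\lambda\beta}{1-\lambda} A^{-1}$, which is exactly $C_\infty^{-1}$ (recall $C_\infty = \frac{1-\lambda}{\lambda\beta} A$ from \cref{sec:sampling}); writing $C_n^{-1} - P_\star = \lambda^n (C_0^{-1} - P_\star)$ and rearranging produces the claimed $C_n^{-1} = \lambda^n C_0^{-1} + (1 - \lambda^n) C_\infty^{-1}$.

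Next I would handle the mean, reusing the identity $\mat C_\beta(\vect m_n, C_n) = \lambda C_{n+1}$ from the covariance step. Then \eqref{eq:explicit_first_moment} becomes $\vect m_{n+1} = \lambda C_{n+1}\bigl(\beta A^{-1}\vect a + C_n^{-1}\vect m_n\bigr)$. Subtracting $\vect a$ and rewriting $\vect a = \lambda C_{n+1}(C_n^{-1} + \beta A^{-1})\vect a$ (which holds because $C_{n+1}^{-1} = \lambda(C_n^{-1} + \beta A^{-1})$), the two $\beta A^{-1}\vect a$ contributions cancel, leaving the clean one-step relation
\[
    \vect m_{n+1} - \vect a = \lambda\, C_{n+1} C_n^{-1} (\vect m_n - \vect a).
\]
Iterating this from $n$ down to $0$ gives the product $\lambda^n (C_n C_{n-1}^{-1})(C_{n-1}C_{n-2}^{-1})\cdots(C_1 C_0^{-1})$, in which every interior pair $C_k^{-1} C_k$ collapses to the identity, leaving $\lambda^n C_n C_0^{-1}$ and hence $\vect m_n - \vect a = \lambda^n C_n C_0^{-1} (\vect m_0 - \vect a)$.

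There is no genuinely hard step: once the recursion for $C_n^{-1}$ is linearized by inversion, everything is elementary linear algebra. The only points needing a little care are the correct identification of the fixed point $P_\star$ with $C_\infty^{-1}$ in the $\lambda \neq 1$ case, and the cancellation in the mean update that yields precisely the factor $C_{n+1}C_n^{-1}$ rather than an expression that fails to telescope; securing that cancellation is exactly what makes the closed form for $\vect m_n$ so compact.
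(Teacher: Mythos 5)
Your proof is correct and follows essentially the same route as the paper: linearize the covariance update by passing to $C_n^{-1}$, solve the resulting affine recursion (your fixed-point decomposition with $P_\star = C_\infty^{-1}$ is equivalent to the paper's explicit geometric sum), and observe that $C_n^{-1}(\vect m_n - \vect a)$ contracts by a factor $\lambda$ per step — your telescoping product $\lambda^n C_n C_{n-1}^{-1}\cdots C_1 C_0^{-1}$ is exactly the paper's identity $\widetilde{\mat C}_{n+1}^{-1}\widetilde{\vect m}_{n+1} = \lambda\,\widetilde{\mat C}_n^{-1}\widetilde{\vect m}_n$ written out. The only cosmetic difference is that the paper first rescales to the coordinates $\widetilde{\vect m}_n = A^{-1/2}(\vect m_n - \vect a)$, $\widetilde{\mat C}_n = \beta A^{-1/2}C_n A^{-1/2}$, whereas you work directly in the original variables.
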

\begin{proof}
When $\alpha = 0$, the evolution equations~\eqref{eq:moments_quadratic_tilde} for the moments simplify to
\begin{align*}
    \widetilde {\vect m}_{n+1} =  (I_d+ \widetilde{\mat C}_{n})^{-1} \widetilde {\vect m}_{n}\,,\qquad
    \widetilde{\mat C}_{n+1}^{-1}  =\lambda \left(\widetilde{\mat C}_n^{-1}
     + I_d\right)\,.
\end{align*}
For $\lambda=1$, the result for the covariance matrix is easily obtained by solving the second equation explicitly for $\widetilde{\mat C}_n^{-1}$.
Next, consider the case $\lambda \neq 1$. We have
\begin{align*}
    & \widetilde{\mat C}_n^{-1}
    = \lambda^{n}\widetilde{\mat C}_0^{-1} + (\lambda + \dotsc + \lambda^n)  I_d
    = \lambda^{n}\widetilde{\mat C}_0^{-1} +
    \lambda \left( \frac{1 - \lambda^{n}}{1 - \lambda} \right) I_d\,.
\end{align*}
For the evolution of the mean, notice that
\[
 \widetilde{\mat C}_{n+1}^{-1} \widetilde {\vect m}_{n+1}
 =\lambda \left(\widetilde{\mat C}_n^{-1}+ I_d\right)
 (I_d+ \widetilde{\mat C}_{n})^{-1} \widetilde {\vect m}_{n}
 =\lambda \widetilde{\mat C}_{n}^{-1} \widetilde {\vect m}_{n} \,.
\]
Hence, $\widetilde {\vect m}_{n}=\lambda^n\widetilde{\mat C}_{n} \widetilde{\mat C}_{0}^{-1} \widetilde {\vect m}_{0}$ and the result follows.
\end{proof}

We deduce from this result a convergence estimate for the mean and the covariance of the iterates.
\begin{proposition}
    \label{prop:cv-alpha0-main}
    Consider the iterative scheme~\eqref{eq:iterative_scheme_measures_infinite} with $\alpha = 0$ and initial conditions $(\vect m_0, \mat C_0) \in \real^d \times \pos^d$.
    Then the following statements hold:
    \begin{enumerate}
        \item[(i)] Sampling mode $\lambda = (1 + \beta)^{-1}$.
            For all $n\in\nat$, it holds that
            \begin{align*}
                |\vect m_{n} - \vect a|_A&\le \max \left(1, k_0 \right) \lambda^n |\vect m_0 - \vect a|_A\,,\\
                \norm{ C_n-A }_{A} &\le \max\left(1, k_0 \right) \lambda^n \norm{C_0-A}_{A}\,.
            \end{align*}
        \item[(ii)] Optimization mode $\lambda=1$. For all $n\in \nat$, it holds that
            \begin{align*}
                |\vect m_{n} - \vect a|_A&\le \left(\frac{k_0}{k_0+\beta n}\right) |\vect m_0 - \vect a|_A\,,\qquad
                C_n \preccurlyeq \left(\frac{k_0}{k_0+\beta n}\right) C_0.
            \end{align*}
    \end{enumerate}
\end{proposition}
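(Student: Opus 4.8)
The plan is to start from the closed-form expressions for $\vect m_n$ and $\mat C_n^{-1}$ in \cref{lem:cv-alpha0} and reduce both claims to scalar inequalities over the eigenvalues of a single matrix. The genuine obstruction is that $\mat C_0$ and $\mat A$ need not commute, so $\mat C_n^{-1}=\lambda^n \mat C_0^{-1}+(1-\lambda^n)\mat C_\infty^{-1}$ is a true sum of non-commuting matrices and cannot be diagonalized as it stands. My first step is therefore to conjugate by $A^{-1/2}$, setting $D_n := A^{-1/2}\mat C_n A^{-1/2}$. Using that in sampling mode $\mat C_\infty=\mat A$ (since $\frac{1-\lambda}{\lambda\beta}=1$ when $\lambda=(1+\beta)^{-1}$), this turns the covariance recursion into $D_n^{-1}=\lambda^n D_0^{-1}+(1-\lambda^n)I$ in sampling mode and $D_n^{-1}=D_0^{-1}+n\beta I$ in optimization mode. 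In both cases $D_n$ is a function of the single symmetric matrix $D_0$, so all the $D_n$ are simultaneously diagonalizable and every estimate collapses to a scalar computation over the eigenvalues $\mu_i>0$ of $D_0$. Note also that $k_0=\norm{A^{1/2}\mat C_0^{-1}A^{1/2}}=\norm{D_0^{-1}}$, hence the smallest eigenvalue of $D_0$ is $\mu_{\min}=k_0^{-1}$.

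Next I would rewrite the two error quantities in these variables, recalling $\vecnorm{v}[A]=\vecnorm{A^{-1/2}v}$ and $\norm{B}[A]=\norm{A^{-1/2}BA^{-1/2}}$. The mean identity of \cref{lem:cv-alpha0} gives $A^{-1/2}(\vect m_n-\vect a)=\lambda^n D_n D_0^{-1}A^{-1/2}(\vect m_0-\vect a)$, so $\vecnorm{\vect m_n-\vect a}[A]\le \lambda^n\norm{D_n D_0^{-1}}\,\vecnorm{\vect m_0-\vect a}[A]$. For the covariance in sampling mode, a short manipulation (or direct simultaneous diagonalization) yields $D_n-I=\lambda^n D_n D_0^{-1}(D_0-I)$, whence $\norm{\mat C_n-\mat A}[A]=\norm{D_n-I}\le \lambda^n\norm{D_n D_0^{-1}}\,\norm{D_0-I}=\lambda^n\norm{D_n D_0^{-1}}\,\norm{\mat C_0-\mat A}[A]$. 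Thus both sampling bounds, and the optimization mean bound, hinge on the single quantity $\norm{D_n D_0^{-1}}$.

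The core computation is to evaluate $\norm{D_n D_0^{-1}}$ by diagonalizing $D_0$. Its eigenvalues are $\frac{1}{\mu_i(1-\lambda^n)+\lambda^n}$ in sampling mode and $\frac{1}{1+n\beta\mu_i}$ in optimization mode; in each case the relevant scalar map is decreasing in $\mu$ (the coefficients $1-\lambda^n$ and $n\beta$ are nonnegative), so the operator norm is attained at $\mu_{\min}=k_0^{-1}$, giving $\norm{D_n D_0^{-1}}=\frac{k_0}{1+\lambda^n(k_0-1)}$ (sampling) and $\frac{k_0}{k_0+\beta n}$ (optimization). The optimization mean bound follows at once. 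For the optimization covariance bound I would show $D_n\preccurlyeq \frac{k_0}{k_0+\beta n}D_0$: by simultaneous diagonalization this is the scalar inequality $\frac{1}{1+n\beta\mu_i}\le \frac{k_0}{k_0+\beta n}$, which holds precisely because $\mu_i\ge k_0^{-1}$; conjugating back by $A^{1/2}$ gives $\mat C_n\preccurlyeq \frac{k_0}{k_0+\beta n}\mat C_0$.

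Finally, for the sampling claims it remains to bound $\frac{k_0}{1+\lambda^n(k_0-1)}\le \max(1,k_0)$, which I would verify by the two elementary cases $k_0\ge 1$ (where the denominator is $\ge 1$) and $k_0<1$ (where, using $\lambda^n\le 1$ and $k_0-1<0$, the denominator is $\ge k_0>0$). Substituting into the two displayed inequalities of the previous paragraph completes part (i). The only part demanding real care is the bookkeeping of the weighted norms $\vecnorm{\dummy}[A]$ and $\norm{\dummy}[A]$ together with the correct identification $\mu_{\min}=k_0^{-1}$; the true structural step is the conjugation by $A^{-1/2}$, which removes the non-commutativity of $\mat C_0$ and $\mat A$ and reduces the whole proposition to monotone scalar estimates.
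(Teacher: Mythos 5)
Your proposal is correct and follows essentially the same route as the paper's proof: both start from the closed forms in \cref{lem:cv-alpha0}, conjugate by $A^{-1/2}$ (the paper's $\widetilde{\mat C}_n = \beta A^{-1/2}\mat C_n A^{-1/2}$ is just your $D_n$ rescaled by $\beta$), exploit that the resulting matrices are simultaneously diagonalizable, and reduce everything to monotone scalar bounds on the eigenvalues, with $\norm{D_n D_0^{-1}}\le\max(1,k_0)$ as the key estimate. The only cosmetic difference is that you compute the exact value $k_0/\bigl(1+\lambda^n(k_0-1)\bigr)$ before bounding it, whereas the paper bounds each eigenvalue $\ell_i$ by $\max\{1/\mu_i,1\}$ directly.
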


In order to study the convergence in the general case $\alpha \in (0, 1)$,
we will reduce the evolution of the moments~\eqref{eq:moments_quadratic_tilde} to the scalar case,
\begin{subequations}
\label{eq:1Dmoments}
\begin{align}
    u_{n+1} &= \left[\alpha +(1-\alpha) (1 + v_{n})^{-1}\right] u_{n}, \\
    v_{n+1}  &=
    \left[\alpha^2 + (1-\alpha^2) \lambda^{-1} (1 + v_n )^{-1}\right]v_n
\end{align}
\end{subequations}
by diagonalization. Then, using \cref{lemma:convergence_recursion_equations}, the asymptotic behavior of the moments can be summarized as follows.

\begin{proposition}
    \label{prop:cv-discrete}
    Consider the iterative scheme~\eqref{eq:mean_field_sdes} with $\alpha \in (0,1)$ and initial conditions $(\vect m_0, \mat C_0) \in \real^d \times \pos^d$.
    Then the following statements hold:
    \begin{enumerate}
        \item[(i)] Sampling mode $\lambda = (1 + \beta)^{-1}$. For all $n\in\nat$,
            \begin{align*}
                |\vect m_{n} - \vect a|_A&\le \max \left(1, k_0 \right)^{\frac{1}{1+\alpha}}
                \bigl((1-\alpha)\lambda+\alpha\bigr)^n |\vect m_0 - \vect a|_A\,,\\
                \norm{ C_n-A }_{A} &\le \max\left(1, k_0 \right) \left((1-\alpha^2)\lambda+\alpha^2\right)^n \norm{C_0-A}_{A}\,.
            \end{align*}
        \item[(ii)] Optimization mode $\lambda=1$. For all $n\in \nat$, it holds that
            \begin{align*}
                |\vect m_{n} - \vect a|_A&\le \left(\frac{k_0+\beta}{k_0+\beta+\beta(1-\alpha^2)n}\right)^{\frac{1}{1+\alpha}}
                |\vect m_0 - \vect a|_A\,,\\
                C_n &\preccurlyeq \left(\frac{k_0+\beta}{k_0+\beta+\beta(1-\alpha^2)n}\right) C_0 \,.
            \end{align*}
    \end{enumerate}
\end{proposition}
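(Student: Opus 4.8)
The plan is to diagonalize the matrix recursion \eqref{eq:moments_quadratic_tilde} into a family of decoupled scalar systems \eqref{eq:1Dmoments}, one per eigendirection, analyze each scalar system, and reassemble the results into the operator- and Euclidean-norm statements claimed. The diagonalization works because the update defining $\widetilde{\mat C}_{n+1}$ in \eqref{eq:second_moment_quadratic_discrete_tilde} is a matrix function of $\widetilde{\mat C}_n$ that commutes with it; hence, by induction, every $\widetilde{\mat C}_n$ is symmetric and shares the orthonormal eigenbasis $\{q_i\}_{i=1}^d$ of $\widetilde{\mat C}_0$, with eigenvalues $v_n^{(i)}$ solving the scalar recursion for $v$. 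The factor $\alpha I_d + (1-\alpha)(I_d + \widetilde{\mat C}_n)^{-1}$ multiplying $\widetilde{\vect m}_n$ in \eqref{eq:first_moment_quadratic_discrete_tilde} is then diagonal in the same basis, so, writing $\widetilde{\vect m}_n = \sum_i u_n^{(i)} q_i$, each pair $(u_n^{(i)}, v_n^{(i)})$ evolves autonomously according to \eqref{eq:1Dmoments} with initial data read off from $\widetilde{\vect m}_0$ and $\widetilde{\mat C}_0$.

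The reassembly uses the identities $\vecnorm{\vect m_n - \vect a}[A] = \abs{\widetilde{\vect m}_n} = \bigl(\sum_i (u_n^{(i)})^2\bigr)^{1/2}$ and $\norm{C_n - A}[A] = \beta^{-1}\max_i \abs{v_n^{(i)} - \beta}$, together with the fact that $C_n \preccurlyeq r\,C_0$ is equivalent to $v_n^{(i)} \le r\, v_0^{(i)}$ for every $i$. The scalar fixed point is $v_\infty = \lambda^{-1} - 1$, which equals $\beta$ in sampling mode and $0$ in optimization mode, consistent with \cref{lemma:basic_results_steady_states}. The dependence on $k_0$ enters through the worst eigendirection: since $k_0 = \norm*{A^{1/2} C_0^{-1} A^{1/2}}$ is the reciprocal of the smallest eigenvalue of $A^{-1/2} C_0 A^{-1/2}$, the smallest initial eigenvalue of $\widetilde{\mat C}_0$ is $v_0^{\min} = \beta/k_0$, and it is this direction that realizes the slowest convergence and hence the factor $\max(1, k_0)$.

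For the covariance I would analyze the scalar map $v \mapsto \bigl[\alpha^2 + (1-\alpha^2)\lambda^{-1}(1+v)^{-1}\bigr]v$ directly. A sign computation shows $v_n^{(i)}$ converges monotonically to $v_\infty$; in sampling mode the linearization at $v_\infty = \beta$ gives the per-step contraction $(1-\alpha^2)\lambda + \alpha^2$, while in optimization mode the substitution $w_n = 1/v_n^{(i)}$ turns the recursion into $w_{n+1} = w_n + (1-\alpha^2)w_n/(w_n + \alpha^2)$, whose increments approach $1 - \alpha^2$; bounding the increments from below then yields the algebraic rate $\frac{k_0+\beta}{k_0+\beta+\beta(1-\alpha^2)n}$ at the worst eigendirection. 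Taking the maximum over $i$ produces the stated covariance bounds, and the $\alpha \to 0$ specialization recovers \cref{lem:cv-alpha0}.

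The main obstacle is the mean, because $u_n^{(i)} = u_0^{(i)} \prod_{k=0}^{n-1} h(v_k^{(i)})$ with $h(v) = \alpha + (1-\alpha)(1+v)^{-1}$ is a product of time-varying factors, each differing from its limiting value while $v_k^{(i)}$ is still relaxing. The key observation, which explains the exponent $1/(1+\alpha)$ appearing in both modes, is that in the variable $w = 1/v$ one has $h(v) = (w + \alpha)/(w+1)$, so that $\log h(v_k) \approx -(1-\alpha)/w_k$, whereas the covariance increment is $\Delta w_k \approx 1 - \alpha^2$; consequently $\sum_k \log h(v_k)$ telescopes against $\sum_k \Delta w_k / w_k \approx \log(w_n/w_0)$ with proportionality constant $(1-\alpha)/(1-\alpha^2) = 1/(1+\alpha)$, giving $\prod_k h(v_k) \approx (v_n/v_0)^{1/(1+\alpha)}$ in optimization mode and, analogously, a mean prefactor equal to the $1/(1+\alpha)$ power of the covariance prefactor $\max(1,k_0)$ in sampling mode. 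Turning this heuristic into a sharp inequality valid for all $v \ge 0$ and $\alpha \in (0,1)$ is precisely the delicate scalar estimate supplied by \cref{lemma:convergence_recursion_equations}; once it is available, the reassembly via the norm identities above is routine.
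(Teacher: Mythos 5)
Your proposal is correct and takes essentially the same route as the paper's proof: diagonalize \eqref{eq:moments_quadratic_tilde} in the common eigenbasis shared by all the matrices $\widetilde{\mat C}_n$, apply \cref{lemma:convergence_recursion_equations} to each scalar pair $(u_n^{(i)}, v_n^{(i)})$, and reassemble via the norm identities using that every eigenvalue of $\widetilde{\mat C}_0$ is at least $\beta/k_0$, which is exactly how the prefactors $\max(1,k_0)$ and the uniform optimization rate arise in the paper. Your heuristic sketch of why the scalar estimates hold (linearization at $v_\infty$, the substitution $w=1/v$, the telescoping giving the exponent $1/(1+\alpha)$) is sound but plays no logical role, since you ultimately invoke \cref{lemma:convergence_recursion_equations} just as the paper does.
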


\subsubsection{Convergence Analysis for the Continuous-time Dynamics.}%
Next, we consider the limiting case $\alpha\to 1$.
Rewriting the right-hand side of~\eqref{eq:momentprop-a} and~\eqref{eq:momentprop-b} using \eqref{eq:explicit_moments},
we obtain for any $\lambda\in (0,1]$ and $\beta>0$,
\begin{subequations}
\label{eq:system_moments_continuous_time}
\begin{align}
    \label{eq:system_moments_continuous_time_m}
    \dot {\vect m} &= - \beta \mat C \left(A + \beta\mat C \right)^{-1} (\vect m - \vect a), \\
    \label{eq:system_moments_continuous_time_c}
    \dot {\mat C}  &= - 2\beta \, \mat C \,  \left(A + \beta\mat C \right)^{-1} \, \left( \mat C -  \left(\frac{1-\lambda}{\beta\lambda}\right) A \right).
\end{align}
\end{subequations}

\begin{proposition}
    \label{prop:cv-cont}
    Let $\bigl(\vect m(t), \mat C(t)\bigr)$ denote the solution to~\cref{eq:system_moments_continuous_time} with initial conditions $(\vect m_0, \mat C_0) \in \real^d \times \pos^d$.
    Then the following statements hold:
    \begin{itemize}
        \item[(i)] Sampling mode $\lambda = (1 + \beta)^{-1}$. For all $t>0$,
            \begin{subequations}
            \begin{align*}
                \vecnorm{\vect m(t)-\vect a}_A &\leq \max\bigl(1,k_0^{\lambda/2}  \bigr) \e^{-(1 - \lambda) t} \vecnorm{\vect m_0 - \vect a}_A, \\
                \norm{\mat C(t) - \mat A}_A &\leq
                \max\bigl(1,k_0^{\lambda} \bigr)\e^{-2 (1 - \lambda) t} \norm{ \mat C_0-\mat A}_A\,.
            \end{align*}
            \end{subequations}
        \item[(ii)] Optimization mode $\lambda=1$. For all $t \geq 0$, it holds
            \begin{subequations}
            \begin{align*}
                \vecnorm{\vect m(t) - \vect a}_A &\leq\left(  \frac{k_0+\beta}{k_0 + \beta + 2 t \beta} \right)^{\frac{1}{2}} \vecnorm{\vect m_0 - \vect a}_A\,,\\
                C(t) &\preccurlyeq \left(  \frac{k_0+\beta}{k_0 + \beta + 2 t \beta} \right)C_0 \,.
            \end{align*}
            \end{subequations}
    \end{itemize}
\end{proposition}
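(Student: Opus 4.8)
The plan is to reduce the matrix/vector system \eqref{eq:system_moments_continuous_time} to a family of decoupled scalar ODEs through the same affine change of variables used in the discrete-time analysis, and then to integrate these scalar equations essentially explicitly.

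First I would introduce $\widetilde{\vect m}(t) := A^{-1/2}(\vect m(t) - \vect a)$ and $\widetilde{\mat C}(t) := \beta A^{-1/2}\mat C(t)A^{-1/2}$, exactly as in \eqref{eq:moments_quadratic_tilde}. Using $A + \beta \mat C = A^{1/2}(I_d + \widetilde{\mat C})A^{1/2}$, a direct computation transforms \eqref{eq:system_moments_continuous_time} into the autonomous system
\begin{align*}
    \dot{\widetilde{\vect m}} &= -\widetilde{\mat C}(I_d + \widetilde{\mat C})^{-1}\widetilde{\vect m}, \\
    \dot{\widetilde{\mat C}} &= -2\widetilde{\mat C} + 2\lambda^{-1}\widetilde{\mat C}(I_d + \widetilde{\mat C})^{-1}.
\end{align*}
The crucial gain is that the right-hand side of the covariance equation is now a function of $\widetilde{\mat C}$ alone: the matrix $A$ has disappeared. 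Hence $\widetilde{\mat C}(t)$ keeps the eigenvectors of $\widetilde{\mat C}(0)$ for all $t$, and in that fixed eigenbasis the system decouples, each eigenvalue $v(t)$ of $\widetilde{\mat C}$ and each coordinate $u(t)$ of $\widetilde{\vect m}$ solving
\begin{align*}
    \dot v = -2v + 2\lambda^{-1}\frac{v}{1+v}, \qquad \dot u = -\frac{v}{1+v}u.
\end{align*}
Since $\vecnorm{\vect m - \vect a}[A] = |\widetilde{\vect m}|$ and $\norm{\mat C - \mat A}[A] = \beta^{-1}\norm{\widetilde{\mat C} - \beta I_d}$, it suffices to prove scalar bounds for $v$ and $u$ and then recombine over eigenvalues. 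I would also record the a priori lower bound $v(0)\geq \beta/k_0$ for every eigenvalue, which follows from $k_0 = \beta\norm{\widetilde{\mat C}(0)^{-1}}$.

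Next I would integrate the scalar ODEs. The key observation is that $u$ and $v$ are linked by a conserved quantity obtained by dividing the two equations: in sampling mode ($\lambda^{-1} = 1+\beta$) the ratio $u/|v-\beta|^{1/2}$ is constant, and in optimization mode ($\lambda=1$) the ratio $u/v^{1/2}$ is constant. This reduces every mean estimate to the corresponding covariance estimate at the cost of a square root, which already explains the exponents $\lambda/2$ and $1/2$ in the mean bounds. For the covariance, separation of variables is explicit: in sampling mode the quantity $|v-\beta|^{1+\beta}/v$ decays like $\e^{-2\beta t}$, while in optimization mode $g := 1/v$ satisfies $\dot g = 2g/(1+g)$, which a monotonicity argument bounds below by $g(0) + \frac{2g(0)}{1+g(0)}t$.

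The final step is to extract sharp constants. When $v(0)\geq\beta$ the eigenvalue is monotone and the advertised rate follows with constant one; the obstacle is the opposite regime $v(0)<\beta$, where the instantaneous decay rate is strictly slower than the advertised one. Here I would use the explicit conserved quantities together with $v(0)\geq\beta/k_0$: in sampling mode the factor $v(t)/v(0)$ is bounded by $\max(1,\beta/v(0))\leq\max(1,k_0)$, and taking the $(1+\beta)$-th root produces exactly $\max(1,k_0^{\lambda})$; in optimization mode the inequality $v(0)\geq\beta/k_0$ is precisely what is needed to dominate $\frac{v(0)(1+v(0))}{1+v(0)+2v(0)t}$ by $\frac{(k_0+\beta)v(0)}{k_0+\beta+2\beta t}$, giving $\mat C(t)\preccurlyeq\frac{k_0+\beta}{k_0+\beta+2\beta t}\mat C_0$ after recombining over eigenvectors. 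I expect this absorption of the slow-regime gap into the constant $\max(1,k_0^{\lambda})$ to be the only genuinely delicate point; everything else is routine integration and the diagonalization bookkeeping.
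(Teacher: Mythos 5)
Your proposal is correct and takes essentially the same approach as the paper: the identical change of variables $\widetilde{\vect m} = A^{-1/2}(\vect m - \vect a)$, $\widetilde{\mat C} = \beta A^{-1/2}\mat C A^{-1/2}$ and eigenbasis reduction lead to the scalar ODE system treated in \cref{lemma:cv-alpha1-1D}, and your scalar arguments (the conserved ratio linking $u$ to $\abs{v - v_{\infty}}^{1/2}$, the quantity $\abs{v-\beta}^{1+\beta}/v$ decaying like $\e^{-2\beta t}$, and the monotonicity bound for $g = 1/v$) are exactly the content of that lemma in exponentiated form. Your extraction of the constants $\max\bigl(1,k_0^{\lambda}\bigr)$ and $(k_0+\beta)/(k_0+\beta+2\beta t)$ from monotonicity of $v$ together with $v(0) \geq \beta/k_0$ likewise coincides with the paper's recombination step.
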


\begin{remark}[Discrete to Continuum]
    Notice that, by letting $\alpha = \e^{-t/n}$ in the convergence results obtained for $\alpha\in(0,1)$ in~\cref{prop:cv-discrete}
    and taking the limit $n \to \infty$,
    we recover the convergence results of the continuous-time setting,
    up to the constant prefactor.
\end{remark}

\begin{remark}[Sharpness]\label{rmk:sharpness}
    It is possible to show, using the lower bounds on the trend to equilibrium provided by \cref{lemma:convergence_recursion_equations,lemma:cv-alpha1-1D},
    that the convergence rates we obtained in \cref{prop:cv-discrete,prop:cv-cont} are all sharp with respect to $n$ and $t$ respectively.
    Note that the argument leading to \cref{prop:cv-discrete} also applies to the case $\alpha=0$.
    However, the upper bounds we obtain in \cref{prop:cv-alpha0-main} are stronger than those we would be able to obtain by applying \cref{lemma:convergence_recursion_equations} for $\alpha=0$.
    Lower bounds for the sampling mode in the case $\alpha=0$ can be obtained the same way as for $\alpha\in(0,1)$.
    In optimization mode ($\lambda=1)$,
    we can derive lower bounds explicitly using the expression from \cref{lem:cv-alpha0} as follows:
    for $\widetilde{\mat C}_n := \beta A^{-1/2} {\mat C}_n A^{-1/2}$, we have $\widetilde{\mat C}_0 \preccurlyeq \norm*{\widetilde{\mat C}_0} I_d$, so
    \begin{align*}
        \widetilde{\mat C}_{n}^{-1}  = \widetilde{\mat C}_0^{-1}
     + n I_d
     \preccurlyeq \left(1+n \norm*{\widetilde{\mat C}_0}\right)  \widetilde{\mat C}_0^{-1}
     \quad \Rightarrow \quad
      \mat C_n \succcurlyeq \left(\frac{1}{1+\beta n \norm*{\mat C_0}_A}\right) \mat C_0.
    \end{align*}
    The conclusion from the above observations is that all rates provided in \cref{tab:cv-rates} are sharp.
\end{remark}

\begin{remark}
    [Attractor]
    As a consequence of the above convergence results for linear objective functions~$f$,
    the steady state $(\vect a, A)$ is the unique attractor of
    the moment equations \eqref{eq:equations_moments_discrete_gaussian}
    and \eqref{eq:momentprop}
    when taking an initial condition with $C_0 \in \pos^d$.
    Therefore, whilst the mean-field dynamics
    \eqref{eq:iterative_scheme_measures} and \eqref{eq:mean_field} admit infinitely many steady states given by all Dirac distributions in
    addition to the Gaussian steady state $\normal(\vect a, A)$,
    the solutions to the mean-field dynamics always converge to the desired target measure $\normal(\vect a, A)$ when initialized at Gaussian initial conditions with $C_0 \in \pos^d$,
    avoiding the manifold of Diracs along the evolution.
\end{remark}


\subsection{Particle Approximations}\label{sec:notation}

In this subsection we describe particle approximations of
the mean field dynamics \eqref{eq:mean_field_sdes} and~\eqref{eq:mean_field_sdes2}. This leads to the implementable algorithms used in Section \ref{sec:numerical_experiments}.
The following is a discrete-time system of interacting particles in $\real^d$ with mean field limit given by~\eqref{eq:mean_field_sdes}:
\begin{equation}
    \label{eq:interacting_particle_system}
    \theta^{(j)}_{n+1} =
    \mathcal M_{\beta} (\rho_n^J) + \alpha \bigl(\theta^{(j)}_n - \mathcal M_{\beta}(\rho_n^J)\bigr)
    +  \sqrt{(1 - \alpha^2) \, \lambda^{-1} \mathcal C_{\beta} (\rho_n^J)} \, \vect \xi_n^{(j)},
    \qquad j = 1, \dots, J.
\end{equation}
Here $\vect \xi^{(j)}_n$, for $j \in \{1, \dotsc, J\}$ and $n \in \nat$, are independent $\normal(\vect 0, I_d)$ random variables,
and $\rho^J_n$ is the empirical measure associated with the particle system at iteration $n$,
\[
    \rho^J_n := \frac{1}{J} \sum_{j=1}^{J} \delta_{\theta^{(j)}_n}\,.
\]
We note that
\begin{subequations}
    \label{eq:emp}
\begin{align}
    \mathcal M_{\beta} (\rho^J_n) &= \frac%
    {\sum_{j=1}^{J}  \e^{-\beta f(\theta^{(j)}_n)} \, \theta^{(j)}_n }
    {\sum_{j=1}^{J}  \e^{-\beta f(\theta^{(j)}_n)}},
    \\
    \label{eq:covariance}
    \mathcal C_{\beta} (\rho^J_n) &= \frac%
    {\sum_{j=1}^{J}  \left((\theta^{(j)}_n - \mathcal M_{\beta}(\rho^J_n)) \otimes \left(\theta^{(j)}_n - \mathcal M_{\beta}(\rho^J_n)\right)\right) \, \e^{-\beta f(\theta^{(j)}_n)}}
    {\sum_{j=1}^{J}  \e^{-\beta f(\theta^{(j)}_n)} }.
\end{align}
\end{subequations}
The limit cases $\alpha=0$ and $\alpha\to 1$ for fixed $\lambda>0$ and $\beta>0$ reduce to simpler systems.
Indeed, in the case where $\alpha = 0$,
the method simplifies to
\[
    \theta^{(j)}_{n+1} =
    \mathcal M_{\beta} (\rho_n^J)
    +  \sqrt{\lambda^{-1} \mathcal C_{\beta} (\rho_n^J)} \, \vect \xi_n^{(j)},
    \qquad j = 1, \dots, J.
\]
On the other hand,
when $\alpha \approx 1$,
the particle evolution \cref{eq:interacting_particle_system} may be viewed as a time discretization with timestep $\Delta t = - \log \alpha$
of the following continuous-time interacting particle system,
in which we generalize the notation \eqref{eq:emp} to continuous time in the obvious way:
\begin{align}
    \label{eq:consensus_based_sampler}%
    \dot\theta^{(j)} &=
    - \bigl(\theta^{(j)} - \mathcal M_{\beta}(\rho^J_t)\bigr)
    +  \sqrt{2 \lambda^{-1} \, \mathcal C_{\beta} (\rho^J_t)} \, \dot{\vect W}^{(j)},
    \qquad j = 1, \dots, J,
\end{align}
where $\{ \vect W^{(j)} \}_{j=1}^{J}$ are independent standard Brownian motions in $\real^d$.
The formal mean field limit of this equation is given by~\eqref{eq:mean_field_sdes2}.

We note that
the finite-dimensional particle systems~\eqref{eq:interacting_particle_system} and \eqref{eq:consensus_based_sampler} are both affine invariant;
the proof is similar to that given for the mean-field limit.
    In addition, like ensemble Kalman based methods for inverse problems~\cite{MR3041539},
    the particle systems~\eqref{eq:interacting_particle_system} and \eqref{eq:consensus_based_sampler} both satisfy the following invariant subspace property.
    \begin{lemma}
        \label{lemma:invariant_subspace}
        Let $\mathcal S$ denote the linear span of~$\{\theta_0^{(j)}\}_{j=1}^J$.
        Then $\theta^{(j)}_n \in \mathcal S$ for all $(j, n) \in \{1, \dotsc, J\} \times \nat$ and
        $\theta^{(j)}_t \in \mathcal S$ for all $(j, t) \in \{1, \dotsc, J\} \times [0, \infty)$.
    \end{lemma}
    \begin{proof}
        We prove only the first claim, which follows from a simple recursion.
        Let us assume the claim is true for $(j, n) \in \{1, \dotsc, J\} \times \{0, \dotsc, N\}$
        and prove that it is then also true for $n = N+1$.
        Let $\vect a \in \mathcal S^{\perp}$,
        where $\mathcal S^{\perp}$ is the orthogonal complement of~$\mathcal S$ in $\real^d$.
        Taking the inner product of both sides of~\eqref{eq:interacting_particle_system} with $\vect a$,
        we obtain for all $j \in \{1, \dotsc, J\}$ that
        \begin{align*}
            \vect a^\t \theta^{(j)}_{N+1}
            &= \vect a^\t \mathcal M_{\beta} (\rho_n^J) + \alpha \vect a^\t \bigl(\theta^{(j)}_N - \mathcal M_{\beta}(\rho_N^J)\bigr)
            + \vect a^\t \sqrt{(1 - \alpha^2) \, \lambda^{-1} \mathcal C_{\beta} (\rho_N^J)} \, \vect \xi_N^{(j)} \\
            &= 0 + 0 + \vect a^\t \sqrt{(1 - \alpha^2) \, \lambda^{-1} \mathcal C_{\beta} (\rho_N^J)} \, \vect \xi_N^{(j)},
        \end{align*}
        and so by the Cauchy--Schwarz inequality,
        \begin{align*}
            \lvert \vect a^\t \theta^{(j)}_{N+1} \rvert^2
            \leq (1 - \alpha^2) \, \lambda^{-1}
            \left\lvert \vect \xi_N^{(j)}  \right\rvert^2 \, \bigl\lvert \sqrt{\mathcal C_{\beta} (\rho_N^J)} \vect a \bigr\rvert^2
            = 0,
        \end{align*}
        because $\mathcal C_{\beta} (\rho_N^J) \vect a = 0$ by the formula~\eqref{eq:covariance} for the weighted covariance $\mathcal C_{\beta}(\rho_N^J)$.
        Since $\vect a$ was arbitrary in $\mathcal S^\perp$,
        the proof is complete.
    \end{proof}

\begin{remark}[Cooling schedule]
    To improve algorithmic implementations it will be
    of value to develop a rigorous understanding of the relationship between the number of particles $J$
    and the parameter $\beta$ needed to establish good performance of the method.
    Relatedly, it will also be useful to investigate theoretically the rate of convergence to equilibrium
    in the setting where a cooling schedule is employed for $\beta$. See \cref{sec:numerical_experiments} for numerical investigations in this direction.
\end{remark}

\section{Analysis Beyond The Gaussian Setting}\label{sec:results}

In this section,
we study the proposed method~\eqref{eq:mean_field_sdes} in the case where the function $f$ is not necessarily quadratic, and so the target probability distribution may be non-Gaussian.
We begin, in \cref{sec:prelimbounds}, by presenting preliminary bounds on $\vect m _\beta(\vect m, \mat C)$ and $\mat C_\beta(\vect m, \mat C)$ defined in~\eqref{eq:mbeta_and_cbete},
and then we analyze the optimization ($\lambda = 1$) and sampling ($\lambda = (1 + \beta)^{-1}$) methods in \cref{sub:analysis_of_the_optimization_scheme,sub:analysis_of_the_sampling_scheme}, respectively.
The proofs of all results are presented in \cref{sec:proof_of_the_main_results}, with the exception
of Theorem \ref{thm:existence-sstates} which is presented
in-text.

The results in this section are based on the following two assumptions.
\begin{assumption}[Convexity of the potential]
    \label{assumption:convexity_potential}
    The function $f$ satisfies $f \in C^{2}(\real^d)$ and $\hessian f(\theta) \succcurlyeq \Lhess \succcurlyeq \lhess I_d$ for all $\theta \in \real^d$,
    for some $\Lhess \in \pos^d$
    and some $\lhess > 0$.
\end{assumption}
Assumption~\ref{assumption:convexity_potential} guarantees the existence of a unique global minimizer for $f$,
which we will denote throughout this section by
\[
    \theta_*:=\argmin_{\theta\in\real^d} f(\theta).
\]
\begin{assumption}
    [Bound from above on the Hessian]
    \label{assumption:convexity_potential_above}
    The function $f$ satisfies $f \in C^{2}(\real^d)$ and $\hessian f(\theta) \preccurlyeq \Uhess \preccurlyeq u I_d$ for all $\theta \in \real^d$,
    for some $\Uhess \in \pos^d$ and some $\uhess > 0$.
\end{assumption}

These assumptions are very similar to the ones made in \cite{carrillo2018analytical} in order to
show the convergence of the CBO method \cite{pinnau2017consensus} for global optimization.
The convergence results we present in this section are summarized in~\cref{table:convergence_rates_nongaussian}.

\begin{table}[ht]
    \centering
    \begin{tabular}{|c|c|c|c|c|}
         \hline
         \phantom{$\Big($}
         & \multicolumn{2}{c|}{Sampling} & \multicolumn{2}{c|}{Optimization}
         \\[2mm] \hline
         \phantom{$\Big($}
         &  Mean ($d=1$) & Covariance ($d=1$) & Mean ($d=1$) & Covariance (any $d$)
         \\[2mm] \hline
         $\alpha = 0$
         & \phantom{$\Bigg($}\hspace{-4mm} $\left(\frac{k}{\beta}\right)^n$
         & $\left(\frac{k}{\beta}\right)^n$
         & $\lesssim \frac{\log (n)}{n}$
         & $\frac{\widetilde k_0}{\widetilde k_0+\beta n}$
         \\[2mm] \hline
         $\alpha \in (0, 1)$
         & \phantom{$\Bigg($}\hspace{-4mm} $\left(\alpha + (1-\alpha^2)\frac{k}{\beta}\right)^n$
         & $\left(\alpha + (1-\alpha^2)\frac{k}{\beta}\right)^n$
         & $\begin{array}{c}
               \lesssim n^{-1/q}  \\
               \mbox{(not optimal)}
         \end{array}$
         & $\frac{\widetilde k_0+\beta}{\widetilde k_0+\beta+\beta(1-\alpha^2)n}$
         \\ \hline
         $\alpha = 1$
         & \phantom{$\Bigg($}\hspace{-4mm} $\e^{-\left(1 - \frac{2k}{\beta} \right) t}$
         & $\e^{-\left(1 - \frac{2k}{\beta} \right) t}$
         & $\begin{array}{c}
               \lesssim t^{-1/q}  \\
               \mbox{(not optimal)}
         \end{array}$
         & $\frac{\widetilde k_0+\beta}{\widetilde k_0+\beta+2 \beta t}$
         \\ \hline
    \end{tabular}
    \caption{%
        Sharp upper bounds on the convergence rates for CBS in sampling and optimization modes,
        in the case of a non-Gaussian target distribution and a Gaussian initial condition with strictly positive definite covariance matrix $C_0$.
        Here $k$ is a positive constant independent of $n$, $t$, $\alpha$ and $\beta$,
        and $\widetilde k_0 := \norm{L^{1/2}C_0^{-1}L^{1/2}}$,
        where $L$ is the symmetric positive definite matrix from \cref{assumption:convexity_potential},
        and $q$ is any constant strictly greater than $2 \max(2, \uhess/\lhess)$,
        where $\lhess$ and $\uhess$ are the constants from \cref{assumption:convexity_potential} and \cref{assumption:convexity_potential_above}, respectively.
        Obtaining sharp convergence rates for the mean in the non-Gaussian case for $\alpha \neq 0$ in optimization mode is an open problem.
    }
    \label{table:convergence_rates_nongaussian}%
\end{table}

\subsection{Preliminary Bounds}\label{sec:prelimbounds}
We first obtain sharp bounds on $C_{\beta}$ which,
in the special case when $f$ is quadratic,
enable to recover~\eqref{eq:explicit_second_moment}.
The first bound relies on a logarithmic Sobolev inequality for the probability measure $\frac{1}{Z_\beta} \e^{-\beta f}$,
where $Z_\beta$ is the normalization constant.
\begin{lemma}[Upper bound on weighted covariance]
    \label{lemma:bound_second_moment}
    If~\cref{assumption:convexity_potential} holds,
    then
    \[
        \forall (\vect m, C) \in \real^d \times \pos^d, \qquad
       \mat C_\beta(\vect m, \mat C) \preccurlyeq \left( C^{-1} + \beta \Lhess \right)^{-1}.
    \]
\end{lemma}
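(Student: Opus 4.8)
The plan is to recognize $\mat C_\beta(\vect m, \mat C)$ as the covariance matrix of the probability measure $\mu \propto \e^{-V}$, where
\[
    V(\theta) := \tfrac12 \vecnorm{\theta - \vect m}[\mat C]^2 + \beta f(\theta).
\]
Indeed, up to normalization $\e^{-V} \propto g(\dummy; \vect m, \mat C)\,\e^{-\beta f}$, so $\mu = L_\beta\, g(\dummy; \vect m, \mat C)$ in the notation of~\eqref{eq:mom}, and hence $\mathcal C(\mu) = \mat C_\beta(\vect m, \mat C)$ by~\eqref{eq:mbeta_and_cbete}. The key structural fact is that $V$ is uniformly convex: differentiating twice gives $\hessian V(\theta) = \mat C^{-1} + \beta\,\hessian f(\theta)$, and \cref{assumption:convexity_potential} yields the constant lower bound $\hessian V(\theta) \succcurlyeq \mat C^{-1} + \beta \Lhess =: K \in \pos^d$ for every $\theta$. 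In particular $\mu$ is strictly log-concave, so it has finite second moments and a well-defined covariance.

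The sharp matrix bound then follows from the Brascamp--Lieb inequality, which for a log-concave measure $\mu \propto \e^{-V}$ asserts $\mathrm{Var}_\mu(\phi) \le \int \eip*{\grad\phi}{\bigl(\hessian V\bigr)^{-1}\grad\phi}\,\d\mu$ for smooth $\phi$. Applying this to the linear test functions $\phi(\theta) = \eip*{\vect v}{\theta}$ for $\vect v \in \real^d$, for which $\grad\phi \equiv \vect v$, and using $\bigl(\hessian V(\theta)\bigr)^{-1} \preccurlyeq K^{-1}$ pointwise (a consequence of $\hessian V \succcurlyeq K$), I would obtain
\[
    \eip*{\vect v}{\mat C_\beta(\vect m, \mat C)\,\vect v}
    = \mathrm{Var}_\mu\bigl(\eip*{\vect v}{\dummy}\bigr)
    \le \int \eip*{\vect v}{\bigl(\hessian V(\theta)\bigr)^{-1}\vect v}\,\mu(\d\theta)
    \le \eip*{\vect v}{K^{-1}\vect v}.
\]
Since $\vect v$ is arbitrary, this is exactly $\mat C_\beta(\vect m, \mat C) \preccurlyeq (\mat C^{-1} + \beta \Lhess)^{-1}$. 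This route is sharp: when $f$ is quadratic, $\hessian f \equiv \Lhess$ identically, the middle inequality above becomes an equality, and one recovers~\eqref{eq:explicit_second_moment}.

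The main obstacle is obtaining the \emph{matrix-valued} estimate rather than a weaker scalar one. A direct appeal to the Poincar\'e or logarithmic Sobolev inequality for $\mu$ supplies only a bound with the scalar constant $\lambda_{\min}(K)$, giving $\mat C_\beta \preccurlyeq \lambda_{\min}(K)^{-1} I_d$, which is strictly weaker than the claim whenever $K$ is anisotropic; recovering the full matrix inequality requires the position-dependent weight $\bigl(\hessian V\bigr)^{-1}$ that the Brascamp--Lieb inequality (equivalently, a Helffer--Sj\"odstrand covariance representation) provides. If one instead insists on arguing through the logarithmic Sobolev inequality for $\frac{1}{Z_\beta}\e^{-\beta f}$, the technical work is to transfer that functional inequality to the Gaussian-tilted measure $\mu$ and upgrade it to the anisotropic covariance bound; I would dispatch the accompanying integrability and differentiation-under-the-integral requirements using the uniform convexity of $V$, which forces Gaussian tails.
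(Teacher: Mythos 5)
Your proof is correct, but it follows a genuinely different route from the paper's. You invoke the Brascamp--Lieb inequality, $\mathrm{Var}_\mu(\phi) \le \int \eip*{\grad\phi}{(\hessian V)^{-1}\grad\phi}\,\d\mu$, on linear test functions together with the pointwise bound $(\hessian V)^{-1} \preccurlyeq (\mat C^{-1}+\beta\Lhess)^{-1}$; this yields the anisotropic matrix bound in one stroke, with no change of variables, and makes the sharpness in the quadratic case transparent. The paper instead avoids Brascamp--Lieb entirely: it writes $f$ as its quadratic lower model (with Hessian $\Lhess$) at the minimizer $\widetilde\theta$ of $V_\beta$ plus a convex remainder $g$, performs the whitening change of variables $\theta \mapsto \widetilde\theta + (\mat C^{-1}+\beta\Lhess)^{-1/2}\theta$ so that the transformed potential $\widetilde V_\beta = \tfrac12|\theta|^2 + \beta\widetilde g$ satisfies $\hessian \widetilde V_\beta \succcurlyeq I_d$, and then applies Bakry--\'Emery and the resulting Poincar\'e inequality with constant $1$ to linear functions, recovering the matrix inequality by undoing the change of variables. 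This means your closing claim --- that a Poincar\'e/log-Sobolev argument can only produce the weaker isotropic bound $\lambda_{\min}(\mat C^{-1}+\beta\Lhess)^{-1} I_d$ and that the position-dependent Brascamp--Lieb weight is \emph{required} --- is mistaken: the affine change of variables transfers the anisotropy into the coordinates, after which the scalar Poincar\'e constant $1$ suffices. In terms of trade-offs, your argument is shorter and uses a sharper off-the-shelf tool, while the paper's argument relies only on the more standard Bakry--\'Emery/Poincar\'e machinery and sets up the decomposition-plus-change-of-variables template that it reuses immediately afterwards in the proof of the matching lower bound (Lemma 3.3).
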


\begin{remark}
    We note that, by the standard Holley--Stroock result, see e.g.~\cite[Theorem 2.11]{MR3509213},
    a similar bound could be obtained when $f$ is of the type $f_c + f_b$,
    where $f_c$ satisfies the convexity property \cref{assumption:convexity_potential}
    and $f_b$ is a bounded function.
\end{remark}

The next lemma provides a bound from below on $C_\beta$.
\begin{lemma}
    [Lower bound on weighted covariance]
    \label{lemma:lower_bound_second_moment}
    If \cref{assumption:convexity_potential_above} holds,
    then
    \[
        \forall (\vect m, C) \in \real^d \times \pos^d, \qquad
        \mat C_\beta(\vect m, \mat C) \succcurlyeq \left( C^{-1} + \beta \Uhess \right)^{-1}.
    \]
\end{lemma}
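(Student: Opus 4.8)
The plan is to recognize $\mat C_\beta(\vect m, \mat C)$ as the covariance matrix of the probability measure $\mu \propto \e^{-V}$, where $V(\theta) = \frac12 \vecnorm{\theta - \vect m}[\mat C]^2 + \beta f(\theta)$, since the reweighted density $L_\beta\, g(\dummy; \vect m, \mat C)$ is exactly proportional to $\e^{-V}$. Differentiating twice gives $\hessian V(\theta) = \mat C^{-1} + \beta \hessian f(\theta)$, so \cref{assumption:convexity_potential_above} yields the pointwise bound $\hessian V \preccurlyeq \mat C^{-1} + \beta \Uhess$, and hence $\expect_\mu[\hessian V] \preccurlyeq \mat C^{-1} + \beta \Uhess$. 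The goal then reduces to the matrix Cramér--Rao inequality $\operatorname{Cov}_\mu(\theta) \succcurlyeq (\expect_\mu[\hessian V])^{-1}$, after which monotonicity of matrix inversion delivers $\mat C_\beta(\vect m, \mat C) = \operatorname{Cov}_\mu(\theta) \succcurlyeq (\mat C^{-1} + \beta \Uhess)^{-1}$, as required.

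To establish the Cramér--Rao bound I would integrate by parts against linear test functions. Writing $\bar\theta = \expect_\mu[\theta]$, the identity $\grad \e^{-V} = -(\grad V)\e^{-V}$ yields, after one integration by parts, $\expect_\mu[\eip*{\xi}{\theta - \bar\theta}\eip*{\zeta}{\grad V}] = \eip*{\xi}{\zeta}$ for all $\xi,\zeta\in\real^d$; the same computation gives $\expect_\mu[\grad V] = 0$ and $\expect_\mu[\grad V \otimes \grad V] = \expect_\mu[\hessian V] =: M$. Applying Cauchy--Schwarz to the centred pair $(\eip*{\xi}{\theta}, \eip*{\zeta}{\grad V})$ then produces $\eip*{\xi}{\zeta}^2 \le (\xi^\t \operatorname{Cov}_\mu(\theta)\, \xi)(\zeta^\t M \zeta)$. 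Choosing the optimal direction $\zeta = M^{-1}\xi$ gives $\xi^\t \operatorname{Cov}_\mu(\theta)\, \xi \ge \xi^\t M^{-1}\xi$ for every $\xi$, which is precisely the matrix inequality $\operatorname{Cov}_\mu(\theta) \succcurlyeq M^{-1}$.

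The main technical point is to justify the integration-by-parts steps, namely the vanishing of boundary terms and the finiteness of the second moments of both $\theta$ and $\grad V$ under $\mu$. This is where the ambient strong convexity of \cref{assumption:convexity_potential} is used: it guarantees that $V$ is strictly convex and coercive, so that $\mu$ is a genuine log-concave probability measure with Gaussian tails, all relevant moments are finite, and $\e^{-V}$ together with its first derivatives decays rapidly enough for the boundary contributions to drop. I emphasize that only the upper bound $\hessian f \preccurlyeq \Uhess$ enters the quantitative estimate $M \preccurlyeq \mat C^{-1} + \beta \Uhess$; the lower Hessian bound is needed solely to make the manipulations rigorous. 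This argument is the exact dual of the proof of \cref{lemma:bound_second_moment}, with a Cramér--Rao lower bound replacing the Brascamp--Lieb (log-Sobolev) upper bound used there.
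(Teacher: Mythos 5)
Your proposal is correct, and its analytic engine---integration by parts against linear test functions followed by Cauchy--Schwarz, i.e.\ a Cram\'er--Rao bound---is exactly the engine of the paper's proof; the packaging, however, is genuinely different. The paper first performs a change of variables built from the unique global minimizer $\widetilde\theta$ of $V_\beta$: it splits $f$ into an affine part, the quadratic $\frac{1}{2}\vecnorm{\theta-\widetilde\theta}[\Uhess^{-1}]^2$, and a concave remainder, then rescales by $(C^{-1}+\beta\Uhess)^{-1/2}$ so that the claim reduces to $\mathcal C(\widetilde\rho_\beta)\succcurlyeq I_d$ for a measure whose potential has Hessian $\preccurlyeq I_d$. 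For that normalized target the ``diagonal'' choice $\zeta=\xi=\vect a$ in Cauchy--Schwarz suffices, giving $\vect a^\t\mathcal C(\widetilde\rho_\beta)\vect a\geq \bigl(\int \partial_{\vect a}^2\widetilde V_\beta\,\widetilde\rho_\beta\bigr)^{-1}\geq 1$. You skip the change of variables entirely: taking the optimal test vector $\zeta=M^{-1}\xi$ yields the full matrix Cram\'er--Rao inequality $\mathcal C_\beta(\vect m,\mat C)\succcurlyeq M^{-1}$ with $M=\expect_\mu[\hessian V]$, and you conclude via anti-monotonicity of matrix inversion, $M\preccurlyeq C^{-1}+\beta\Uhess\Rightarrow M^{-1}\succcurlyeq(C^{-1}+\beta\Uhess)^{-1}$---the same operator-monotonicity fact the paper cites from Bhatia in its proof of \cref{lemma:collapse_optimization}. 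Your route is more direct (no minimizer $\widetilde\theta$, no concave-remainder decomposition) and produces a strictly stronger intermediate statement, $\mathcal C_\beta\succcurlyeq(\expect[\hessian V_\beta])^{-1}$; what the paper's normalization buys is a scalar, direction-by-direction argument that mirrors, step for step, its proof of the upper bound in \cref{lemma:bound_second_moment}.

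On rigor: both arguments need $\rho_\beta\propto\e^{-V_\beta}$ to be a well-defined probability measure with finite second moments and vanishing boundary terms, which \cref{assumption:convexity_potential_above} alone does not guarantee (the paper's proof implicitly assumes this by invoking ``the unique global minimizer of $V_\beta$''). Your explicit appeal to \cref{assumption:convexity_potential} for coercivity is therefore no loss relative to the paper, and correctly isolates the upper Hessian bound as the only quantitative input.
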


We now obtain a crude bound on the weighted first moment $\vect m_\beta(\vect m, C)$,
which will be our starting point for establishing the existence of a steady state for the sampling scheme.
This bound is useful because it shows that $\vect m_\beta(\vect m, C) \xrightarrow[\beta \to \infty]{} \theta_*$ for any fixed $\vect m$ and $C > 0$.
\begin{lemma}
    [Bound on weighted mean]
    \label{lemma:first_moment_several_dimensions}
    If~\cref{assumption:convexity_potential,assumption:convexity_potential_above} hold,
    then there exists a positive constant $k = k(\lhess, \uhess, d)$ such that,
    \[
        \forall (\vect m, C, \beta) \in \real^d \times \pos^d \times \real_{>0}, \qquad
        \vecnorm{\vect m_\beta(\vect m, C) - \theta_*}
        \leq \sqrt{\frac{\norm{C^{-1}}}{\lhess \beta}} \vecnorm{\vect m - \theta_*} + k \left( \frac{1}{\norm{C}} + \beta \lhess \right)^{-1/2}.
    \]
\end{lemma}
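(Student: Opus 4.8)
\emph{Strategy.} First I would recognise $\vect m_\beta(\vect m, \mat C) = \mathcal M(\pi)$, where $\pi$ is the tilted measure $L_\beta\, g(\dummy; \vect m, \mat C)$, whose density is proportional to $\exp(-V)$ with
\[
V(\theta) = \tfrac12 \vecnorm{\theta - \vect m}_{\mat C}^2 + \beta f(\theta).
\]
By \cref{assumption:convexity_potential}, $V$ is $C^2$ and strongly convex, $\hessian V = \mat C^{-1} + \beta \hessian f \succcurlyeq \kappa I_d$ with $\kappa := \norm{\mat C^{-1}}^{-1}... $ — more precisely $\mat C^{-1} \succcurlyeq \norm{\mat C}^{-1} I_d$, so I set $\kappa := \norm{\mat C}^{-1} + \beta \lhess$. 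In particular $\pi$ has a unique mode $\widehat\theta = \argmin V$, characterised by $\grad V(\widehat\theta) = 0$. The plan is to split
\[
\vecnorm{\vect m_\beta(\vect m, \mat C) - \theta_*} \le \vecnorm{\widehat\theta - \theta_*} + \vecnorm{\vect m_\beta(\vect m, \mat C) - \widehat\theta},
\]
and to show that the first term yields the $\vect m$-dependent contribution and the second the additive $\kappa^{-1/2}$ term.

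\emph{Distance from the mode to $\theta_*$.} I would write $\grad V(\widehat\theta)=0$ as $\mat C^{-1}(\widehat\theta - \vect m) + \beta \grad f(\widehat\theta) = 0$ and use $\grad f(\widehat\theta) = \grad f(\widehat\theta) - \grad f(\theta_*) = M(\widehat\theta - \theta_*)$ with $M := \int_0^1 \hessian f(\theta_* + s(\widehat\theta - \theta_*))\,\d s \succcurlyeq \lhess I_d$, giving
\[
(\mat C^{-1} + \beta M)(\widehat\theta - \theta_*) = \mat C^{-1}(\vect m - \theta_*).
\]
Pairing with $w := \widehat\theta - \theta_*$ bounds the left side below by $\vecnorm{w}_{\mat C}^2 + \beta \lhess \vecnorm{w}^2$; the right side is at most $\vecnorm{\vect m - \theta_*}_{\mat C}\vecnorm{w}_{\mat C}$ by Cauchy--Schwarz, and Young's inequality then cancels the $\vecnorm{\cdot}_{\mat C}^2$ contributions, leaving $\beta \lhess \vecnorm{w}^2 \le \tfrac12 \vecnorm{\vect m - \theta_*}_{\mat C}^2$. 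Since $\vecnorm{\vect m - \theta_*}_{\mat C} \le \norm{\mat C^{-1}}^{1/2} \vecnorm{\vect m - \theta_*}$, this gives $\vecnorm{\widehat\theta - \theta_*} \le \sqrt{\norm{\mat C^{-1}}/(2\lhess\beta)}\,\vecnorm{\vect m - \theta_*}$, slightly sharper than the stated coefficient. The essential point is that pairing with $w$ and applying Young's inequality is what turns the crude non-commutative bound $\norm{\mat C^{-1}}/(\lhess\beta)$ into the required square-root factor.

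\emph{Distance from the mean to the mode.} Here I would integrate $\grad V$ against $\pi$ and integrate by parts (legitimate since $V$ grows at least quadratically, so $\grad V\, \e^{-V}$ is integrable and decays at infinity) to get $\int \grad V\,\pi(\d\theta) = 0$, that is $\mat C^{-1}(\vect m_\beta - \vect m) + \beta \int \grad f\,\pi(\d\theta) = 0$. Subtracting the mode equation and writing $\grad f(\theta) - \grad f(\widehat\theta) = N(\theta)(\theta - \widehat\theta)$ with $N(\theta) := \int_0^1 \hessian f(\widehat\theta + s(\theta - \widehat\theta))\,\d s$ (so $\lhess I_d \preccurlyeq N(\theta) \preccurlyeq \uhess I_d$ by \cref{assumption:convexity_potential,assumption:convexity_potential_above}), I would split $\theta - \widehat\theta = (\theta - \vect m_\beta) + q$ with $q := \vect m_\beta - \widehat\theta$ and set $\overline N := \int N(\theta)\,\pi(\d\theta) \succcurlyeq \lhess I_d$, reaching
\[
(\mat C^{-1} + \beta \overline N)\, q = -\beta \int N(\theta)(\theta - \vect m_\beta)\,\pi(\d\theta).
\]
Pairing with $q$ bounds the left side below by $\beta \lhess \vecnorm{q}^2$ and the right side above by $\beta \uhess \vecnorm{q} \int \vecnorm{\theta - \vect m_\beta}\,\pi(\d\theta)$, so $\vecnorm{q} \le (\uhess/\lhess) \int \vecnorm{\theta - \vect m_\beta}\,\pi(\d\theta)$. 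Finally Jensen gives $\int \vecnorm{\theta - \vect m_\beta}\,\pi(\d\theta) \le (\trace \mathcal C_\beta(\vect m, \mat C))^{1/2}$, and \cref{lemma:bound_second_moment} gives $\mathcal C_\beta(\vect m, \mat C) \preccurlyeq (\mat C^{-1} + \beta \Lhess)^{-1} \preccurlyeq \kappa^{-1} I_d$, whence $\int \vecnorm{\theta - \vect m_\beta}\,\pi(\d\theta) \le \sqrt d\,\kappa^{-1/2}$. This produces the additive term with $k = \uhess\sqrt d/\lhess$.

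\emph{Main obstacle.} The delicate step is the mean-to-mode estimate: since $f$ is only $C^2$, a second-order Taylor expansion of $\grad V$ about the mode (which would introduce third derivatives of $f$) is unavailable. The argument above avoids this by using the exact finite-difference identity $\grad f(\theta) - \grad f(\widehat\theta) = N(\theta)(\theta - \widehat\theta)$ together with the Hessian bounds of \cref{assumption:convexity_potential,assumption:convexity_potential_above}, reducing everything to the covariance estimate of \cref{lemma:bound_second_moment}. The other point requiring care is that the square-root scaling in the first term must survive the non-commutativity of $\mat C^{-1}$ and $\hessian f$, which the Young's-inequality step handles cleanly. Combining the two bounds and using $\sqrt{1/2} \le 1$ yields the claimed inequality.
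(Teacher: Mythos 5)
Your proof is correct, but it follows a genuinely different route from the paper's in both halves of the argument, and the comparison is instructive. For the mode-to-minimizer estimate, the paper does not use the first-order condition at all: it compares potential values directly, showing $V_\beta(\theta) - V_\beta(\theta_*) > 0$ whenever $\vecnorm{\theta - \theta_*}$ exceeds $R = \sqrt{\norm{C^{-1}}/(\lhess\beta)}\,\vecnorm{\vect m - \theta_*}$, so the minimizer must lie in that ball; your Euler--Lagrange-plus-Young argument reaches the same conclusion (with a slightly sharper factor $1/\sqrt 2$) by purely algebraic means. For the mean-to-mode estimate, the paper applies Cauchy--Schwarz to get $\vecnorm{\vect m_\beta - \widetilde\theta} \leq \bigl(\int \abs{\theta - \widetilde\theta}^2 \rho_\beta\bigr)^{1/2}$ and then sandwiches $V_\beta$ between two quadratics centered at the mode, estimating the resulting ratio of Gaussian integrals explicitly (a Laplace-method-style computation); this yields the constant $\sqrt d\,(\uhess/\lhess)^{d/4}$. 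You instead exploit the stationarity identity $\int \grad V\,\d\pi = 0$, subtract the mode equation, and reduce everything to the covariance bound of \cref{lemma:bound_second_moment}, obtaining $k = \uhess\sqrt d/\lhess$. Your route buys two things: it recycles an already-proved lemma rather than redoing Gaussian estimates, and your constant depends only polynomially on $\uhess/\lhess$ and $\sqrt d$, whereas the paper's grows like $(\uhess/\lhess)^{d/4}$ in the dimension. The paper's route is more self-contained (it never needs the log-Sobolev/Poincar\'e machinery behind \cref{lemma:bound_second_moment} inside this proof) and its level-set argument for the mode requires no differentiability of the argument at all beyond what the assumptions give. Both arguments are valid under exactly the stated hypotheses, and your integration by parts is legitimate for the reason you give: $V$ grows quadratically while $\grad V$ grows at most linearly, so $\grad(\e^{-V})$ is integrable with vanishing boundary terms.
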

Unfortunately, this bound degenerates in the limit $C \to 0$.
In spatial dimension one, we will obtain, in the proof of~\cref{proposition:convergence_optimization},
a finer bound on the weighted mean that can be used for proving convergence of the optimization scheme.

\subsection{Analysis of the Optimization Scheme}%
\label{sub:analysis_of_the_optimization_scheme}

In this subsection, we are concerned with the
large-time convergence of the law of the solutions to
the mean-field evolution equations~\eqref{eq:mean_field_sdes} and~\eqref{eq:mean_field_sdes2} when $\lambda = 1$
and under the following assumption on the initial condition:
\begin{assumption}
    [Non-degenerate Gaussian initial conditions]
    \label{assumption:gaussian_init}
    \label{lab:gaussian_initial_conditions}
    The initial condition for the mean field evolution~\eqref{eq:iterative_scheme_measures}
    (or~\eqref{eq:mean_field}, in the continuous time setting)
    is Gaussian with strictly positive definite covariance matrix.
\end{assumption}

Under this assumption,
following~\cref{lemma:propagation_of_gaussians},
the solutions are normally distributed for all (discrete or continuous) times
with the first and second moments evolving according to~\cref{eq:equations_moments_discrete_gaussian} and~\eqref{eq:momentprop}, respectively.
We will show that, under appropriate assumptions, the mean converges to $\theta_*$ and the covariance to zero.

Throughout this subsection,
we denote by $\{(\vect m_n, C_n)\}_{n \in \nat}$ a solution to~\eqref{eq:equations_moments_discrete_gaussian} with $C_0 \succcurlyeq 0$,
and by $\big\{\bigl(\vect m(t), C(t)\bigr)\big\}_{t \in [0, \infty)}$ a solution to~\eqref{eq:momentprop} with $C(0) \succcurlyeq 0$.
We also denote by $\rho_n$ and $\rho_t$ solutions to~\eqref{eq:iterative_scheme_measures} and~\eqref{eq:mean_field}, respectively.

We begin by showing that the covariance matrices decrease to zero with rates matching those obtained
in the case of quadratic $f$ in~\cref{sub:convergence_to_equilibrium},
up to constant prefactors.

\begin{proposition}
    [Collapse of the ensemble in optimization mode]
    \label{lemma:collapse_optimization}
    Let $\lambda=1$ and $\beta>0$ and assume that \cref{assumption:convexity_potential} holds.
    Then we have
    \begin{enumerate}
        \item[(i)] Discrete time $\alpha=0$. If $C_0 \in \pos^d$, then
            \begin{equation}
        \label{eq:upper_bound_collapse-alpha0}
        C_{n} \preccurlyeq
        \left(\frac{\norm{L^{-1/2}C_0^{-1}L^{-1/2}}}{\norm{L^{-1/2}C_0^{-1}L^{-1/2}} +\beta n}\right) C_0.
    \end{equation}
        \item[(ii)] Discrete time $\alpha\in (0,1)$. If $C_0 \in \pos^d$, then
    \begin{equation}
        \label{eq:upper_bound_collapse}
        C_{n} \preccurlyeq
        \left(\frac{\norm{L^{-1/2}C_0^{-1}L^{-1/2}} +\beta}{\norm{L^{-1/2}C_0^{-1}L^{-1/2}} +\beta+ \beta(1-\alpha^2)n}\right) C_0.
    \end{equation}
    \item[(iii)] Continuous time $\alpha=1$. If $C(0) \in \pos^d$, then
    \begin{equation}
        \label{eq:upper_bound_collapse_continuous}
        C(t) \preccurlyeq
        \left(\frac{\norm{L^{-1/2}C(0)^{-1}L^{-1/2}} +\beta}{\norm{L^{-1/2}C(0)^{-1}L^{-1/2}}+\beta+ 2\beta t}\right) C(0).
    \end{equation}
        \end{enumerate}
\end{proposition}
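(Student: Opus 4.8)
The three bounds share a common engine, so the plan is to set up a matrix recursion (resp.\ differential inequality) for $C_n$ (resp.\ $C(t)$) and then collapse it to a scalar problem. First I would invoke \cref{lemma:bound_second_moment}, which under \cref{assumption:convexity_potential} gives $\mat C_\beta(\vect m, C) \preccurlyeq (C^{-1} + \beta L)^{-1}$; substituting this into the moment evolution \eqref{eq:second_moment_discrete_gaussian} with $\lambda = 1$ and into \eqref{eq:momentprop-b} yields $C_{n+1} \preccurlyeq \alpha^2 C_n + (1-\alpha^2)(C_n^{-1}+\beta L)^{-1}$ and $\dot C \preccurlyeq -2C + 2(C^{-1}+\beta L)^{-1}$, respectively. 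The one algebraic fact that makes everything scalar is that, writing $\kappa := \norm{L^{-1/2}C_0^{-1}L^{-1/2}}$, the definition of the operator norm gives $L^{-1/2}C_0^{-1}L^{-1/2} \preccurlyeq \kappa I_d$, hence $C_0^{-1} \preccurlyeq \kappa L$, i.e.\ $\beta L \succcurlyeq (\beta/\kappa) C_0^{-1}$. This is what lets me trade the non-commuting matrix $L$ for a scalar multiple of $C_0^{-1}$.

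For case (i) ($\alpha = 0$) the recursion reads $C_{n+1} \preccurlyeq (C_n^{-1}+\beta L)^{-1}$. Inverting (which reverses the Loewner order) gives the clean linear inequality $C_{n+1}^{-1} \succcurlyeq C_n^{-1} + \beta L$, which telescopes to $C_n^{-1} \succcurlyeq C_0^{-1} + n\beta L$. Using $\beta L \succcurlyeq (\beta/\kappa)C_0^{-1}$ I obtain $C_n^{-1} \succcurlyeq (1 + n\beta/\kappa)C_0^{-1} = \tfrac{\kappa+\beta n}{\kappa}C_0^{-1}$, and inverting back gives \eqref{eq:upper_bound_collapse-alpha0}.

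For case (ii) ($\alpha\in(0,1)$) I would prove $C_n \preccurlyeq c_n C_0$ by induction on $n$, with $c_0 = 1$. Assuming $C_n \preccurlyeq c_n C_0$, monotonicity of $C\mapsto \alpha^2 C$ and of $C \mapsto (C^{-1}+\beta L)^{-1}$ under the Loewner order, followed by the key inequality, gives the chain $C_{n+1} \preccurlyeq \alpha^2 c_n C_0 + (1-\alpha^2)(c_n^{-1}C_0^{-1}+\beta L)^{-1} \preccurlyeq \bigl[\alpha^2 c_n + (1-\alpha^2)\tfrac{c_n\kappa}{\kappa + c_n\beta}\bigr]C_0$, so the scalar sequence $c_{n+1} = \alpha^2 c_n + (1-\alpha^2)\tfrac{c_n\kappa}{\kappa+c_n\beta}$ does the job. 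Setting $w_n := \beta c_n/\kappa$ (so $w_0 = \beta/\kappa$) turns this into exactly the scalar recurrence \eqref{eq:1Dmoments} with $\lambda = 1$ already analyzed en route to \cref{prop:cv-discrete}; its solution obeys $w_{n+1}^{-1} = w_n^{-1} + (1-\alpha^2)/(1+\alpha^2 w_n)$, and since $w_n$ is decreasing one bounds $1 + \alpha^2 w_n \le 1 + w_0$ to telescope $w_n^{-1} \ge w_0^{-1} + n(1-\alpha^2)/(1+w_0)$. Rearranging gives $c_n \le \tfrac{\kappa+\beta}{\kappa+\beta+\beta(1-\alpha^2)n}$, which is \eqref{eq:upper_bound_collapse}; alternatively one simply quotes \cref{lemma:convergence_recursion_equations}.

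For the continuous-time case (iii) the same reduction applies, now to $\dot C \preccurlyeq F(C) := -2C + 2(C^{-1}+\beta L)^{-1}$. The clean route is a comparison principle for the Loewner order: I would check that $F$ is quasi-monotone, i.e.\ that whenever $C_1 \preccurlyeq C_2$ touch along a direction $v$ (so $C_1 v = C_2 v$) one has $\eip{v}{F(C_1)v} \le \eip{v}{F(C_2)v}$ — the linear term contributes equally at a touching point, while $(C^{-1}+\beta L)^{-1}$ is order-preserving. Then the super-solution $\bar C(t) = c(t)C_0$ with $c(0)=1$ and, via the key inequality, $\dot c = -2c + \tfrac{2c\kappa}{\kappa+c\beta}$ satisfies $\dot{\bar C}\succcurlyeq F(\bar C)$, so comparison yields $C(t)\preccurlyeq c(t)C_0$. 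Finally, since $c$ is decreasing with $c(0)=1$, the bound $\kappa + c\beta \le \kappa+\beta$ turns $\tfrac{d}{dt}c^{-1} = \tfrac{2\beta}{\kappa+c\beta}$ into $\tfrac{d}{dt}c^{-1}\ge \tfrac{2\beta}{\kappa+\beta}$, whence $c(t)\le \tfrac{\kappa+\beta}{\kappa+\beta+2\beta t}$, giving \eqref{eq:upper_bound_collapse_continuous}. I expect the main obstacle to be making this comparison rigorous — the decay term $-2C$ is order-reversing, so only the touching-direction structure rescues quasi-monotonicity, and one must carefully invoke the sub/super-solution theorem for matrix differential inequalities; cases (i)–(ii) are essentially bookkeeping once the key inequality is in hand.
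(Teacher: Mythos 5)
Your proof is correct, and it rests on the same two pillars as the paper's — \cref{lemma:bound_second_moment} to obtain the one-sided recursion (resp.\ differential) inequality, and operator monotonicity of $M \mapsto (M^{-1}+\beta L)^{-1}$ — but your reduction to a scalar problem is genuinely different. The paper whitens by $L$: it sets $\bar C_n = \beta L^{1/2}C_nL^{1/2}$, introduces the matrix sequence $\bar D_n$ solving the recursion \emph{with equality} (resp.\ the ODE $\frac{\d}{\d t}\bar D = -2\bar D + 2(\bar D^{-1}+I_d)^{-1}$), proves $\bar C_n \preccurlyeq \bar D_n$ by induction (resp.\ by noting $\frac{\d}{\d t}\bigl(\e^{2t}(\bar D - \bar C)\bigr)\succcurlyeq 0$), and then bounds $\bar D$ by re-using the Gaussian-case analysis of \cref{prop:cv-alpha0-main,prop:cv-discrete,prop:cv-cont}, i.e.\ diagonalization plus \cref{lemma:convergence_recursion_equations,lemma:cv-alpha1-1D}. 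You instead exploit $C_0^{-1}\preccurlyeq \kappa L$ with $\kappa=\norm{L^{-1/2}C_0^{-1}L^{-1/2}}$ to compare directly against scalar multiples of $C_0$: this makes case (i) a two-line telescoping of $C_{n+1}^{-1}\succcurlyeq C_n^{-1}+\beta L$, and in case (ii) it bypasses both the auxiliary matrix sequence and the diagonalization, landing immediately on the scalar recurrence of \cref{lemma:convergence_recursion_equations} (your direct derivation of its $\lambda=1$ bound via $w_{n+1}^{-1}=w_n^{-1}+(1-\alpha^2)/(1+\alpha^2 w_n)$ and monotonicity of $w_n$ is also correct). The trade-off appears in case (iii): you invoke a quasimonotone sub/supersolution theorem for matrix ODEs, which is heavier machinery than needed — the paper's elementary trick of showing $\frac{\d}{\d t}\bigl(\e^{2t}(\bar D(t) - \bar C(t))\bigr)\succcurlyeq 0$ works verbatim with your scalar supersolution $\bar D(t) = c(t)C_0$ and would spare you the citation. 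Note, though, that both routes share the same implicit bootstrap at this point: dropping the difference of resolvents (paper) or verifying quasimonotonicity only at touching directions (you) already presupposes the ordering one is in the course of proving, so in either write-up a short continuity argument from the equal initial data is what actually closes the continuous-time case; you are right to flag this as the one delicate step, and it is no worse in your version than in the paper's.
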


Ideally, we would like to show that $\vect m_n \xrightarrow[n\to \infty]{} \theta_*$ and $\vect m(t) \xrightarrow[t\to \infty]{} \theta_*$;
however,
we were able to show this result only in the one-dimensional setting.
In the multi-dimensional case, we establish the following weaker result.

\begin{theorem}
    \label{thm:no_bad_convergence}
    Let $\lambda=1$, $\beta>0$, $\mat C_0\in\pos^d$,
    and suppose that \cref{assumption:convexity_potential,assumption:convexity_potential_above} hold.
    If there exists $\hat\theta \in \real^d$ such that $\vect m_n \xrightarrow[n \to \infty]{} \hat\theta$ for some $\alpha \in [0, 1)$ or $\vect m(t) \xrightarrow[t \to \infty]{} \hat\theta$ for $\alpha=1$,
    then $\hat\theta=\theta_*$ is the minimizer of~$f$.
\end{theorem}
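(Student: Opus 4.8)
The plan is to argue by contradiction: suppose the mean converges to some limit $\hat\theta$ with $\hat\theta \neq \theta_*$, and show that this is incompatible with the moment dynamics. The key structural fact is that, along the Gaussian flow, the mean satisfies the fixed-point-type evolution $\vect m_{n+1} = \alpha \vect m_n + (1-\alpha)\vect m_\beta(\vect m_n, C_n)$ (and its continuous-time analogue $\dot{\vect m} = -\vect m + \vect m_\beta(\vect m, C)$). If $\vect m_n \to \hat\theta$, then passing to the limit in this recurrence forces $\hat\theta = \vect m_\beta(\hat\theta, C_\infty)$, where $C_\infty = \lim_n C_n = 0$ by the collapse result \cref{lemma:collapse_optimization}. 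So the heart of the matter is to understand the limit of $\vect m_\beta(\vect m, C)$ as $C \to 0$ with $\vect m \to \hat\theta$, and to show that the only consistent fixed point is $\theta_*$.

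The difficulty is that the crude bound in \cref{lemma:first_moment_several_dimensions} degenerates as $C \to 0$ (the prefactor $\sqrt{\norm{C^{-1}}/(\lhess\beta)}$ blows up), so one cannot directly read off $\vect m_\beta(\hat\theta, 0) = \hat\theta$ from it. First I would instead analyze $\vect m_\beta(\vect m, C)$ directly via its definition as the mean of the tilted Gaussian $L_\beta\bigl(g(\dummy;\vect m, C)\bigr)$, whose density is proportional to $\exp\bigl(-\beta f(\theta) - \tfrac12 \vecnorm{\theta - \vect m}[C]^2\bigr)$. As $C \to 0$ this measure concentrates, and a Laplace/Taylor expansion of the exponent around a point near $\vect m$ should show that $\vect m_\beta(\vect m, C) - \vect m$ is, to leading order, proportional to $-C\,\grad f(\vect m)$, reflecting a gradient-descent step. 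This is where \cref{assumption:convexity_potential,assumption:convexity_potential_above} enter: the two-sided Hessian bounds $\lhess I_d \preccurlyeq \hessian f \preccurlyeq \uhess I_d$ control the tilting and ensure the expansion is uniform. The precise claim I would aim for is a bound of the shape
\begin{equation*}
    \eip*{\grad f(\vect m)}{\vect m_\beta(\vect m, C) - \vect m} \le 0,
\end{equation*}
with equality in the limit only when $\grad f(\vect m) = 0$, i.e. only at $\theta_*$. Such an inequality captures that the weighted-mean map always moves in a descent direction for $f$.

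Concretely, I would proceed as follows. First, establish that $C_n \to 0$ (respectively $C(t)\to 0$) from \cref{lemma:collapse_optimization}, so that in the limit the covariance plays no role beyond setting the scale of concentration. Second, take the limit in the mean recurrence: since $\alpha < 1$ (or, in continuous time, since $\dot{\vect m}\to 0$ if $\vect m(t)$ converges), convergence $\vect m_n \to \hat\theta$ forces $\vect m_\beta(\vect m_n, C_n) - \vect m_n \to 0$. Third — the main step — show that $\vect m_\beta(\vect m, C) - \vect m \to 0$ as $(\vect m, C)\to(\hat\theta, 0)$ can hold only if $\grad f(\hat\theta) = 0$; by \cref{assumption:convexity_potential} the minimizer $\theta_*$ is the unique critical point, so $\hat\theta = \theta_*$. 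For the continuous-time case $\alpha = 1$, the argument is parallel but cleaner: if $\vect m(t)\to\hat\theta$ then $\dot{\vect m}(t) = \vect m_\beta(\vect m, C) - \vect m$ must have limit zero (after verifying via the moment ODEs that $\dot{\vect m}$ itself converges, e.g. by a Barbalat-type argument), giving the same critical-point condition.

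The main obstacle I anticipate is making the concentration analysis of $\vect m_\beta(\vect m, C)$ rigorous and \emph{uniform} as $C\to 0$ — in particular, controlling the contribution of the tails of $f$ away from $\vect m$ when the Gaussian weight becomes singular, and ruling out the degenerate possibility that the weighted mean stalls at a non-critical point because of anisotropy in $C_n$ (the $C_n$ need not be isotropic, and their direction of collapse matters). The two-sided Hessian bounds should tame this, since they pin $f$ between two quadratics, but translating them into a clean descent inequality uniform over all admissible $C$ is the delicate part. I expect this is precisely why the authors restrict the \emph{sharp} mean-convergence statement to $d = 1$, where $\vect m_\beta$ can be analyzed by elementary monotonicity arguments, and settle here for the weaker conclusion that \emph{any} limit must be the minimizer.
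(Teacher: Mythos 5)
Your setup is on target: the collapse $C_n \to 0$ from \cref{lemma:collapse_optimization}, the Taylor/Laplace expansion showing $\vect m_\beta(\vect m, C) - \vect m \approx -\beta C \grad f(\vect m)$ (this is exactly the content of the paper's \cref{lemma:no_bad_cv}), and the worry about anisotropic collapse (handled in the paper via \cref{remark:conditioning}, which shows $\cond(C_n)$ stays bounded) all appear in the paper's argument. But your central step fails. You propose to pass to the limit in the recurrence and conclude from $\vect m_\beta(\vect m_n, C_n) - \vect m_n \to 0$ that $\grad f(\hat\theta) = 0$. This implication is false: precisely because $\vect m_\beta(\vect m, C) - \vect m = -\beta C \grad f(\vect m) + \text{(higher order in } \norm{C})$, the increment tends to zero as $C \to 0$ \emph{regardless} of whether $\grad f(\hat\theta)$ vanishes, so the fixed-point condition at $(\hat\theta, 0)$ is vacuous --- consistent with the fact that \emph{every} Dirac mass is a steady state of the dynamics (\cref{lemma:basic_results_steady_states}). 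The same objection applies to your continuous-time variant: $\dot{\vect m}(t) \to 0$ is automatic once $C(t) \to 0$ and carries no information about $\hat\theta$.

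What is actually needed, and what the paper does, is a quantitative summation argument rather than a limiting one. Convergence of $\vect m_n$ forces the series $\sum_n (\vect m_{n+1} - \vect m_n)$ to converge; but projecting each increment onto the fixed direction $\grad f(\hat\theta)$ and using the expansion of \cref{lemma:no_bad_cv}, together with the bounded condition number, yields for all large $n$ the estimate $-(\vect m_{n+1} - \vect m_n)^\t \grad f(\hat\theta) \geq K \lambda_{\min}(C_n) \vecnorm{\grad f(\hat\theta)}^2$. The decisive ingredient --- absent from your proposal --- is the \emph{lower} bound on the covariance, \cref{eq:optim_lower_bound_cov}, which uses \cref{assumption:convexity_potential_above} to guarantee $\lambda_{\min}(C_n) \gtrsim 1/n$: the ensemble cannot collapse faster than it does in the quadratic case. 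Then $\sum_n \lambda_{\min}(C_n) = \infty$ by harmonic divergence, so the mean would have to drift infinitely far in the direction $-\grad f(\hat\theta)$, contradicting $\vect m_n \to \hat\theta$. In short, the theorem is not decided by what the limit point satisfies as a fixed point; it is a race between the decay rate of $C_n$ from below and the summability required for $\vect m_n$ to converge, and this is precisely where the upper Hessian bound enters --- not merely to make the Laplace expansion uniform, as your proposal suggests.
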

It follows from the identity
\begin{equation}
    \label{eq:wasserstein}
    \forall \mu \in \mathcal P_2(\real^d), \qquad W_2(\mu, \delta_{\theta_*})^2 = \vecnorm{\mathcal M(\mu) - \theta_*}^2 + \trace \bigl(\mathcal C(\mu)\bigr),
\end{equation}
where $\wass[2]{\dummy}{\dummy}$ denotes the quadratic Wasserstein distance,
that \cref{lemma:collapse_optimization} and \cref{thm:no_bad_convergence}
can be combined in order to obtain convergence results for the solutions to the mean field systems~\eqref{eq:iterative_scheme_measures} and~\eqref{eq:mean_field}.
For example, the following result holds in the discrete-time case.
\begin{corollary}
    Suppose that \cref{assumption:convexity_potential,assumption:convexity_potential_above,assumption:gaussian_init} hold.
    If there exists $\hat \theta$ such that $\mathcal M(\rho_n) \xrightarrow[n \to \infty]{}~\hat \theta$,
    then  $\wass[2]{\rho_n}{\delta_{\theta_*}} \xrightarrow[n\to \infty]{} 0$.
\end{corollary}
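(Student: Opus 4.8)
The plan is to combine the collapse of the covariance (Proposition~\ref{lemma:collapse_optimization}) with the identification of the limit point (Theorem~\ref{thm:no_bad_convergence}) through the Wasserstein identity~\eqref{eq:wasserstein}. Under Assumption~\ref{assumption:gaussian_init}, Lemma~\ref{lemma:propagation_of_gaussians} guarantees that $\rho_n$ is Gaussian for every $n$, with mean $\vect m_n = \mathcal M(\rho_n)$ and covariance $C_n = \mathcal C(\rho_n)$ evolving according to~\eqref{eq:equations_moments_discrete_gaussian}. Since $C_0 \in \pos^d$ by assumption, the hypotheses of Proposition~\ref{lemma:collapse_optimization} are met. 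The only quantity we need to control is $W_2(\rho_n, \delta_{\theta_*})^2$, and~\eqref{eq:wasserstein} tells us this decomposes exactly as $\vecnorm{\vect m_n - \theta_*}^2 + \trace(C_n)$.

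\medskip

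\noindent\textbf{First} I would handle the covariance term. By Proposition~\ref{lemma:collapse_optimization} (whichever of the cases $\alpha=0$ or $\alpha \in (0,1)$ is relevant; here the hypothesis $\alpha \in [0,1)$ from Theorem~\ref{thm:no_bad_convergence} covers both), we have $C_n \preccurlyeq c_n C_0$ for a scalar sequence $c_n \to 0$ as $n \to \infty$. Taking traces and using monotonicity of the trace under the Loewner order, $\trace(C_n) \le c_n \trace(C_0) \to 0$. \textbf{Second}, for the mean term, I would invoke the hypothesis of the corollary directly: we are given that $\mathcal M(\rho_n) = \vect m_n \to \hat\theta$ for some $\hat\theta$. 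By Theorem~\ref{thm:no_bad_convergence} (applicable since Assumptions~\ref{assumption:convexity_potential} and~\ref{assumption:convexity_potential_above} hold and $C_0 \in \pos^d$), this limit must satisfy $\hat\theta = \theta_*$. Hence $\vecnorm{\vect m_n - \theta_*}^2 = \vecnorm{\vect m_n - \hat\theta}^2 \to 0$.

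\medskip

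\noindent\textbf{Finally}, combining the two pieces in~\eqref{eq:wasserstein} gives
\[
    W_2(\rho_n, \delta_{\theta_*})^2 = \vecnorm{\vect m_n - \theta_*}^2 + \trace(C_n) \xrightarrow[n \to \infty]{} 0,
\]
which is the claim. \textbf{The only genuine subtlety} lies in making sure the hypotheses line up: the corollary assumes convergence of the mean merely to \emph{some} point $\hat\theta$, and the entire force of Theorem~\ref{thm:no_bad_convergence} is to upgrade this to convergence to the \emph{correct} point $\theta_*$, ruling out spurious limits. This is precisely the step that requires both convexity assumptions and is the reason the result is phrased conditionally on the mean converging at all. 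No genuinely new estimates are needed here — the proof is a short assembly of results already established, with the Wasserstein identity~\eqref{eq:wasserstein} as the organizing glue.
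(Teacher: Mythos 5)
Your proposal is correct and follows exactly the route the paper intends: the corollary is presented there as an immediate consequence of combining \cref{lemma:collapse_optimization} (giving $\trace(C_n) \to 0$), \cref{thm:no_bad_convergence} (identifying $\hat\theta = \theta_*$), and the Wasserstein identity~\eqref{eq:wasserstein}. Your write-up simply makes explicit the assembly that the paper leaves implicit, so there is nothing to add.
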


In the one-dimensional case,
it is possible to prove the convergence of $m_n$ and $m(t)$ to the minimizer $\theta_*$ without the \emph{a priori} assumption that $m_n$ and $m(t)$ have a limit.
\begin{proposition}
    [Convergence in the one-dimensional case]
    \label{proposition:convergence_optimization}
    Let $d=1$, $\lambda=1$, $\beta>0$, $\mat C_0\in \pos^d$, and suppose that \cref{assumption:convexity_potential,assumption:convexity_potential_above} are satisfied.
    Then it holds that $m_n \xrightarrow[n \to \infty]{} \theta_*$ for $\alpha\in[0,1)$ and,
    likewise, $m(t) \xrightarrow[t \to \infty]{} \theta_*$ for $\alpha=1$.
\end{proposition}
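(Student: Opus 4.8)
The plan is to work with the scalar moment recursion~\eqref{eq:equations_moments_discrete_gaussian} (for $\alpha \in [0,1)$) and the scalar ODE~\eqref{eq:momentprop} (for $\alpha = 1$), in the regime $\lambda = 1$, $d = 1$, and to combine the collapse of the covariance with a non-degenerate refinement of \cref{lemma:first_moment_several_dimensions}. First I would record that \cref{lemma:collapse_optimization} gives $C_n \to 0$ (resp.\ $C(t) \to 0$) at the algebraic rate $\sim 1/n$ (resp.\ $1/t$), and that the matching lower bounds discussed in \cref{rmk:sharpness} yield $C_n \geq c/n$, hence $\sum_n C_n = \infty$ (resp.\ $\int_0^\infty C(t)\,\d t = \infty$). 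Since only the large-time behaviour matters, I may restrict attention to indices (or times) at which $C$ is as small as I wish.

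The heart of the argument is a sharp control of the weighted mean that, unlike \cref{lemma:first_moment_several_dimensions}, does not degenerate as $C \to 0$. Writing $\mu = \mu_{m,C}$ for the tilted Gaussian with density proportional to $\exp\bigl(-\tfrac{(\theta - m)^2}{2C} - \beta f(\theta)\bigr)$, integration by parts against $\mu$ (using that its logarithmic derivative integrates to zero) yields the identity
\begin{equation*}
    m_\beta(m,C) - m = -\beta C \int_{\real} f'(\theta)\,\mu(\d\theta).
\end{equation*}
Because $-\log\mu$ has second derivative at least $C^{-1} + \beta\lhess$, the Brascamp--Lieb inequality gives $\mathrm{Var}_\mu(\theta) \leq C$, while \cref{assumption:convexity_potential_above} makes $f'$ Lipschitz with constant $\uhess$. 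Combining these, I would show $\int f'\,\d\mu = f'(m) + \bigo{\sqrt C}$, uniformly in $m$ on bounded sets. Since \cref{assumption:convexity_potential,assumption:convexity_potential_above} give $f'(m)(m - \theta_*) \geq \lhess\,(m - \theta_*)^2$ and $\abs{f'(m)} \leq \uhess\abs{m - \theta_*}$, this produces the finer bound: for $C$ small enough there are constants $c_1, c_2 > 0$ with
\begin{equation*}
    \sign(m - \theta_*)\int f'\,\d\mu \;\in\; \bigl[\, c_1\abs{m - \theta_*} - c_2\sqrt C,\; \uhess\abs{m - \theta_*} + c_2\sqrt C \,\bigr].
\end{equation*}

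With $e_n := \abs{m_n - \theta_*}$, inserting this into $m_{n+1} - \theta_* = \alpha(m_n - \theta_*) + (1 - \alpha)\bigl(m_\beta(m_n, C_n) - \theta_*\bigr)$ together with the identity above gives, for $n$ large,
\begin{equation*}
    e_{n+1} \leq \bigl(1 - (1 - \alpha)\beta c_1 C_n\bigr)e_n + (1 - \alpha)\beta c_2\, C_n^{3/2};
\end{equation*}
in particular $\sum_n C_n^{3/2} < \infty$ shows the iterates $m_n$ stay bounded, which justifies the uniformity used above. I would then close the argument with the elementary Robbins--Monro-type lemma: if $e_{n+1} \leq (1 - a_n)e_n + b_n$ with $a_n \in [0,1]$, $\sum_n a_n = \infty$ and $b_n/a_n \to 0$, then $e_n \to 0$; here $a_n \propto C_n$ (so $\sum_n a_n = \infty$) and $b_n/a_n = \bigo{\sqrt{C_n}} \to 0$. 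The continuous-time case is identical in spirit: the same identity yields $\frac{\d}{\d t}\abs{m - \theta_*} \leq -\beta c_1 C\abs{m - \theta_*} + \beta c_2 C^{3/2}$, and a Gr\"onwall estimate using $\int C = \infty$ and $C \to 0$ forces $\abs{m(t) - \theta_*} \to 0$.

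The main obstacle is the finer weighted-mean bound of the second paragraph: one must control $\int f'\,\d\mu$ as $C \to 0$ using only $f'' \in [\lhess, \uhess]$ (no higher regularity of $f$), and, more delicately, one must ensure the additive $\bigo{\sqrt C}$ error is genuinely of lower order than the $\bigo{C}$ damping, so that the ratio $b_n/a_n$ vanishes even while $m_n$ approaches $\theta_*$ only slowly. This is also precisely where $d = 1$ enters: the scalar covariance satisfies $\sum_n C_n = \infty$, whereas in higher dimensions the ensemble may collapse anisotropically and the relevant directional variances can be summable, which would break the Robbins--Monro step and is why only the weaker \cref{thm:no_bad_convergence} is available in general dimension.
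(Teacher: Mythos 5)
Your proof is correct in substance, but it takes a genuinely different route from the paper's. The paper proves this proposition via a cone-type invariant: \cref{lemma:fine_bound_first_moment} controls the weighted mean through explicit comparisons with truncated Gaussians (using \cref{lemma:auxiliary_lemma_tilted_distribution} and the formula for the mean of a truncated normal), and \cref{lemma:auxiliary_result} then shows that the ratio $\abs{\widetilde m_n}/\widetilde C_n^{1/r}$, with $r = \max(\uhess/\lhess, 2+\varepsilon)$, stays bounded along the iteration (with \cref{lemma:auxiliary_convergence_opti} handling $\alpha \in (0,1)$); convergence of $m_n$ then follows from $C_n \to 0$ alone, and the algebraic rate $n^{-1/r}$ comes for free. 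You instead exploit the exact identity $m_\beta(m,C) - m = -\beta C \int f'\,\d\mu$ (integration by parts against the tilted measure), approximate $\int f'\,\d\mu$ by $f'(m)$ using the variance bound (your Brascamp--Lieb step is the paper's \cref{lemma:bound_second_moment}) and the Lipschitz property of $f'$, and close with a Robbins--Monro argument that needs $\sum_n C_n = \infty$. This is close in spirit to the paper's proof of \cref{thm:no_bad_convergence} -- \cref{lemma:no_bad_cv} is precisely the expansion $m_\beta(m,C) \approx m - \beta C \grad f(m)$, and divergence of $\sum_n \lambda_{\min}(C_n)$ drives the contradiction there -- but you upgrade it to an unconditional convergence proof by making the error estimate uniform. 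The paper's route buys an explicit rate; yours buys more elementary machinery (no truncated-Gaussian computations, no \cref{lemma:technical_inequality_opti}) and a transparent quasi-gradient-descent structure.

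Three points to tighten. First, the lower bound $C_n \geq c/n$ in the non-Gaussian setting should be cited as \cref{eq:optim_lower_bound_cov} (valid under \cref{assumption:convexity_potential_above}), not \cref{rmk:sharpness}, which concerns the quadratic case. Second, your ``uniformly on bounded sets, then boundedness of the iterates'' step risks circularity as written: the constants in the recursion would depend on a bound you are in the process of proving. This is harmless because the refined bound is in fact uniform in $m$: the self-bounding estimate $(1 - \uhess\beta C)\abs{\int f'\,\d\mu} \leq \abs{f'(m)} + \uhess\sqrt{C}$ shows that the $\mathcal O(C)\abs{m - \theta_*}$ part of the error can be absorbed into the damping term once $C$ is small (smallness depending only on $\lhess$, $\uhess$, $\beta$), so no a priori control of $m_n$ is needed; you should state the bound in this globally uniform form. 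Third, your closing explanation of why the argument is confined to $d=1$ is not accurate: \cref{eq:optim_lower_bound_cov} gives $\lambda_{\min}(C_n) \gtrsim 1/n$ in every dimension, so the directional variances are never summable. The genuine multi-dimensional obstruction is that $C_n$ and $\hessian f$ need not commute, so that $-C_n \grad f(m_n)$ need not be a descent direction for $\abs{m_n - \theta_*}$; this alignment problem is exactly what the paper's proof of \cref{thm:no_bad_convergence} circumvents by assuming convergence a priori, so that $\grad f(m_n)$ stabilizes near a fixed nonzero vector.
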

As above, this result can be combined with \cref{lemma:collapse_optimization} to obtain
a convergence result in Euclidean Wasserstein distance for the solution to~\eqref{eq:iterative_scheme_measures} and~\eqref{eq:mean_field},
under \cref{assumption:convexity_potential,assumption:convexity_potential_above,assumption:gaussian_init}.
When deriving this convergence result, we obtain non-optimal rates of order $n^{-1/r}$ for the case $\alpha=0$,
$n^{-1/2r}$ for $\alpha\in (0,1)$
and $t^{-1/2r}$ for $\alpha = 1$, with $r=r(u,l)>2$.

To conclude this section,
we present a convergence result for $m_n$ with an explicit sharp rate in the particular case $\alpha = 0$.
\begin{proposition}[Rate of convergence]
    \label{prop:convergence_rate_1d_opti}
    Let $d=1$, $\lambda=1$, $\beta>0$, $\alpha=0$, $\mat C_0\in \pos^d$  and
    suppose that \cref{assumption:convexity_potential,assumption:convexity_potential_above} are satisfied.
    Suppose additionally that $\e^{-\beta f}$ is, together with all its derivatives, bounded from above uniformly in $\real$.
    Then there exists a positive constant $k = k(m_0, C_0)$ such that,
    for sufficiently large $n$,
    \[
        \vecnorm{ m_n - \theta_*} \leq k \left( \frac{\log n}{n} \right).
    \]
\end{proposition}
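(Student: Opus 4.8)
The plan is to establish the sharp rate $\vecnorm{m_n - \theta_*} \lesssim (\log n)/n$ for the one-dimensional optimization scheme with $\alpha = 0$ by combining the collapse of the covariance from \cref{lemma:collapse_optimization}(i) with a refined analysis of the weighted mean $m_\beta(m_n, C_n)$. Recall that for $\alpha = 0$ the scheme reduces to $m_{n+1} = m_\beta(m_n, C_n)$ and $C_{n+1} = C_\beta(m_n, C_n)$. From \cref{lemma:collapse_optimization}(i) we already know $C_n \to 0$ at rate $\bigo{1/n}$; the crux is therefore to understand how fast $m_\beta(m, C)$ approaches $\theta_*$ as $C \to 0$. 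The crude bound in \cref{lemma:first_moment_several_dimensions} degenerates as $C \to 0$ and is useless here, so the first and most important step is to derive a sharper one-dimensional estimate for $m_\beta(m, C) - \theta_*$ valid in the small-$C$ regime.

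First I would derive this finer bound. In $d = 1$, the weighted mean is $m_\beta(m, C) = \int \theta \, \mu(\d\theta)$ where $\mu \propto g(\theta; m, C)\e^{-\beta f(\theta)}$. The idea is a Laplace-type / Gaussian-concentration argument: since $C_n$ is small, the measure $\mu$ concentrates near its mode, and by Assumption \ref{assumption:convexity_potential} the combined log-density $-\frac{1}{2C}(\theta - m)^2 - \beta f(\theta)$ is strongly concave, so $\mu$ is close to a Gaussian centered at its mode $\theta_c$ solving $C^{-1}(\theta_c - m) + \beta f'(\theta_c) = 0$. I expect to show that $\vecnorm{m_\beta(m,C) - \theta_c}$ is controlled by a higher derivative of $f$ times the variance $C_\beta \lesssim C$, using the boundedness of $\e^{-\beta f}$ and its derivatives to control the tails and the third-derivative correction term in the Laplace expansion. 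Then relating $\theta_c$ back to $\theta_*$ (both are near-minimizers in the appropriate sense) should yield a contraction of the form $\vecnorm{m_{n+1} - \theta_*} \leq \kappa_n \vecnorm{m_n - \theta_*} + \varepsilon_n$, where $\kappa_n < 1$ captures the pull toward $\theta_*$ and $\varepsilon_n$ is an additive error decaying with $C_n$.

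Next I would turn this one-step estimate into the claimed rate. Using the explicit behavior $C_n \asymp 1/(\beta n)$ from \cref{lemma:collapse_optimization}(i), the contraction factor $\kappa_n$ should take the form $\kappa_n = 1 - c C_n + o(C_n) \approx 1 - c'/n$, reflecting that the Gaussian prior's restoring force weakens as $C_n$ shrinks, while the additive error satisfies $\varepsilon_n \lesssim C_n \asymp 1/n$. Unrolling the recursion $\vecnorm{m_{n+1} - \theta_*} \leq (1 - c'/n)\vecnorm{m_n - \theta_*} + O(1/n)$ is a standard discrete Gronwall-type computation: the product $\prod_{k=k_0}^{n}(1 - c'/k) \asymp (k_0/n)^{c'}$ and summing the inhomogeneous terms against this integrating factor produces a leading term of size $(\log n)/n$, with the logarithm arising precisely from the borderline balance between the $1/n$ contraction rate and the $1/n$ forcing. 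I would choose the prefactor $k = k(m_0, C_0)$ to absorb the dependence on the starting point and the transient behavior for small $n$.

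The main obstacle I anticipate is the sharp control of $m_\beta(m, C) - \theta_*$ in the concentrated regime, i.e. proving that the error is genuinely $\bigo{C}$ rather than $\bigo{\sqrt C}$. This requires exploiting cancellation: the first-order contribution of the asymmetry of $f$ around the mode must be tracked carefully, and the hypothesis that $\e^{-\beta f}$ and all its derivatives are bounded is presumably what allows a clean Laplace expansion with controlled remainder uniformly in the relevant range of $m$ and $C$. Making the constants in this expansion uniform as $C_n \to 0$, and verifying that the resulting recursion indeed has matched $1/n$ contraction and forcing so that the logarithm appears (and is not removable), is the delicate part; the subsequent Gronwall argument is routine by comparison.
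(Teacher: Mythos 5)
Your overall architecture---a Laplace-type expansion of $m_\beta(m,C)$ in the concentrated regime, fed into a recursion driven by $C_n\asymp 1/n$---is the same as the paper's, but the quantitative core of your plan contains a genuine error, located exactly where you placed the "main obstacle", and your Gronwall step is miscalculated. First, an additive forcing $\varepsilon_n\lesssim C_n\asymp 1/n$ is not sufficient: unrolling $e_{n+1}\leq(1-c'/n)\,e_n+K/n$ gives $e_n\approx n^{-c'}\sum_{k\leq n}K\,k^{c'-1}=\bigo{1}$ for every $c'>0$ (heuristically, the fixed point of the recursion is $e\equiv K/c'$), so the "borderline balance between the $1/n$ contraction rate and the $1/n$ forcing" produces a constant, not $(\log n)/n$. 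The logarithm requires forcing of order $1/n^2$ together with a contraction exponent equal to exactly $1$. Correspondingly, the target you set for the hard step is too weak: the mean-minus-mode correction in the Laplace expansion is of size $\beta f'''\cdot V''(\theta_c)^{-2}=\bigo{C^2}$, not "a higher derivative of $f$ times the variance" $=\bigo{C}$, and this extra order is precisely the difference between concluding $\vecnorm{m_n-\theta_*}\lesssim(\log n)/n$ and concluding nothing. The paper proves exactly this finer bound: $m_\beta(m,C)-\theta_*=\bigl(C^{-1}+\beta f''(m)\bigr)^{-1}C^{-1}(m-\theta_*)+R_m$ with $\abs{R_m}\leq K\bigl(C^2+C^2\abs{m-\theta_*}+C\abs{m-\theta_*}^2\bigr)$.

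Two further issues would remain even after repairing the forcing. Your decoupled mean recursion needs the contraction exponent to equal $1$ exactly, i.e.\ $\beta\, n\, C_n f''(m_n)\to 1$; the available two-sided bounds on $C_n$ (\cref{lemma:collapse_optimization} and \cref{eq:optim_lower_bound_cov}, whose constants involve $\lhess$ and $\uhess$) only give $c/n\leq C_n\leq C/n$, which would permit $c'\neq 1$ and degrade the rate to $n^{-\min(c',1)}$. The paper circumvents this entirely by exploiting that the same factor $\bigl(C^{-1}+\beta f''(m)\bigr)^{-1}$ is also the leading term of $C_\beta(m,C)$ (with remainder $\bigo{C^3}$), so the coupled quantity $C_n^{-1}(m_n-\theta_*)$ is conserved up to per-step errors $\bigo{C_n+C_n e_n+e_n^2}$; summing, $\sum_i C_i\asymp\log n$ produces the logarithm, and multiplying by $C_n\asymp 1/n$ gives the rate, with no sharp asymptotics of $C_n$ needed. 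Finally, the self-referential term $e_n^2$ (equivalently your $C_n e_n^2$) is not a priori summable: the paper closes the recursion by a bootstrap, starting from the crude rate $e_n\lesssim n^{-1/r}$, $r>2$, inherited from the proof of \cref{proposition:convergence_optimization}, and upgrading it finitely many times until only the $\sum_i C_i$ term is non-summable. Your proposal never invokes this crude rate, so the recursion it sets up is not closed.
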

The rate of convergence obtained in \cref{prop:convergence_rate_1d_opti} is almost optimal
in view of the fact shown in \cref{sub:convergence_to_equilibrium} that $\vecnorm{ m_n - \theta_*}$ scales with $n$ as $\mathcal O(1/n)$ in the case when $f$ is quadratic.
We expect the result to extend to other values of $\alpha$ and to the continuous-time solution to~\eqref{eq:momentprop},
but we focus on the case $\alpha = 0$ in order to avoid overly lengthy and technical proofs.
We point out that, already in the Gaussian case, the argument to obtain an optimal decay rate for $\alpha \in (0, 1]$ is quite technical.
Finding a simplified argument to prove optimal rates in the optimization setting is an interesting open problem,
which we leave for future work.

\subsection{Analysis of the Sampling Scheme}%
\label{sub:analysis_of_the_sampling_scheme}

In this subsection, we investigate the existence of steady states and convergence for the mean field dynamics associated with the consensus-based samplers, that is when used with $\lambda = (1+\beta)^{-1}$.
We consider both the iteration~\eqref{eq:iterative_scheme_measures} (in the case $\alpha\in [0,1)$) and the nonlocal, nonlinear Fokker--Planck equation~\eqref{eq:mean_field} (in the case $\alpha= 1$).

We begin by stating an existence result in the multi-dimensional setting.
Since the corresponding proof is very short,
we include it in this section.
\begin{theorem}[Existence of steady states]
\label{thm:existence-sstates}
Let $\lambda=(1+\beta)^{-1}$, $\beta>0$ and $\alpha\in[0,1]$.
    Suppose \cref{assumption:convexity_potential,assumption:convexity_potential_above} are satisfied.
    Then there exists $\underline \beta$ such that,
    for all $\beta \geq \underline \beta$,
    the dynamics~\eqref{eq:iterative_scheme_measures} and \eqref{eq:mean_field} admit a Gaussian steady state $ g\bigl(\dummy; \vect m_{\infty}(\beta), \mat C_{\infty}(\beta)\bigr)$
    satisfying
    \[
        \Uhess^{-1} \preccurlyeq \mat C_{\infty}(\beta) \preccurlyeq \Lhess^{-1} \quad \text{ and } \quad \vecnorm{\vect m_{\infty}(\beta) - \theta_*} = \mathcal O\left(\frac{1}{\sqrt{\beta}}\right).
    \]
\end{theorem}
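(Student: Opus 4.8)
The plan is to find the Gaussian steady state by solving the fixed-point equations for its mean and covariance, which by \cref{lemma:basic_results_steady_states} characterize all steady states. Writing $\vect m_\infty, \mat C_\infty$ for the mean and covariance of the sought Gaussian, the steady-state relation \eqref{eq:steady_state} with $\lambda = (1+\beta)^{-1}$ becomes the coupled system
\begin{align*}
    \vect m_\infty &= \vect m_\beta(\vect m_\infty, \mat C_\infty), \\
    \mat C_\infty &= (1+\beta)\, \mat C_\beta(\vect m_\infty, \mat C_\infty).
\end{align*}
The strategy is to decouple these as much as possible and apply a topological fixed-point theorem. First I would focus on the covariance equation. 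Combining the upper and lower bounds from \cref{lemma:bound_second_moment,lemma:lower_bound_second_moment}, for any fixed $\vect m$ the map $\mat C \mapsto (1+\beta)\, \mat C_\beta(\vect m, \mat C)$ sends the order interval $\{\mat C : \Uhess^{-1} \preccurlyeq \mat C \preccurlyeq \Lhess^{-1}\}$ into itself, at least for $\beta$ large: indeed $(1+\beta)(\mat C^{-1} + \beta \Uhess)^{-1} \preccurlyeq (1+\beta)\,\mat C_\beta \preccurlyeq (1+\beta)(\mat C^{-1} + \beta \Lhess)^{-1}$, and as $\beta \to \infty$ the outer bounds converge to $\Uhess^{-1}$ and $\Lhess^{-1}$ respectively, so the image interval is contained in the domain interval once $\beta \geq \underline \beta$. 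This identifies the set on which $\mat C_\infty$ must live and gives the claimed sandwich $\Uhess^{-1} \preccurlyeq \mat C_\infty \preccurlyeq \Lhess^{-1}$.

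Next I would handle the coupled system as a whole. The natural domain is the compact convex set $K = \{\theta_*\} \times [\Uhess^{-1}, \Lhess^{-1}]$ inflated to allow $\vect m$ to range over a suitable closed ball; more precisely I would consider the map $\Psi(\vect m, \mat C) = \bigl(\vect m_\beta(\vect m, \mat C),\, (1+\beta)\,\mat C_\beta(\vect m, \mat C)\bigr)$ on a product $\overline B(\theta_*, R) \times [\Uhess^{-1}, \Lhess^{-1}]$. The covariance argument above shows the second component maps the interval into itself. For the first component I would invoke \cref{lemma:first_moment_several_dimensions}: on the covariance interval $\norm{\mat C^{-1}}$ is bounded (by $\norm{\Uhess}$) and $\norm{\mat C}$ is bounded, so the bound reads $\vecnorm{\vect m_\beta - \theta_*} \leq c_1 \beta^{-1/2}\vecnorm{\vect m - \theta_*} + c_2 \beta^{-1/2}$ with constants independent of $\vect m, \mat C$. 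For $\beta$ large enough $c_1 \beta^{-1/2} < 1$, so choosing $R = \mathcal O(\beta^{-1/2})$ makes $\overline B(\theta_*, R)$ invariant under the first component; this simultaneously yields the estimate $\vecnorm{\vect m_\infty - \theta_*} = \mathcal O(\beta^{-1/2})$. Since $\Psi$ is continuous (the weighted moments depend continuously on $(\vect m, \mat C)$, the integrals defining them being smooth in the parameters under \cref{assumption:convexity_potential,assumption:convexity_potential_above}) and maps the compact convex set $\overline B(\theta_*, R) \times [\Uhess^{-1}, \Lhess^{-1}]$ into itself, Brouwer's fixed-point theorem furnishes a fixed point $(\vect m_\infty, \mat C_\infty)$.

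I anticipate the main obstacle to be the interplay between the two equations, since $\vect m_\beta$ and $\mat C_\beta$ are coupled through the shared measure $L_\beta g(\dummy; \vect m, \mat C)$: one must verify that the covariance interval is genuinely invariant \emph{uniformly} in $\vect m$ ranging over the ball, and symmetrically that the mean ball is invariant uniformly in $\mat C$ over the interval. Fortunately \cref{lemma:bound_second_moment,lemma:lower_bound_second_moment} hold for every $(\vect m, \mat C)$, so the covariance sandwich is automatically $\vect m$-uniform, and \cref{lemma:first_moment_several_dimensions} is likewise uniform once $\norm{\mat C}$ and $\norm{\mat C^{-1}}$ are controlled on the interval; thus the decoupling is cleaner than it first appears. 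A secondary technical point is the threshold $\underline \beta$: it must be chosen large enough to simultaneously ensure (a) the covariance image interval sits inside $[\Uhess^{-1}, \Lhess^{-1}]$ and (b) the contraction-type factor $c_1 \beta^{-1/2} < 1$ for the mean; taking $\underline \beta$ to be the maximum of the two resulting thresholds completes the argument. Applying Brouwer rather than Banach avoids having to prove a global contraction, which would be delicate given the coupling.
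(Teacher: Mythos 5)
Your proposal is correct and takes essentially the same route as the paper's proof: there too, steady states are characterized via \cref{lemma:basic_results_steady_states} as fixed points of $(\vect m, \mat C) \mapsto \bigl(\vect m_\beta(\vect m,\mat C),\, (1+\beta)\mat C_\beta(\vect m,\mat C)\bigr)$, the invariant set is exactly your product set $\bigl\{(\vect m, \mat C) : \vecnorm{\vect m - \theta_*} \leq R\beta^{-1/2},\ \Uhess^{-1} \preccurlyeq \mat C \preccurlyeq \Lhess^{-1}\bigr\}$ (with $R = 2k/\sqrt{\lhess}$, $k$ the constant from \cref{lemma:first_moment_several_dimensions}), and the conclusion is drawn from Brouwer's fixed-point theorem. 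One minor tightening: since $\mat C \preccurlyeq \Lhess^{-1}$ gives $\mat C^{-1} \succcurlyeq \Lhess$ (and symmetrically $\mat C^{-1} \preccurlyeq \Uhess$), the sandwich you displayed already yields $\Uhess^{-1} \preccurlyeq (1+\beta)\mat C_\beta \preccurlyeq \Lhess^{-1}$ exactly for every $\beta > 0$, so the limiting argument ``as $\beta \to \infty$'' is unnecessary (and, by itself, convergence of the outer bounds would not imply containment) --- largeness of $\beta$ is needed only to make the mean ball invariant, just as in the paper, where $\underline\beta = 4\uhess/\lhess$.
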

\begin{proof}
    By~\cref{lemma:basic_results_steady_states},
    a Gaussian $g(\dummy; \vect m_{\infty}, \mat C_{\infty})$ is a steady state if and only if
    \[
        \vect m_{\infty} =\vect m_\beta(\vect m_{\infty}, \mat C_{\infty})
            \qquad \text{and} \qquad
        \mat C_{\infty} = \lambda^{-1}\mat C_\beta(\vect m_{\infty}, \mat C_{\infty})\,,
    \]
    i.e.\ if and only if $\bigl(\vect m_{\infty}(\beta), \mat C_{\infty}(\beta)\bigr)$ is a fixed point of the map
    \[
        \Phi_{\beta} : (\vect m, \mat C) \mapsto \bigl(\vect m_\beta(\vect m, \mat C), (1+\beta)\mat C_\beta(\vect m, \mat C)\bigr).
    \]
    In order to prove the result,
    we show that
    $\Phi_{\beta}(S_{\beta}) \subset S_{\beta}$ for all $\beta$ sufficiently large, where
    \[
        S_{\beta} = \Bigl\{(\vect m, \mat C): \vecnorm{\vect m-\theta_*} \leq R \beta^{-1/2} ~\text{and}~ \Uhess^{-1} \preccurlyeq \mat C \preccurlyeq \Lhess^{-1} \Bigr\}
    \]
    and $R = 2k/\sqrt{\lhess}$, with $k = k(\lhess, \uhess, d)$ the constant from \cref{lemma:first_moment_several_dimensions}.
    Since $\Phi_{\beta}$ is continuous, the result then follows from Brouwer's fixed point theorem.
    By \cref{lemma:bound_second_moment,lemma:lower_bound_second_moment},
    it holds that $\Uhess^{-1} \preccurlyeq (1+\beta)\mat C_\beta(\vect m, \mat C) \preccurlyeq \Lhess^{-1}$ for any $(\vect m, \mat C) \in S_{\beta}$,
    so we have to show only that there exist $\underline \beta$ such that
    \[
       \forall \beta \geq \underline{\beta},\quad \forall (\vect m, \mat C) \in S_{\beta},  \qquad
        \vecnorm{\vect m_\beta(\vect m, \mat C) - \theta_*} \leq R \beta^{-1/2}.
    \]
    If $(\vect m, \mat C) \in S_{\beta}$,
    then by \cref{lemma:first_moment_several_dimensions} there exists $k = k(\lhess, \uhess, d)$ such that
    \begin{align*}
        \forall \beta > 0, \qquad
        \vecnorm{\vect m_\beta(\vect m, C) - \theta_*}
        &\leq \frac{R}{\beta} \sqrt{\frac{\uhess}{\lhess}} + k \left( \lhess + \beta \lhess \right)^{-1/2} \\
        &\leq R \beta^{-1/2} \left(\sqrt{\frac{\uhess}{\beta\lhess}} + \frac{k}{R} \sqrt{\frac{1}{\lhess}} \right)
        = R \beta^{-1/2} \left(\sqrt{\frac{\uhess}{\beta\lhess}} + \frac{1}{2} \right)
    \end{align*}
    from where the statement follows easily with $\underline{\beta} = \frac{4\uhess}{\lhess}$.
\end{proof}
This result shows that the sampling scheme admits a steady state whose mean is close to the minimizer of $f$ for large $\beta$,
but it does not provide much information on the covariance of the Gaussian steady state.
In the one-dimensional setting,
we can show that the steady state is in fact unique and arbitrarily close to the Laplace approximation of the target distribution
provided that $\beta$ is sufficiently large.
By the Laplace approximation $\hat \rho$ of the target distribution,
we mean the Gaussian probability distribution $g\bigl(\dummy; \theta_*, \hessian f(\theta_*)^{-1}\bigr)$,
that is
\[
    \hat \rho(\theta) := \frac{\e^{-\hat f(\theta)}}{\int_{\real^d} \e^{-\hat f(\theta)}\, \d \theta},
    \qquad
    \hat f(\theta) := f(\theta_*) + \frac{1}{2} \bigl((\theta - \theta_*) \otimes (\theta - \theta_*) \bigr) : \hessian f(\theta_*).
\]
(Note that $\hat \rho$ coincides with the target distribution when $f$ is quadratic.)
In order to establish results in the one-dimensional setting,
we make the following additional assumption on $f$.
\begin{assumption}
    \label{assumption:assumption_f}
    Let $d=1$.
    The function $f$ is smooth and,
    together with all its derivatives,
    it is bounded from above by the reciprocal of a Gaussian,
    in the sense that for all $i \in \{0, 1, \dotsc\}$ there exists $\lambda_i \in \real$ such that
    \begin{equation*}
        \norm{\e^{- \lambda_i t^2} \derivative*{i}[f]{t}(t)}[\infty] < \infty.
    \end{equation*}
\end{assumption}

We let $C_* := 1/f''(\theta_*)$ and denote by $B_R(m_*, C_*)$ the closed ball of radius $R$ around $(m_*, C_*)$.
\begin{theorem}[Convergence to the steady state]
    \label{thm:convergence}
    Let $d=1$ and $\lambda=(1+\beta)^{-1}$,
    and suppose \cref{assumption:convexity_potential,assumption:assumption_f} hold.
    For any $R\in (0,C_*)$, there exists $\underline\beta = \underline \beta(f, R)$
    and $k=k(f, R)$ such that the following statements hold for all $\beta \geq \underline \beta$:
    \begin{itemize}
        \item \textbf{Steady state.}
            There exists a pair
            \(
                \bigl(m_{\infty}(\beta), C_{\infty}(\beta)\bigr)
            \),
            unique in $B_{R}(\theta_*, C_*)$,
            such that the Gaussian density $\rho_{\infty} = g(\dummy; m_{\infty}, C_{\infty})$ satisfies~\eqref{eq:steady_state},
            and this pair satisfies
            \[
                \vecnorm{ \begin{pmatrix} m_{\infty}(\beta) \\ C_{\infty}(\beta) \end{pmatrix} - \begin{pmatrix} m_* \\ C_0 \end{pmatrix} }
                \leq \frac{k}{\beta}.
            \]
            By \cref{lemma:basic_results_steady_states},
            the density $\rho_{\infty}$ is a steady state
            of both the iterative scheme~\eqref{eq:iterative_scheme_measures} with any $\alpha \in [0, 1)$ and the nonlinear Fokker--Planck equation~\cref{eq:mean_field},
            corresponding to $\alpha=1$.

        \item \textbf{Discrete time $\alpha\in [0,1)$.}
            If~\cref{assumption:gaussian_init} holds and the moments of the initial (Gaussian) law satisfy $(m_0, C_0) \in B_R(\theta_*, C_*)$,
            then the solution to the iterative scheme~\cref{eq:iterative_scheme_measures}
            converges geometrically to the steady state~$\rho_{\infty}$
            provided that $\alpha + (1-\alpha^2)\frac{k}{\beta} < 1$.
            More precisely,
            \[
                \forall n \in \nat, \qquad
                \vecnorm{ \begin{pmatrix} m_n \\ C_n \end{pmatrix} - \begin{pmatrix} m_{\infty}(\beta) \\ C_{\infty}(\beta) \end{pmatrix} }
                \leq \left(\alpha + (1-\alpha^2)\frac{k}{\beta}\right)^n \vecnorm{ \begin{pmatrix} m_0 \\ C_0 \end{pmatrix} - \begin{pmatrix} m_{\infty}(\beta) \\ C_{\infty}(\beta) \end{pmatrix} }.
            \]
        \item \textbf{Continuous time $\alpha=1$.}
            If~\cref{assumption:gaussian_init} holds and the moments of the initial (Gaussian) law satisfy $\bigl(m_0, C_0\bigr) \in B_R(\theta_*, C_*)$,
            then the solution to the mean field Fokker Planck equation~\eqref{eq:mean_field} converges exponentially to the steady state~$\rho_{\infty}$
            provided that $1 - \frac{2k}{\beta} > 0$.
            More precisely,
            \[
                \forall t \geq 0, \qquad
                \vecnorm{ \begin{pmatrix} m(t) \\ C(t) \end{pmatrix} - \begin{pmatrix} m_{\infty}(\beta) \\ C_{\infty}(\beta) \end{pmatrix} }
                \leq \exp \left( - \left(1 - \frac{2k}{\beta} \right) t \right) \vecnorm{ \begin{pmatrix} m_0 \\ C_0 \end{pmatrix} - \begin{pmatrix} m_{\infty}(\beta) \\ C_{\infty}(\beta) \end{pmatrix} }.
            \]
    \end{itemize}
\end{theorem}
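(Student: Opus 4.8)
The plan is to reduce all three convergence statements to a single quantitative contraction property of the map
\[
    \Phi_\beta \colon (m, C) \mapsto \bigl(m_\beta(m, C),\, (1+\beta)\, C_\beta(m, C)\bigr),
\]
already used in the proof of \cref{thm:existence-sstates}, whose fixed points are exactly the moments of the Gaussian steady states by \cref{lemma:basic_results_steady_states}. The three dynamics are assembled from $\Phi_\beta$ in a structurally identical way: the discrete recursion \eqref{eq:equations_moments_discrete_gaussian} reads $(m_{n+1}, C_{n+1}) = F_\alpha(m_n, C_n)$ with
\[
    F_\alpha(m, C) = \diag(\alpha, \alpha^2)\,(m, C) + \diag(1-\alpha, 1-\alpha^2)\,\Phi_\beta(m, C),
\]
while the continuous-time system \eqref{eq:momentprop} reads $\dot x = -D\bigl(x - \Phi_\beta(x)\bigr)$ with $x = (m, C)$ and $D = \diag(1, 2)$. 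In each case the fixed point coincides with that of $\Phi_\beta$, so the whole theorem follows once $\Phi_\beta$ is shown to be a contraction with constant $O(1/\beta)$ on the closed ball $B_R(\theta_*, C_*)$, together with a localization of its fixed point within $O(1/\beta)$ of $(\theta_*, C_*)$.

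The heart of the argument, and the main obstacle, is the bound $\norm{D\Phi_\beta(m, C)} \leq k/\beta$ uniformly on $B_R(\theta_*, C_*)$. Writing the tilted measure $\mu_{m,C} \propto \exp\bigl(-\tfrac{1}{2C}(\theta - m)^2 - \beta f(\theta)\bigr)$, whose mean and variance are $m_\beta$ and $C_\beta$, I would differentiate under the integral sign (justified by \cref{assumption:assumption_f}, which controls all tails) and express the four entries of the Jacobian as covariances and higher central moments of $\mu_{m,C}$. The diagonal mean-entry is exactly $\partial_m m_\beta = C_\beta / C$, which is $O(1/\beta)$ by \cref{lemma:bound_second_moment}; the remaining three entries involve third and fourth central moments of $\mu_{m,C}$, multiplied by factors $C^{-2}$ and $(1+\beta)$. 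Here \cref{assumption:convexity_potential} makes $\mu_{m,C}$ strongly log-concave with second derivative of the potential at least $C^{-1} + \beta\lhess$, so it concentrates at scale $\beta^{-1/2}$ around its mode; the delicate point is that its skewness and excess kurtosis must be shown to decay fast enough — via a Laplace-type expansion, with \cref{assumption:assumption_f} guaranteeing uniform control of the remainder over the ball — so that, after the prefactors, all three entries are likewise $O(1/\beta)$. Taking $k$ to be $\beta$ times an upper bound for $\norm{D\Phi_\beta}$ on the ball then gives the Lipschitz constant $k/\beta$, and choosing $\underline\beta$ large guarantees $k/\beta < 1$.

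With the contraction in hand, the steady-state claims follow quickly. A finer one-dimensional estimate on the weighted mean — of the type derived in the proof of \cref{proposition:convergence_optimization}, giving $\abs{m_\beta(\theta_*, C_*) - \theta_*} = O(1/\beta)$ rather than the crude $O(\beta^{-1/2})$ of \cref{lemma:first_moment_several_dimensions} — combined with $\abs{(1+\beta)C_\beta(\theta_*, C_*) - C_*} = O(1/\beta)$ yields $\norm{\Phi_\beta(\theta_*, C_*) - (\theta_*, C_*)} = O(1/\beta)$. Since moreover $\norm{\Phi_\beta(m,C) - \Phi_\beta(\theta_*, C_*)} \leq (k/\beta)R$ on the ball, the map $\Phi_\beta$ sends $B_R(\theta_*, C_*)$ into itself for $\beta$ large, and Banach's fixed point theorem gives a unique fixed point $\bigl(m_\infty(\beta), C_\infty(\beta)\bigr)$ there. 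The a posteriori bound $\norm{x_\infty - y} \leq (1 - k/\beta)^{-1}\norm{\Phi_\beta(y) - y}$ at $y = (\theta_*, C_*)$ then produces the claimed estimate $\norm{(m_\infty, C_\infty) - (\theta_*, C_*)} \leq k/\beta$ after enlarging $k$.

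Finally, the rates follow by propagating the contraction through the two assembling operations. In discrete time, subtracting the fixed-point relation gives
\[
    F_\alpha(x) - x_\infty = \diag(\alpha, \alpha^2)(x - x_\infty) + \diag(1-\alpha, 1-\alpha^2)\bigl(\Phi_\beta(x) - \Phi_\beta(x_\infty)\bigr),
\]
and since $\norm{\diag(\alpha, \alpha^2)} = \alpha$ and $\norm{\diag(1-\alpha, 1-\alpha^2)} = 1 - \alpha^2$ for $\alpha \in [0,1)$, the triangle inequality and the Lipschitz bound give $\norm{F_\alpha(x) - x_\infty} \leq \bigl(\alpha + (1-\alpha^2)k/\beta\bigr)\norm{x - x_\infty}$; this is a genuine contraction precisely when $\alpha + (1-\alpha^2)k/\beta < 1$, and iterating yields the stated geometric rate (the same bound shows the iterates remain in $B_R$, where the Lipschitz estimate is valid). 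In continuous time I would instead estimate $\tfrac12\tfrac{\d}{\d t}\norm{x - x_\infty}^2 = -\langle x - x_\infty, D(x - x_\infty)\rangle + \langle x - x_\infty, D(\Phi_\beta(x) - \Phi_\beta(x_\infty))\rangle$, bound the first term below using $\lambda_{\min}(D) = 1$ and the second above using $\norm{D} = 2$ and the Lipschitz constant $k/\beta$, obtaining $\tfrac{\d}{\d t}\norm{x - x_\infty}^2 \leq -2(1 - 2k/\beta)\norm{x - x_\infty}^2$; Grönwall's inequality then gives the exponential rate $\exp\bigl(-(1 - 2k/\beta)t\bigr)$, valid whenever $1 - 2k/\beta > 0$.
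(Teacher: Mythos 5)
Your proposal follows essentially the paper's own route: the key claim that $\norm{D\Phi_\beta} = \mathcal O(\beta^{-1})$ uniformly on $B_R(\theta_*, C_*)$ is exactly the paper's \cref{prop:contraction}, proved there just as you suggest, by writing the Jacobian entries as ratios of weighted integrals (moments of the tilted measure) and controlling them with the quantitative Laplace expansion of \cref{proposition:laplace_principle}; the discrete-time triangle-inequality argument and the continuous-time Gr\"onwall argument are then verbatim the paper's proof of the two convergence bullets. The one structural difference is at the existence step: the paper proves existence by Brouwer's theorem (\cref{proposition:sampling_existence_steady_state_1d}), needing only that $\Phi_\beta$ maps a small ball into itself, and obtains uniqueness afterwards from the contraction; you instead use Banach's fixed point theorem, which is slightly cleaner once the contraction is in hand, since its a posteriori estimate replaces the paper's separate localization of the fixed point. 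Both routes consume the same Laplace-method input, so this is a cosmetic rather than substantive difference.

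There is, however, one step that would fail as cited. You claim $\abs{m_\beta(\theta_*, C_*) - \theta_*} = \mathcal O(1/\beta)$ follows from ``a finer one-dimensional estimate of the type derived in the proof of \cref{proposition:convergence_optimization}.'' It does not: the fine bound there (\cref{lemma:fine_bound_first_moment}), evaluated at $m = \theta_*$, degenerates to $\abs{m_\beta(\theta_*,C_*) - \theta_*} \leq 2\phi(0)/\sqrt{\lhess\beta}$, i.e.\ only $\mathcal O(\beta^{-1/2})$ — the same order as the crude bound of \cref{lemma:first_moment_several_dimensions} — because the factor $\phi(x)/x$ blows up as $\widetilde m \to 0$. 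Likewise, the sharper expansions in the proof of \cref{prop:convergence_rate_1d_opti} carry remainders of size $\mathcal O(C^2)$, which are useful only when $C \to 0$ (optimization mode) and give nothing at the fixed scale $C = C_*$ relevant here. The correct way to get the $\mathcal O(1/\beta)$ localization is the one the paper uses in \cref{proposition:sampling_existence_steady_state_1d}: apply the Laplace expansion to $\int_{\real} (\theta - \theta_*)\,\e^{-\beta f(\theta)} g(\theta;\theta_*,C_*)\,\d\theta$ and exploit that the prefactor $(\theta-\theta_*)g$ vanishes at $\theta_*$, so the leading $\beta^{-1/2}$ term drops and the numerator is $\mathcal O(\beta^{-3/2})$ against a denominator of order $\beta^{-1/2}$ (see \cref{eq:intermediate_bound_sampling_1} and \cref{eq:mean_estimate}). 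Since this is the same Laplace machinery you already invoke for the Jacobian, the repair is local, but as written the cited tool cannot deliver the claimed rate.
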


There is no conceptual obstruction to generalizing this result to the multi-dimensional setting,
but the associated calculations involving the Laplace's method,
on which the proof of \cref{thm:convergence} relies,
are significantly more technical than in the one-dimensional setting,
so we focus here on the one-dimensional case only.

\section{Numerical Experiments}%
\label{sec:numerical_experiments}
In this section,
we present numerical experiments illustrating our method.
The performance of CBS in optimization mode is studied in~\cref{sub:use_in_general_purpose_optimization}.
We then illustrate the efficacy of the method for sampling in \cref{sub:low_dimensional_parameter_space},
where a simple inverse problem with low-dimensional parameter and data is considered,
and in \cref{sub:high_dimensional_parameter_space},
where a more realistic and challenging example is examined.
Video animations associated with the numerical experiments presented in this section are freely available online~\cite{figures}.

\subsection{General-Purpose Optimization}%
\label{sub:use_in_general_purpose_optimization}

In this subsection,
we study the efficacy of our method for solving optimization problems that do not necessarily originate from a Bayesian context.
We also show empirically how the convergence of the algorithm can be improved by adapting the parameter $\beta$ appropriately during the simulation.
Throughout the subsection,
we consider the same non-convex test functions as those taken in~\cite{pinnau2017consensus}:
the translated Ackley function, defined for $x \in \real^d$ by
\begin{equation}
    \label{eq:ackley}
    f_A(x) = -20 \exp \left( - \frac{1}{5} \sqrt{\frac{1}{d} \sum_{i=1}^{d} |x_i - b|^2} \right)
    -\exp \left( \frac{1}{d} \sum_{i=1}^{d} \cos\bigl(2 \pi (x_i - b)\bigr) \right) + \e \,+\, 20,
\end{equation}
and the Rastrigin function, defined by
\begin{equation}
    \label{eq:rastrigin}
    f_R(x) =  \sum_{i=1}^{d} \Bigl((x_i - b)^2 - 10 \cos \bigl(2 \pi (x_i - b)\bigr) + 10 \Bigr).
\end{equation}
Both functions are minimized at $x_* = (b, \dotsc, b)$, where $b \in \real$ is a translation parameter.
They are depicted in \cref{fig:ackley_rastrigin}.
\begin{figure}[ht]
    \centering
    \includegraphics[width=0.49\linewidth]{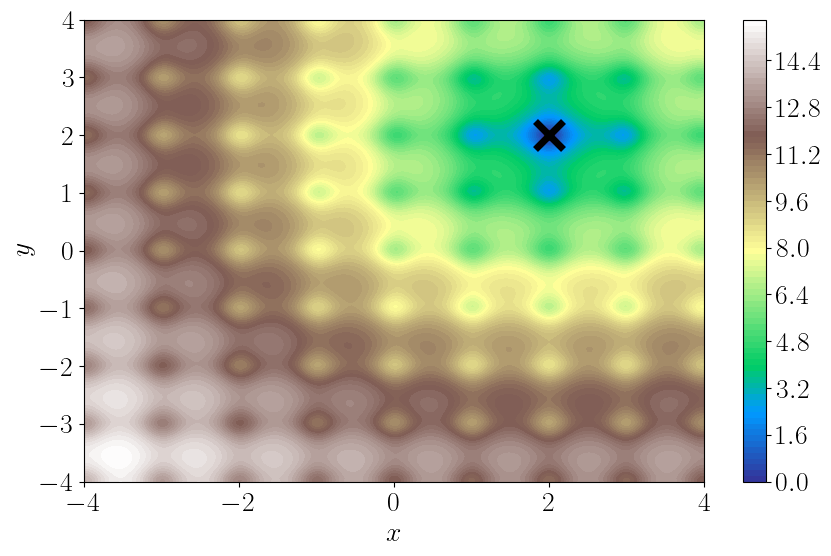}
    \includegraphics[width=0.49\linewidth]{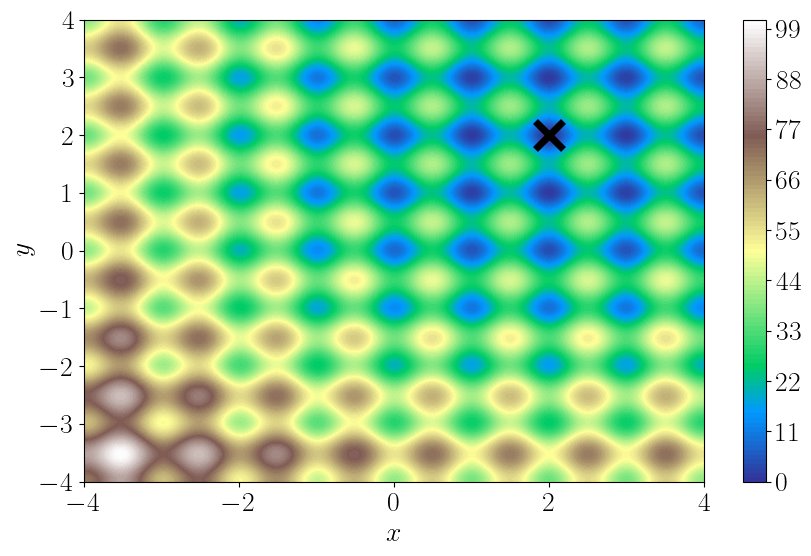}
    \caption{
        Ackley (left) and  Rastrigin (right) functions for $d = 2$ and $b = 2$;
        see~\eqref{eq:ackley} and~\eqref{eq:rastrigin}.
    }%
    \label{fig:ackley_rastrigin}
\end{figure}

In all simulations presented below, the  initial particle ensemble members are drawn independently from $\normal (0, 3 I_d)$,
and the simulation is stopped when $\vecnorm{\mathcal C(\rho^J_n)}_{\rm F} < 10^{-12}$ for the first time;
here $\rho^J_n$ denotes the empirical measure associated with the ensemble at iteration $n$.

\subsubsection{Dynamic Adaptation of \texorpdfstring{$\beta$}{beta}}%
In this paragraph, we show numerically that adapting $\beta$ dynamically during a simulation can be advantageous for convergence.
We consider the following simple adaptation scheme with parameter $\eta \in \left(\frac{1}{J}, 1\right)$:
denoting by $\{\theta^{(j)}_n\}_{j=1}^J$ the ensemble at step $n$,
the parameter $\beta$ employed for the next iteration is obtained as the positive solution to the following equation:
\begin{equation}
    \label{eq:ess}
    J_{\rm eff}(\beta) := \frac{\left( \sum_{j=1}^{J} \omega_j \right)^2}
    {\sum_{j=1}^{J} \abs{\omega_j}^2} = \eta J,
    \qquad \omega_j := \e^{- \beta f(\theta_n^{(j)})}.
\end{equation}
Employing the notation $f_j = f(\theta_n^{(j)})$,
we calculate
\[
    J_{\rm eff}'(\beta) = - 2 \beta \frac{ \left( \sum_{j=1}^{J} \omega_j \right) \left( \sum_{j=1}^{J} f_j \omega_j \right) - \left( \sum_{j=1}^{J} f_j \abs{\omega_j}^2 \right)}{\left( \sum_{j=1}^{J} \abs{\omega_j}^2 \right)^2 } \leq 0,
\]
so $J_{\rm eff}$ is a continuous, non-increasing function with $J_{\rm eff}(0) = J$ and $\lim_{\beta \to \infty} J_{\rm eff}(\beta) = 1$.
Consequently, equation~\eqref{eq:ess} admits a unique solution in $(0, \infty)$.
The left-hand side of~\eqref{eq:ess} is known in statistics as an \emph{effective sample size},
which motivates the notation $J_{\rm eff}$.
When this approach is employed, the parameter $\beta$ is generally small in the early stage of the simulation as long as the initial ensemble has large enough spread,
and it increases progressively as the simulation advances and the ensemble spread decreases. In other words, this cooling schedule for $\beta$ ensures that roughly always the same proportion $\eta$ of particles contribute to the weighted sums in the scheme.
This adaptation approach is useful for a two primary reasons:
\begin{itemize}
    \item
        On the one hand, provided that $\eta$ and $J$ are sufficiently large,
        adapting $\beta$ according to~\eqref{eq:ess} ensures that situations where the ensemble quickly collapses to a very narrow distribution do not arise.  An early collapse of the ensemble is not desirable as the scheme may then get stuck in local minima of the objective function $f$, or in the case when the collapse is not complete, the convergence is slowed down considerably.
        This issue is especially critical when the scheme~\eqref{eq:interacting_particle_system} is employed with $\alpha = 0$:
        in this case, if $\beta$ is not sufficiently small at the beginning of the simulation,
        it is often the case that the weighted covariance of the initial ensemble is very close to zero,
        in which case the ensemble collapses nearly to a point in a single step.
    \item
        On the other hand, increasing $\beta$ in the later stage of the simulation significantly accelerates convergence to the minimizer.
        Indeed, when a fixed value of $\beta$ is employed,
        the weights~$\{\omega_j\}_{j=1}^{J}$ all converge to the same value as the simulation progresses and the ensemble collapses,
        and so the influence of the objective function on the dynamics diminishes.
        By increasing $\beta$ dynamically, we strengthen the bias of the dynamics towards areas of small $f$,
        thereby accelerating convergence.
\end{itemize}
In the remainder of this section,
we consider for simplicity only the choice $\eta = \frac{1}{2}$.
A more detailed analysis of the efficiency of this approach,
through both theoretical and numerical means, is left for future work. More generally, an interesting open question is whether it is possible to determine an optimal cooling schedule for $\beta$ taking the above considerations into account.
We illustrate in~\cref{tab:adaptation_beta} the performance of CBS in optimization mode,
with both fixed and adaptive $\beta$, for finding the minimizer of the Ackley function with $b = 0$ in dimension~2.
The data presented in each cell are calculated from 100 independent runs of the method.
For all the values of $J$ and $\alpha$ considered,
using the adaptive strategy based on~\eqref{eq:ess} provides a significant advantage,
in terms of both the number of iterations required for convergence and the accuracy of the approximate minimizer.

\begin{table}[!ht]
    \centering
    \begin{tabular}{c|c|c|c|c}
        Adapt? & $\alpha$ & $J=50$ & $J=100$ & $J=200$ \\
        \hline
        no & $0$
           & $ 100 \% \,|\, 511 \,|\, 8.73 \times 10^{-3} $
           & $ 100 \% \,|\, 966 \,|\, 4.34 \times 10^{-3} $
           & $ 100 \% \,|\, 1767 \,|\, 2.5 \times 10^{-3} $
           \\
        no & $.5$
           & $ 100 \% \,|\, 611 \,|\, 1.22 \times 10^{-2} $
           & $ 100 \% \,|\, 1191 \,|\, 6.87 \times 10^{-3} $
           & $ 100 \% \,|\, 2141 \,|\, 3.38 \times 10^{-3} $
           \\
        no & $.9$
           & $ 100 \% \,|\, 2028 \,|\, 1.6 \times 10^{-2} $
           & $ 100 \% \,|\, 3693 \,|\, 8.31 \times 10^{-3} $
           & $ 100 \% \,|\, 7259 \,|\, 5.22 \times 10^{-3} $
           \\
           \hline
        yes & $0$
            & $ 100 \% \,|\, 31 \,|\, 1.86 \times 10^{-7} $
            & $ 100 \% \,|\, 31 \,|\, 1.09 \times 10^{-7} $
            & $ 100 \% \,|\, 31 \,|\, 8.44 \times 10^{-8} $
            \\
        yes & $.5$
            & $ 100 \% \,|\, 49 \,|\, 2.86 \times 10^{-7} $
            & $ 100 \% \,|\, 48 \,|\, 2.0 \times 10^{-7} $
            & $ 100 \% \,|\, 48 \,|\, 1.43 \times 10^{-7} $
            \\
        yes & $.9$
            & $ 100 \% \,|\, 251 \,|\, 2.27 \times 10^{-6} $
            & $ 100 \% \,|\, 242 \,|\, 4.36 \times 10^{-7} $
            & $ 100 \% \,|\, 238 \,|\, 2.87 \times 10^{-7} $
            \\
    \end{tabular}
    \caption{%
        Performance of the CBS in optimization mode for the Ackley function in spatial dimension $d = 2$,
        without and with adaptive $\beta$.
        The three data presented in each cell are respectively
        the success rate of the method,
        the average number of iteration until the stopping criterion is met,
        and the average (over the successful runs) error at the final iteration,
        computed as the infinity norm between the minimizer and the ensemble mean.
        Our definition of the success rate is very similar to that used in~\cite{pinnau2017consensus}:
        a run is considered successful if the ensemble mean is within .25,
        in infinity norm, of the minimizer at the final iteration.
    }
    \label{tab:adaptation_beta}
\end{table}

\subsubsection{Low-dimensional Optimization Problem: \texorpdfstring{$d = 2$}{}}

The performance of CBS in optimization mode is illustrated in~\cref{tab:cbs_ackley_2d,tab:cbs_rastrigin_2d},
for the Ackley and Rastrigin functions respectively, in spatial dimension $d = 2$.
We make a few observations:
\begin{itemize}
    \item
        \emph{Influence of $\alpha$}:
        The simulations corresponding to $\alpha = 0$ consistently require fewer iterations to converge than those corresponding to $\alpha = \frac{1}{2}$,
        and they have a better success rate for the Rastrigin function.
    \item
        \emph{Influence of $J$}:
        For the Rastrigin function,
        a high number of particles, i.e.\ a large value of $J$,
        correlates with a better success rate.
        With only 50 particles, the method often converges to the wrong local minimizer,
        but with 200 particles the ensemble almost always collapses at the global minimizer.
    \item
        \emph{Influence of $b$}:
        For the Rastrigin function,
        a low value of $b$ correlates with better performance.
        This behavior, which was observed also for CBO in~\cite{pinnau2017consensus},
        is not surprising because, when $b = 0$,
        the minimizer is centered with respect to the initial ensemble.
\end{itemize}
We also note that, like CBO~\cite{pinnau2017consensus},
our method performs markedly better for the Ackley function than for the Rastrigin function.
Snapshots of the particles are presented in \cref{fig:convergence_ackley_rastrigin} for the parameters $\alpha = 0$ and $J = 100$.

\begin{table}[htpb]
    \centering
    \begin{tabular}{c|c|c|c|c}
         $b$ & $\alpha$ & $J=50$ & $J=100$ & $J=200$ \\
        \hline
        $0$ & $0$
        & $ 100 \% \,|\, 31 \,|\, 1.86 \times 10^{-7} $
        & $ 100 \% \,|\, 31 \,|\, 1.09 \times 10^{-7} $
        & $ 100 \% \,|\, 31 \,|\, 8.44 \times 10^{-8} $
        \\
        $0$ & $.5$
        & $ 100 \% \,|\, 49 \,|\, 2.86 \times 10^{-7} $
        & $ 100 \% \,|\, 48 \,|\, 2.0 \times 10^{-7} $
        & $ 100 \% \,|\, 48 \,|\, 1.43 \times 10^{-7} $
        \\
        \hline
        $1$ & $0$
        & $ 100 \% \,|\, 31 \,|\, 1.83 \times 10^{-7} $
        & $ 100 \% \,|\, 31 \,|\, 1.16 \times 10^{-7} $
        & $ 100 \% \,|\, 31 \,|\, 7.91 \times 10^{-8} $
        \\
        $1$ & $.5$
        & $ 100 \% \,|\, 49 \,|\, 3.23 \times 10^{-7} $
        & $ 100 \% \,|\, 49 \,|\, 2.05 \times 10^{-7} $
        & $ 100 \% \,|\, 49 \,|\, 1.47 \times 10^{-7} $
        \\
        \hline
        $2$ & $0$
        & $ 100 \% \,|\, 31 \,|\, 1.86 \times 10^{-7} $
        & $ 100 \% \,|\, 32 \,|\, 1.1 \times 10^{-7} $
        & $ 100 \% \,|\, 32 \,|\, 8.61 \times 10^{-8} $
        \\
        $2$ & $.5$
        & $ 100 \% \,|\, 51 \,|\, 3.03 \times 10^{-7} $
        & $ 100 \% \,|\, 50 \,|\, 1.92 \times 10^{-7} $
        & $ 100 \% \,|\, 50 \,|\, 1.38 \times 10^{-7} $
        \\
    \end{tabular}
    \caption{%
        Performance of the CBS in optimization mode for the Ackley function in spatial dimension $d = 2$.
        See the caption of \cref{tab:adaptation_beta} for a description of the data presented.
    }
    \label{tab:cbs_ackley_2d}
\end{table}

\begin{table}[htpb]
    \centering
    \begin{tabular}{c|c|c|c|c}
         $b$ & $\alpha$ & $J=50$ & $J=100$ & $J=200$ \\
        \hline
        $0$ & $0$
        & $ 83 \% \,|\, 41 \,|\, 1.73 \times 10^{-7} $
        & $ 99 \% \,|\, 45 \,|\, 1.19 \times 10^{-7} $
        & $ 100 \% \,|\, 45 \,|\, 8.43 \times 10^{-8} $
        \\
        $0$ & $.5$
        & $ 77 \% \,|\, 74 \,|\, 3.39 \times 10^{-4} $
        & $ 98 \% \,|\, 69 \,|\, 2.21 \times 10^{-7} $
        & $ 100 \% \,|\, 66 \,|\, 1.56 \times 10^{-7} $
        \\
        \hline
        $1$ & $0$
        & $ 84 \% \,|\, 42 \,|\, 1.85 \times 10^{-7} $
        & $ 99 \% \,|\, 44 \,|\, 1.03 \times 10^{-7} $
        & $ 100 \% \,|\, 45 \,|\, 7.8 \times 10^{-8} $
        \\
        $1$ & $.5$
        & $ 72 \% \,|\, 68 \,|\, 6.03 \times 10^{-7} $
        & $ 91 \% \,|\, 68 \,|\, 2.23 \times 10^{-7} $
        & $ 100 \% \,|\, 68 \,|\, 1.56 \times 10^{-7} $
        \\
        \hline
        $2$ & $0$
        & $ 79 \% \,|\, 42 \,|\, 1.84 \times 10^{-7} $
        & $ 96 \% \,|\, 44 \,|\, 1.12 \times 10^{-7} $
        & $ 100 \% \,|\, 45 \,|\, 7.78 \times 10^{-8} $
        \\
        $2$ & $.5$
        & $ 58 \% \,|\, 80 \,|\, 4.14 \times 10^{-4} $
        & $ 74 \% \,|\, 75 \,|\, 3.52 \times 10^{-5} $
        & $ 96 \% \,|\, 74 \,|\, 1.54 \times 10^{-7} $
        \\
    \end{tabular}
    \caption{%
        Performance of the CBS in optimization mode for the Rastrigin function in spatial dimension $d = 2$.
        See the caption of \cref{tab:adaptation_beta} for a description of the data presented.
    }
    \label{tab:cbs_rastrigin_2d}
\end{table}

\begin{figure}[htpb]
    \centering
    \includegraphics[width=0.45\linewidth]{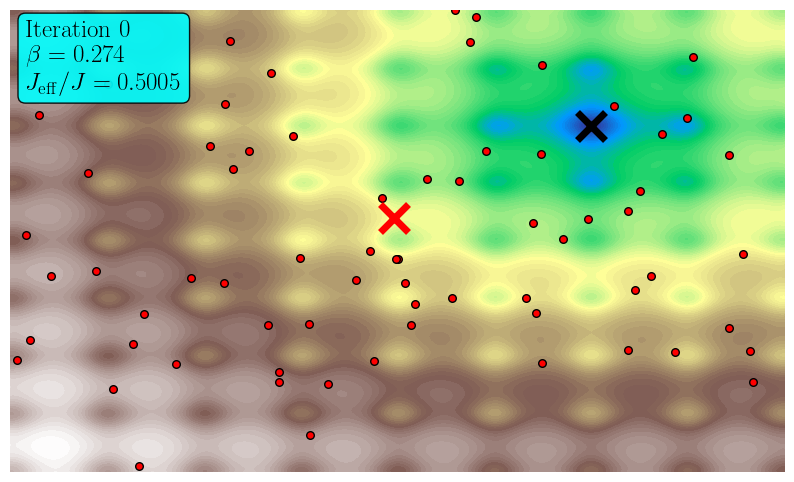}
    \includegraphics[width=0.45\linewidth]{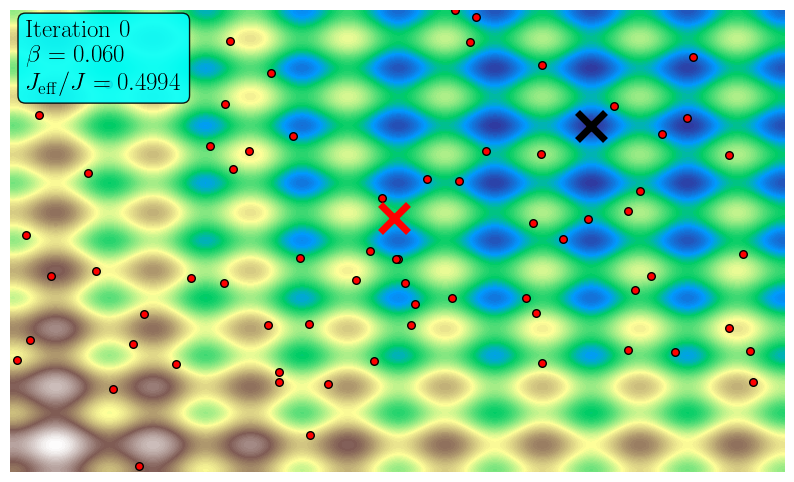}
    \includegraphics[width=0.45\linewidth]{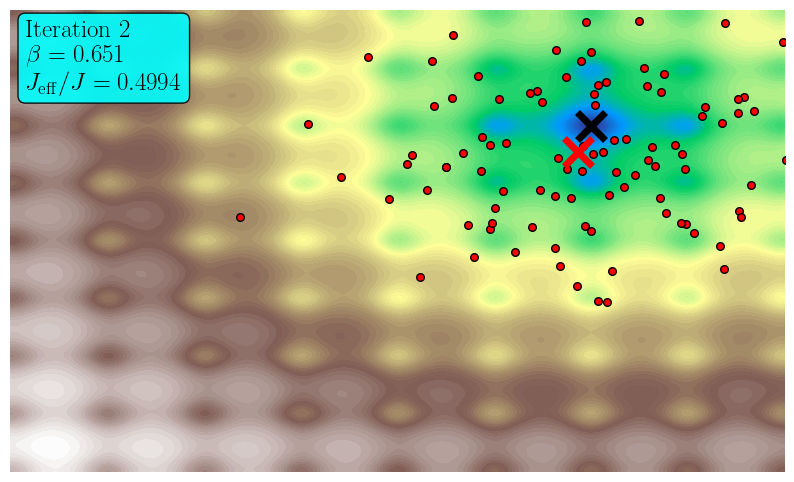}
    \includegraphics[width=0.45\linewidth]{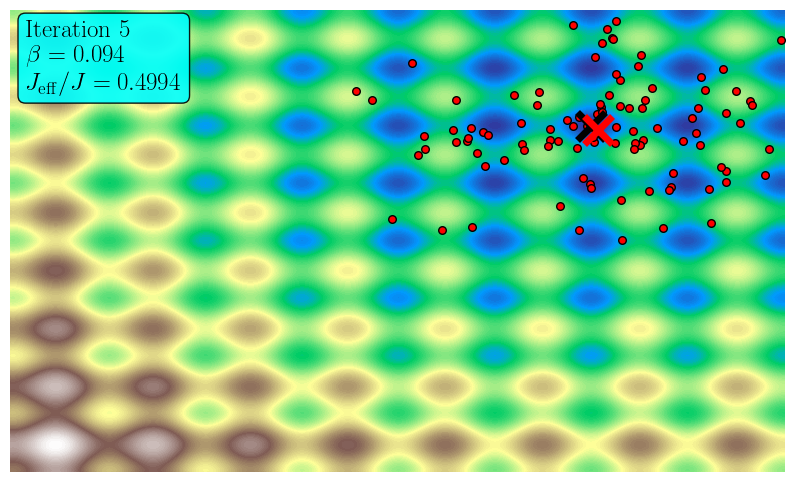}
    \includegraphics[width=0.45\linewidth]{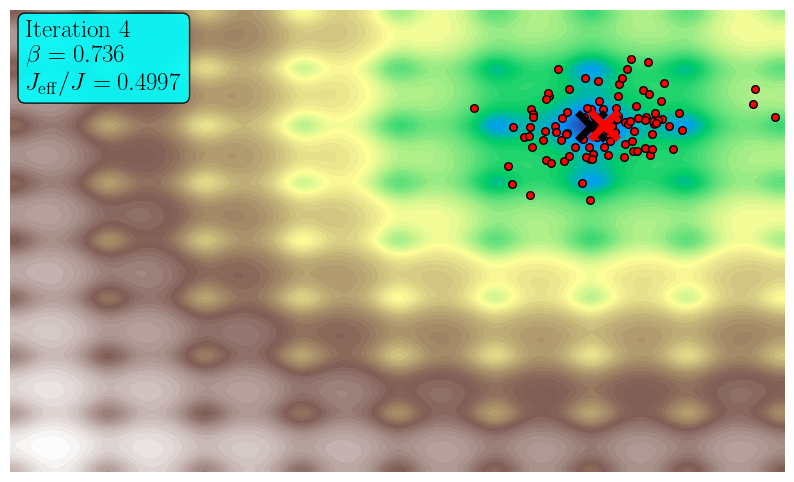}
    \includegraphics[width=0.45\linewidth]{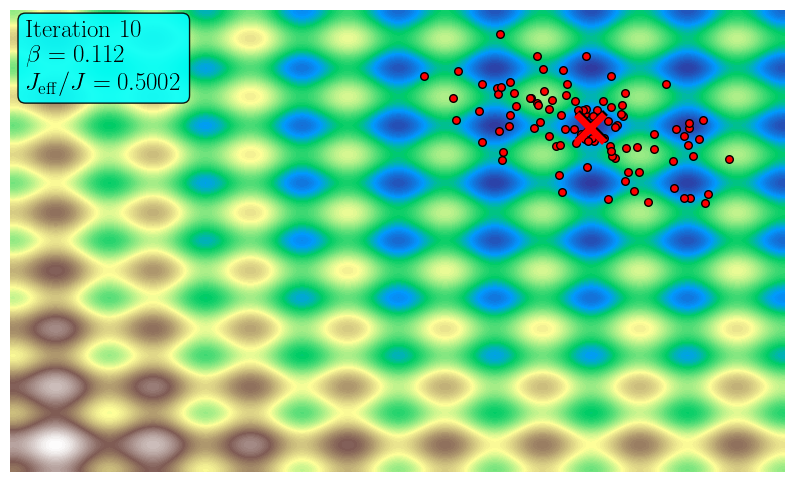}
    \includegraphics[width=0.45\linewidth]{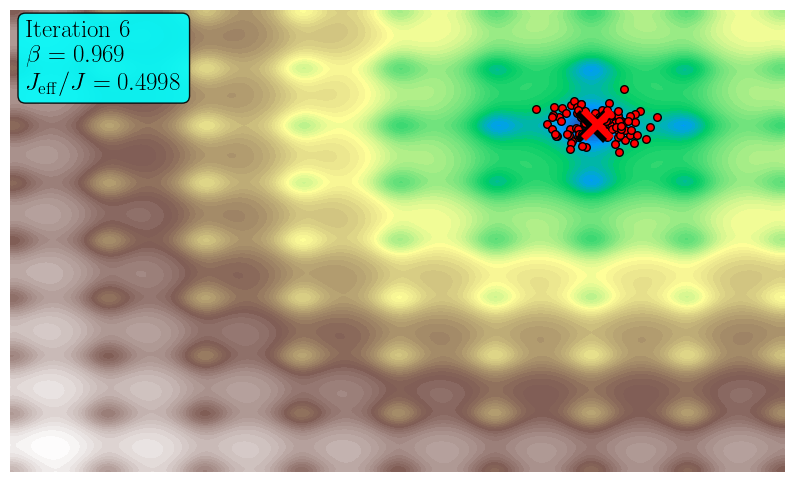}
    \includegraphics[width=0.45\linewidth]{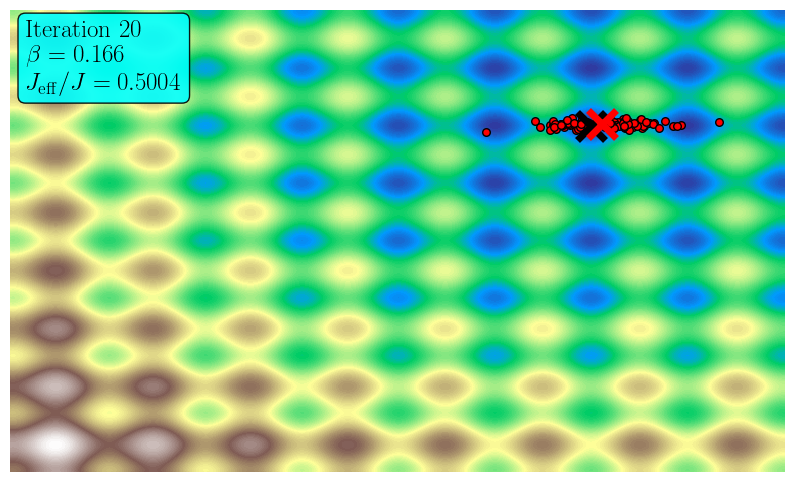}
    \includegraphics[width=0.45\linewidth]{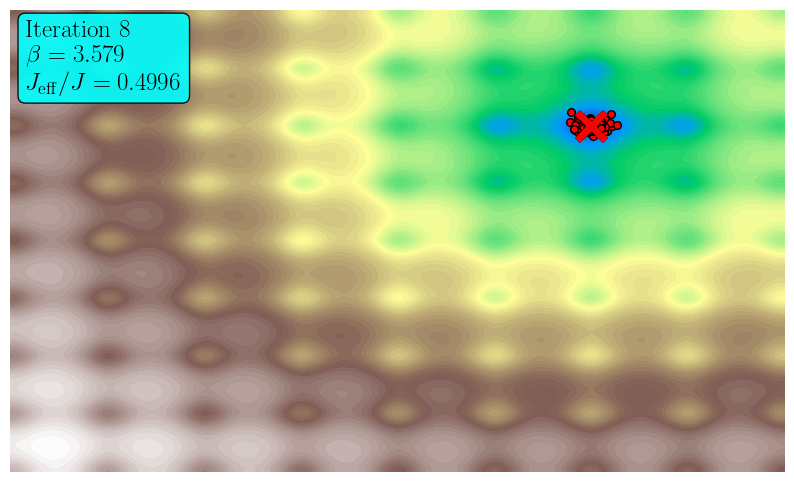}
    \includegraphics[width=0.45\linewidth]{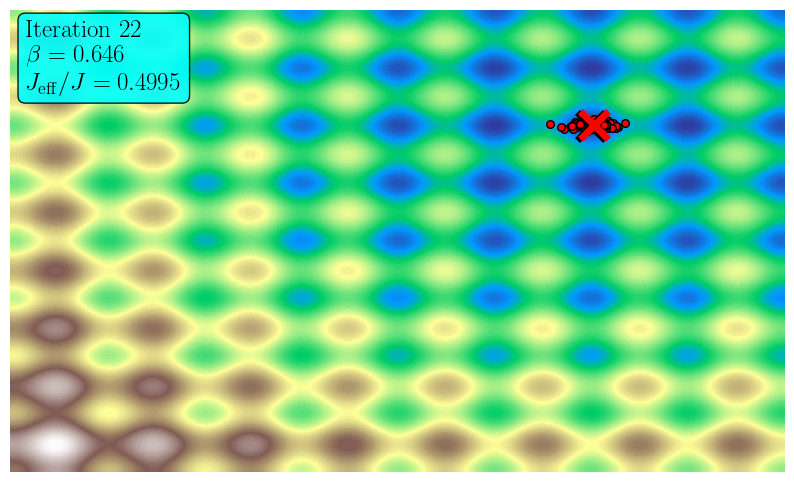}
    \caption{
        Illustration of the convergence of CBS in optimization mode for the Ackley (left) and Rastrigin (right) functions in dimension 2,
        for the parameters $J = 100$, $\alpha = 0$ and with adaptive~$\beta$.
        The black cross denotes the unique global minimizer,
        and the red cross shows the ensemble mean.
    }%
    \label{fig:convergence_ackley_rastrigin}
\end{figure}

\subsubsection{Higher-Dimensional Optimization Problem: \texorpdfstring{$d = 10$}{}}

In this paragraph,
we repeat the numerical experiments of the previous section in higher dimension $d = 10$.
We employ an adaptive $\beta$ in all the simulations,
as this approach was shown in the previous subsection to perform much better.
The associated results are presented in \cref{tab:ackley_high_dim,tab:rastrigin_high_dim},
which show that the method performs better for small $\alpha$ and large $J$ for this case as well.
Overall, the method seems to require a larger ensemble size than CBO in order to guarantee a similar success rate.
A fair comparison of the computational expenses required by both methods is difficult, however,
because the number of time steps employed in CBO is not documented in~\cite{pinnau2017consensus}.

\begin{table}[ht]
    \centering
    \begin{tabular}{c|c|c|c|c}
         $b$ & $\alpha$ & $J=100$ & $J=500$ & $J=1000$ \\
        \hline
        $0$ & $0$
        & $ 100 \% \,|\, 95 \,|\, 4.19 \times 10^{-4} $
        & $ 100 \% \,|\, 77 \,|\, 9.81 \times 10^{-8} $
        & $ 100 \% \,|\, 78 \,|\, 6.97 \times 10^{-8} $
        \\
        $0$ & $.5$
        & $ 100 \% \,|\, 248 \,|\, 1.27 \times 10^{-2} $
        & $ 100 \% \,|\, 109 \,|\, 1.71 \times 10^{-7} $
        & $ 100 \% \,|\, 110 \,|\, 1.13 \times 10^{-7} $
        \\
        \hline
        $1$ & $0$
        & $ 100 \% \,|\, 100 \,|\, 1.34 \times 10^{-3} $
        & $ 100 \% \,|\, 78 \,|\, 1.04 \times 10^{-7} $
        & $ 100 \% \,|\, 78 \,|\, 6.79 \times 10^{-8} $
        \\
        $1$ & $.5$
        & $ 98 \% \,|\, 278 \,|\, 3.27 \times 10^{-2} $
        & $ 100 \% \,|\, 111 \,|\, 1.72 \times 10^{-7} $
        & $ 100 \% \,|\, 111 \,|\, 1.13 \times 10^{-7} $
        \\
        \hline
        $2$ & $0$
        & $ 98 \% \,|\, 125 \,|\, 7.72 \times 10^{-3} $
        & $ 100 \% \,|\, 78 \,|\, 9.71 \times 10^{-8} $
        & $ 100 \% \,|\, 79 \,|\, 6.85 \times 10^{-8} $
        \\
        $2$ & $.5$
        & $ 65 \% \,|\, 306 \,|\, 6.53 \times 10^{-2} $
        & $ 100 \% \,|\, 113 \,|\, 1.7 \times 10^{-7} $
        & $ 100 \% \,|\, 113 \,|\, 1.13 \times 10^{-7} $
        \\
    \end{tabular}
    \caption{%
        Performance of the CBS in optimization mode for the Ackley function in dimension~10.
        See the caption of \cref{tab:adaptation_beta} for a description of the data presented.
    }
    \label{tab:ackley_high_dim}
\end{table}

\begin{table}[htpb]
    \centering
    \begin{tabular}{c|c|c|c|c}
         $b$ & $\alpha$ & $J=100$ & $J=500$ & $J=1000$ \\
        \hline
        $0$ & $0$
        & $ 6 \% \,|\, 222 \,|\, 2.1 \times 10^{-2} $
        & $ 95 \% \,|\, 107 \,|\, 9.69 \times 10^{-8} $
        & $ 100 \% \,|\, 111 \,|\, 6.62 \times 10^{-8} $
        \\
        $0$ & $.5$
        & $ 10 \% \,|\, 331 \,|\, 6.68 \times 10^{-2} $
        & $ 99 \% \,|\, 150 \,|\, 1.88 \times 10^{-7} $
        & $ 100 \% \,|\, 155 \,|\, 1.14 \times 10^{-7} $
        \\
        \hline
        $1$ & $0$
        & $ 4 \% \,|\, 224 \,|\, 4.61 \times 10^{-2} $
        & $ 94 \% \,|\, 108 \,|\, 9.66 \times 10^{-8} $
        & $ 100 \% \,|\, 111 \,|\, 6.97 \times 10^{-8} $
        \\
        $1$ & $.5$
        & $ 0 \% \,|\, 334 \,|\, - $
        & $ 74 \% \,|\, 165 \,|\, 5.75 \times 10^{-7} $
        & $ 99 \% \,|\, 162 \,|\, 1.18 \times 10^{-7} $
        \\
        \hline
        $2$ & $0$
        & $ 0 \% \,|\, 224 \,|\, - $
        & $ 74 \% \,|\, 113 \,|\, 9.82 \times 10^{-8} $
        & $ 99 \% \,|\, 114 \,|\, 7.07 \times 10^{-8} $
        \\
        $2$ & $.5$
        & $ 0 \% \,|\, 333 \,|\, - $
        & $ 19 \% \,|\, 190 \,|\, 1.17 \times 10^{-4} $
        & $ 69 \% \,|\, 189 \,|\, 1.24 \times 10^{-7} $
        \\
    \end{tabular}
    \caption{
        Performance of the CBS in optimization mode for the Rastrigin function in dimension~10.
        See the caption of \cref{tab:adaptation_beta} for a description of the data presented.
    }
    \label{tab:rastrigin_high_dim}
\end{table}

\subsection{Sampling: Low-Dimensional Parameter Space}%
\label{sub:low_dimensional_parameter_space}

We first consider an inverse problem with low-dimensional parameter space that
was first presented in~\cite{MR3400030} and later employed as a test problem in~\cite{HV18,garbuno2020interacting}.
In this problem,
the forward model maps the unknown $(u_1, u_2) \in \real^2$ to the observation $\bigl(p(x_1), p(x_2)\bigr) \in \real^2$,
where $x_1 = 0.25$ and $x_2 = 0.75$ and
where $p(x)$ denotes the solution to the boundary value problem
\begin{equation}
    \label{eq:boundary_value_problem}%
    -\e^{u_1} \derivative*{2}[p]{x} = 1, \qquad x \in [0, 1],
\end{equation}
with boundary conditions $p(0) = 0$ and $p(1) = u_2$.
This problem admits the following explicit solution~\cite{HV18}:
\[
    p(x) = u_2 x + \e^{-u_1} \left( - \frac{x^2}{2} + \frac{x}{2} \right).
\]
We employ the same parameters as in~\cite{garbuno2020interacting}:
the prior distribution is $\normal(0, \sigma^2 {I_2})$ with $\sigma = 10$,
and the noise distribution is $\normal(0, \gamma^2 {I_2})$ with $\gamma = 0.1$.
The observed data is $y = (27.5, 79.7)$.

We now investigate the efficiency of~\eqref{eq:interacting_particle_system} for sampling from the posterior distribution.
To this end, we use the parameters $\alpha = \beta = \frac{1}{2}$ and $J = 1000$ particles.
The ensemble after 100 iterations is depicted in~\cref{fig:1d_elliptic_optimization_noise}, together with the true posterior.
It appears from the figure that the Gaussian approximation of the posterior provided by scheme~\eqref{eq:interacting_particle_system} is close to the true posterior,
and indeed we can verify that the mean and covariance of the true and approximate posterior distributions,
which are given respectively by
\[
    m_p =
    \begin{pmatrix}
        -2.714...  \\
        104.346...
    \end{pmatrix}
    \quad
    C_p =
    \begin{pmatrix}
        0.0129... & 0.0288... \\
        0.0288... & 0.0808...
    \end{pmatrix}
\]
and
\[
    \widetilde{m}_p
    \begin{pmatrix}
        -2.712...  \\
        104.356...
    \end{pmatrix}
    \quad
    \widetilde{C}_p =
    \begin{pmatrix}
        0.0135... & 0.0302... \\
        0.0302... & 0.0829...
    \end{pmatrix},
\]
are fairly close.
\begin{figure}[ht]
    \centering
    \includegraphics[height=0.36\linewidth]{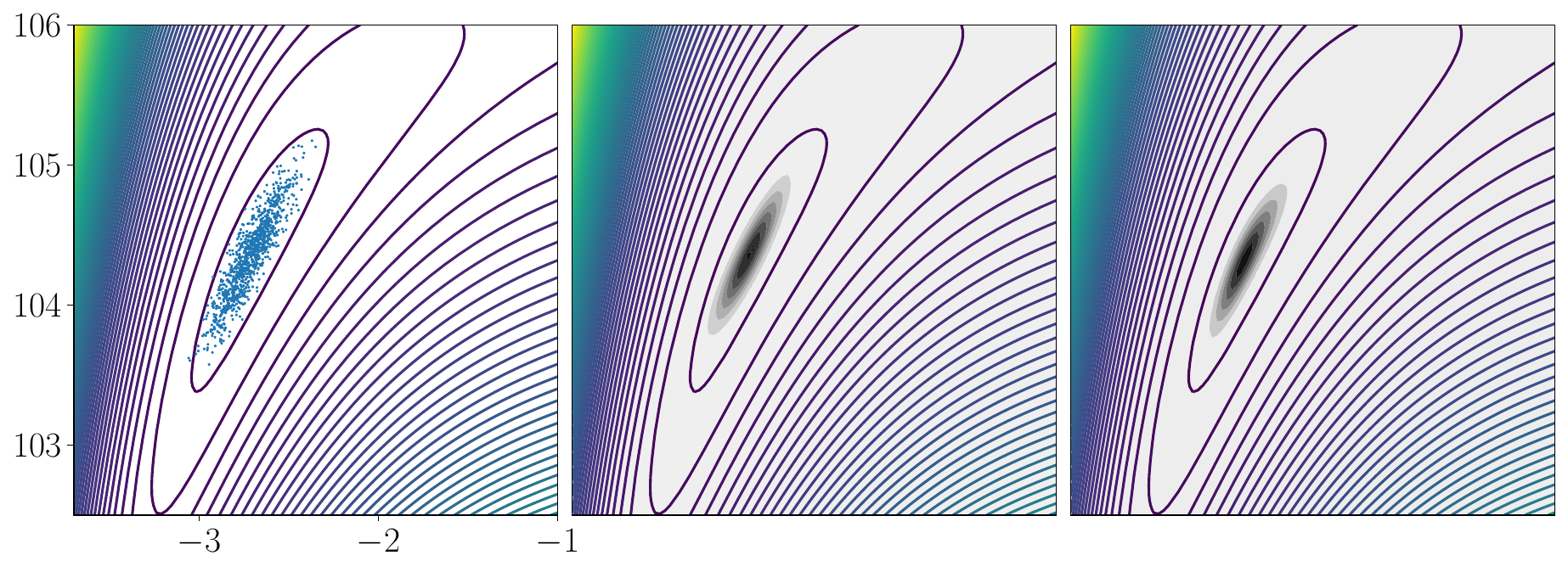}
    \caption{%
    {\bf Left:} Particles at iteration $n = 100$ for fixed $\alpha = \beta = \frac{1}{2}$.
    {\bf Middle:} Gaussian density with the same mean and covariance as the empirical distribution associated with these particles.
    {\bf Right:} True Bayesian posterior.
    }%
    \label{fig:1d_elliptic_optimization_noise}
\end{figure}

\subsection{Sampling: Higher-Dimensional Parameter Space}%
\label{sub:high_dimensional_parameter_space}

In this section,
we consider the more challenging inverse problem of finding the permeability field of a porous medium from noisy pressure measurements in a Darcy flow; for other methods applied to this problem, see~\cite{dashti2013map,garbuno2020interacting,pavliotis21derivative}.
Assuming Dirichlet boundary conditions and scalar permeability for simplicity, we consider the forward model mapping the logarithm of the permeability, denoted by $a(x)$,
to the solution of the PDE
\begin{subequations}
\label{eq:pde_darcy}
\begin{align}
    - \grad \cdot \big( \e^{a(x)} \grad p(x) \big) &= f(x), \qquad &x \in D, \\
    p(x) &= 0, &x \in \partial D.
\end{align}
\end{subequations}
Here $D = [0, 1]^2$ is the domain and $f(x) = 50$ represents a source of fluid.
We assume that noisy pointwise measurements of $p(x)$
are taken at a finite number equispaced points in $D$, given by
\begin{equation*}
    \label{eq:data_points}
    x_{ij} = \left( \frac{i}{M}, \frac{j}{M} \right), \qquad 1 \leq i, j \leq M - 1,
\end{equation*}
and that these measurements are perturbed by Gaussian noise with distribution $\mathcal N(0, \gamma^2 {I_K})$,
where $\gamma = 0.01$ and $K = (M-1)^2$.
For the prior distribution, we employ a Gaussian measure on $\lp{2}{D}$ with mean zero and precision (inverse covariance) operator given by
\[
    \mathcal C^{-1} = (- \laplacian + \tau^2 \mathcal I)^{r},
\]
equipped with Neumann boundary conditions on the space of mean-zero functions.
Here $r$ and $\tau$ are parameters controlling the smoothness and characteristic inverse length scale of samples drawn from the prior,
respectively.
The eigenfunctions and eigenvalues of the covariance operator are
\[
    \psi_{\ell}(x) = \cos \bigl( \pi (\ell_1 x_1 + \ell_2 x_2)\bigr),
    \qquad \lambda_{\ell} = \left(\pi^2 \vecnorm{\ell}^2 + \tau^2\right)^{-r}, \qquad \ell \in \nat^2.
\]
By the Karhunen--Loève (KL) expansion~\cite{pavliotis2011applied},
it holds for any $a \sim \mathcal N(0, C)$ that
\begin{equation}
    \label{eq:karhunen-loeve}
    a(x)  = \sum_{\ell \in \nat^2} \ip{a}{\psi_{\ell}} \psi_{\ell}(x)
    =: \sum_{\ell \in \nat^2} \sqrt{\lambda_{\ell}} \, \theta_{\ell} \, \psi_{\ell}(x)\,,
\end{equation}
for independent coefficients $\theta_{\ell} \sim \mathcal N(0, 1)$, and where $\ip{\dummy}{\dummy}$ denotes the $L^2$-inner product.

In order to approach the problem numerically,
we take as object of inference a finite number of terms $\{\theta_{\ell}\}_{|\ell|_{\infty} \leq N}$ in the KL expansion of the log-permeability,
which may be ordered as a linear vector given an ordering of $\{0, \dotsc, N\}^2$.
The associated prior distribution is given by the finite-dimensional Gaussian $\mathcal N(0, I_d)$, where $d = (N + 1)^2$.
At the numerical level, the forward model is evaluated as follows:
for a given vector of coefficients $\theta \in \real^{d}$,
a log-permeability field is calculated by summation as $a(\dummy; \theta) := \sum_{\vecnorm{\ell}_{\infty} \leq N} \sqrt{\lambda_{\ell}} \, \theta_{\ell} \, \psi_{\ell}(\dummy)$,
and the corresponding solution to~\eqref{eq:pde_darcy} is approximated with a finite element method (FEM).
Linear shape functions over a regular mesh with 100 subdivisions per direction are employed for the finite element solution.

For the numerical experiments presented below,
a true value $\theta^{\dagger} \in \real^{d}$ for the vector of coefficients is drawn from $\mathcal N(0, I_d)$ and
employed in order to construct the true permeability field
which, in turn, is used with the FEM described above in order to generate the data.
In particular, we employ only $(N+1)^2$ terms in the KL expansion of the true permeability.
We note that, with this approach, the resulting random field
should be viewed only as an approximate sample from $\mathcal N(0, \mathcal C)$.
Our aim is to study the performance of CBS,
not the effect of FEM discretization and truncation of the KL series on the solution of the inverse problem.

The ensemble obtained after 100 iterations of CBS with adaptive $\beta$, with $\alpha = 0$ and with $J = 512$
is depicted in~\cref{fig:log_of_permeablitiy_sampling},
along with the marginals of the Gaussian distribution with the same first and second moments as the empirical measure associated with the ensemble.
The particles forming the initial ensembles were drawn independently from $\normal(0, 9 I_d)$.
In order to validate our results,
we use as point of reference the solution provided by the ensemble Kalman sampling method~\cite{garbuno2020interacting},
combined with the adaptive time-stepping scheme from~\cite{Kovachki_2019}.
It appears from the simulations that the agreement between the posterior distribution obtained by CBS and that obtained by ensemble Kalman sampling is very good,
and both approximate posteriors are in good agreement with the true solution.

Using the final ensemble as initial condition for~\eqref{eq:interacting_particle_system} in optimization mode,
and running 50 more iterations of the algorithm, one obtains an approximation of the MAP estimator,
whose associated permeability field is illustrated in \cref{fig:log_of_permeablitiy}.
Here we use as point of comparison the solution provided by the ensemble Kalman inversion approach~\cite{iglesias2013ensemble}.
We present below the values of the first 9 Karhunen--Loève coefficients of (i) the true permeability,
(ii) the MAP estimator obtained by CBS,
and (iii) the MAP estimator obtained by ensemble Kalman inversion:
\begin{align*}
    (u^\dagger)^\t &=
    \begin{pmatrix}
        1.19 &  -2.52 &    2.07 &  -0.97 &  -0.10 &  -1.54 &    0.10 &  -0.00 &    1.01 & \dots
    \end{pmatrix}, \\
    (u_{\rm MAP}^{\rm CBS})^\t &=
    \begin{pmatrix}
        1.17 &  -2.48 &    2.04 &  -0.73 &  -0.23 &  -1.65 &  -0.22 &  -0.02 &    0.23 & \dots
    \end{pmatrix}, \\
    (u_{\rm MAP}^{\rm EKI})^\t &=
    \begin{pmatrix}
        1.17 &  -2.48 &    2.04 &  -0.73 &  -0.23 &  -1.65 &  -0.23 &  -0.02 &    0.24 & \dots
    \end{pmatrix}.
\end{align*}
(All the numbers displayed here were rounded to two decimals.)
The agreement between the MAP estimators as approximated by ensemble Kalman inversion and by our method is very good,
and both vectors are close to the KL series of the logarithm of the true permeability.

\begin{figure}[ht]
    \centering
    \includegraphics[width=\linewidth]{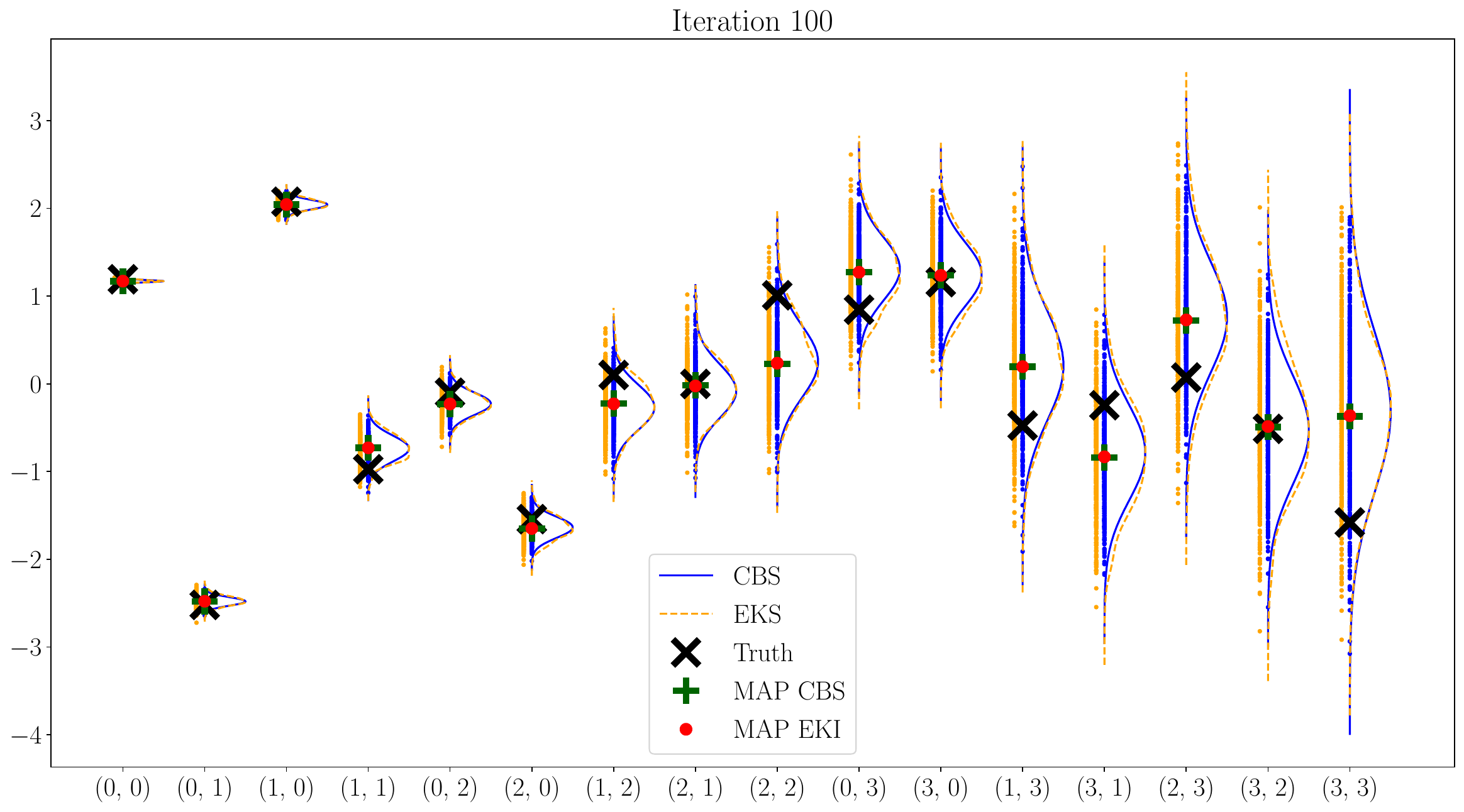}
    \caption{%
        Approximate posterior samples produced by~\eqref{eq:interacting_particle_system} with $\alpha = 0$ and adaptive $\beta$.
        Here, the labels on the $x$-axis denote the multi-indices associated with the KL coefficients of the permeability.
        The (non-normalized) solid curves represent the marginals of the Gaussian distribution whose mean and covariance are calculated from the samples produced by CBS.
        The (non-normalized) dashed curves are the marginal distributions obtained by kernel density estimation using Gaussian kernels from the samples produced by ensemble Kalman sampling~\cite{garbuno2020interacting}.
        The black crosses denote the true values of the KL coefficients, i.e.\ the values employed to generate the data.
    }%
    \label{fig:log_of_permeablitiy_sampling}
\end{figure}

\begin{figure}[ht!]
    \centering
    \includegraphics[width=\linewidth]{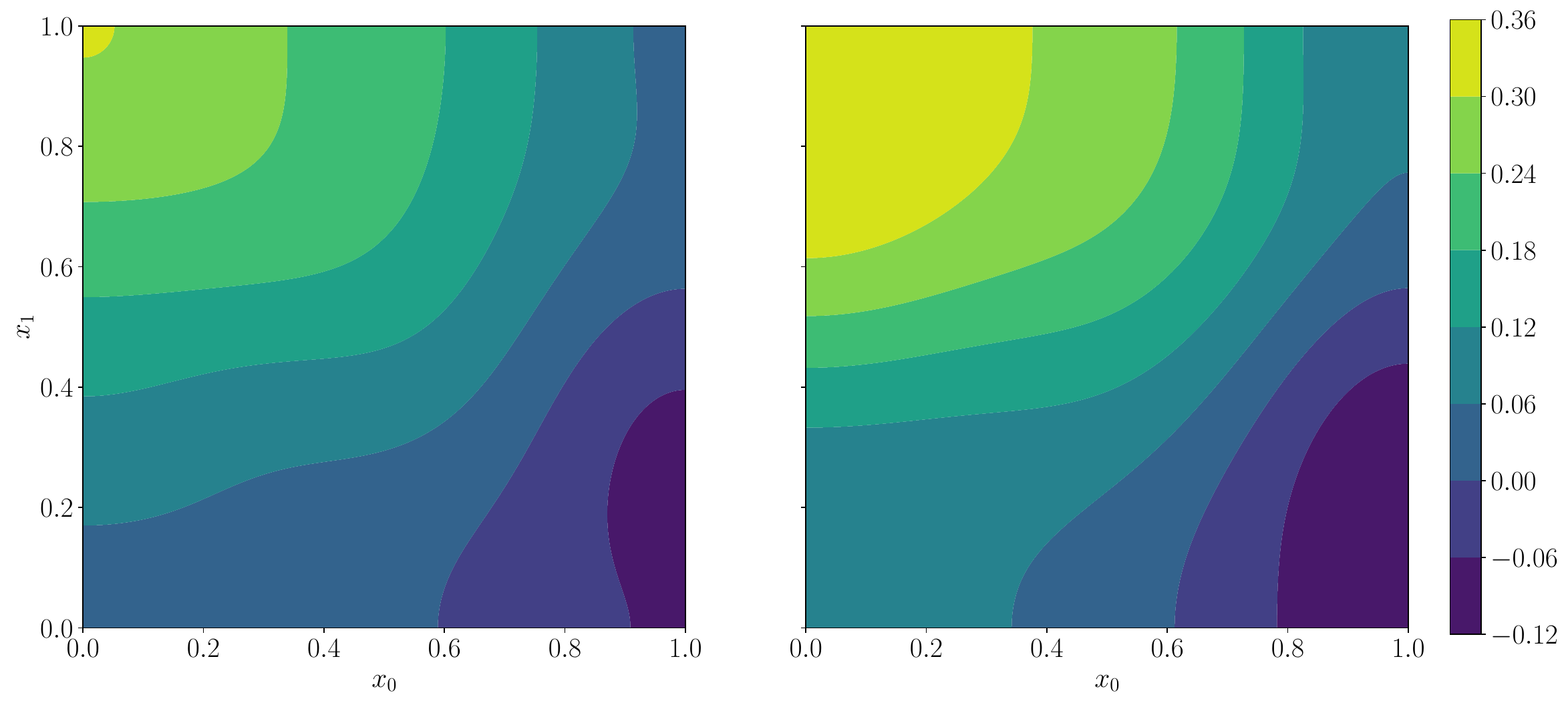}
    \caption{%
        Logarithms of true (left) and approximate permeability profiles (right).
        The approximate permeability profile was constructed from the approximation of the MAP estimator provided by~\eqref{eq:interacting_particle_system}
        with $\alpha = 0$, adaptive $\beta$ and $\lambda = 1$ (optimization mode),
        with $J = 512$ particles.
    }%
    \label{fig:log_of_permeablitiy}
\end{figure}

\subsection{Discussion}
\label{sec:discussion-numerics}
We draw the following conclusions from the numerical experiments presented in this section.
\begin{itemize}
    \item It is crucial to dynamically adapt the parameter $\beta$ during a simulation for our method to be competitive,
        both for optimization and sampling tasks.
        We obtained very good numerical results with the adaptation scheme based on the effective sample size in~\eqref{eq:ess}.

    \item For optimization tasks,
        our method generally requires more particles than CBO~\cite{pinnau2017consensus} in order to consistently find the global minimizer when the number of local minima is large.
        Relatedly, for a given number of particles,
        the probability of converging to (a small neighborhood) of the correct minimizer appears to be better for CBO.

    \item For sampling tasks,
        our numerical experiments suggest that the CBS method is competitive with the ensemble Kalman sampling scheme~\cite{garbuno2020interacting}.
        The number of iterations required by both methods in order to reach equilibrium is of the same order of magnitude,
        and the quality of the posterior approximation appears similar in the test cases we considered.
\end{itemize}

In future work, we will aim to give our proposed $\beta$-adaptation scheme a theoretical footing,
and to investigate other adaptation strategies.
It will also be worthwhile to more precisely compare our method with discretizations of CBO and EKS in terms of computational cost,
especially for PDE-based inverse problems,
where evaluations of the forward model are typically the predominant computational cost.
Finally, it would be interesting, both for optimization and sampling tasks,
to investigate whether ideas from~\cite{jin2018random,2019arXiv190810890N,garbuno2020affine} could be leveraged in order to improve the performance of our method
when the number of particles is of the same order of magnitude as the dimension of the parameter space.

\section{Proof of the Main Results}%
\label{sec:proof_of_the_main_results}

Throughout this section,
for a given $\vect m\in \real^d$ and $\mat C\in\real^{d\times d}$, we will use the notation
\begin{equation}\label{eq:rhobeta}
    \rho_\beta(\theta; \vect m, \mat C)=\frac{1}{Z_{\beta}}e^{-V_{\beta}(\theta)}\,,\qquad
    V_{\beta}(\theta; \vect m, \mat C) := \frac{1}{2}| \theta - \vect m|_C^2 + \beta f(\theta)\,,
\end{equation}
where $Z_{\beta} = Z_{\beta}(\vect m, \mat C)$ is the normalization constant.
When the parameters $\vect m$, $\mat C$ are clear from the context, we will often write just $\rho_{\beta}(\theta)$ and $V_{\beta}(\theta)$ for conciseness.

\subsection{Proof of the Convergence Estimates in the Gaussian Setting}%
\label{sub:gaussian_proofs}

\begin{proof}[Proof of \cref{prop:cv-alpha0-main}]
    Consider first the sampling case $\lambda=(1+\beta)^{-1}$.
    Using the same notation as in the proof of \cref{lem:cv-alpha0}, we have
    \begin{equation}
        \label{eq:corrolary_cn}
        \widetilde C_n^{-1} - \beta^{-1} I_d = \lambda^n \left(\widetilde C_0^{-1} - \beta^{-1} I_d \right).
    \end{equation}
    Rearranging the equation, we obtain
    \[
        \widetilde C_n - \beta I_d = (\widetilde C_n \widetilde C_0^{-1}) \lambda^n \left(\widetilde C_0 - \beta I_d \right).
    \]
    Since $\widetilde C_n$ commutes with $\widetilde C_0^{-1}$ from~\eqref{eq:corrolary_cn},
    the matrix $\widetilde C_n \widetilde C_0^{-1}$ is symmetric and positive definite.
    By~\eqref{eq:corrolary_cn}, the eigenvalues $\{\ell_i\}$ of $\widetilde C_n \widetilde C_0^{-1}$ are of the form
    \[
        \ell_i = \frac{1}{\beta^{-1} m_i + \lambda^n (1 - \beta^{-1} m_i)}
        \leq \max \left\{ \frac{\beta}{m_i}, 1 \right\} \leq \max \left\{1, k_0\right\} ,
    \]
    where $\{m_i\}$ denote the eigenvalues of $\widetilde C_0$. Hence,
    \begin{align*}
        \norm*{C_n-A}_A
        &=\beta^{-1}\norm*{\widetilde C_n - \beta I_d}
        = \lambda^n\beta^{-1} \norm{\left(\widetilde C_n \widetilde C_0^{-1} \right) \left(\widetilde C_0 - \beta I_d\right)}\\
        &\le \lambda^n \norm*{\widetilde C_n \widetilde C_0^{-1}} \beta^{-1}\norm*{\widetilde C_0 - \beta I_d}
        \le \lambda^n \max\left(1, k_0 \right) \norm*{ C_0 -  A}_A\,.
    \end{align*}
    This shows the convergence result of the covariance,
    and the convergence result for the mean follows similarly using \cref{lem:cv-alpha0}:
    \begin{align*}
        |\vect m_{n} - \vect a|_A &= |\widetilde{\vect m}_n|
        = \lambda^n |\widetilde C_n \widetilde C_0^{-1} \widetilde{\vect m}_0|
        \le \lambda^n \norm*{\widetilde C_n \widetilde C_0^{-1}} |\widetilde{\vect m}_0|\\
        &\le \lambda^n \max\left(1,k_0\right) |\widetilde{\vect m}_0|
        =\lambda^n \max\left(1,k_0\right) |\vect m_{0} - \vect a|_A\,.
    \end{align*}

    In the optimization case $\lambda = 1$,
    we have using the definition of $k_0$ that
    \[
        \widetilde C_n^{-1} = \widetilde C_0^{-1} + n I_d \succcurlyeq \left( 1  + \frac{\beta n}{k_0} \right) \widetilde C_0^{-1}
        \qquad \Rightarrow \qquad
        \widetilde C_n \preccurlyeq \left( \frac{k_0}{ k_0  + \beta n} \right)  \widetilde C_0.
    \]
    This shows the convergence result for the covariance,
    which directly implies the convergence estimate for the mean.
\end{proof}

\begin{proof}
    [Proof of \cref{prop:cv-discrete}]
    Notice that the right-hand side of~\eqref{eq:second_moment_quadratic_discrete_tilde} commutes with $\widetilde C_n$,
    so there exists an orthogonal matrix $Q$ such that $\widehat C_n := Q^\t \widetilde C_n Q$ is diagonal for all $n \in \nat$.
    Introducing $\widehat m_n = Q^\t \widetilde m_n$,
    we can check that $\widehat m_n$ and $\widehat C_n$ solve again ~\eqref{eq:moments_quadratic_tilde}.
    Therefore, for all $i \in \{1, \dotsc, d\}$,
    it holds that $(u_{i,n}, v_{i,n}) := \bigl((\widehat m_n)_i, (\widehat C_n)_{ii}\bigr)$ solves the discrete-time equation~\eqref{eq:1Dmoments}
    with initial conditions which depend on $i.$
    The convergence of the solution for the two-dimensional difference equation~\eqref{eq:1Dmoments} is then given by \cref{lemma:convergence_recursion_equations}.
    Note that $v_{i,0}\ge \beta/k_0$ for all $i\in\{1,\dotsc, d\}$,
    because by definition $k_0 = \beta\norm*{\widetilde C_0^{-1}} = \beta\norm*{\widehat C_0^{-1}}$.
    In the sampling case, we have
    \begin{align*}
        |\vect m_{n} - \vect a|_A
    &= |\widehat m_n|
    \le  \max (1, k_0)^{\frac{1}{1+\alpha}} \bigl((1-\alpha)\lambda+\alpha\bigr)^n|\widehat m_0|\\
    &=\max (1, k_0)^{\frac{1}{1+\alpha}}\bigl((1-\alpha)\lambda+\alpha\bigr)^n|\vect m_{0} - \vect a|_A\,.
    \end{align*}
    On the other hand, it holds for any $1\le i\le d$ that
    \begin{align*}
        \abs*{{(\widehat C_n)}_{ii} - \beta}
          &\leq \max (1, k_0)\left((1-\alpha^2)\lambda+\alpha^2\right)^n \abs*{(\widehat C_0)_{ii}- \beta}.
    \end{align*}
    From this, we deduce
    \begin{align*}
        \norm*{\widehat C_n - \beta I_d}
    &\le\max (1, k_0)\left((1-\alpha^2)\lambda+\alpha^2\right)^n \norm*{\widehat C_0- \beta I_d}.
    \end{align*}
    Since $\norm{QMQ^\t} = \norm{M}$ for any symmetric matrix $M$ and orthogonal matrix $Q$,
    we deduce
    \begin{align*}
        \norm*{\widetilde C_n - \beta I_d}
    &\le\max (1, k_0)\left((1-\alpha^2)\lambda+\alpha^2\right)^n \norm*{\widetilde C_0- \beta I_d}.
    \end{align*}
    The statement then follows because
    \(
    \norm{C_n - C_{\infty}}[A] = \beta^{-1} \norm*{\widetilde C_n - \beta I_d}
    \)
    by definition of $\norm{\dummy}[A]$.
    An analogous argument,
    using the estimates~\eqref{eq:convergence_v_opti_1} and~\eqref{eq:convergence_v_opti} in \cref{lemma:convergence_recursion_equations} and noting that
    the function $s \mapsto (s+1)/\bigl(s+1+(1-\alpha^2)n\bigr)$ is strictly decreasing for $s\ge 0$,
    yields the bounds for the optimization case $\lambda=1$.
\end{proof}

\begin{proof}
    [Proof of \cref{prop:cv-cont}]
    Letting $\widetilde {\vect m}(t) = A^{-1/2} (\vect m(t) - \vect a)$ and $\widetilde{\mat C}(t) = \beta A^{-1/2} \mat C(t) A^{-1/2}$,
    we can verify that $\widetilde {\vect m}$ and $\widetilde{\mat C}$ solve
    \begin{subequations}
        \begin{align*}
            \dot {\widetilde {\vect m}} &= - \widetilde{\mat C} \left(I_d + \widetilde{\mat C} \right)^{-1} \widetilde {\vect m}, \\
            \dot {\widetilde {\mat C}}  &= - 2 \widetilde {\mat C} \, \left(I_d + \widetilde{\mat C}\right)^{-1} \, \left( \widetilde {\mat C} -  \left(\frac{1-\lambda}{\lambda}\right)I_d \right).
        \end{align*}
    \end{subequations}
    It is then straightforward to show the result by employing the same reasoning as in the discrete-time case and using~\cref{lemma:cv-alpha1-1D},
    which characterizes the convergence to equilibrium for the following ODE system with $u,v$ scalar functions:
    \begin{align}
        \label{eq:1Dmoments-alph1}
        \dot u = - \left( \frac{v}{1 + v}\right) u,\qquad
        \dot v = -2 \left( \frac{v}{1 + v} \right)\left(v-v_{\infty}\right),
        \qquad v_{\infty} = \frac{1 - \lambda}{\lambda}.
    \end{align}
    We leave the details to the reader.
\end{proof}

\subsection{Proof of the Preliminary Bounds}%
\begin{proof}
    [Proof of \cref{lemma:bound_second_moment}]
    Recall notation \eqref{eq:rhobeta}, and let $\widetilde \theta$ denote the unique global minimizer of $V_{\beta}(\theta)$.
    The function $g$ defined by
    \[
        g(\theta) = f(\theta) - \left( f(\widetilde \theta) + \grad f(\widetilde \theta)^\t (\theta - \widetilde \theta) + \frac{1}{2} |\theta - \widetilde \theta|_{\Lhess^{-1}}^2  \right)
    \]
    is such that $g(\widetilde \theta) = \grad g(\widetilde \theta) = 0$ and $\hessian g(\theta) \succcurlyeq 0$ for all $\theta \in \real^d$,
    by the convexity assumption on the function $f$.
    We denote
    \[
        \widetilde V_{\beta}(\theta):=\frac{1}{2}|\theta|^2+\beta \widetilde g (\theta)\,,\qquad
         \widetilde g(\theta) := g\left(\widetilde \theta + \left( C^{-1} + \beta \Lhess \right)^{-1/2} \theta\right),
     \]
     and define $\widetilde \rho_\beta(\theta)= \frac{1}{\widetilde Z_{\beta}} \e^{-\widetilde V_{\beta}(\theta)}$ where $\widetilde Z_{\beta}$ is the normalization constant.
     By a change of variables, it holds
    \begin{align*}
        \mat C_\beta(\vect m, \mat C) =\mathcal C (\rho_\beta)
        = \left( C^{-1} + \beta \Lhess \right)^{-1/2}  \mathcal{C}(\widetilde \rho_{\beta}) \left( C^{-1} + \beta \Lhess \right)^{-1/2},
    \end{align*}
    It remains to show $\mathcal{C}(\widetilde \rho_{\beta}) \preccurlyeq I_d$ or,
    equivalently, that for every unit vector $\vect a\in \real^d$ it holds
    \begin{align}
        \label{eq:intermediate_equation_bound_above}
        \vect a^\t \mathcal{C}(\widetilde \rho_{\beta}) \vect a=\int_{\real^d} \abs{\vect a^\t \theta- \int_{\real^d} (\vect a^\t \theta) \, \widetilde \rho_\beta(\theta)  \d \theta}^2\widetilde \rho_\beta(\theta)\,\d \theta \le 1,
    \end{align}
    Clearly $\widetilde g(0) = \grad \widetilde g(0) = 0$ and $\hessian \widetilde g \succcurlyeq 0$, so $\hessian \widetilde V_{\beta} \succcurlyeq I_d$.
    Therefore, by the Bakry-Emery criterion~\cite[Theorem 2.10]{MR3509213}, the probability distribution $\d \mu(\theta):=\widetilde \rho_\beta(\theta)\d\theta$
    satisfies a logarithmic Sobolev inequality,
    and thus also a Poincar\'e inequality by~\cite[Proposition 2.12]{MR3509213},
    with the factor on the right equal to 1.
    That is, it holds
    \[
        \forall u \in H^1(\mu), \qquad
        \int_{\real^d} \left| u - \int_{\real^d} u \, \d \mu \right|^2 \d \mu
        \leq \int_{\real^d} \vecnorm{\grad u}^2 \d \mu.
    \]
    Applying this inequality with $u(\theta) = \vect a^\t \theta$ gives~\eqref{eq:intermediate_equation_bound_above}.
\end{proof}

\begin{proof}
    [Proof of \cref{lemma:lower_bound_second_moment}]
    Let $\widetilde \theta$ denote again the unique global minimizer of $V_{\beta}(\theta)$,
    where $V_{\beta}$ is given in \eqref{eq:rhobeta}.
    The function $g$ defined by
    \[
        g(\theta) = f(\theta) - \left( f(\widetilde \theta) + \grad f(\widetilde \theta)^\t (\theta - \widetilde \theta) + \frac{1}{2} |\theta - \widetilde \theta|_{\Uhess^{-1}}^2  \right)
    \]
    is such that $g(\widetilde \theta) = \grad g(\widetilde \theta) = 0$ and $\hessian g(\theta) \preccurlyeq 0$ for all $\theta \in \real^d$,
    by~\cref{assumption:convexity_potential_above}.
    By a change of variables, it holds
    \begin{align*}
        \mat C_\beta(\vect m, \mat C) =\mathcal C (\rho_\beta)
        = \left( C^{-1} + \beta \Uhess \right)^{-1/2}  \mathcal{C}(\widetilde \rho_{\beta}) \left( C^{-1} + \beta \Uhess \right)^{-1/2},
    \end{align*}
    where $\widetilde \rho_\beta(\theta) = \frac{1}{\widetilde Z_{\beta}} \e^{-\widetilde V_{\beta}(\theta)}$,
    with $\widetilde Z_{\beta}$ the normalization constant and
    \[
        \widetilde V_{\beta}(\theta):=\frac{1}{2}|\theta|^2+\beta \widetilde g (\theta)\,,\qquad
        \widetilde g(\theta) := g\left(\widetilde \theta + \left( C^{-1} + \beta \Uhess \right)^{-1/2} \theta\right).
    \]
    It remains to show that $\mathcal C(\widetilde \rho_{\beta}) \succcurlyeq I_d$.
    To this end, let $\bar \theta = \mathcal M(\widetilde \rho_{\beta})$ for brevity and,
    for a given unit vector $\vect a \in \real^d$, let $\partial_{\vect a} = \vect a^\t \grad$ and so $\partial_{\vect a}\widetilde \rho_\beta = -(\partial_{\vect a} \widetilde V_\beta )\widetilde \rho_\beta$.
    By the Cauchy--Schwarz inequality,
    \[
        \int_{\real^d} \partial_{\vect a} \widetilde V_{\beta}(\theta) \, \vect a^\t(\theta - \bar \theta) \, \widetilde \rho_{\beta}(\theta) \, \d \theta
        \leq \sqrt{\int_{\real^d} \abs{\partial_{\vect a}\widetilde V_{\beta}(\theta)}^2 \widetilde \rho_{\beta}(\theta) \, \d \theta} \,\,
        \sqrt{\int_{\real^d} \abs{\vect a^\t(\theta - \bar \theta)}^2 \widetilde \rho_{\beta}(\theta) \, \d \theta}.
    \]
    After rearranging and using integration by parts, this gives
    \begin{align*}
        \vect a^\t \mathcal C (\widetilde \rho_{\beta})  \vect a
        &\geq \frac{\left( \int_{\real^d} \partial_{\vect a}\widetilde V_{\beta}(\theta) \, \vect a^\t(\theta - \bar \theta) \, \widetilde \rho_{\beta}(\theta) \, \d \theta \right)^2}
        {\int_{\real^d} \abs{\partial_{\vect a}\widetilde V_{\beta}(\theta)}^2 \widetilde \rho_{\beta}(\theta) \, \d \theta} \\
        &= \frac{\left( \int_{\real^d}  \vect a^\t(\theta - \bar \theta) \, \partial_{\vect a}\widetilde \rho_{\beta}(\theta) \, \d \theta \right)^2}
        {- \int_{\real^d} \partial_{\vect a}\widetilde V_{\beta}(\theta) \partial_{\vect a}\widetilde \rho_{\beta}(\theta) \, \d \theta}
        = \frac{1}
        {\int_{\real^d} \partial_{\vect a}^2\widetilde V_{\beta}(\theta) \widetilde \rho_{\beta}(\theta) \, \d \theta},
    \end{align*}
    where we denote $\partial_{\vect a}^2h(\theta)= \vect a^T D^2h(\theta) \vect a$.
    Since $\hessian \widetilde V_{\beta} \preccurlyeq I_d$ because $\hessian \widetilde g \preccurlyeq 0$,
    it follows immediately that $\mathcal C (\widetilde \rho_{\beta}) \succcurlyeq I_d$.
\end{proof}

\begin{proof}
    [Proof of \cref{lemma:first_moment_several_dimensions}]
    Let $\widetilde \theta$ denote again the unique global minimizer of $V_{\beta}(\theta)$ given by \eqref{eq:rhobeta}.
    We first show a bound on $\widetilde \theta - \theta_*$.
    By the assumptions on $f$, it holds
    \[
        V_{\beta}(\theta) \geq \frac{1}{2} \vecnorm{\theta - m}_C^2 + \frac{\lhess \beta}{2} |\theta - \theta_*|^2 + \beta f(\theta_*)
        \geq \frac{\lhess \beta}{2} |\theta - \theta_*|^2 + \beta f(\theta_*).
    \]
    Likewise, it holds $V_{\beta}(\theta_*) \leq \frac{1}{2}\norm{C^{-1}} \vecnorm{\theta_* - m}^2 + \beta f(\theta_*)$,
    so we obtain
    \begin{align*}
        V_{\beta}(\theta) - V_{\beta}(\theta_*)
        &\geq \frac{\lhess \beta}{2} |\theta - \theta_*|^2 - \frac{1}{2} \norm{C^{-1}} \vecnorm{\theta_* - m}^2.
    \end{align*}
    In particular, for any $\theta$ such that
    \[
        \vecnorm{\theta - \theta_*} > \left(\frac{\norm{C^{-1}}}{\lhess \beta}\right)^{1/2}\vecnorm{\theta_* - m} =: R,
    \]
    it holds $V_{\beta}(\theta) - V_{\beta}(\theta_*) > 0$,
    implying that $\vecnorm*{\widetilde \theta - \theta_*} \leq R$.
    Now,
    \begin{align}
        \notag%
        \vecnorm*{\vect m_\beta(\vect m, \mat C) - \widetilde \theta}
        &=        \vecnorm*{\mathcal{M}(\rho_\beta) - \widetilde \theta}
        = \abs{\int_{\real} (\theta - \widetilde \theta) \rho_{\beta} (\theta) \, \d \theta} \\
        \label{eq:intermediate_bound_first_moment}
        &\leq \sqrt {\int_{\real^d}  \abs*{\theta - \widetilde \theta}^2 \, \rho_{\beta}(\theta) \, \d \theta}
        = \sqrt{\frac{\int_{\real^d}\abs*{\theta - \widetilde \theta}^2 \, \e^{-V_{\beta}(\theta)} \, \d \theta}
        {\int_{\real^d} \, \e^{-V_{\beta}(\theta)} \, \d \theta}}\,.
    \end{align}
    Since $V_{\beta}(\theta)$ is minimized at $\theta = \widetilde \theta$,
    it holds
    \[
        V_{\beta}(\widetilde \theta) + \frac{1}{2}  \vecnorm*{\theta - \widetilde \theta}[(C^{-1} + \beta \Lhess)^{-1}]^2
        \leq V_{\beta}(\theta) \leq
        V_{\beta}(\widetilde \theta) + \frac{1}{2}  \vecnorm*{\theta - \widetilde \theta}[(C^{-1} + \beta \Uhess)^{-1}]^2
    \]
    Using these inequalities,
    we can obtain an upper bound for the numerator in~\eqref{eq:intermediate_bound_first_moment}
    and a lower bound for the denominator in~\eqref{eq:intermediate_bound_first_moment}, respectively:
    \begin{align*}
        \int_{\real^d}\vecnorm*{\theta - \widetilde \theta}^2 \, \e^{-V_{\beta}(\theta)} \, \d \theta
        &\leq \e^{-V_{\beta}(\widetilde \theta)} \trace \left( \left( C^{-1} + \beta \Lhess \right)^{-1} \right)  \det \left(C^{-1} + \beta \Lhess\right)^{-1/2} (2\pi)^{d/2} \\
        \int_{\real^d} \, \e^{-V_{\beta}(\theta)} \, \d \theta
        &\geq \e^{-V_{\beta}(\widetilde \theta)} \det \left(C^{-1} + \beta\Uhess \right)^{-1/2} (2\pi)^{d/2}.
    \end{align*}
    Combining these inequalities,
    writing the determinant as a product of eigenvalues,
    and using the inequality $\frac{1+x}{1+y}\le \frac{x}{y}$ for all $0<y\le x$, we deduce
    \begin{align*}
        \notag%
        \vecnorm*{\vect m_\beta(\vect m, \mat C) - \widetilde \theta}
        &\leq \sqrt{ \trace \left( \left( C^{-1} + \beta \Lhess \right)^{-1} \right) } \frac{\det \left(C^{-1} + \beta \Uhess\right)^{1/4}}{\det \left(C^{-1} + \beta \Lhess\right)^{1/4}} \\
        &\leq \sqrt{d \norm{ (C^{-1} + \beta \Lhess)^{-1} }} \frac{\det \left(C^{-1} + \beta \uhess I_d\right)^{1/4}}{\det \left(C^{-1} + \beta \lhess I_d\right)^{1/4}}
        \leq  \sqrt{d \norm{ (C^{-1} + \beta \Lhess)^{-1} }}\left(\frac{\uhess}{\lhess}\right)^{d/4}.
    \end{align*}
    The statement then follows from the triangle inequality,
    \[
        \vecnorm{\vect m_\beta(\vect m, \mat C) - \theta_*} \leq \vecnorm*{\theta_* - \widetilde \theta} + \vecnorm*{\vect m_\beta(\vect m, \mat C) - \widetilde \theta},
    \]
    and from the fact that $\norm{ (C^{-1} + \beta \Lhess)^{-1} } \leq \norm{ (C^{-1} + \beta \lhess I_d)^{-1} } \leq \left( \norm{C}^{-1} + \beta \lhess \right)^{-1}$.
\end{proof}

\subsection{\texorpdfstring%
    {Proof of \cref{lemma:collapse_optimization} and \cref{thm:no_bad_convergence}}
    {Proof that the optimization scheme can converge only to the minimizer}}%

\begin{proof}
    [Proof of \cref{lemma:collapse_optimization}]
    Let $x = \alpha^2$ for simplicity.
    It holds by~\eqref{eq:second_moment_discrete} and \cref{lemma:bound_second_moment} that
    \[
        C_{n+1} \preccurlyeq x C_n + (1 - x) (C_n^{-1} + \beta \Lhess)^{-1}.
    \]
    Therefore, introducing $\bar C_n = \beta \Lhess^{1/2} C_{n} \Lhess^{1/2}$, it holds
    \[
        \bar C_{n+1} \preccurlyeq x \bar C_{n} + (1 - x) (\bar C_n^{-1} + I_d)^{-1}.
    \]
    Let $\bar D_{n}$ denote the solution to the discrete-time equation
    \[
        \bar D_{n+1} = x \bar D_{n} + (1 - x) (\bar D_n^{-1} + I_d)^{-1},
        \qquad \bar D_0 = \bar C_0.
    \]
    It is clear that $\bar C_{n} \preccurlyeq \bar D_{n}$ for all $n \geq 0$.
    Indeed, this is true for $n = 0$, and if $\bar C_{n} \preccurlyeq \bar D_{n}$ then
    \begin{align*}
        \bar D_{n+1} - \bar C_{n+1}
        &\succcurlyeq x(\bar D_n - \bar C_n) + (1 - x) \left( (\bar D_n^{-1} + I_d)^{-1} - (\bar C_n^{-1} + I_d)^{-1} \right)\\
        &\succcurlyeq (1 - x) \left( (\bar D_n^{-1} + I_d)^{-1} - (\bar C_n^{-1} + I_d)^{-1} \right)
    \end{align*}
    By~\cite[Proposition V.1.6]{MR1477662}, the function $\real \ni s \mapsto -1/s$ is operator monotone on $(0, \infty)$,
    meaning that if two symmetric positive definite matrices $M_1$ and $M_2$ are such that $M_1 \succcurlyeq M_2$, then it holds that $M_1^{-1} \preccurlyeq M_2^{-1}$.
    Therefore
    \[
        \bar C_n \preccurlyeq \bar D_n
        ~~ \Rightarrow ~~
        \bar C_n^{-1} \succcurlyeq \bar D_n^{-1}
        ~~ \Rightarrow ~~
        \bar C_n^{-1} + I_d \succcurlyeq \bar D_n^{-1} + I_d
        ~~ \Rightarrow ~~
        (\bar C_n^{-1} + I_d)^{-1} \preccurlyeq (\bar D_n^{-1} + I_d)^{-1},
    \]
    which shows that $\bar D_{n+1} - \bar C_{n+1} \succcurlyeq 0$.
    Now note that $\bar D_n$ satisfies the same equation as $\widetilde C_n$ in~\eqref{eq:second_moment_quadratic_discrete_tilde},
    so we deduce by a reasoning similar to the proof of~\cref{prop:cv-discrete} that $\bar D_n$ satisfies
    \[
        \bar D_n \preccurlyeq \left(\frac{\norm{\bar C_0^{-1}}+1}{\norm{\bar C_0^{-1}}+1+(1-x)n}\right) \bar C_0,
    \]
    which implies the statement for the discrete-time case $\alpha\in (0,1)$.
    If $\alpha=0$, then it follows from \cref{prop:cv-alpha0-main} that
        \[
        \bar D_n \preccurlyeq \left(\frac{\norm{\bar C_0^{-1}}}{\norm{\bar C_0^{-1}}+n}\right) \bar C_0.
    \]

    Similarly in the continuous-time case,
    let $\bar C(t) = \beta \Lhess^{1/2} C(t) \Lhess^{1/2}$ and let $\bar D(t)$ denote the solution to
    the equation
    \[
        \derivative*{1}{t}\bar D(t) = -2 \bar D(t) + 2 \bigl(\bar D(t)^{-1} + I_d\bigr)^{-1},
        \qquad \bar D(0) = \bar C(0).
    \]
    We have by~\eqref{eq:second_moment} and \cref{lemma:bound_second_moment} that
    \[
        \derivative*{1}{t} \bar C(t) \preccurlyeq -2 \bar C(t) + 2 \bigl(\bar C(t)^{-1} + I_d\bigr)^{-1}.
    \]
    Using the same reasoning as in the discrete-time case, we derive that
    \[
        \derivative*{1}{t} \bigl(\bar D(t) - \bar C(t)\bigr) \succcurlyeq -2 \bigl( \bar D(t) - \bar C(t)\bigr)
        \qquad\Leftrightarrow\qquad \derivative*{1}{t} \Bigl( \e^{2t}  \bigl(\bar D(t) - \bar C(t)\bigr) \Bigr) \succcurlyeq 0,
    \]
    and so $\bar C(t) \preccurlyeq \bar D(t)$ for all $t \geq 0$.
    Employing a reasoning similar to that in \cref{prop:cv-cont},
    we obtain the statement.
\end{proof}
We show a similar result establishing a lower bound on $C_n$.
\begin{lemma}[Lower bound on the covariance in optimization mode]
    \label{eq:optim_lower_bound_cov}
    Let $\lambda=1$, $\beta>0$ and $\alpha\in [0,1)$, and assume that \cref{assumption:convexity_potential_above} holds.
    Then, for any solution $\{(m_n, C_n)\}_{n \in \nat}$ to~\cref{eq:first_moment_discrete_gaussian,eq:second_moment_discrete_gaussian} with $C_0 \in \pos^d$,
    it holds that
    \begin{equation}
        \label{eq:lower_bound_collapse}
        C_{n} \succcurlyeq
        \left(C_{0}^{-1} + n (1 - \alpha^2)\beta \Uhess\right)^{-1} \,.
    \end{equation}
    Likewise, for any solution $\bigl\{ \bigl(m(t), C(t)\bigr)\bigr\}_{t \in \real_{\geq 0}}$ to~\cref{eq:momentprop-a,eq:momentprop-b} with $C(0) \in \pos^d$,
    the following inequality holds:
    \begin{equation}
        \label{eq:lower_bound_collapse_continuous}
        C(t) \succcurlyeq
        \left(C(0)^{-1} + 2 t \beta \Uhess\right)^{-1} \,.
    \end{equation}
\end{lemma}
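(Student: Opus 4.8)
The plan is to mirror the proof of \cref{lemma:collapse_optimization}, but now using the \emph{lower} bound on the weighted covariance from \cref{lemma:lower_bound_second_moment}, namely $\mat C_\beta(\vect m, \mat C) \succcurlyeq (C^{-1} + \beta \Uhess)^{-1}$, in place of the upper bound. The governing identity is \cref{eq:second_moment_discrete_gaussian} with $\lambda = 1$, which reads $C_{n+1} = \alpha^2 C_n + (1-\alpha^2)\mat C_\beta(\vect m_n, C_n)$. Substituting the lower bound gives $C_{n+1} \succcurlyeq \alpha^2 C_n + (1-\alpha^2)(C_n^{-1} + \beta\Uhess)^{-1}$. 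The central idea is that, rather than iterating a recurrence for $C_n$ directly, I would track the \emph{inverse} covariance $C_n^{-1}$ and show that it grows at most linearly, at a rate controlled by $\beta\Uhess$; inverting the resulting bound yields the stated lower bound on $C_n$.

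For the discrete case, write $x = \alpha^2$, $A = C_n$ and $B = (C_n^{-1} + \beta\Uhess)^{-1}$, so that $C_{n+1} \succcurlyeq xA + (1-x)B$. Applying operator monotonicity of the matrix inverse (\cite[Proposition V.1.6]{MR1477662}, already invoked in the proof of \cref{lemma:collapse_optimization}) gives $C_{n+1}^{-1} \preccurlyeq (xA + (1-x)B)^{-1}$. The key step is then to invoke \emph{operator convexity} of the map $X \mapsto X^{-1}$ on $\pos^d$ (see again \cite{MR1477662}), which yields $(xA + (1-x)B)^{-1} \preccurlyeq x A^{-1} + (1-x)B^{-1}$. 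Since $A^{-1} = C_n^{-1}$ and $B^{-1} = C_n^{-1} + \beta\Uhess$, the right-hand side collapses to $C_n^{-1} + (1-\alpha^2)\beta\Uhess$. Thus $C_{n+1}^{-1} \preccurlyeq C_n^{-1} + (1-\alpha^2)\beta\Uhess$, and a straightforward induction gives $C_n^{-1} \preccurlyeq C_0^{-1} + n(1-\alpha^2)\beta\Uhess$; a final application of operator monotonicity of the inverse produces \cref{eq:lower_bound_collapse}.

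For the continuous-time case I would argue analogously at the level of $\frac{\d}{\d t} C(t)^{-1} = -C^{-1}\dot C\, C^{-1}$. Plugging in the differential inequality $\dot C \succcurlyeq -2C + 2(C^{-1} + \beta\Uhess)^{-1}$ coming from \cref{eq:momentprop-b} and \cref{lemma:lower_bound_second_moment}, and using that conjugation by the positive definite matrix $C^{-1}$ preserves the Loewner order, gives $\frac{\d}{\d t}C^{-1} \preccurlyeq 2C^{-1} - 2C^{-1}(C^{-1} + \beta\Uhess)^{-1}C^{-1}$. The computation at the heart of this case is the noncommutative algebraic identity, obtained by writing $Q = C^{-1} + \beta\Uhess$ and expanding $C^{-1}Q^{-1}C^{-1} = Q - 2\beta\Uhess + \beta\Uhess\, Q^{-1}\beta\Uhess$, which simplifies the right-hand side to $2\beta\Uhess - 2\beta^2 \Uhess\,(C^{-1}+\beta\Uhess)^{-1}\Uhess \preccurlyeq 2\beta\Uhess$, the last inequality holding because $\Uhess(C^{-1}+\beta\Uhess)^{-1}\Uhess \succcurlyeq 0$. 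Hence $\frac{\d}{\d t}C^{-1} \preccurlyeq 2\beta\Uhess$; integrating this matrix differential inequality (testing against an arbitrary fixed vector to reduce to the scalar case) yields $C(t)^{-1} \preccurlyeq C(0)^{-1} + 2t\beta\Uhess$, and inverting gives \cref{eq:lower_bound_collapse_continuous}.

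The main obstacle I anticipate is entirely at the level of matrix analysis rather than the structure of the argument: one must apply operator monotonicity and operator convexity of the inverse only where justified, and in the continuous case the noncommutativity of $C^{-1}$ and $\Uhess$ means the simplification of $C^{-1}(C^{-1}+\beta\Uhess)^{-1}C^{-1}$ must be carried out without assuming that the two matrices commute. Provided these standard facts are handled correctly, the linear-growth bound on $C_n^{-1}$ (resp.\ $C(t)^{-1}$) follows immediately, and the proof is noticeably shorter than that of the matching upper bound, since no comparison sequence $\bar D_n$ is needed.
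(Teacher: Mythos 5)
Your proof is correct, and the overall architecture is the same as the paper's: pass to the inverse covariance, show it grows at most linearly with increment controlled by $\beta\Uhess$, and invert at the end. The difference is in how the key per-step (resp.\ instantaneous) inequality is justified. The paper first conjugates by $\beta^{1/2}\Uhess^{1/2}$, setting $\widehat C_n = \beta \Uhess^{1/2} C_n \Uhess^{1/2}$, so that the lower bound from \cref{lemma:lower_bound_second_moment} becomes $\widehat C_{n+1} \succcurlyeq \alpha^2 \widehat C_n + (1-\alpha^2)(\widehat C_n^{-1} + I_d)^{-1}$; after this rescaling every matrix in sight is a function of $\widehat C_n$, so they all commute and the bound $\widehat P_{n+1} \preccurlyeq \widehat P_n + (1-\alpha^2) I_d$ on $\widehat P_n = \widehat C_n^{-1}$ follows from an essentially scalar computation plus operator anti-monotonicity of the inverse. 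You instead stay in the original variables and invoke operator convexity of $X \mapsto X^{-1}$ on $\pos^d$ to write $\bigl(\alpha^2 C_n + (1-\alpha^2) B\bigr)^{-1} \preccurlyeq \alpha^2 C_n^{-1} + (1-\alpha^2) B^{-1}$ with $B = (C_n^{-1}+\beta\Uhess)^{-1}$, which collapses immediately to $C_{n+1}^{-1} \preccurlyeq C_n^{-1} + (1-\alpha^2)\beta\Uhess$. This is a genuinely different key lemma: convexity is a stronger fact than monotonicity, but it lets you dispense with the rescaling and the commutativity argument entirely, and your discrete proof is shorter for it. (The paper's route has the mild virtue of using only the monotonicity fact it already cites, and of matching the style of the proof of \cref{lemma:collapse_optimization}.) In the continuous case the two arguments are essentially parallel — both differentiate $C^{-1}$ and bound $\tfrac{\d}{\d t} C^{-1} \preccurlyeq 2\beta\Uhess$ — except that the paper's rescaled algebra is commutative while you carry out the noncommutative expansion $C^{-1}Q^{-1}C^{-1} = Q - 2\beta\Uhess + \beta^2 \Uhess Q^{-1} \Uhess$ with $Q = C^{-1}+\beta\Uhess$ directly; your identity and the resulting bound $2\beta\Uhess - 2\beta^2\Uhess Q^{-1}\Uhess \preccurlyeq 2\beta\Uhess$ check out. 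One shared technicality, which the paper also glosses over, is that one should note $C_n \succ 0$ for all $n$ (immediate by induction from the recursion) and that $C(t)$ stays invertible on the time interval considered, so that the inverses being differentiated or compared exist; this is harmless in both write-ups.
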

\begin{proof}
    Let us now use the notation $\widehat C_n = \beta \Uhess^{1/2} C_{n} \Uhess^{1/2}$.
    It holds by \cref{lemma:lower_bound_second_moment}
    \[
        \widehat C_{n+1} \succcurlyeq x \widehat C_{n} + (1 - x) (\widehat C_n^{-1} + I_d)^{-1}.
    \]
    Defining $\widehat P_n = \widehat C_n^{-1}$ for $n \in \{0, 1, \dotsc\}$
    we have
    \[
    \widehat P_{n+1} \preccurlyeq (xI_d + \widehat P_n)^{-1} (I_d + \widehat P_n) \widehat P_n
    = \widehat P_n + (1-x) (I_d + x\widehat P_n^{-1})^{-1} \preccurlyeq \widehat P_n + (1 - x)I_d,
    \]
    so we deduce~\eqref{eq:lower_bound_collapse}.
    For the continuous-time case,
    we employ the notation $\widehat C(t) = \beta \Uhess^{1/2} C(t) \Uhess^{1/2}$ and $\widehat P(t) = \widehat C(t)^{-1}$.
    By~\eqref{eq:momentprop-b} and \cref{lemma:lower_bound_second_moment},
    we have that
    \begin{align*}
        \derivative*{1}{t} \widehat C(t) &\succcurlyeq  -2 \widehat C(t) + 2 (\widehat C(t)^{-1} + I_d)^{-1}\\
        &= -2  (\widehat C(t)^{-1} + I_d)^{-1} \left[( \widehat C(t)^{-1} + I_d)\widehat C(t) - I_d\right]
        = -2 \widehat C(t)\bigl(\widehat C(t)+I_d\bigr)^{-1} \widehat C(t)\,.
    \end{align*}
    Hence,
    \[
 \derivative*{1}{t} \widehat P(t)
 = - \widehat C(t)^{-1}\derivative*{1}{t} \widehat C(t) \widehat C(t)^{-1}
 \preccurlyeq 2 \bigl(I_d + \widehat C(t) \bigr)^{-1} \preccurlyeq 2 I_d,
    \]
    leading to the statement.
\end{proof}

\begin{remark}
    \label{remark:conditioning}
    A simple corollary of~\cref{lemma:collapse_optimization,eq:optim_lower_bound_cov} is that
    the condition number
    $$
    \cond(\mat C_n)= \norm*{\mat C_n} \norm*{\mat C_n^{-1}}
    $$
    of $C_n$ remains bounded as $n \to \infty$,
    and similarly in continuous time.
\end{remark}

In order to prove~\cref{thm:no_bad_convergence}, we first show the following auxiliary result.

\begin{lemma}
    \label{lemma:no_bad_cv}
   Let $\beta>0$ and suppose $f$ satisfies \cref{assumption:convexity_potential,assumption:convexity_potential_above}.
   Then there exists a constant $K=K(\beta, d, \lhess, \uhess)>0$ such that
   the following inequality holds
    \begin{align}
        \notag%
        \vecnorm{\vect m_\beta(\vect m, \mat C) - \vect m + \beta \mat C \grad f(\vect m)}
        &\leq \e^{\beta f(\vect m)}
        \frac{K \beta \vecnorm{C \grad f(\vect m)} \norm{\mat C} + K \norm{\mat C}^{3/2}}
        {1 - K \e^{\beta f(\vect m)} \norm{C}},
    \end{align}
    for all $(\vect m \times C) \in \real^d \times \pos^d$ such that the denominator is positive.
\end{lemma}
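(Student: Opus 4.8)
The plan is to start from an exact integration-by-parts identity for the weighted mean and then estimate the resulting expectation perturbatively in $\norm{\mat C}$. Writing $\vect g := \grad f(\vect m)$ and using the notation $\rho_\beta = \rho_\beta(\dummy; \vect m, \mat C)$ from~\eqref{eq:rhobeta}, I would first integrate the identity $\grad_\theta \rho_\beta = -\rho_\beta \bigl(\mat C^{-1}(\theta - \vect m) + \beta \grad f(\theta)\bigr)$ over $\real^d$. Since \cref{assumption:convexity_potential} forces $V_\beta$ to grow at least quadratically, $\rho_\beta$ has Gaussian tails, all the integrals involved converge, and the boundary term vanishes, yielding the exact \emph{force balance}
\begin{equation*}
    \mat C^{-1}\bigl(\vect m_\beta(\vect m, \mat C) - \vect m\bigr) = -\beta \expect_{\rho_\beta}[\grad f(\theta)],
    \qquad\text{i.e.}\qquad
    \vect m_\beta(\vect m, \mat C) - \vect m = -\beta \mat C\, \expect_{\rho_\beta}[\grad f(\theta)].
\end{equation*}
Subtracting $\beta \mat C \vect g$ from both sides, the quantity to be estimated becomes $E := \vect m_\beta(\vect m, \mat C) - \vect m + \beta \mat C \vect g = -\beta \mat C\, \expect_{\rho_\beta}[\grad f(\theta) - \vect g]$, which already exhibits the crucial cancellation: the leading linear-in-$\mat C$ behaviour of $\vect m_\beta$ is exactly $-\beta \mat C \vect g$, so $E$ measures only the nonlinear correction.

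Next I would bound $\abs{E}$. By \cref{assumption:convexity_potential_above} the map $\grad f$ is $\uhess$-Lipschitz, so $\vecnorm{\grad f(\theta) - \vect g} \leq \uhess \vecnorm{\theta - \vect m}$ and hence $\abs{E} \leq \beta \uhess \norm{\mat C}\, \expect_{\rho_\beta}\vecnorm{\theta - \vect m}$. The key manipulation is to split the displacement about the weighted mean,
\begin{equation*}
    \expect_{\rho_\beta}\vecnorm{\theta - \vect m}
    \leq \vecnorm{\vect m_\beta(\vect m, \mat C) - \vect m} + \expect_{\rho_\beta}\vecnorm{\theta - \vect m_\beta(\vect m, \mat C)}
    \leq \bigl(\abs{E} + \beta \vecnorm{\mat C \vect g}\bigr) + \sqrt{\trace \mat C_\beta(\vect m, \mat C)},
\end{equation*}
where in the last step I used $\vect m_\beta - \vect m = E - \beta \mat C \vect g$ together with Jensen's inequality and $\expect_{\rho_\beta}\vecnorm{\theta - \vect m_\beta}^2 = \trace \mat C_\beta(\vect m, \mat C)$. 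This step is responsible for the shape of the final bound: the term $\beta\vecnorm{\mat C \vect g}$ produces the numerator contribution $\propto \beta \vecnorm{\mat C \vect g}\norm{\mat C}$, the fluctuation term $\sqrt{\trace \mat C_\beta}$ produces the $\norm{\mat C}^{3/2}$ contribution, and the reappearance of $\abs{E}$ on the right is exactly what forces a denominator of the form $1 - (\dots)\norm{\mat C}$.

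To close the estimate I would control $\trace \mat C_\beta(\vect m, \mat C)$. The cleanest route uses \cref{lemma:bound_second_moment}, which gives $\mat C_\beta \preccurlyeq (\mat C^{-1} + \beta \Lhess)^{-1} \preccurlyeq \mat C$ and hence $\trace \mat C_\beta \leq \trace \mat C \leq d\norm{\mat C}$; substituting into $\abs{E} \leq \beta \uhess \norm{\mat C}\bigl(\abs{E} + \beta \vecnorm{\mat C \vect g} + \sqrt{\trace \mat C_\beta}\bigr)$ and solving the resulting self-referential inequality for $\abs{E}$ — valid precisely when the coefficient of $\abs{E}$ is below $1$ — already yields a bound of exactly the advertised shape, namely $\abs{E} \leq \bigl(\beta^2\uhess\vecnorm{\mat C\vect g}\norm{\mat C} + \beta\uhess\sqrt{d}\,\norm{\mat C}^{3/2}\bigr)/(1-\beta\uhess\norm{\mat C})$. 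To recover the \emph{precise} stated form with the prefactor $\e^{\beta f(\vect m)}$, I would instead control $\trace\mat C_\beta$ by a Laplace-type estimate: lower-bounding the normalization $Z_\beta$ through the Gaussian envelope coming from $\hessian f \preccurlyeq \Uhess$ (which factors out $\e^{-\beta f(\vect m)}$ and a Gaussian determinant) and upper-bounding the second-moment integral using $\e^{-\beta f}\le 1$ under the normalization $\min f = 0$, so that the ratio carries a compensating factor exponential in $\beta f(\vect m)$. Absorbing all $\beta$- and $d$-dependent factors into a single $K = K(\beta, d, \lhess, \uhess)$ then produces the claimed inequality.

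The main obstacle I anticipate is bookkeeping rather than any conceptual difficulty: the cancellation, the Lipschitz estimate, and the variance control are routine, but producing \emph{exactly} the prefactor $\e^{\beta f(\vect m)}$ and the denominator $1 - K\e^{\beta f(\vect m)}\norm{\mat C}$ requires the (deliberately non-optimal) Laplace lower bound on $Z_\beta$ together with the implicit normalization $\min f = 0$, under which $\e^{\beta f(\vect m)} \ge 1$. It is worth stressing that with the sharp covariance bound of \cref{lemma:bound_second_moment} one obtains the cleaner denominator $1 - \beta \uhess \norm{\mat C}$ with no exponential prefactor at all, so the stated inequality follows \emph{a fortiori}; the exponential factor is an artifact of the cruder normalization estimate and is harmless for the use of this lemma in the proof of \cref{thm:no_bad_convergence}.
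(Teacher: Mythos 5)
Your proof is correct, and it takes a genuinely different route from the paper's. The paper argues by a pointwise second-order Taylor expansion of $\e^{-\beta f(\theta)}$ about $\theta = \vect m$, integrated against $g(\dummy;\vect m,\mat C)$: this gives $\int g\,\e^{-\beta f} = \e^{-\beta f(\vect m)} + R_0$ and $\int(\theta-\vect m)\,g\,\e^{-\beta f} = -\e^{-\beta f(\vect m)}\beta\mat C\grad f(\vect m) + R_1$ with $\abs{R_0}\leq K\norm{\mat C}$, $\abs{R_1}\leq K\norm{\mat C}^{3/2}$, and the claimed bound follows by forming the ratio, the $\e^{\beta f(\vect m)}$ factors arising from dividing through by $\e^{-\beta f(\vect m)}$. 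You instead combine the exact Stein-type identity $\vect m_\beta(\vect m,\mat C)-\vect m = -\beta\mat C\,\expect_{\rho_\beta}[\grad f]$ (integration by parts for $\rho_\beta$ of~\eqref{eq:rhobeta}, justified by the Gaussian tails forced by \cref{assumption:convexity_potential}), the Lipschitz bound $\norm{\hessian f}\leq\uhess$ from \cref{assumption:convexity_potential,assumption:convexity_potential_above}, the covariance bound of \cref{lemma:bound_second_moment}, and a self-bounding inequality, which yields
\begin{equation*}
    \vecnorm{\vect m_\beta(\vect m,\mat C) - \vect m + \beta\mat C\grad f(\vect m)}
    \leq \frac{\beta^2\uhess\vecnorm{\mat C\grad f(\vect m)}\norm{\mat C} + \beta\uhess\sqrt{d}\,\norm{\mat C}^{3/2}}{1-\beta\uhess\norm{\mat C}}
    \qquad\text{whenever } \beta\uhess\norm{\mat C}<1.
\end{equation*}
This is cleaner and in fact stronger: its constants are independent of the additive normalization of $f$, whereas the paper's constant $M$ (hence its $K$) scales like $\e^{-\beta\min f}$, so the statement with $K=K(\beta,d,\lhess,\uhess)$ is accurate only under a convention such as $\min f = 0$ (indeed, shifting $f$ by a large negative constant leaves the left-hand side invariant but drives the right-hand side to zero, so some such convention is unavoidable). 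Under that convention, $\e^{\beta f(\vect m)}\geq 1$, and your a fortiori deduction with $K=\beta\uhess\sqrt{d}$ is valid: positivity of the stated denominator gives $1-\beta\uhess\norm{\mat C}\geq 1-K\e^{\beta f(\vect m)}\norm{\mat C}>0$, and enlarging the numerator while shrinking the (positive) denominator only weakens the bound—so the sketched "Laplace-type" detour is unnecessary and can be dropped. What the paper's Taylor route buys in exchange is reusability: pushed to higher order, the same expansion drives \cref{prop:convergence_rate_1d_opti}, whereas the Stein identity gives only the first-order statement—which, as you correctly note, is all that is needed for \cref{thm:no_bad_convergence}.
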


\begin{proof}
    By Taylor's theorem, there exists for all $(\theta, \vect m) \in \real^d \times \real^d$ a point $\xi = \xi(\theta, \vect m) \in \real^d$
    on the straight segment between $\theta$ and $\vect m$ such that
    \begin{align*}
        \e^{-\beta f(\theta)} =& \e^{- \beta f(\vect m)} - \e^{-\beta f(\vect m)} \beta \grad f(\vect m) \cdot (\theta - \vect m)  \\
                               &+ \frac{1}{2} \e^{- \beta f(\xi)} \left( \beta^2 \big(\grad f(\xi) \otimes \grad f(\xi) \big) - \beta \hessian f(\xi) \right) : \big( (\theta - \vect m) \otimes (\theta -\vect m) \big) \\
                              =: &\e^{- \beta f(\vect m)} - \e^{-\beta f(\vect m)} \beta \grad f(\vect m) \cdot (\theta - \vect m) + R(\theta; \vect m).
    \end{align*}
    By~\cref{assumption:convexity_potential} and \cref{assumption:convexity_potential_above},
    it is clear that
    \[
        \frac{1}{2} \sup_{\xi \in \real^d} \biggl( \e^{- \beta f(\xi)} \Bigl( \beta^2 \vecnorm{\grad f(\xi)}^2 + \beta \norm{\hessian f(\xi)}[\rm F] \Bigr) \biggr) < \infty,
    \]
    where $\norm{\dummy}[\rm F]$ denote the Frobenius norm.
    Consequently, there exists a constant $M$ such that
    \begin{align}
        \label{eq:remainder}
        \forall (\theta, \vect m) \in \real^d \times \real^d, \qquad
        \abs{R(\theta; \vect m)} \leq  M \abs{\theta - \vect m}^2.
    \end{align}
    We therefore deduce
    \begin{subequations}
    \begin{align}
        \label{eq:laplace_quadratic_0}%
        \int_{\real^d} g(\theta; \vect m, C) \, \e^{-\beta f(\theta)} \, \d \theta &= \e^{-\beta f(\vect m)} + \, R_0(\vect m, C), \\
        \label{eq:laplace_quadratic_1}%
        \int_{\real^d} (\theta - \vect m) \, g(\theta; \vect m, C) \, \e^{-\beta f(\theta)} \, \d \theta &= - \e^{-\beta f(\vect m)} \beta C \grad f(\vect m) + R_1(\vect m, C),
    \end{align}
    \end{subequations}
    with remainder terms satisfying the bounds
    \begin{equation}
        \label{eq:bounds_remainders}
        \forall (\vect m, \mat C) \in \real^d \times \pos^d, \qquad
        \left\{
        \begin{aligned}
            &\abs{R_0(\vect m, C)} \leq K \norm{C}, \\
            &\abs{R_1(\vect m, C)} \leq K \norm{C}^{3/2}.
        \end{aligned}
        \right.
    \end{equation}
    The second bound holds because,
    by~\eqref{eq:remainder} and a change of variable, we have
    \begin{align*}
        \vecnorm{ R_1(\vect m, C) }
        &\leq M\int_{\real^d} \vecnorm{ \theta - \vect m }^3 \, g(\theta; \vect m, \mat C) \, \d \theta \\
        &= M\int_{\real^d} \vecnorm*{C^{1/2}u}^3 \, g(u; \vect 0, I_d) \, \d u
        \leq M\norm*{C}^{3/2} \int_{\real^d} \vecnorm*{u}^3 g(u; \vect 0, I_d) \, \d u.
    \end{align*}
    Using \cref{eq:laplace_quadratic_0,eq:laplace_quadratic_1},
    we obtain
    \[
        \vect m_\beta(\vect m, \mat C) - \vect m = \frac{- \e^{-\beta f(\vect m)} \beta C \grad f(\vect m) + R_1(\vect m, C)}{\e^{-\beta f(\vect m)} + R_0(\vect m, C)}.
    \]
    In view of~\eqref{eq:bounds_remainders},
    it therefore holds
    \begin{align*}
        \notag%
        \vecnorm{\vect m_\beta(\vect m, \mat C) - \vect m + \beta \mat C \grad f(\vect m)}
        &= \abs{\frac{ \beta \mat C \grad f(\vect m) R_0(\vect m, C) + R_1(\vect m, \mat C)}{\e^{-\beta f(\vect m)} + R_0(\vect m, \mat C)}} \\
        &\leq \e^{\beta f(\vect m)}
        \frac{K \beta \abs{C \grad f(\vect m)} \norm{\mat C} + K \norm{\mat C}^{3/2}}
        {\abs{1 + \e^{\beta f(\vect m)} R_0(\vect m, \mat C)}}.
    \end{align*}
    Using the bound on $R_0(\vect m, \mat C)$ given in~\eqref{eq:bounds_remainders},
    we obtain the statement.
\end{proof}

\begin{proof}
    [Proof of~\cref{thm:no_bad_convergence}]
    For a contradiction, assume $\vect m_n \to \hat \theta$ and $\hat \theta\neq \theta_*$, where $\theta_*$ denotes the global minimizer of $f$.
    Then, by the convexity assumption on $f$,
    it holds that $\vecnorm*{\grad f(\hat \theta)} > 0$.
    By~\cref{lemma:collapse_optimization},
    it holds $C_n \to 0$, and by \cref{remark:conditioning},
    the condition number of $C_n$ satisfies $\cond(C_n) \leq \kappa$ for some $\kappa > 0$ and all $n \in \{0, 1, \dotsc\}$.
    By continuity of $\nabla f$ at $\hat \theta$, we have that for any $\varepsilon > 0$, there is $\delta=\delta(\varepsilon) > 0$ such that
    \begin{equation}
        \label{eq:definition_delta}
        \forall \vect m \in B_{\delta}(\hat\theta), \qquad
        \vecnorm{ \grad f(\hat\theta) - \grad f(\vect m) }
        \leq \frac{\varepsilon}{\kappa} \vecnorm{\grad f(\hat\theta)}
    \end{equation}
    Fix $0 < \varepsilon \ll 1$ and let $\delta = \delta(\varepsilon)$.
    From Lemma~\ref{lemma:no_bad_cv},
    there exists $ K> 0$ such that the inequality
     \begin{align}
         \label{eq:inequality_from_lemma}
         \vecnorm{\vect m_\beta(\vect m, \mat C) - \vect m + \beta \mat C \grad f(\vect m)}
         &\leq \frac{K \beta \e^{\beta f(\vect m)} \vecnorm{C \grad f(\vect m)} \norm{\mat C} + K \e^{\beta f(\vect m)} \norm{\mat C}^{3/2}}
         {1 - K \e^{\beta f(\vect m)} \norm{C}}
    \end{align}
    is satisfied for all $(\vect m, \mat C) \in  (\real^d \times \pos^d)$ such that the denominator is positive.
    We claim that there exists $\widetilde c > 0$ such that
    the following inequalities are satisfied
    for all $\vect m \in B_{\delta}(\hat\theta)$
    and all matrices $0 < C \leq \widetilde c I_d$ such that $\cond(C) \leq \kappa$:
    \begin{equation}
        \label{eq:three_ineualities}%
        \left\{
        \begin{aligned}
            & \abs{1 - K \e^{\beta f(\vect m)} \norm{C}} \geq \frac{1}{2}, \\
            & K \beta \e^{\beta f(\vect m)} \norm{C} \leq \frac{\varepsilon}{4}, \\
            & K \e^{\beta f(\vect m)} \norm{C}^{3/2} \leq \frac{\varepsilon}{4} \vecnorm{C \grad f(\vect m)}.
        \end{aligned}
        \right.
    \end{equation}
    Indeed, it suffices to choose
    \[
        \widetilde c = \min \left(
            \frac{I}{2K} ,
            \frac{\varepsilon I}{4 K \beta},
            {\left(\frac{\varepsilon I}{ 4K \kappa } \inf_{\vect m \in B_{\delta}(\hat\theta)} \vecnorm{\grad f(\vect m)}\right)}^2
        \right),
        \qquad \text{ where }
        I = \inf_{\vect m \in B_{\delta}(\hat\theta)} \e^{-\beta f(\vect m)}.
    \]
    Here the arguments of the minimum guarantee that each of the three inequalities in~\eqref{eq:three_ineualities} are satisfied,
    respectively.
    We note that $\inf_{\vect m \in B_{\delta}(\hat\theta)} \vecnorm{\grad f(\vect m)} > 0$ by~\eqref{eq:definition_delta} and the fact that $\varepsilon/\kappa < 1$.
    To justify that the third inequality in~\eqref{eq:three_ineualities} is indeed satisfied for this choice of $\widetilde c$,
    notice that
    \[
        \vecnorm{C \grad f(\vect m)}
        \geq \lambda_{\min}(C) \vecnorm{\grad f(\vect m)}
        \geq \cond(C)^{-1} \vecnorm{\grad f(\vect m)} \norm{C}.
    \]
    Substituting the three inequalities in~\eqref{eq:three_ineualities} into the estimate~\eqref{eq:inequality_from_lemma} from~\cref{lemma:no_bad_cv},
    we obtain that,
    for all $\vect m \in B_{\delta}(\hat\theta)$ and all $0 < C \leq \widetilde c I_d$ such that $\cond(C) \leq \kappa$,
    it holds
    \begin{align}
        \label{eq:bound_remainder}
        \vecnorm{\vect m_\beta(\vect m, \mat C) - \vect m + \beta C \grad f(\vect m)}
        \leq \varepsilon \vecnorm{ C \grad f(\vect m)}.
    \end{align}
    Now since $(\vect m_n, \mat C_n) \to (\hat\theta, 0)$ as $n \to \infty$ by assumption,
    there exists $N$ sufficiently large such that $\vect m_n \in B_{\delta}(\hat\theta)$ and $0 < C_n \leq \widetilde c I_d$ and $\cond(C_n) \leq \kappa$ for all $n \geq N$.
    By~\eqref{eq:equations_moments_discrete},
    we have that for any $n \geq N$ it holds
    \[
        \vect m_{n+1} - \vect m_n = (1 - \alpha) \bigl(\vect m_\beta(\vect m_n, C_n) - \vect m_n\bigr)
        = - (1 - \alpha) \bigl( \beta C_n \grad f(\vect m_n) + \vect r(\vect m_n, C_n) \bigr),
    \]
    where $\vect r(\vect m_n, \mat C_n)$ is the remainder term, bounded by~\eqref{eq:bound_remainder}.
    Taking the inner product of both sides with $\grad f(\hat\theta)$ and using~\eqref{eq:bound_remainder},
    we deduce
    \[
        - (\vect m_{n+1} - \vect m_n)^\t \grad f(\hat\theta)
        \geq \beta (1 - \alpha) \left( \grad f(\hat\theta)^\t C_n \grad f(\vect m_n) \right) - \varepsilon (1-\alpha)\norm{C_n} \vecnorm{\grad f(\vect m_n)} \vecnorm*{\grad f(\hat\theta)} .
    \]
    For any $(\vect x, \vect y) \in \real^d \times \real^d$ with $\vecnorm{\vect x - \vect y} \leq \zeta \vecnorm{\vect x}$,
    it holds
    \begin{align*}
        \vect x^\t C_n \vect y
        &= \vect x^\t C_n \vect x - \vect x^\t C_n \left(\vect x - \vect y\right) \\
        &\geq \vect x^\t C_n \vect x - \sqrt{\vect x^\t C_n \vect x^\t}\sqrt{\left(\vect x - \vect y\right)^\t C_n \left(\vect x - \vect y\right)}
        \geq \lambda_{\min}(C_n) \vecnorm{\vect x}^2 \left(1 - \cond(C_n) \, \zeta \right).
    \end{align*}
    Together with~\eqref{eq:definition_delta}, this implies
    \begin{align*}
        \forall n \geq N, \qquad
        - (\vect m_{n+1} - \vect m_n)^\t \grad f(\hat\theta)
        \geq& \, \beta (1 - \alpha)  (1 - \varepsilon) \lambda_{\min}(C_n) \vecnorm*{\grad f(\hat\theta)}^2 \\
            &\qquad - \varepsilon \left(1 + \frac{\varepsilon}{\kappa}\right)(1-\alpha) \lambda_{\max}(C_n) \vecnorm*{\grad f(\hat\theta)}^2.
    \end{align*}
    By repeating this reasoning with a smaller $\varepsilon$ if necessary,
    we can ensure
    \begin{equation}
        \label{eq:result_large_n}%
        \forall n \geq N, \qquad
        - (\vect m_{n+1} - \vect m_n)^\t \grad f(\hat\theta) \geq K \lambda_{\min}(C_n)\abs*{\grad f(\hat\theta)}^2,
    \end{equation}
    with a constant $K$ independent of $n$.
    Since $\lambda_{\min}(C_n) \geq \frac{\lambda}{n}$ by~\eqref{eq:lower_bound_collapse},
    for some other constant $\lambda$ independent of $n$,
    we conclude that for any $n \geq N$, it holds
    \[
        - (\vect m_{n+1} - \vect m_N)^\t \grad f(\hat\theta) \geq \left(\sum_{s=N}^n\frac{1}{s}\right){K \lambda}\abs{\grad f(\hat\theta)}^2 \xrightarrow[n \to \infty]{} \infty,
   \]
   which is a contradiction because we assumed $\vect m_n \to \hat \theta$.
   A similar reasoning applies in the continuous-time setting.
\end{proof}

\subsection{\texorpdfstring%
    {Proof of \cref{proposition:convergence_optimization,prop:convergence_rate_1d_opti}}
    {Proof of convergence of the optimization scheme in one dimension}}%

For simplicity, we introduce the ``dimensionless'' notation $\widetilde m = \sqrt{\lhess \beta} (m - \theta_*)$ and $\widetilde C = \lhess \beta C$.
We also introduce
\begin{align*}
    &\widetilde m_{\beta}(\widetilde m, \widetilde C)
    = \sqrt{\lhess \beta}\left(m_{\beta}\left(\theta_* + \frac{\widetilde m}{\sqrt{\lhess \beta}}, \frac{\widetilde C}{\lhess \beta}\right)  - \theta_*\right)
    = \sqrt{\lhess \beta}\bigl(m_{\beta}\left(m, C\right)  - \theta_*\bigr),\\
    &\widetilde C_\beta(\widetilde m, \widetilde C)
    = \lhess \beta \,C_\beta\left(\theta_* + \frac{\widetilde m}{\sqrt{\lhess \beta}}, \frac{\widetilde C}{\lhess \beta}\right)
    =\lhess \beta\, C_\beta(m,C)\,.
\end{align*}
We begin by obtaining auxiliary results.
\begin{lemma}
    [Bound on the weighted mean]
    \label{lemma:fine_bound_first_moment}
   Let $d=1$ and $\beta>0$.
   If \cref{assumption:convexity_potential} is satisfied,
    then it holds
    \begin{align}
        \label{eq:bound_first_moment}
        \forall (\widetilde m, \widetilde C) \in \real \times \real_{> 0}, \qquad
        \abs*{\widetilde m_\beta(\widetilde m, \widetilde  C)}
        \leq  \frac{\abs{\widetilde m}}{1 + \widetilde C}  \left( 1 + 2 \frac{\phi\left(\frac{\abs{\widetilde m}}{\sqrt{\widetilde C(1 + \widetilde C)}} \right)}{\frac{\abs{\widetilde m}}{\sqrt{\widetilde C(1 + \widetilde C)}}}  \right),
    \end{align}
    with $\phi$ the probability density function of the standard normal distribution, i.e.~$\phi = g(\dummy; 0, 1)$.
\end{lemma}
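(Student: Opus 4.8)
The plan is to reduce the inequality to a clean statement about the mean of a one–dimensional log-concave measure and then compare this measure with a reference Gaussian, using two monotone-correlation (Chebyshev) inequalities together with standard Gaussian tail estimates.

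First I would normalize. Translating so that $\theta_*=0$ and substituting $\theta=\theta_*+s/\sqrt{\lhess\beta}$, the quantity $\widetilde m_\beta(\widetilde m,\widetilde C)$ becomes the mean $\langle s\rangle_\mu$ of the probability measure $\mu(\d s)\propto \e^{-W(s)}\,\d s$, where
\[
    W(s)=\frac{(s-\widetilde m)^2}{2\widetilde C}+h(s),\qquad h(s):=\beta\bigl(f(\theta_*+s/\sqrt{\lhess\beta})-f(\theta_*)\bigr).
\]
By \cref{assumption:convexity_potential} (in $d=1$, $f''\ge\lhess$) one has $h''\ge1$ and $h(0)=h'(0)=0$, hence $h(s)\ge\tfrac12 s^2$ for all $s$, while $g'(s):=h'(s)-s$ has the same sign as $s$. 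Reflecting $s\mapsto-s$ if necessary, I may assume $\widetilde m\ge0$. Completing the square in $\tfrac{(s-\widetilde m)^2}{2\widetilde C}+\tfrac{s^2}{2}$ lets me factor $\e^{-W}=\e^{-g}\,\e^{-W_{\rm ref}}$, where $W_{\rm ref}$ is the potential of the Gaussian $\nu=\normal(\bar s,\sigma^2)$ with $\bar s=\widetilde m/(1+\widetilde C)$ and $\sigma^2=\widetilde C/(1+\widetilde C)$; note that $\bar s=r\sigma$ for the argument $r$ appearing in the statement, and that the claimed right-hand side equals $\bar s+2\sigma\phi(r)$.

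The next step splits the mean at the origin. Dropping the negative (resp.\ positive) part of the numerator and cross-multiplying (using $N_-D_+\le N_+D_-$), I obtain the two-sided enclosure $E_\mu[s\mid s\le0]\le\langle s\rangle_\mu\le E_\mu[s\mid s\ge0]$. On $[0,\infty)$ the weight $\e^{-g}$ is nonincreasing, so reweighting $\nu|_{[0,\infty)}$ by it lowers the conditional mean, giving $E_\mu[s\mid s\ge0]\le E_\nu[s\mid s\ge0]=\bar s+\sigma\,\phi(r)/\Phi(r)$; since $\Phi(r)\ge\tfrac12$ for $r\ge0$ this is at most $\bar s+2\sigma\phi(r)$, which is exactly the asserted upper bound. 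The symmetric argument on $(-\infty,0]$, where $\e^{-g}$ is nondecreasing, yields $E_\mu[s\mid s\le0]\ge E_\nu[s\mid s\le0]=\bar s-\sigma\,\phi(r)/(1-\Phi(r))$.

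It then remains to check that this lower bound is no smaller than $-(\bar s+2\sigma\phi(r))$, i.e.\ the Mills-ratio inequality $\phi(r)/(1-\Phi(r))\le 2r+2\phi(r)$ for $r\ge0$. I would prove this by writing $\lambda(r):=\phi(r)/(1-\Phi(r))$ and showing $\lambda(r)-r\le\lambda(0)=2\phi(0)$: the identity $\lambda'=\lambda(\lambda-r)$ together with the classical estimate $\lambda(r)\le\tfrac12\bigl(r+\sqrt{r^2+4}\bigr)$ (equivalently $\lambda(\lambda-r)\le1$) shows $\lambda-r$ is nonincreasing, and then the elementary fact that $r+2\phi(r)$ is nondecreasing (its derivative $1-2r\phi(r)$ is positive) gives $2\phi(0)\le r+2\phi(r)$, so $\lambda(r)\le r+2\phi(0)\le 2r+2\phi(r)$. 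Combining the two sides produces $|\langle s\rangle_\mu|\le\bar s+2\sigma\phi(r)$, which is the claim. The only genuinely delicate ingredient is this Gaussian-tail estimate; the correlation inequalities and the truncated-moment formulas are routine.
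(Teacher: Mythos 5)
Your proof is correct, and its first half is essentially the paper's: you split the tilted measure at the minimizer, sandwich its mean between the two conditional means, compare each conditional measure with the corresponding truncated Gaussian via the monotone-reweighting inequality (the paper's \cref{lemma:auxiliary_lemma_tilted_distribution}, applied to the monotone factor $\e^{-g}$ on each half-line), and then use the exact truncated-normal mean formulas. The genuine divergence is in how the unfavorable tail is handled. The paper never fixes the sign of $\widetilde m$: it proves that the truncated-Gaussian mean is nondecreasing in $m$ (a second application of the tilting lemma), replaces $m$ by $\abs{m}$, and obtains the bound on $\mathcal M(\rho_-)$ from the reflected version of the argument for $\mathcal M(\rho_+)$, so that both tails only ever produce the ratio $\phi(r)/\Phi(r)$ with $r\ge 0$, and the single fact $\Phi(r)\ge\tfrac12$ suffices. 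You instead fix $\widetilde m\ge 0$ once and for all, which makes the right tail immediate but leaves the inverse Mills ratio $\lambda(r)=\phi(r)/\bigl(1-\Phi(r)\bigr)$ on the left tail, forcing you to prove the extra estimate $\lambda(r)\le 2r+2\phi(r)$. Your derivation of it is sound: $\lambda'=\lambda(\lambda-r)$, the classical bound $\lambda(r)\le\tfrac12\bigl(r+\sqrt{r^2+4}\bigr)$ is equivalent to $\lambda(\lambda-r)\le 1$ and hence makes $\lambda-r$ nonincreasing, so $\lambda(r)\le r+2\phi(0)$, while $r+2\phi(r)$ is increasing because $2r\phi(r)\le 2\phi(1)<1$, giving $2\phi(0)\le r+2\phi(r)$. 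The trade-off is clear: the paper's route needs no Gaussian tail estimate beyond $\Phi\ge\tfrac12$ but pays with the monotonicity-in-$m$ step; yours avoids that step but leans on the Birnbaum--Komatu hazard-rate bound, which you cite as classical --- legitimately so, and if you want it self-contained it follows from the identity $\phi(r)-r\bigl(1-\Phi(r)\bigr)=\int_r^\infty\bigl(1-\Phi(t)\bigr)\,\d t$ together with the elementary tail bound $1-\Phi(t)\le\phi(t)/t$. Both routes arrive at the identical constant in~\eqref{eq:bound_first_moment}.
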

\begin{proof}
    Let $\rho_+(\theta) := \frac{1}{Z_+}1_{[\theta_*, \infty)}(\theta) \rho_{\beta}(\theta)$ and $\rho_-(\theta) := \frac{1}{Z_-}1_{(-\infty, \theta_*)}(\theta) \rho_{\beta}(\theta)$, where $\rho_\beta$ is defined as in \eqref{eq:rhobeta} and $Z_+$, $Z_-$ are the normalization constants.
    It is clear that
    \[
        \mathcal M(\rho_-) \leq \mathcal M(\rho_{\beta}) \leq \mathcal M(\rho_+) \quad \text{ and } \quad \mathcal M(\rho_-) \leq \theta_* \leq \mathcal M(\rho_+).
    \]
    For example, we have
    \begin{align*}
        \mathcal M(\rho_+) - \theta_*
        &= \frac{\int_{\theta_*}^\infty (\theta-\theta_*) \rho_{\beta}(\theta)}{\int_{\theta_*}^\infty \rho_{\beta}(\theta)}
        \geq \frac{\int_{\theta_*}^\infty (\theta-\theta_*) \rho_{\beta}(\theta)}{\int_{-\infty}^\infty \rho_{\beta}(\theta)}
        \geq \frac{\int_{-\infty}^\infty (\theta-\theta_*) \rho_{\beta}(\theta)}{\int_{-\infty}^\infty \rho_{\beta}(\theta)} = \mathcal M(\rho_{\beta}) - \theta_*.
    \end{align*}
    Now notice that,
    since $f(\theta) = f(\theta_*) + \frac{\lhess}{2} \abs*{\theta - \theta_*}^2 + g(\theta)$ for a function $g$ that is nondecreasing on $[\theta_*, \infty)$ and such that $g(\theta_*) = g'(\theta_*) = 0$,
    it holds by~\cref{lemma:auxiliary_lemma_tilted_distribution} that
    \begin{align*}
        \mathcal M(\rho_+) - \theta_*
        &= \frac{\int_{\theta_*}^{\infty} (\theta - \theta_*) \exp \left( - \frac{(\theta-m)^2}{2 C} - \beta f(\theta)  \right) \d \theta} {\int_{\theta_*}^{\infty} \exp \left( - \frac{(\theta-m)^2}{2 C} - \beta f(\theta) \right) \d \theta} \\
        &\leq \frac{\int_{\theta_*}^{\infty} (\theta - \theta_*) \exp \left( - \frac{(\theta-m)^2}{2 C} - \frac{\beta \lhess}{2} \abs*{\theta -\theta_*}^2  \right) \d \theta} {\int_{\theta_*}^{\infty} \exp \left( - \frac{(\theta-m)^2}{2 C} - \frac{\beta\lhess}{2} \abs*{\theta - \theta_*}^2  \right) \d \theta}.
    \end{align*}
    Completing the square in the last expression,
    we obtain
    \begin{align*}
        \mathcal M(\rho_+) - \theta_*
            \leq \frac{\int_{\theta_*}^{\infty} (\theta - \theta_*) \exp \left( - \frac{1}{2} \left(\frac{1}{C} + \beta \lhess \right) \left(  \theta - \frac{ \frac{m}{C}  + \lhess \beta \theta_* }{\frac{1}{C} + \lhess \beta} \right)^2  \right) \d \theta}
            {\int_{\theta_*}^{\infty} \exp \left( - \frac{1}{2} \left(\frac{1}{C} + \beta \lhess \right) \left(  \theta - \frac{ \frac{m}{C}  + \lhess \beta \theta_* }{\frac{1}{C} + \lhess \beta} \right)^2  \right) \d \theta}
            =: D(m, C).
    \end{align*}

    We claim that $D(\dummy, C)$,
    which is the mean of a truncated Gaussian up to the additive constant $\theta_*$,
    is a nondecreasing function for fixed $C$.
    Indeed, let us introduce the function $\mu: (m, C) \mapsto \frac{ m/C  + \lhess \beta \theta_* }{1/C + \lhess \beta}$.
    Since $\mu(m, C)$ is an increasing function of $m$ for fixed $C$,
    it is sufficient to show that the function
    \begin{equation}
        \label{eq:intermediate_mu}
        \mu \mapsto
        \frac{\int_{\theta_*}^{\infty} (\theta - \theta_*) \exp \left( - \frac{1}{2} \left(\frac{1}{C} + \beta \lhess \right) \left(  \theta - \mu \right)^2 \right) \d \theta}
        {\int_{\theta_*}^{\infty} \exp \left( - \frac{1}{2} \left(\frac{1}{C} + \beta \lhess \right) \left(  \theta - \mu \right)^2  \right) \d \theta}
    \end{equation}
    is nondecreasing for fixed $C$.
    To this end, assume that $\mu_1 \leq \mu_2$ and note that
    \begin{align*}
     &\exp \left(-\frac{1}{2} \left(\frac{1}{C} + \beta \lhess \right) |  \theta - \mu_1 |^2  \right)  \\
     &\qquad \qquad \propto \exp \left(-\frac{1}{2}\left(\frac{1}{C} + \beta \lhess \right) |  \theta - \mu_2 |^2  \right)
     \exp \left(- \left(\frac{1}{C} + \beta \lhess \right) \left( \mu_2 - \mu_1 \right) \theta  \right).
    \end{align*}
    Since the second factor is decreasing for $\theta \in [\theta_*, \infty)$,
    we deduce by~\cref{lemma:auxiliary_lemma_tilted_distribution} that the function defined in~\eqref{eq:intermediate_mu} is nondecreasing,
    and therefore $D(\dummy, C)$ is also nondecreasing.

    Using the standard formula for the mean of a truncated normal distribution,
    we deduce
    \[
        D(m, C) =
        \mu(m,C) - \theta_* +
        \frac{\phi\left(\sqrt{\frac{1}{C} + \lhess \beta}\bigl(\theta_* - \mu(m, C)\bigr)\right)}
        {1 - \Phi\left(\sqrt{\frac{1}{C} + \lhess \beta}\bigl(\theta_* - \mu(m, C)\bigr)\right)} \frac{1}{\sqrt{\frac{1}{C} + \lhess \beta}},
    \]
    where $\Phi$ denotes the CDF of the standard normal distribution.
    Using the notation introduced at the beginning of this section and the fact that $\Phi(x) + \Phi(-x) = 1$, this rewrites
    \[
        \sqrt{\lhess \beta}D(m, C)
        = \frac{1}{1 + \widetilde C}
        \left( \widetilde m + \frac{\phi\left(\frac{\widetilde m}{\sqrt{\widetilde C(1 + \widetilde C)}} \right)}{\Phi\left(\frac{\widetilde m}{\sqrt{\widetilde C(1 + \widetilde C)}} \right)} \sqrt{\widetilde C(1 + \widetilde C)} \right)
        =: \widetilde D(\widetilde m, \widetilde C).
    \]
    Since $D(\dummy, C)$ is nondecreasing,
    we deduce that
    \[
        \sqrt{\lhess \beta}\bigl(\mathcal M(\rho_+) - \theta_*\bigr)
        \leq \widetilde D(|\widetilde m|, \widetilde C).
    \]
    Employing the same reasoning for $\mathcal M(\rho_-)$,
    we obtain similarly
    \[
        \sqrt{\lhess \beta}(\mathcal M(\rho_-)-\theta_*)
        \geq -\widetilde D(|\widetilde m|, \widetilde C).
    \]
    Using the fact that $\Phi(x) \geq \Phi(0) = 1/2$ for all $x \geq 0$, and
    \[
     \sqrt{\lhess \beta}(\mathcal M(\rho_-)-\theta_*)\le
     \widetilde m_{\beta}(\widetilde m, \widetilde C)
     \le \sqrt{\lhess \beta}\bigl(\mathcal M(\rho_+) - \theta_*\bigr),
    \]
    we obtain the statement.
\end{proof}

In order to establish \cref{prop:convergence_rate_1d_opti},
we prove the following technical result.
\begin{lemma}
    [Bound on the ratio of weighted moments]
    \label{lemma:auxiliary_result}
    Let $d=1$ and $\beta>0$.
    If~\cref{assumption:convexity_potential,assumption:convexity_potential_above} are satisfied,
    then there exists for all $\varepsilon \in (0, 1)$ a constant
    $\gamma = \gamma(\lhess, \uhess, \varepsilon) > 0$ such that
    \[
        \forall  (\widetilde m, \widetilde C) \in \real \times \real_{>0}, \qquad
        \frac{\abs{\widetilde m_{\beta}(\widetilde m, \widetilde C)}}{\widetilde C_{\beta}(\widetilde m, \widetilde C)^{\frac{1}{r}}} \leq \max \left( \gamma,  \frac{\abs{\widetilde m}}{\widetilde C^{\frac{1}{r}}}\right),
    \]
    where $r = \max\left(\frac{\uhess}{\lhess}, (2 + \varepsilon)\right)$.
\end{lemma}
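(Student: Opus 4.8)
The plan is to reduce the two-variable estimate to a single scalar inequality in the rescaled covariance $\widetilde C$, and then to produce $\gamma$ by an asymptotic analysis. Throughout I work in the dimensionless variables, assume $\widetilde m \geq 0$ without loss of generality, and set $\kappa := \uhess/\lhess \geq 1$. First I would feed the upper bound on the weighted mean from \cref{lemma:fine_bound_first_moment} together with the lower bound on the weighted covariance from \cref{lemma:lower_bound_second_moment}, rescaled to the dimensionless variables as $\widetilde C_\beta \geq \widetilde C/(1+\kappa\widetilde C)$, into the quotient $G := |\widetilde m_\beta|/\widetilde C_\beta^{1/r}$. A short computation in which the factor $\widetilde m$ cancels yields the clean bound
\[
G \leq b(\widetilde C)\,\bigl(s + 2\phi(s)\bigr), \qquad s = \frac{\widetilde m}{\sqrt{\widetilde C(1+\widetilde C)}}, \qquad b(\widetilde C) = \frac{(1+\kappa\widetilde C)^{1/r}\,\widetilde C^{1/2 - 1/r}}{(1+\widetilde C)^{1/2}}.
\]
At the same time the target quantity rewrites as $\widetilde m/\widetilde C^{1/r} = s\,\psi(\widetilde C)$ with $\psi(\widetilde C) = \widetilde C^{1/2-1/r}(1+\widetilde C)^{1/2}$, and one checks $b = h\psi$, where $h(\widetilde C) = (1+\kappa\widetilde C)^{1/r}/(1+\widetilde C)$. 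The degenerate case $\widetilde m = 0$, corresponding to $s=0$, is included by continuity.

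Next I record two elementary facts. First, $h \leq 1$: this is the inequality $(1+\kappa\widetilde C)^{1/r} \leq 1 + \widetilde C$, i.e. $(1+\widetilde C)^r \geq 1 + \kappa\widetilde C$, which follows from Bernoulli's inequality $(1+\widetilde C)^r \geq 1 + r\widetilde C$ together with $r \geq \kappa$, and is strict for $\widetilde C > 0$. Second, $s \mapsto s + 2\phi(s)$ is increasing, since its derivative $1 - 2s\phi(s)$ is bounded below by $1 - 2\max_{s\geq 0} s\phi(s) = 1 - 2\phi(1) > 0$. Using these facts, I would split the desired inequality $b(s+2\phi(s)) \leq \max(\gamma, s\psi)$ according to whether $s \leq \gamma/\psi$ or $s > \gamma/\psi$. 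In the first branch $\max = \gamma$ and, by monotonicity, the worst case is at $s = \gamma/\psi$; in the second branch $\max = s\psi$ and, bounding $\phi(s) \leq \phi(\gamma/\psi)$ and $s > \gamma/\psi$, the worst case is again controlled at $s = \gamma/\psi$. Both branches then collapse to the \emph{same} scalar condition
\[
\frac{2\,b(\widetilde C)}{1 - h(\widetilde C)}\,\phi\!\left(\frac{\gamma}{\psi(\widetilde C)}\right) \leq \gamma \qquad \text{for all } \widetilde C > 0.
\]

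It remains to exhibit $\gamma$ satisfying this for every $\widetilde C$, which I would do by studying the left-hand side asymptotically. As $\widetilde C \to \infty$ one has $b \to \kappa^{1/r}$, $h \to 0$ and $\psi \to \infty$, so the left-hand side tends to $2\kappa^{1/r}\phi(0)$, a finite constant independent of $\gamma$. The delicate regime, and the main obstacle, is $\widetilde C \to 0$: there $h \to 1$ so $1 - h \to 0$ and $b \to 0$, so that $2b/(1-h)$ blows up, but only polynomially in $\widetilde C$ (whether $1-h \sim c\widetilde C$ or $\sim c\widetilde C^2$, according as $\kappa < r$ or $\kappa = r$). Against this, $\psi \sim \widetilde C^{1/2-1/r} \to 0$, so $\phi(\gamma/\psi) \sim \exp\bigl(-\tfrac12\gamma^2\,\widetilde C^{-(1-2/r)}\bigr)$ decays faster than any power of $\widetilde C$ precisely because $1 - 2/r > 0$. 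This is exactly where the hypothesis $r \geq 2 + \varepsilon$ (rather than merely $r \geq \uhess/\lhess$) is needed: it forces the exponent $1 - 2/r$ to be strictly positive, so the Gaussian tail annihilates the polynomial blow-up and the left-hand side tends to $0$. Being continuous on $(0,\infty)$ with finite limits at both ends, and bounded uniformly in $\gamma \geq 1$ by a finite constant $M_*$ (split the supremum over $\widetilde C \geq 1$, where every factor is bounded, and $\widetilde C < 1$, where the exponential estimate applies with $\gamma \geq 1$), the quantity has finite supremum $M_*$. Choosing $\gamma = \max(1, M_*)$ then secures the scalar condition and completes the proof; the only genuine subtlety is the $\widetilde C \to 0$ interplay, resolved entirely by the strict gap $r > 2$.
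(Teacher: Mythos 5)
Your proposal is correct, and its skeleton coincides with the paper's: the same two inputs (\cref{lemma:lower_bound_second_moment,lemma:fine_bound_first_moment}), the same case split on whether $\abs{\widetilde m}$ exceeds $\gamma\,\widetilde C^{1/r}$ (your $s \lessgtr \gamma/\psi$), the same monotonicity of $x \mapsto x + 2\phi(x)$, and the same reduction to a one-variable inequality in $\widetilde C$ — indeed your scalar condition $\frac{2b}{1-h}\,\phi(\gamma/\psi) \leq \gamma$ is an algebraic rearrangement of the inequality $h(\widetilde C; \gamma) \leq 1$ that the paper isolates as \cref{lemma:technical_inequality_opti} (you keep $\kappa = \uhess/\lhess$ where the paper majorizes it by $r$; immaterial). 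Where you genuinely depart is in how that scalar inequality is established. The paper proves it quantitatively: it splits $\widetilde C \geq 1$ and $\widetilde C < 1$, uses concavity of $\widetilde C \mapsto (1+r\widetilde C)^{1/r}$ in the first regime, and in the second takes logarithms, bounds $\frac{1}{r}\log(1+r\widetilde C) - \log(1+\widetilde C) \leq -\frac{1}{4}\bigl(\frac{r-1}{r+1}\bigr)\widetilde C^2$, and kills the Gaussian factor with the polynomial tail bound $\phi(x) \leq K(1+x)^{-4r/(r-2)}$, tracking the $\gamma$-dependence explicitly throughout. You instead argue softly: the left-hand side is continuous in $\widetilde C$, tends to $0$ as $\widetilde C \to 0$ (exponential beats polynomial, which is exactly where $r \geq 2+\varepsilon$ enters — the same place it enters the paper's argument), tends to the finite constant $2\kappa^{1/r}\phi(0)$ as $\widetilde C \to \infty$, and is non-increasing in $\gamma$; so you bound it uniformly for $\gamma \geq 1$ by its supremum $M_*$ at $\gamma = 1$ and then take $\gamma = \max(1, M_*)$. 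This monotonicity-in-$\gamma$ decoupling is a genuine simplification: it dispenses with all explicit constants and the tail estimate, at the cost of producing a non-constructive $\gamma$ — which is all the lemma requires.
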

\begin{proof}
    Using \cref{lemma:lower_bound_second_moment} and \cref{lemma:fine_bound_first_moment}, we deduce
    \begin{align}
        \label{eq:bound_cone}
        \abs{\frac{\widetilde m_{\beta}(\widetilde m, \widetilde C)}{\widetilde C_{\beta}(\widetilde m, \widetilde C)^{\frac{1}{r}}}}
        \leq \abs{\frac{\widetilde m_{\beta}(\widetilde m, \widetilde C)}{\left(\frac{\widetilde C}{1 + \frac{\uhess}{\lhess} \widetilde C}\right)^{\frac{1}{r}}}}
        \leq
        \frac{\abs{\widetilde m}}{\widetilde C^{\frac{1}{r}}} \frac{\left( 1 + r\widetilde C \right)^{\frac{1}{r}}}{1 + \widetilde C}  \left( 1 +2 \frac{\phi\left(\frac{|\widetilde m|}{\sqrt{\widetilde C(1 + \widetilde C)}} \right)}{\frac{|\widetilde m|}{\sqrt{\widetilde C(1 + \widetilde C)}}} \right)
        =: B(\widetilde m, \widetilde C).
    \end{align}
    If $\abs{\widetilde m} \geq \gamma\widetilde C^{1/r}$ for some $\gamma > 0$,
    then it holds that
    \begin{align}
        \label{eq:intermediate_bound}
        \frac{\left( 1 + r\widetilde C \right)^{\frac{1}{r}}}{1 + \widetilde C}
        \left( 1 +2 \frac{\phi\left(\frac{|\widetilde m|}{\sqrt{\widetilde C(1 + \widetilde C)}} \right)}{\frac{|\widetilde m|}{\sqrt{\widetilde C(1 + \widetilde C)}}} \right)
        \leq
        \frac{\left(1 + r \widetilde C \right)^{\frac{1}{r}}} {1 + \widetilde C}
        \left( 1 + 2 \frac{ \phi\left(\frac{\gamma\widetilde C^{\frac{1}{r}}}{\sqrt{\widetilde C(1 + \widetilde C)}} \right) }{\frac{\gamma\widetilde C^{\frac{1}{r}}}{\sqrt{\widetilde C(1 + \widetilde C)}} } \right)
    \end{align}
    since $\phi(z)/z$ is non-increasing.
    We claim that, for $\gamma$ sufficiently large, the right-hand side of this inequality is bounded from above by 1 for all $\widetilde C > 0$.
    Checking this claim is technical but not difficult,
    so we postpone the proof to~\cref{lemma:technical_inequality_opti} in the appendix.
    For such a value of $\gamma$,
    it holds by~\eqref{eq:bound_cone} that if $\abs{\widetilde m} \geq \gamma  \widetilde C^{1/r}$,
    then
    \[
        \frac{\abs{\widetilde m_{\beta}(\widetilde m, \widetilde C)}}{\widetilde C_{\beta}(\widetilde m, \widetilde C)^{\frac{1}{r}}} \leq \frac{\abs{\widetilde m}}{\widetilde C^{\frac{1}{r}}}.
    \]

    On the other hand, since $B(\dummy, \widetilde C)$ is increasing for fixed $\widetilde C$
    (because the function $x \mapsto x + 2 \phi(x)$ is increasing),
    it holds that, if $\abs{\widetilde m} \leq \gamma  \widetilde C^{1/r}$,
    then $B(\abs{\widetilde m}, \widetilde C) \leq B(\gamma  \widetilde C^{1/r}, \widetilde C) \leq \gamma$ by \cref{lemma:technical_inequality_opti} again,
    which proves the result.
\end{proof}

\begin{proof}
    [Proof of \cref{proposition:convergence_optimization}]
    Let us first assume that $\alpha=0$.
    Then, by \eqref{eq:equations_moments_discrete_gaussian},
    since the moments of successive iterates are related by
    \[
        \widetilde m_{n+1} = \widetilde m_{\beta}(\widetilde m_n, \widetilde C_n) \qquad \text{ and } \qquad
        \widetilde C_{n+1} = \widetilde C_{\beta}(\widetilde m_n, \widetilde C_n)
    \]
    for this value of $\alpha$,
    it holds by \cref{lemma:auxiliary_result} that
    \begin{equation}
       \frac{\abs{\widetilde m_{n+1}}}{\widetilde C_{n+1}^{1/r}} \leq \max \left( \gamma,  \frac{\abs{\widetilde m_n}}{\widetilde C_n^{1/r}}\right) \leq \dotsc \leq \max \left( \gamma,  \frac{\abs{\widetilde m_0}}{\widetilde C_0^{1/r}}\right),
       \label{eq:aux1}
    \end{equation}
    which gives directly the convergence of $\widetilde m_{n}$ to 0, in view of the fact that $\widetilde C_n \to 0$ by \cref{lemma:collapse_optimization}.
    In the case where $\alpha \in (0,  1)$,
    the moments of successive iterates are related by the equations
    \begin{align*}
        \widetilde m_{n+1} &= (1 - \alpha) \widetilde m_{\beta}(\widetilde m_n, \widetilde C_n)  + \alpha \widetilde m_n, \\
        \widetilde C_{n+1} &= (1 - \alpha^2) \widetilde C_{\beta}(\widetilde m_n, \widetilde C_n) + \alpha^2 \widetilde C_n,
    \end{align*}
    so clearly
    \begin{align*}
        \abs{\widetilde m_{n+1}}
        &\leq (1 - \alpha) \abs{\widetilde m_{\beta}(\widetilde m_n, \widetilde C_n)}  + \alpha \abs{\widetilde m_n} \\
        &\leq (1 - \alpha) \abs{\widetilde m_{\beta}(\widetilde m_n, \widetilde C_n)}  + \alpha \max \left( \abs{\widetilde m_n}, \gamma \widetilde C_n^{1/r} \right)
        =: \widehat m_{n+1}.
    \end{align*}
    We will now use the technical~\cref{lemma:auxiliary_convergence_opti} in the appendix with parameters
    \[
        (\widehat C_{\beta}, \widehat C_n, \widehat m_{\beta}, \widehat u)
        = \left(\widetilde C_{\beta}(\widetilde m_n, \widetilde C_n), \widetilde C_n, \abs{\widetilde m_{\beta}(\widetilde m_n, \widetilde C_n)}, \max \left( \abs{\widetilde m_n}, \gamma \widetilde C_n^{1/r} \right)\right).
    \]
    Using \cref{lemma:auxiliary_result},
    we check that the assumptions of \cref{lemma:auxiliary_convergence_opti} are satisfied:
    \[
        \frac{\widehat m_{\beta}}{\widehat C_{\beta}^{1/r}}
        = \frac{\abs{\widetilde m_{\beta}(\widetilde m_n, \widetilde C_n)}}{\widetilde C_{\beta}^{1/r}}
        \leq \max \left( \gamma, \frac{\abs{\widetilde m_n}}{\widetilde C_n^{1/r}}\right)
        = \frac{\widehat u}{\widehat C_n^{1/r}},
    \]
    so we deduce that, for $q = 2r$,
    \[
        \frac{\abs{\widetilde m_{n+1}}}{\widetilde C_{n+1}^{1/q}}
        \leq \frac{\widehat m_{n+1}}{\widetilde C_{n+1}^{1/q}}
        \leq \frac{\widehat u}{\widehat C_{n}^{1/q}}
        = \frac{\max \left( \abs{\widetilde m_n}, \gamma \widetilde C_n^{1/r} \right)}{\widetilde C_{n}^{1/q}}
        = \max \left( \frac{\abs{\widetilde m_n}}{\widetilde C_{n}^{1/q}}, \gamma \widetilde C_n^{1/r - 1/q} \right).
    \]
    Since $\widetilde C_n \leq \widetilde C_0$ by \cref{lemma:collapse_optimization},
    this implies
    \[
        \frac{\abs{\widetilde m_{n+1}}}{\widetilde C_{n+1}^{1/q}}
        \leq \max \left( \frac{\abs{\widetilde m_n}}{\widetilde C_{n}^{1/q}}, \gamma \widetilde C_0^{1/r - 1/q} \right)
        \leq  \dotsc \leq \max \left( \frac{\abs{\widetilde m_0}}{\widetilde C_{0}^{1/q}}, \gamma \widetilde C_0^{1/r - 1/q} \right),
    \]
    implying the convergence of $\widetilde m_n\to 0$ with rate $n^{-1/q}$.

    A similar reasoning can be employed to show the convergence in continuous time%
    ; the details are omitted for conciseness.
\end{proof}

\begin{proof}
    [Proof of \cref{prop:convergence_rate_1d_opti}]
    Let us now obtain a convergence rate in the case where $\alpha = 0$.
    To this end,
    the main idea is to express that,
    close to equilibrium, i.e.\ when $C_n \ll 1$ and $\abs{m_n - \theta_*} \ll 1$,
    the algorithm behaves similarly to how it would in a quadratic potential.
    Employing the same reasoning as in the derivation of~\eqref{eq:laplace_quadratic_0} and~\eqref{eq:laplace_quadratic_1},
    now using Taylor's theorem up to higher orders,
    we deduce
    \begin{subequations}
        \begin{align}
            \int_{\real} g(\theta; m, C) \, \e^{-\beta f(\theta)} \, \d \theta = \e^{-\beta f(m)} &\left( 1 + \left( \beta^2 \abs{f'(m)}^2 - \beta f''(m) \right) \frac{C}{2} \right) + R_0(m, C),
            \label{eq:aux0}
            \\
            \int_{\real} (\theta - m) \, g(\theta; m, C) \, \e^{-\beta f(\theta)} \, \d \theta &= - \e^{-\beta f(m)} \beta C f'(m) + R_1(m, C), \label{eq:aux}\\
            \int_{\real} (\theta - m)^2 \, g(\theta; m, C) \, \e^{-\beta f(\theta)} \, \d \theta &= \e^{-\beta f(m)} C \left( 1 + \left( \beta^2 \abs{f'(m)}^2 - \beta f''(m) \right) \frac{3C}{2} \right) + R_2(m, C),
            \nonumber
        \end{align}
    \end{subequations}
    with remainder terms (different from the ones in the proof of \cref{thm:no_bad_convergence}) satisfying
    \[
        \forall m \in (\theta_* - 1, \theta_* + 1), \quad \forall  0 < C \leq 1, \qquad
        \left\{
        \begin{aligned}
            &\abs{R_0(m, C)} \leq K \abs{C}^{2}, \\
            &\abs{R_1(m, C)} \leq K \abs{C}^{2}, \\
            &\abs{R_2(m, C)} \leq K \abs{C}^{3},
        \end{aligned}
        \right.
    \]
    for an appropriate constant $K$.
    We claim that
    \begin{subequations}
    \begin{align}
        \label{eq:limit_m_equation}%
        m_{\beta} (m, C) - \theta_* &=  \bigl(C^{-1} + \beta f''(m)\bigr)^{-1} C^{-1}(m -\theta_*) + R_m(m, C) \\
        \label{eq:limit_C_equation}%
        C_{\beta} (m, C) &= \bigl(C^{-1} + \beta f''(m)\bigr)^{-1} + R_C(m, C),
    \end{align}
    \end{subequations}
    with $R_m$ and $R_C$ satisfying
    \[
        \forall m \in (\theta_* - \overline m, \theta_* + \overline m), \quad \forall 0 < C < \overline C \qquad
        \left\{
        \begin{aligned}
            &\abs{R_m(m, C)} \leq K \left( C^2 + C^2\abs{m - \theta_*} + C \abs{m - \theta_*}^2 \right), \\
            &\abs{R_C(m, C)} \leq K \abs{C}^{3},
        \end{aligned}
        \right.
    \]
    for a possibly different constant $K$ independent of $m$ and $C$ and appropriate positive constants $\overline m$ and $\overline C$.
    For completeness, let us present the details of the proof of~\eqref{eq:limit_m_equation}.
    To simplify the notation, we will write $u(m, C) = \mathcal O\bigl( v(m, C) \bigr)$ to mean that
    there exist constants~$K$, $\widetilde m$ and $\widetilde C$ such that $\abs{u(m, C)} \leq K \, v(m, C)$
    for all $m \in (\theta_* - \widetilde m, \theta_* + \widetilde m)$
    and for all~$0 < C < \widetilde C$.
    It holds, by a Taylor expansion of the function $x \mapsto (1 + x)^{-1}$ around $x = 0$,
    \begin{align*}
        \bigl(C^{-1} + \beta f''(m)\bigr)^{-1} C^{-1}(m -\theta_*)
        &= m - \theta_* - C \beta f''(m)(m - \theta_*) + \mathcal O(C^2 \abs{m - \theta_*}) \\
        &= m - \theta_* - C \beta f'(m) + \mathcal O(C^2 \abs{m - \theta_*} + C \abs{m - \theta_*}^2) \\
        &= m_{\beta}(m, C) - \theta_* + \mathcal O(C^2 + C^2\abs{m - \theta_*} + C \abs{m - \theta_*}^2).
    \end{align*}
    In the second line, we used that $f''(m) (\theta_* - m) = f'(\theta_*) - f'(m) - \frac{1}{2} f'''(\xi) |\theta_* - m|^2$
    by Taylor's theorem, for some appropriate $\xi$.
    Moreover, the third line is a consequence of the estimate
    $$
      |m_{\beta}(m, C)-m+C \beta f'(m)|=\mathcal O(C^2)
    $$
    due to \eqref{eq:aux0}-\eqref{eq:aux}.
    Equation~\eqref{eq:limit_C_equation} can be shown using a similar approach,
    so we will omit its derivation.
    Combining~\eqref{eq:limit_m_equation} and~\eqref{eq:limit_C_equation},
    we deduce
    \begin{align*}
        C_{\beta}(m, C)^{-1} \bigl(m_{\beta}(m, C) - \theta_*\bigr)
        &= \frac{\bigl(C^{-1} + \beta f''(m)\bigr)^{-1} C^{-1}(m -\theta_*) + R_m(m, C)}{\frac{1}{C^{-1} + \beta f''(m)} + R_C(m, C)} \\
        &= \frac{C^{-1} (m - \theta_*) + \bigl(C^{-1} + \beta f''(m)\bigr) R_m(m, C)}
        {1 + \bigl(C^{-1} + \beta f''(m)\bigr)R_C(m, C)} \\
        & = C^{-1} (m - \theta_*) + \mathcal O(C + C \abs{m - \theta_*} + \abs{m -\theta_*}^2).
    \end{align*}
    Now let $(m_n, C_n)$ denote the iterates of the optimization scheme.
    In view of the definition of the $\mathcal O$ notation,
    and since we already showed that $C_n \leq K n^{-1}$ and $|m_n - \theta_*| \leq K n^{- \frac{1}{r}}$ for some positive constant $K$ and some $r > 2$ due to \eqref{eq:aux1},
    the previous equation implies that there exists another constant $K$ and an index $k$ sufficiently large such that,
    for all $n \geq k$,
    \begin{align}
        \label{eq:estimate_near_gaussian}
        \abs{C_{n}^{-1} (m_n - \theta_*)
        - C_k^{-1} (m_k - \theta_*)}
        \le K \sum_{i=k}^{n-1} \Bigl(C_i + \underbrace{C_i \abs{m_i - \theta_*}}_{\text{summable}} + \abs{m_i -\theta_*}^2\Bigr).
    \end{align}
    All the summands are bounded from above by the worst decay given by the last summand $i^{-\frac{2}{r}}$, up to a constant factor.
    Since
    \begin{equation}\label{eq:aux2}
        \sum_{i=k}^{n-1} i^{-\frac{2}{r}} \leq \int_{k-1}^{n-1} x^{-\frac{2}{r}} \, \d x \leq  \widetilde K  n^{1 - \frac{2}{r}},
    \end{equation}
    with $\widetilde K$ a constant independent of $n$ changing from occurrence to occurrence,
    we deduce that the right-hand side of \eqref{eq:estimate_near_gaussian} is controlled by $ \widetilde K  \left(1+n^{1 - \frac{2}{r}}\right)$. Therefore,  using the fact that $C_n \to 0$ with rate $1/n$, we obtain
    \[
        \forall n \geq k, \qquad \abs{m_n - \theta_*} \leq \left( \frac{\widetilde K}{n} \right) C_k^{-1} (m_k - \theta_*) + \widetilde K \left(n^{-1} + n^{- \frac{2}{r}}\right).
    \]
    We have thus upgraded the convergence rate to $n^{-\frac{2}{r}}$.
    This procedure can be repeated until only the first term in the sum on the right-hand side of~\eqref{eq:estimate_near_gaussian} is non-summable,
    leading finally to the estimate
    \[
        \abs{m_n - \theta_*} \leq \widetilde K \left( \frac{\log n}{n}\right),
    \]
    by a similar argument as in \eqref{eq:aux2} applied to the decay $1/i$.
\end{proof}

\subsection{\texorpdfstring{Proof of \cref{thm:convergence}}{Proof of the existence of a steady state}}%

In this section, we analyze the mean-field dynamics \cref{eq:mean_field,eq:iterative_scheme_measures}.
We show, in the convex one-dimensional case,
the existence and uniqueness of a steady state close to the Laplace approximation of the Bayesian posterior at the MAP estimator.
We begin by showing a version of Laplace's method, which is based on reducing all information about the objective function $f$ into the unique smooth and increasing function $\tau:\real\to\real$ satisfying
    \begin{equation}\label{eq:implicit_equation_tau}
        \forall \theta \in \real, \qquad
        f\bigl(\theta_* + \tau(\theta)\bigr) = f(\theta_*) + \theta^2.
    \end{equation}
    with $\tau(0)=0$. For details, see~\cref{lemma:change_of_variable}.
\begin{proposition}[Laplace's method]
    \label{proposition:laplace_principle}
    Let $d=1$.
    Suppose \cref{assumption:convexity_potential,assumption:assumption_f} hold,
    and assume additionally that $\varphi$ is a smooth function such that
    \begin{alignat}{2}
        \label{eq:assumption_laplace}
        &\forall i \in \{0, \dotsc, 2N+2\}, \qquad && \norm*{\derivative*{i}[\varphi]{\theta}}[\infty] \leq M_{\varphi} < \infty,
    \end{alignat}
    for some $N \in \nat$ and some $M_{\varphi} \geq 0$.
    Then, introducing the function $\psi(\theta) = \varphi\bigl(\theta_* + \tau(\theta)\bigr) \, \derivative*{1}[\tau]{\theta}(\theta)$,
    where $\tau$ is the map provided by~\cref{lemma:change_of_variable},
    it holds
    \[
        I_{\beta} := \int_{\real} \e^{-\beta f(\theta)} \, \varphi(\theta) \, \d \theta
        = \e^{-\beta f(\theta_*)}  \left( \sum_{n=0}^{N} \psi_{2n} \, \frac{\Gamma(n + 1/2)}{\beta^{n+1/2}} + R_{\beta} \right),
        \qquad \psi_{2n} := \frac{\derivative*{2n}[\psi]{t}(0)}{(2n)!},
    \]
    and the remainder $R_{\beta}$ satisfies the bound
    \begin{align*}
        \abs{R_{\beta}} \leq \frac{K \, M_{\varphi}}{(\beta - \beta_0)^{N + 3/2}},
    \end{align*}
    for some constants $K=K(f,N)>0$ and $\beta_0=\beta_0(f,N)\ge0$.
\end{proposition}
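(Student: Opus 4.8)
The plan is to reduce $I_\beta$ to a pure Gaussian integral via the change of variables of \cref{lemma:change_of_variable}, and then to carry out a Watson-type expansion. Since $\tau$ is smooth, strictly increasing and maps $\real$ bijectively onto itself (as provided by \cref{lemma:change_of_variable}, whose applicability rests on the convexity in \cref{assumption:convexity_potential}), the substitution $\theta = \theta_* + \tau(t)$ is admissible; it satisfies $f(\theta) = f(\theta_*) + t^2$ and $\d \theta = \derivative*{1}[\tau]{t}(t)\,\d t$, so that
\[
    I_\beta = \e^{-\beta f(\theta_*)} \int_{\real} \e^{-\beta t^2}\, \varphi\bigl(\theta_* + \tau(t)\bigr)\, \derivative*{1}[\tau]{t}(t)\,\d t = \e^{-\beta f(\theta_*)} \int_{\real} \e^{-\beta t^2}\, \psi(t)\,\d t.
\]
Everything then reduces to expanding $\int_{\real} \e^{-\beta t^2}\psi(t)\,\d t$ in powers of $\beta^{-1/2}$.

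Next I would Taylor expand $\psi$ about $t = 0$ to order $2N+1$, writing $\psi(t) = \sum_{k=0}^{2N+1} \frac{\derivative*{k}[\psi]{t}(0)}{k!}\, t^k + \mathcal R(t)$, with integral remainder $\mathcal R(t) = \frac{t^{2N+2}}{(2N+1)!} \int_0^1 (1-\sigma)^{2N+1}\, \derivative*{2N+2}[\psi]{t}(\sigma t)\,\d \sigma$. Against the even weight $\e^{-\beta t^2}$ the odd monomials integrate to zero, while the even ones are evaluated using $\int_{\real} \e^{-\beta t^2} t^{2n}\,\d t = \Gamma(n+1/2)\,\beta^{-(n+1/2)}$. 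The polynomial part therefore contributes exactly $\sum_{n=0}^{N} \psi_{2n}\, \Gamma(n+1/2)\,\beta^{-(n+1/2)}$, matching the stated leading sum, and it remains to estimate $R_\beta := \int_{\real} \e^{-\beta t^2}\, \mathcal R(t)\,\d t$.

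To bound $R_\beta$, I would establish a global Gaussian-type growth bound on the top derivative of $\psi$: namely, that there exist constants $C = C(f,N)$ and $\beta_0 = \beta_0(f,N) \ge 0$ such that $\abs{\derivative*{2N+2}[\psi]{t}(t)} \le C\, M_{\varphi}\, \e^{\beta_0 t^2}$ for all $t \in \real$. This follows by expanding $\derivative*{2N+2}[\psi]{t}$ via the Leibniz and Faà di Bruno rules into a finite sum of products of factors $\varphi^{(j)}(\theta_* + \tau(t))$, each bounded by $M_{\varphi}$, and derivatives $\tau^{(i)}(t)$ of order $\le 2N+2$, whose at-most-Gaussian growth is supplied by \cref{lemma:change_of_variable} together with \cref{assumption:assumption_f}. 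Since $\abs{\sigma t} \le \abs{t}$ on the domain of integration, this yields $\abs{\mathcal R(t)} \le \frac{C\, M_{\varphi}}{(2N+2)!}\, \abs{t}^{2N+2}\, \e^{\beta_0 t^2}$, and hence, for $\beta > \beta_0$,
\[
    \abs{R_\beta} \le \frac{C\, M_{\varphi}}{(2N+2)!} \int_{\real} \abs{t}^{2N+2}\, \e^{-(\beta - \beta_0) t^2}\,\d t = \frac{C\, M_{\varphi}\, \Gamma(N + 3/2)}{(2N+2)!\,(\beta - \beta_0)^{N+3/2}},
\]
which is the claimed estimate with $K = C\,\Gamma(N+3/2)/(2N+2)!$.

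The hard part will be the uniform growth bound on $\derivative*{2N+2}[\psi]{t}$. Because $\tau$ is defined only implicitly through $f(\theta_* + \tau(t)) = f(\theta_*) + t^2$, its derivatives involve inverse powers of $f'(\theta_* + \tau(t))$, which vanishes at $t = 0$; reconciling the removable singularity at the origin (controlled through \cref{lemma:change_of_variable}) with the at-most-Gaussian growth of the derivatives of $f$ at infinity, so as to produce a single bound of the form $C\,M_{\varphi}\,\e^{\beta_0 t^2}$ valid on all of $\real$, is the crux of the argument and is precisely what fixes the threshold $\beta_0$.
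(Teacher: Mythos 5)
Your proposal is correct and takes essentially the same route as the paper: the change of variables from \cref{lemma:change_of_variable}, then Leibniz/Fa\`a di Bruno combined with the Gaussian growth bounds on the derivatives of $\tau$ to control $\derivative*{2N+2}[\psi]{\theta}$, then a Watson-type expansion with Gaussian moment integrals --- the only difference being that the paper factors this last step into a standalone lemma (\cref{lemma:watson}, proved via symmetrization and a Lagrange remainder with the substitution $\sigma = \theta^2$), whereas you inline it using the integral-form Taylor remainder and parity of the monomials. One minor indexing point: since $\psi(\theta) = \varphi\bigl(\theta_* + \tau(\theta)\bigr)\,\derivative*{1}[\tau]{\theta}(\theta)$, its $(2N+2)$-th derivative involves $\derivative*{2N+3}[\tau]{\theta}$ and not only derivatives of $\tau$ of order $\le 2N+2$, but \cref{lemma:change_of_variable} bounds all orders, so your argument goes through unchanged.
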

\begin{proof}
   Applying~\cref{lemma:change_of_variable}, we can use the change of variable $\theta\mapsto \theta_*+\tau(\theta)$ to obtain
    \[
        \forall \varphi \in C^{\infty}(\real), \qquad \int_{\real} \e^{-\beta f(\theta)} \, \varphi(\theta) \, \d \theta
        = \e^{- \beta f(\theta_*)} \int_{\real} \e^{- \beta \theta^2} \, \varphi(\theta_* + \tau(\theta)) \, \derivative*{1}[\tau]{\theta}(\theta) \, \d \theta
        =: \e^{- \beta f(\theta_*)} \widetilde I_{\beta}.
    \]
    By Fa\`a di Bruno's formula (generalized chain rule),
    we have
    \begin{equation*}
        \forall n \in \nat, \qquad
        \derivative*{n}{\theta^n} \Bigl( \varphi\bigl(\theta_* + \tau(\theta)\bigr) \, \derivative*{1}[\tau]{\theta}(\theta) \Bigr)
        = \sum_{i=0}^{n} \derivative*{i}[\varphi]{\theta^i}\bigl(\theta_* + \tau(\theta)\bigr) \, B_{n+1,i+1}\left(\derivative*{1}[\tau]{\theta}(\theta), \dotsc, \derivative*{n-i+1}[\tau]{\theta^n}(\theta)\right),
    \end{equation*}
    where, for $n \in \nat$, the functions $\{B_{n,i}\}_{i \in \{0, \dotsc, n\}}$ are polynomials (more precisely, Bell polynomials) of degree $0$ to $n$.
    By \cref{lemma:change_of_variable},
    there exist a constant $\lambda = \lambda(f, N) \geq 0$ such that
    \[
         \forall i \in \{0, \dotsc, 2N + 3\}, \qquad
         \norm*{\e^{- \lambda \theta^2} \derivative*{i}[\tau]{t}(\theta)}[\infty] < \infty.
    \]
    It is clear, therefore, that
    \begin{align*}
        &\forall n \in \{0, \dotsc, 2N + 2\}, \quad
        \forall i \in \{0, \dotsc, n\}, \\
        &\qquad \norm{\e^{- (i+1) \lambda \theta^2} B_{n+1,i+1}\left(\derivative*{1}[\tau]{\theta}(\theta), \dotsc, \derivative*{n-i+1}[\tau]{\theta^n}(\theta)\right)}[\infty] < \infty.
    \end{align*}
    Combining this inequality with~\eqref{eq:assumption_laplace},
    we deduce that there exists $K = K(f, N)$ such that
    \[
        \forall n \in \{0, \dotsc, 2N + 2\}, \qquad
        \norm{\e^{-\bigl( (2N+3) \lambda\bigr) \theta^2} \derivative*{n}{\theta^n} \Bigl( \varphi\bigl(\theta_* + \tau(\theta)\bigr) \, \derivative*{1}[\tau]{\theta}(\theta) \Bigr)}[\infty]
        \leq K \, M_{\varphi} < \infty.
    \]
    It follows that, in particular, the assumptions of~\cref{lemma:watson} are satisfied for the function $\psi(\theta)$,
    with the parameters $M = K \, M_{\varphi}$ and $\beta_0 = (2N+3) \lambda$.
    By \cref{lemma:watson}, it holds that
    \[
        \widetilde I_{\beta} =
        \sum_{n=0}^{N} \psi_{2n} \, \frac{\Gamma(n + 1/2)}{\beta^{n+1/2}} + R_{\beta},
        \qquad \psi_{2n} := \frac{\derivative*{2n}[\psi]{t}(0)}{(2n)!},
    \]
    where the remainder $R_{\beta}$ satisfies the bound
    \begin{align*}
        \abs{R_{\beta}} \leq \frac{M}{(2N + 2)!} \, \frac{\Gamma(N + 3/2)}{(\beta - \beta_0)^{N + 3/2}},
    \end{align*}
    which concludes the proof.
\end{proof}

In order to prove \cref{thm:convergence},
let us now introduce the following map on $\real \times \real_{>0}$:
\begin{equation}
    \label{eq:phi_beta}
    \Phi_{\beta}:
    \begin{pmatrix}
        m \\
        C
    \end{pmatrix} \mapsto
    \begin{pmatrix}
        m_{\beta}(m, C) \\
        \lambda^{-1} \, C_{\beta}(m, C)
    \end{pmatrix}, \qquad \lambda = (1 + \beta)^{-1}.
\end{equation}
In view of~\cref{lemma:basic_results_steady_states},
existence of a fixed point of $\Phi_\beta$ implies the existence of a steady state solution
both for the iterative scheme~\eqref{eq:iterative_scheme_measures} with any $\alpha \in [0, 1)$ and for the nonlinear Fokker--Planck equation~\cref{eq:mean_field}.
In order to prove the existence of a fixed point of $\Phi_\beta$
we will apply Laplace's method \cref{proposition:laplace_principle}, and therefore need to calculate the coefficients $\psi_{2n}$,
which requires the calculation of the derivatives of the smooth function $\tau$ at 0.
This can be achieved by implicit differentiation of the equation~\eqref{eq:implicit_equation_tau}.
For example, differentiating twice,
we obtain
\[
    \derivative*{1}[\tau]{\theta}(0) = \pm \sqrt{\frac{2}{\derivative*{2}[f]{\theta^2}(\theta_*)}}.
\]
Since, $\tau$ refers here to the unique increasing function such that~\eqref{eq:implicit_equation_tau} holds,
only the positive solution is retained.
Differentiating~\eqref{eq:implicit_equation_tau} again
we obtain
\[
    \derivative*{2}[\tau]{\theta}(0) = - \frac{\derivative*{3}[f]{\theta^3}(\theta_*) }{3\derivative*{2}[f]{\theta^2}(\theta_*)}\, \abs{\derivative*{1}[\tau]{\theta}(0)}^2.
\]

The following result therefore implies the
existence of steady state close to the Laplace approximation of the target distribution
both for the iterative scheme~\eqref{eq:iterative_scheme_measures} with any $\alpha \in [0, 1)$ and for the nonlinear Fokker--Planck equation~\cref{eq:mean_field}.
\begin{proposition}[Existence of a fixed point of $\Phi_{\beta}$]
    \label{proposition:sampling_existence_steady_state_1d}
    Let $d=1$ and assume that \cref{assumption:convexity_potential,assumption:assumption_f} hold.
    Then there exist $\widetilde k = \widetilde k(f)$ and $\widetilde \beta = \widetilde \beta(f)$ such that,
    for all $\beta \geq \widetilde \beta$,
    there exists a fixed point $\bigl(m_{\infty}(\beta), C_{\infty}(\beta)\bigr)$ of $\Phi_{\beta}$ satisfying
    \begin{align*}
        \abs{ m_{\infty}(\beta) - \theta_*}^2 + \abs{\mat C_{\infty}(\beta) - C_*}^2 \leq \abs{\frac{\widetilde k}{\beta}}^2.
    \end{align*}
\end{proposition}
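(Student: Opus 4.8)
The plan is to realise $(m_\infty(\beta), C_\infty(\beta))$ as a fixed point of $\Phi_\beta$ via Brouwer's theorem, in the same spirit as the proof of \cref{thm:existence-sstates}, but now using a sharp asymptotic analysis to locate the fixed point near $(\theta_*, C_*)$. The key is to show that, uniformly over $(m, C)$ in the closed ball $B_R(\theta_*, C_*)$ with $R \in (0, C_*)$, one has the two estimates
\[
    \abs{m_\beta(m, C) - \theta_*} \le \frac{k_1}{\beta}, \qquad
    \abs{(1 + \beta)\, C_\beta(m, C) - C_*} \le \frac{k_2}{\beta},
\]
for all $\beta$ large, with $k_1, k_2$ depending only on $f$ and $R$. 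Granting these, the fixed point and the quantitative bound both drop out of the self-mapping argument below, since a fixed point necessarily lies in the image of $\Phi_\beta$.

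To obtain the estimates I would apply Laplace's method \cref{proposition:laplace_principle} to the integrals defining the weighted moments. Writing $\varphi_0(\theta) = \e^{-(\theta - m)^2/(2C)}$ and $I_\beta[\varphi] = \int_\real \e^{-\beta f}\varphi \, \d \theta$, the Gaussian normalization cancels in the ratios, and by translation invariance of the variance,
\[
    m_\beta(m, C) - \theta_* = \frac{I_\beta[(\theta - \theta_*)\varphi_0]}{I_\beta[\varphi_0]}, \qquad
    C_\beta(m, C) = \frac{I_\beta[(\theta - \theta_*)^2 \varphi_0]}{I_\beta[\varphi_0]} - \left( \frac{I_\beta[(\theta - \theta_*)\varphi_0]}{I_\beta[\varphi_0]} \right)^2.
\]
Each integrand $\theta \mapsto (\theta - \theta_*)^k \varphi_0(\theta)$ is smooth with all derivatives bounded uniformly over $B_R(\theta_*, C_*)$, since $C$ stays in $[C_* - R, C_* + R]$, so \cref{proposition:laplace_principle} applies with a uniform constant $M_\varphi$ and a denominator bounded below by $c/\sqrt{\beta}$. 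Expanding via the change of variable $\tau$ of \cref{lemma:change_of_variable} and recalling $\tau'(0) = \sqrt{2/f''(\theta_*)}$, the relevant Watson coefficients $\psi_{2n} = \derivative*{2n}[\psi]{t}(0)/(2n)!$ are: for $k=0$, $\psi_0 = \varphi_0(\theta_*)\tau'(0)$; for $k = 1, 2$ the constant term vanishes (as $\tau(0) = 0$), and for $k=2$ one finds $\psi_2 = \tau'(0)^3 \varphi_0(\theta_*)$. Using $\Gamma(3/2)/\Gamma(1/2) = 1/2$ and $\tau'(0)^2 = 2 C_*$ these combine to give
\[
    \frac{I_\beta[(\theta - \theta_*)^2 \varphi_0]}{I_\beta[\varphi_0]} = \frac{C_*}{\beta} + \bigo{\beta^{-2}}, \qquad
    \frac{I_\beta[(\theta - \theta_*)\varphi_0]}{I_\beta[\varphi_0]} = \bigo{\beta^{-1}},
\]
so that $C_\beta(m, C) = C_*/\beta + \bigo{\beta^{-2}}$ and $m_\beta(m, C) = \theta_* + \bigo{\beta^{-1}}$; multiplying the first by $(1 + \beta)$ yields the two displayed estimates.

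With these in hand, set $\widetilde k = \sqrt{k_1^2 + k_2^2}$ and choose $\widetilde \beta$ large enough that $\widetilde k / \widetilde \beta \le R$ (and at least the threshold $\beta_0$ of the Laplace remainder). For $\beta \ge \widetilde \beta$ the estimates show that $\Phi_\beta$ maps the compact convex ball $B_R(\theta_*, C_*)$ continuously into $B_{\widetilde k / \beta}(\theta_*, C_*) \subseteq B_R(\theta_*, C_*)$. Brouwer's fixed point theorem then produces $(m_\infty(\beta), C_\infty(\beta))$; being a fixed point it lies in the image, hence in $B_{\widetilde k / \beta}(\theta_*, C_*)$, which is exactly the claimed bound $\abs{m_\infty - \theta_*}^2 + \abs{C_\infty - C_*}^2 \le (\widetilde k/\beta)^2$.

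The main obstacle is the Laplace computation rather than the topological conclusion. One must verify that the leading coefficients assemble to give \emph{exactly} $(\theta_*, C_*)$ — in particular that the second-moment ratio produces the precise constant $C_*/\beta$ and not merely $\bigo{1/\beta}$ — and that every remainder furnished by \cref{proposition:laplace_principle} is controlled uniformly in $(m, C)$ over the ball. The delicate bookkeeping is in handling $C_\beta$ as a difference of ratios and confirming that the subtracted squared first moment is genuinely of lower order $\bigo{\beta^{-2}}$; once this is settled, the self-mapping and Brouwer step are routine.
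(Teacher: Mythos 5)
Your proposal is correct and takes essentially the same route as the paper's proof: Laplace's method (\cref{proposition:laplace_principle}) applied to the zeroth, first and second weighted moments with constants uniform over $B_R(\theta_*, C_*)$, yielding $m_\beta(m,C) = \theta_* + \mathcal O(\beta^{-1})$ and $(1+\beta)\,C_\beta(m,C) = C_* + \mathcal O(\beta^{-1})$ via the same Watson coefficients $\psi_0$, $\psi_2$ and the identity $\tau'(0)^2 = 2/f''(\theta_*)$, followed by a Brouwer self-mapping argument. The only cosmetic difference is that you apply Brouwer on $B_R$ and deduce the quantitative bound from the fixed point lying in the image $\Phi_\beta\bigl(B_R(\theta_*, C_*)\bigr) \subset B_{\widetilde k/\beta}(\theta_*, C_*)$, whereas the paper applies Brouwer directly on the smaller ball $B_{\widetilde k/\beta}(\theta_*, C_*)$; both are valid.
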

\begin{proof}
    It is clear from the definitions of $m_{\beta}$ and $C_{\beta}$
    that the map $\Phi_{\beta}$ is continuous.
    Our approach in order to show the existence of a fixed point is to use Brouwer's fixed point theorem.
    To this end, let us define
    \begin{align*}
        \varphi_j(\theta) = (\theta - \theta_*)^j \, g(\theta; m, C), \qquad j = 0, 1, \dotsc, J.
    \end{align*}
    Introducing the function $\hat \theta: \real \ni u \mapsto m + \sqrt{C}u$,
    and using the notation $g(u) := g(u;0, 1)$ for conciseness,
    we calculate
    \begin{align*}
        \hat \varphi_j(u)
        := \varphi_j\bigl(\hat \theta(u)\bigr)
        &= \frac{1}{\sqrt{C}} (m - \theta_* + \sqrt{C}u)^j \, g(u; 0, 1) \\
        &= C^{\frac{j-1}{2}} \left(\frac{m - \theta_*}{\sqrt{C}} + u\right)^j \, g(u; 0, 1)
        = C^{\frac{j-1}{2}} \sum_{k=0}^{j} \binom{j}{k} \left(\frac{m - \theta_*}{\sqrt{C}} \right)^k u^{j-k} g(u; 0, 1),
    \end{align*}
    so we deduce
    \[
        \norm{\derivative*{n}[\hat \varphi_j]{u^n}}[\infty]
        \leq K_{j,n} \, C^{\frac{j-1}{2}} \left( 1 + \abs{ \frac{m - \theta_*}{\sqrt{C}} }^j \right)
        = K_{j,n} \, \left( C^{\frac{j-1}{2}} + C^{-\frac{1}{2}}\abs{m - \theta_*}^j \right),
    \]
    for some constant $K_{j,n}$ independent of $m$ and $C$.
    Since $\derivative*{n}[\varphi_j]{\theta^n}(\theta) = C^{-n/2} \derivative*{n}[\hat \varphi_j]{u}\bigl(C^{-1/2} (\theta -m) \bigr)$,
    this directly implies
    \begin{equation}
        \label{eq:bound_varphis}
        \norm{\derivative*{n}[\varphi_j]{\theta^n}}[\infty]
        \leq K_{j,n} \, \left( C^{\frac{j-n-1}{2}} + C^{-\frac{n+1}{2}}\abs{m - \theta_*}^j \right).
    \end{equation}
    Let us take any $R \in (0, C_*)$
    and introduce the notation $u(\beta,m,C) = \mathcal O_R \bigl(v(\beta)\bigr)$ for any functions $u(\beta, m, C)$ and $v(\beta)$ to mean that
    there exist constants $c$ and $\widetilde \beta$ such that
    \[
        \forall (m, C) \in B_R(\theta_*, C_*),
        \quad \forall \beta > \widetilde \beta, \qquad
        \abs{u(\beta,m,C)} \leq c v(\beta),
    \]
    where $B_R(\theta_*, C_*)$ denotes the closed ball of radius~$R$ centered at $(\theta_*, C_*)$.
    Since $R<C_*$, it is clear that, for all $j \in \nat$ and $N \in \nat$,
    the right-hand side of~\eqref{eq:bound_varphis} is bounded from above by a constant over $B_R(\theta_*, C_*)$,
    uniformly in $m$, $C$ and $n\in\{0,\dotsc,2N+2\}$.
    Thus, we can apply Laplace's method, \cref{proposition:laplace_principle}.
    Letting $\psi_j(\theta) = \varphi_j\bigl(\theta_* + \tau(\theta)\bigr) \tau'(\theta)$,
    we calculate
    \begin{subequations}
    \begin{align*}
        \psi_j(0)
        &=  \varphi_j(\theta_*) \tau'(0) = \varphi_j(\theta_*) \sqrt{\frac{2}{f''(\theta_*)}},\\
        \psi_j''(0)
        &=
        \varphi_j''(\theta_*) \tau'(0)^3
        + 3 \varphi_j'(\theta_*) \, \tau''(0) \, \tau'(0)
        + \varphi_j(\theta_*) \, \tau'''(0).
    \end{align*}
    \end{subequations}
    Note that only the first term in the expression of $\psi_2''(0)$ is nonzero. Therefore,
    Laplace's method applied with $N=0$ or $N=1$ gives
    \begin{subequations}
    \begin{align}
        \label{eq:intermediate_bound_sampling_0}
        &\e^{\beta f(\theta_*)}\int_{\real} \e^{- \beta f} g(\theta; m, C) \, \d \theta
        =  g(\theta_*; m, C) \, \Gamma(1/2)  \, \frac{\tau'(0)}{\beta^{1/2}}
            +  \mathcal O_R\left(\frac{1}{\beta^{3/2}}\right)\,, \\
        \label{eq:intermediate_bound_sampling_1}
        &\e^{\beta f(\theta_*)}\int_{\real} (\theta - \theta_*) \e^{- \beta f} g(\theta; m, C) \, \d \theta
        = \mathcal O_R\left(\frac{1}{\beta^{3/2}}\right)\,, \\
        \label{eq:intermediate_bound_sampling_2}
        &\e^{\beta f(\theta_*)}\int_{\real} (\theta - \theta_*)^2 \e^{- \beta f} g(\theta; m, C) \, \d \theta =
             g(\theta_*; m, C) \, \Gamma(3/2) \, \frac{\tau'(0)^3}{\beta^{3/2}}
            +  \mathcal O_R\left(\frac{1}{\beta^{5/2}}\right)\,.
    \end{align}
    \end{subequations}
    Further, $g(\theta_*; m, C)$ is bounded above and below on $B_R(\theta_*,C_*)$ by positive constants.
    Hence, equation~\eqref{eq:intermediate_bound_sampling_1} leads to
    \begin{align}
        \label{eq:mean_estimate}
        m_{\beta}(m, C) &= \frac{\int_{\real} \theta \e^{- \beta f(\theta)} g(\theta; m, C) \, \d \theta}{\int_{\real} \e^{- \beta f(\theta)} g(\theta; m, C) \, \d \theta}
        =\theta_* + \mathcal O_R \left(\beta^{-1}\right).
    \end{align}
    For the covariance term, note that
    \[
        C_{\beta}(m, C) = \frac{\int_{\real} (\theta - \theta_*)^2 \e^{- \beta f(\theta)} g(\theta; m, C) \, \d \theta}{\int_{\real} \e^{- \beta f(\theta)} g(\theta; m, C) \, \d \theta}
        - \bigl(m_{\beta}(m, C) - \theta_*\bigr)^2,
    \]
    which by~\eqref{eq:intermediate_bound_sampling_2} and the equality $\Gamma(1/2) = 2 \Gamma(3/2)$,
    leads to
    \begin{align*}
        \lambda^{-1} C_{\beta}(m, C) &= \frac{1 + \beta}{\beta} \left( \frac{\Gamma(3/2)}{\Gamma(1/2)} \abs{\tau'(0)}^2  +  \mathcal O_R(\beta^{-1}) \right) + \mathcal O_R(\beta^{-2})
                        = \frac{1}{f''(\theta_*)} +  \mathcal O_R(\beta^{-1}).
    \end{align*}
    Consequently, we deduce by definition of $\mathcal O_R$ that
    there exist constants $\beta^{\dagger}$ and $\widetilde k$ such that
    \[
        \quad \forall \beta > \beta^\dagger, \qquad
        \sup_{(m, C) \in B_R(\theta_*, C_*)} \abs{\Phi_{\beta}(m, C) - (\theta_*, C_*)} \leq \frac{\widetilde k}{\beta^\dagger},
    \]
    where $\abs{\dummy}$ denotes the Euclidean norm.
    That is, it holds that $\Phi_{\beta} \bigl(B_{R}(\theta_*, C_*)\bigr) \subset B_{\widetilde k/\beta}(\theta_*, C_*)$ for any $\beta \geq \beta^\dagger$.
    If additionally $\beta \geq \widetilde k/R$, we have $B_{\widetilde k/\beta}(\theta_*, C_*) \subset B_{R}(\theta_*, C_*)$ and so
    \[
        \Phi_{\beta} \bigl(B_{\widetilde k/\beta}(\theta_*, C_*)\bigr) \subset \Phi_{\beta} \bigl( B_{R}(\theta_*, C_*) \bigr) \subset B_{\widetilde k/\beta}(\theta_*, C_*).
    \]
    Consequently, in this case Brouwer's theorem implies the existence of a fixed point of $\Phi_{\beta}$ in~$B_{\widetilde k/\beta}(\theta_*, C_*)$.
    This proves the statement with $\widetilde \beta = \max(\beta^\dagger, \widetilde k/R)$.
\end{proof}
Next, we show that the map $\Phi_{\beta}$ given in~\eqref{eq:phi_beta} is a contraction for sufficiently large $\beta$.
\begin{proposition}
    [$\Phi_{\beta}$ is a contraction]
    \label{prop:contraction}
    Under the same assumptions as in \cref{proposition:sampling_existence_steady_state_1d} and for any $R \in (0, C_*)$,
    there exists a constant $\widehat \beta = \widehat \beta(f, R)$ and $\widehat k = \widehat k(f, R)$ such that,
    for all $\beta \geq \widehat \beta$,
    the map $\Phi_{\beta}$
    is a contraction with constant $\widehat k/\beta$ for the Euclidean norm over the closed ball of radius $R$ centered at $(\theta_*, C_*)$:
    for all $(m_1, C_1)$ and $(m_2, C_2)$ in $B_R(\theta_*, C_*)$,
    it holds that
    \begin{align*}
        \vecnorm{\Phi_{\beta}(m_1, C_1) - \Phi_{\beta}(m_2, C_2)}
         \le \frac{\widehat k}{\beta} \vecnorm{\begin{pmatrix} m_2 \\ C_2 \end{pmatrix} - \begin{pmatrix} m_1 \\  C_1 \end{pmatrix}}.
    \end{align*}
\end{proposition}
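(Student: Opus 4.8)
The plan is to prove the estimate by bounding the Jacobian of $\Phi_\beta$ and invoking the mean value inequality on the convex set $B_R(\theta_*, C_*)$. I would first check that $\Phi_\beta$ is continuously differentiable on an open neighbourhood of $B_R(\theta_*, C_*)$: since $C_* - R > 0$, the density $\rho_\beta(\cdot; m, C)$ of~\eqref{eq:rhobeta} is well defined there, and \cref{assumption:assumption_f} provides Gaussian-type domination legitimising differentiation under the integral sign. Writing $\langle h \rangle := \int_\real h(\theta)\,\rho_\beta(\theta; m, C)\,\d\theta$, so that $m_\beta = \langle \theta \rangle$ and $C_\beta = \langle (\theta - m_\beta)^2\rangle$, differentiating $\log \rho_\beta$ yields the score identities
\[
    \partial_m \rho_\beta = \frac{\theta - m_\beta}{C}\,\rho_\beta, \qquad
    \partial_C \rho_\beta = \frac{1}{2C^2}\bigl((\theta - m)^2 - \langle (\theta - m)^2\rangle\bigr)\,\rho_\beta.
\]
Introducing the centred moments $\mu_k := \langle (\theta - m_\beta)^k\rangle$, a short computation then gives
\[
    \partial_m m_\beta = \frac{C_\beta}{C}, \quad
    \partial_C m_\beta = \frac{\mu_3 + 2(m_\beta - m)C_\beta}{2C^2}, \quad
    \partial_m C_\beta = \frac{\mu_3}{C}, \quad
    \partial_C C_\beta = \frac{\mu_4 - C_\beta^2 + 2(m_\beta - m)\mu_3}{2C^2},
\]
and the Jacobian of $\Phi_\beta$ is obtained by leaving the first row as above and multiplying the second row by $\lambda^{-1} = 1 + \beta$.

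The heart of the proof is to show that all four entries of $D\Phi_\beta$ are $\mathcal O(1/\beta)$, uniformly over $(m, C) \in B_R(\theta_*, C_*)$. The first row is comparatively easy: \cref{lemma:bound_second_moment} gives $C_\beta \le (C^{-1} + \beta \Lhess)^{-1} \le (C^{-1} + \beta \lhess)^{-1}$, hence $C_\beta / C \le (1 + \beta \lhess C)^{-1} = \mathcal O(1/\beta)$ because $C \ge C_* - R > 0$ on $B_R$; together with the bound $m_\beta - \theta_* = \mathcal O(1/\beta)$ already established in~\eqref{eq:mean_estimate} (so that $m_\beta - m = \mathcal O(1)$), this controls $\partial_C m_\beta$ once $\mu_3 = \mathcal O(\beta^{-2})$ is known. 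The second row is the delicate one, because it carries the factor $1 + \beta$: to keep $(1+\beta)\partial_m C_\beta = (1+\beta)\mu_3/C$ and $(1+\beta)\partial_C C_\beta$ of size $\mathcal O(1/\beta)$, I need the \emph{sharp} estimates $\mu_3 = \mathcal O(\beta^{-2})$ and $\mu_4 - C_\beta^2 = \mathcal O(\beta^{-2})$.

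These I would obtain from Laplace's method, \cref{proposition:laplace_principle}, applied to the numerators and the denominator of the ratios defining the $\mu_k$, exactly as in the proof of~\cref{proposition:sampling_existence_steady_state_1d}; the prefactors $(\theta - \theta_*)^j g(\theta; m, C)$ satisfy the hypothesis~\eqref{eq:assumption_laplace} uniformly over $B_R$ by~\eqref{eq:bound_varphis}, which secures uniformity of the expansion. Expanding $\mu_3 = \langle (\theta - \theta_*)^3\rangle - 3(m_\beta - \theta_*)\langle(\theta-\theta_*)^2\rangle + \dotsb$, the key observation is that after the change of variables $\theta \mapsto \theta_* + \tau(\theta)$ from \cref{lemma:change_of_variable} the function $\psi(\theta) = \tau(\theta)^3\, g\bigl(\theta_* + \tau(\theta); m, C\bigr)\,\tau'(\theta)$ vanishes to third order at the origin, so its Laplace coefficients $\psi_0$ and $\psi_2$ are zero and its leading contribution is of order $\beta^{-5/2}$; dividing by the normalisation, which is of order $\beta^{-1/2}$, gives $\langle(\theta-\theta_*)^3\rangle = \mathcal O(\beta^{-2})$, while the remaining terms of $\mu_3$ are products of two $\mathcal O(1/\beta)$ factors, so that $\mu_3 = \mathcal O(\beta^{-2})$. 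The bound $\mu_4 - C_\beta^2 = \mathcal O(\beta^{-2})$ then follows at once, since both $\mu_4$ and $C_\beta^2$ are $\mathcal O(\beta^{-2})$ by the same expansion.

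Collecting these estimates yields $\norm{D\Phi_\beta(m, C)} \le \widehat k/\beta$ for all $(m, C) \in B_R(\theta_*, C_*)$ and all $\beta \ge \widehat\beta$, for constants $\widehat k = \widehat k(f, R)$ and $\widehat\beta = \widehat\beta(f, R)$, the latter chosen large enough that $B_R$ lies in the differentiability domain and the Laplace remainders are controlled. Since $B_R(\theta_*, C_*)$ is convex, integrating $D\Phi_\beta$ along the segment joining two points gives
\[
    \vecnorm{\Phi_\beta(m_1, C_1) - \Phi_\beta(m_2, C_2)} \le \frac{\widehat k}{\beta}\,\vecnorm{\begin{pmatrix} m_1 \\ C_1 \end{pmatrix} - \begin{pmatrix} m_2 \\ C_2 \end{pmatrix}},
\]
which is the claim. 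The main obstacle is precisely the sharp third-moment bound: the naive log-concavity estimate only gives $\mu_3 = \mathcal O(\beta^{-3/2})$, which after multiplication by $1 + \beta$ would leave an $\mathcal O(\beta^{-1/2})$ term and destroy the contraction rate; extracting the improvement to $\mathcal O(\beta^{-2})$ relies on the vanishing of the low-order Laplace coefficients (an odd-symmetry effect) and hence on carrying the expansion of \cref{proposition:laplace_principle} to sufficiently high order, uniformly in $(m, C)$ over $B_R$.
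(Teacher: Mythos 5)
Your proposal is correct and takes essentially the same route as the paper's proof: both compute the Jacobian of $\Phi_{\beta}$ explicitly (your score-identity formulas in centred moments $\mu_3$, $\mu_4$ are an algebraic reformulation of the paper's quotient-rule expressions in terms of the functionals $J_{\beta}$), both extract the crucial sharp bounds $\partial_m C_{\beta} = \mathcal O_R(\beta^{-2})$ and $\partial_C C_{\beta} = \mathcal O_R(\beta^{-2})$ from Laplace's method by exploiting that the derivatives of $(\theta - \theta_*)^j \, g(\theta; m, C)$ of order less than $j$ vanish at $\theta_*$, and both conclude with the mean value inequality along the segment in the convex ball $B_R(\theta_*, C_*)$. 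You also correctly identify the heart of the matter, namely that the naive bound $\mu_3 = \mathcal O(\beta^{-3/2})$ would be destroyed by the factor $\lambda^{-1} = 1 + \beta$, and that the improvement to $\mathcal O(\beta^{-2})$ via the vanishing of low-order Laplace coefficients is exactly what the stated contraction constant $\widehat k/\beta$ requires.
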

\begin{proof}
    We assume without loss of generality that $\theta_* = 0$, which is justified because the method is affine invariant, discussed in \cref{sec:properties},
    and we recall that $\Phi_{\beta}$ relates the moments of successive iterates from~\eqref{eq:mean_field_sdes} with $\alpha = 0$
    when this scheme is initialized at a Gaussian density.
    Let us introduce the notation
    \[
        J_{\beta}(\varphi) = \int_{\real} \varphi(\theta) \, \exp \left( - \frac{\abs{\theta - m}^2}{2C} \right) \e^{- \beta f(\theta)} \, \d \theta.
    \]
    Using the fact that
    \[
        m_{\beta}(m, C) = \frac{J_{\beta}(\theta)}{J_{\beta}(1)}, \qquad C_{\beta} (m, C) = \frac{J_{\beta}(\theta^2)}{\abs{J_{\beta}(1)}} - \abs{m_{\beta}(m, C)}^2
        = \frac{J_{\beta}(\theta^2) J_{\beta}(1) - \abs{J_{\beta}(\theta)}^2}{\abs{J_{\beta}(1)}^2},
    \]
    and noting that
    \[
        \partial_m J_{\beta}(\varphi) = \frac{J_{\beta}\bigl(\varphi(\theta) (\theta - m)\bigr)}{C},
        \qquad
        \partial_C J_{\beta}(\varphi) = \frac{J_{\beta}\bigl(\varphi(\theta) |\theta - m|^2 \bigr)}{2 C^2},
    \]
    we calculate
    \begin{align*}
        \partial_m m_{\beta}
        &= \frac{1}{C \abs{J_{\beta}(1)}^2} \Bigl( J_{\beta}\bigl(\theta^2\bigr) J_{\beta}(1) - \abs{J_{\beta}(\theta)}^2 \Bigr) \,,\\
        \partial_C m_{\beta}
        &= \frac{1}{2 C^2 \abs{J_{\beta}(1)}^2} \Bigl( J_{\beta}\bigl(\theta|\theta - m|^2\bigr) J_{\beta}(1) - J_{\beta}\bigl(\abs{\theta - m}^2\bigr) J_{\beta}(\theta) \Bigr) \\
        &=\frac{1}{2 C^2 \abs{J_{\beta}(1)}^2} \Bigl[ J_{\beta}\bigl(\theta^3\bigr) J_{\beta}(1) - J_{\beta}\bigl(\theta^2\bigr) J_{\beta}(\theta) -2m\left(J_{\beta}\bigl(\theta^2\bigr) J_{\beta}(1) - J_{\beta}(\theta)^2\right) \Bigr]\,, \\
        \partial_m C_{\beta}
        &= \frac{1}{C \abs{J_{\beta}(1)}^2} \Bigl( J_{\beta}\bigl(\theta^3\bigr) J_{\beta}(1) - J_{\beta}(\theta) J_{\beta}(\theta^2) \Bigr) - 2 m_{\beta} \, \partial_m m_{\beta} \,,\\
        \partial_C C_{\beta}
        &= \frac{1}{2 C^2 \abs{J_{\beta}(1)}^2} \Bigl( J_{\beta}\bigl(\theta^2|\theta - m|^2\bigr) J_{\beta}(1) - J_{\beta}\bigl(\abs{\theta - m}^2\bigr) J_{\beta}(\theta^2) \Bigr) - 2 m_{\beta} \, \partial_C m_{\beta}\\
        &= \frac{1}{2 C^2 \abs{J_{\beta}(1)}^2} \Bigl[ J_{\beta}\bigl(\theta^4\bigr) J_{\beta}(1) - J_{\beta}(\theta^2)^2
        - 2m\left(J_{\beta}\bigl(\theta^3\bigr) J_{\beta}(1) - J_{\beta}(\theta^2)J_{\beta}(\theta)
        \right)\Bigr] - 2 m_{\beta} \, \partial_C m_{\beta}\,.
    \end{align*}
    Applying Laplace's method,
    and noting that $\frac{\d^n}{\d\theta^n}\bigl(\theta^jg(\theta; m,C)\bigr)$ vanishes at $\theta=\theta_*=0$ for all $n<j$,
    we obtain that
    \begin{align*}
        & \e^{\beta f(\theta_*)} \int_{\real} \theta^3 \e^{- \beta f} g(\theta; m, C) \, \d \theta = \mathcal O_R\left(\frac{1}{\beta^{5/2}}\right), \\
        &\e^{\beta f(\theta_*)} \int_{\real} \theta^4 \e^{- \beta f} g(\theta; m, C) \, \d \theta = \mathcal O_R\left(\frac{1}{\beta^{5/2}}\right).
    \end{align*}
    Combining these estimates with~\cref{eq:intermediate_bound_sampling_0,eq:intermediate_bound_sampling_1,eq:intermediate_bound_sampling_2} and~\eqref{eq:mean_estimate},
    and using the same notation as in the proof of~\cref{proposition:sampling_existence_steady_state_1d},
    we deduce
    \begin{align*}
        &\partial_m m_{\beta} (m, C)
        = \mathcal O_R(\beta^{-1}),
        &\partial_C m_{\beta} (m, C)
        = \mathcal O_R(\beta^{-1}), \\
        &\partial_m C_{\beta} (m, C)
        = \mathcal O_R(\beta^{-2}),
        &\partial_C C_{\beta} (m, C)
        = \mathcal O_R(\beta^{-2}).
    \end{align*}
    It easily follows that
    \begin{equation}
        \label{eq:gradient}
        D \Phi_{\beta} :=
        \begin{pmatrix}
        \partial_m \Phi_{\beta}^m
        &\partial_C \Phi_{\beta}^m  \\
        \partial_m \Phi_{\beta}^C
        &\partial_C \Phi_{\beta}^C
        \end{pmatrix}
        = \begin{pmatrix}
        \mathcal O_R(\beta^{-1})
        & \mathcal O_R(\beta^{-1}) \\
        \mathcal O_R(\beta^{-1})
        & O_R(\beta^{-1})
        \end{pmatrix}.
    \end{equation}
    Therefore,
    for all $(m_1, C_1) \in B_{R}(\theta_*, C_*)$ and $(m_2, C_2) \in B_{R}(\theta_*, C_*)$,
    it holds
    \begin{align*}
        \vecnorm{\Phi_{\beta}(m_1, C_1) - \Phi_{\beta}(m_2, C_2)}
        &= \abs{ \int_{0}^{1} D \Phi_{\beta}\bigl(m_t, C_t\bigr)  \,
        \begin{pmatrix} m_2 - m_1 \\ C_2 - C_1 \end{pmatrix} \, \d t } \\
        &\leq \int_{0}^{1} \norm{D \Phi_{\beta}\bigl(m_t, C_t\bigr)}  \,
         \, \d t \vecnorm{\begin{pmatrix} m_2 \\ C_2 \end{pmatrix} - \begin{pmatrix} m_1 \\  C_1 \end{pmatrix}},
    \end{align*}
    where $(m_t, C_t)^\t = \bigl(m_1 + t (m_2-m_1), C_1 + t(C_2 - C_1)\bigr)^\t$.
    Since $\norm{D \Phi_{\beta}} = \mathcal O_R(\beta^{-1})$, by~\eqref{eq:gradient},
    this concludes the proof of the statement.
\end{proof}

The proof of \cref{thm:convergence} is now a simple consequence of~\cref{proposition:sampling_existence_steady_state_1d,prop:contraction}.
\begin{proof}
    [Proof of \cref{thm:convergence}]
    Let $\widetilde \beta$ and $\widehat \beta$,
    as well as $\widetilde k$ and $\widehat k$,
    be as given in the statements of
    \cref{proposition:sampling_existence_steady_state_1d,prop:contraction}, respectively.
    Let $\underline \beta(f, R)$ and $k(f, R)$ be defined by
    \[
        \underline \beta = \max\left(\widetilde \beta(f), \widehat \beta(f, R), \frac{\widetilde k}{R}\right),
        \qquad k(f,R) = \max\bigl(\widetilde k(f), \widehat k(f, R)\bigr).
    \]
    By \cref{proposition:sampling_existence_steady_state_1d},
    there exists for all $\beta \geq \underline \beta$ a fixed point of $\Phi_{\beta}$ in $B_{\widetilde k/\beta}(\theta_*, C_*) \subset B_R(\theta_*, C_*)$.
    Since $\Phi_{\beta}$ is a contraction over $B_R(\theta_*, C_*)$ for such value of $\beta$ by \cref{prop:contraction},
    this fixed point is unique in $B_R(\theta_*, C_*)$.
    Let us now show the convergence to the fixed point in the discrete and continuous-time cases.

    \begin{itemize}
        \item[(i)] Case $\alpha \in [0, 1)$.
            We consider the iteration \eqref{eq:equations_moments_discrete},
            \begin{align*}
                m_{n+1} &= \alpha m_n + (1 - \alpha) m_{\beta}(m_n, C_n), \\
                C_{n+1} &= \alpha^2 C_n + (1 - \alpha^2) \lambda^{-1} C_{\beta}(m_n, C_n),
            \end{align*}
            Denoting the fixed point by $(m_{\infty}, C_{\infty})^\t$,
            we rewrite this system as
            \begin{align*}
                m_{n+1} - m_{\infty} &= \alpha (m_n - m_{\infty}) + (1 - \alpha) \bigl(m_{\beta}(m_n, C_n) - m_{\beta}(m_{\infty}, C_{\infty})\bigr), \\
                C_{n+1} - C_{\infty} &= \alpha^2 (C_n - C_{\infty}) + (1 - \alpha^2) \lambda^{-1} \bigl( C_{\beta}(m_n, C_n) - C_{\beta}(m_{\infty}, C_{\infty}) \bigr),
            \end{align*}
            and so, by the triangle inequality,
            \begin{align*}
                \abs{%
                    \begin{pmatrix}
                        m_{n+1} \\
                        C_{n+1}
                    \end{pmatrix}
                    -
                    \begin{pmatrix}
                        m_{\infty}  \\
                        C_{\infty}
                    \end{pmatrix}
                }
        &\leq
        \alpha
        \abs{%
            \begin{pmatrix}
                m_{n} \\
                C_{n}
            \end{pmatrix}
            -
            \begin{pmatrix}
                m_{\infty}  \\
                C_{\infty}
            \end{pmatrix}
        }
        +  (1-\alpha^2)\abs{%
            \Phi_{\beta} (m_n, C_n)
            - \Phi_{\beta} (m_{\infty}, C_{\infty})
        } \\
        &\leq
        \left( \alpha + (1-\alpha^2)\frac{k}{\beta} \right)
        \abs{%
            \begin{pmatrix}
                m_{n} \\
                C_{n}
            \end{pmatrix}
            -
            \begin{pmatrix}
                m_{\infty}  \\
                C_{\infty}
            \end{pmatrix}
        },
    \end{align*}
    from where the statement follows easily.
    \item[(ii)]
        Case $\alpha = 1$.
        Similarly, in the continuous-time setting,
        we can rewrite the equations \eqref{eq:equations_moments} for the moments as
        \begin{align*}
            \dot m(t) &= - (m(t) - m_{\infty}) +  \Bigl(m_{\beta}\bigl(m(t), C(t)\bigr) - m_{\beta}(m_{\infty}, C_{\infty})\Bigr), \\
            \dot C(t) &= - 2(C(t) - C_{\infty}) +  2\lambda^{-1} \Bigl( C_{\beta}\bigl(m(t), C(t)\bigr) - C_{\beta}(m_{\infty}, C_{\infty}) \Bigr).
        \end{align*}
        Therefore
        \begin{align*}
            \frac{1}{2} \derivative*{1}{t}
            \abs{%
                \begin{pmatrix}
                    m(t) \\
                    C(t)
                \end{pmatrix}
                -
                \begin{pmatrix}
                    m_{\infty}  \\
                    C_{\infty}
                \end{pmatrix}
            }^2
        &\leq
        -
        \abs{%
            \begin{pmatrix}
                m(t) \\
                C(t)
            \end{pmatrix}
            -
            \begin{pmatrix}
                m_{\infty}  \\
                C_{\infty}
            \end{pmatrix}
        }^2 \\
        &\quad + 2 \abs{\Phi_{\beta} (m_n, C_n) - \Phi_{\beta} (m_{\infty}, C_{\infty})}
        \abs{%
            \begin{pmatrix}
                m(t) \\
                C(t)
            \end{pmatrix}
            -
            \begin{pmatrix}
                m_{\infty}  \\
                C_{\infty}
            \end{pmatrix}
        }
        \\
        &\leq
        - \left( 1 - \frac{2 k}{\beta} \right)
        \abs{%
            \begin{pmatrix}
                m_{n} \\
                C_{n}
            \end{pmatrix}
            -
            \begin{pmatrix}
                m_{\infty}  \\
                C_{\infty}
            \end{pmatrix}
        }^2,
    \end{align*}
    which leads to the statement by Gr\"onwall's inequality.
    \end{itemize}
\end{proof}

\appendix
\section{Auxiliary Technical Results}%

\begin{lemma}
    \label{lemma:convergence_recursion_equations}%
    Let $(u_n, v_n)$ denote the solution to the recurrence relation~\eqref{eq:1Dmoments}
\begin{subequations}
\begin{align}
    u_{n+1} &= \left[\alpha +(1-\alpha) (1 + v_{n})^{-1}\right] u_{n}, \\
    v_{n+1}  &=
    \left[\alpha^2 + (1-\alpha^2) \lambda^{-1} (1 + v_n )^{-1}\right]v_n,
\end{align}
\end{subequations}
with initial condition $(u_0, v_0)$ and $v_0 > 0$. Denote $v_\infty=(1-\lambda)/\lambda$.
We separate the sampling and optimization cases.
\begin{itemize}
    \item[(i)] Case $\lambda \in (0, 1)$.
        It holds, for all $n \in \nat$, that
        \begin{subequations}
        \begin{align}
            &\min \left( 1, \frac{v_{\infty}}{v_0} \right)^{\frac{1}{1+\alpha}} \left((1 - \alpha) \lambda  + \alpha\right)^n
            \le \abs{\frac{u_n}{u_0}} \leq \max \left( 1, \frac{v_{\infty}}{v_0} \right)^{\frac{1}{1+\alpha}} \bigl((1 - \alpha) \lambda  + \alpha\bigr)^n ,\label{eq:convergence_u} \\
             &\min \left(1, \frac{v_{\infty}}{v_0} \right) \left((1 - \alpha^2) \lambda  + \alpha^2\right)^n
             \leq\abs{\frac{v_n - v_{\infty}}{v_0 - v_{\infty}}}
             \leq \max \left( 1, \frac{v_{\infty}}{v_0} \right) \left((1 - \alpha^2) \lambda  + \alpha^2\right)^n  \,;\label{eq:convergence_v}
        \end{align}
        \end{subequations}
    \item[(ii)] Case $\lambda=1$.
        For all $n \in \nat$, it holds that
        \begin{subequations}
        \begin{align}
            \label{eq:convergence_v_opti_1}
            \left(\frac{1}{1+v_0(1-\alpha^2)n}\right)^{\frac{1}{1+\alpha}} \le
            \abs{\frac{u_n}{u_0}} &\leq \left(  \frac{1+v_0}{1 + v_0 + v_0(1 - \alpha^2)n} \right)^{\frac{1}{1 + \alpha}}
        \end{align}
        and
        \begin{align}
            \label{eq:convergence_v_opti}
            \left(\frac{1}{1+v_0(1-\alpha^2)n}\right)\le
            \frac{v_n}{v_0} &\leq \left(  \frac{1+v_0}{1 + v_0 + v_0(1 - \alpha^2)n} \right).
        \end{align}
        \end{subequations}
    \end{itemize}
\end{lemma}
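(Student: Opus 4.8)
Since $v_n$ evolves independently of $u_n$, the plan is to analyse the scalar $v$-recursion first and then feed its behaviour into the $u$-recursion, which is linear in $u$ once the $v_n$ are known. Write the two multipliers as
\[
    h(v) := \alpha^2 + (1-\alpha^2)\lambda^{-1}(1+v)^{-1}, \qquad \eta(v) := \alpha + (1-\alpha)(1+v)^{-1},
\]
so that $v_{n+1} = h(v_n)\,v_n$ and $u_{n+1} = \eta(v_n)\,u_n$; note $v_n > 0$ for all $n$ by induction, both multipliers are positive and strictly decreasing in $v$, and $\eta(v_\infty) = (1-\alpha)\lambda + \alpha$ is the claimed rate for $u$. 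The algebraic engine for the sampling case is the exact identity, obtained by substituting $\lambda^{-1} = 1+v_\infty$ and simplifying,
\[
    v_{n+1} - v_\infty = \kappa(v_n)\,(v_n - v_\infty), \qquad \kappa(v) := \alpha^2 + (1-\alpha^2)(1+v)^{-1},
\]
where $\kappa(v_\infty) = (1-\alpha^2)\lambda + \alpha^2$ is the claimed rate for $v$.

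For $\lambda \in (0,1)$, since $\kappa(v_n) \in (0,1)$ the sign of $v_n - v_\infty$ is preserved and $|v_n - v_\infty|$ strictly decreases, so $v_n$ is monotone, stays on one side of $v_\infty$, and $\kappa(v_k) \lessgtr \kappa(v_\infty)$ accordingly. This already gives the bound with prefactor $1$ on whichever side $v_0$ lies. For the opposite side I would combine the two exact products $v_n - v_\infty = (v_0 - v_\infty)\prod_k \kappa(v_k)$ and $v_n = v_0 \prod_k h(v_k)$ into
\[
    \frac{\prod_{k=0}^{n-1}\kappa(v_k)}{\kappa(v_\infty)^n} = \frac{v_n}{v_0}\prod_{k=0}^{n-1}\frac{\kappa(v_k)}{h(v_k)\,\kappa(v_\infty)},
\]
and show the generic factor is $\ge 1$ for $v \ge v_\infty$ and $\le 1$ for $v \le v_\infty$. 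This reduces to the sign of $F(v) := \kappa(v) - h(v)\kappa(v_\infty)$, which vanishes at $v_\infty$ (because $h(v_\infty) = 1$) and satisfies $F'(v) = (1-\alpha^2)\alpha^2 v_\infty (1+v)^{-2} \ge 0$; combined with $v_\infty/v_0 \le v_n/v_0 \le 1$ (or its reverse) this produces the prefactor $\max/\min(1, v_\infty/v_0)$. For $u$ I would run the identical scheme against the weight $(v_n/v_0)^{1/(1+\alpha)}$, writing the ratio $\prod\eta(v_k)/\eta(v_\infty)^n$ as $(v_n/v_0)^{1/(1+\alpha)}\prod_k \eta(v_k)h(v_k)^{-1/(1+\alpha)}\eta(v_\infty)^{-1}$ and controlling the logarithm $\Phi(v) := \log\eta(v) - \log\eta(v_\infty) - \tfrac{1}{1+\alpha}\log h(v)$; this vanishes at $v_\infty$ and has $\Phi'$ equal to a positive multiple of $\lambda^{-1}\eta(v) - h(v) = \alpha\bigl[\lambda^{-1}(1 - (1-\alpha)(1+v)^{-1}) - \alpha\bigr]$, whose bracket is affine and decreasing in $(1+v)^{-1}$ with positive values at both endpoints, hence positive; so $\Phi$ is increasing and the exponent-$1/(1+\alpha)$ prefactors follow.

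For $\lambda = 1$ we have $v_\infty = 0$ and $h = \kappa$, and the cleanest route uses the reciprocal $p_n := 1/v_n$, for which the recursion collapses to the affine form
\[
    p_{n+1} = p_n + \frac{1-\alpha^2}{1+\alpha^2 v_n}, \qquad \frac{1-\alpha^2}{1+v_0} \le \frac{1-\alpha^2}{1+\alpha^2 v_n} \le 1-\alpha^2,
\]
using $v_n \le v_0$; summing the two increment bounds yields exactly the stated two-sided bounds on $v_n/v_0$. The lower bound on $|u_n/u_0|$ follows from the pointwise inequality $\eta(v) \ge \kappa(v)^{1/(1+\alpha)}$ (the $\Phi \ge 0$ fact above, now one-sided on $[0,\infty)$), giving $|u_n/u_0| = \prod\eta(v_k) \ge (v_n/v_0)^{1/(1+\alpha)}$ and then the lower bound on $v_n$. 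For the upper bound I would instead estimate $-\log|u_n/u_0| = \sum_k \log\frac{1+v_k}{1+\alpha v_k} \ge (1-\alpha)\sum_k \frac{v_k}{1+v_k}$ via $\log(1+x) \ge x/(1+x)$, rewrite $\frac{v_k}{1+v_k} = (1+p_k)^{-1}$, insert $p_k \le p_0 + (1-\alpha^2)k$, and compare the sum with an integral; using $(1-\alpha)/(1-\alpha^2) = 1/(1+\alpha)$ and $1+p_0 = (1+v_0)/v_0$, the constants collapse to $\tfrac{1}{1+\alpha}\log\frac{1+v_0+v_0(1-\alpha^2)n}{1+v_0}$, which is precisely the claimed bound.

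The conceptual content is the decoupling together with the identity $v_{n+1} - v_\infty = \kappa(v_n)(v_n - v_\infty)$; after that everything is monotonicity plus elementary inequalities. The two delicate points, where I would concentrate the effort, are: (a) the sign analyses of the auxiliary functions $F$ and $\Phi$, since these are exactly what produce the sharp prefactors $\max/\min(1, v_\infty/v_0)^{1/(1+\alpha)}$ rather than some crude constant in the sampling case; and (b) matching the exact constant $\frac{1+v_0}{1+v_0+v_0(1-\alpha^2)n}$ in the optimization-case bound for $u$, which is what forces the particular combination of the logarithmic inequality, the reciprocal recursion, and the Riemann-sum comparison described above. I expect (b) to be the main obstacle, as it is the one place where a naive comparison of $|u_n/u_0|$ against $v_n$ fails and a standalone estimate is required.
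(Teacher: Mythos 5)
Your proposal is correct, and its skeleton coincides with the paper's proof: the same exact identity $v_{n+1}-v_\infty=\kappa(v_n)(v_n-v_\infty)$ (the paper writes $\kappa(v)=\gamma(v)=\tfrac{1+\alpha^2v}{1+v}$), monotonicity of the multipliers for the prefactor-one bounds, a product/ratio comparison for the opposite side, and the exponent $\tfrac{1}{1+\alpha}$ coupling of $u$ to $v$. The local differences are worth recording, since in two places your route is a little cleaner. In the sampling case, your sign analysis of $F(v)=\kappa(v)-h(v)\kappa(v_\infty)$ is exactly the paper's comparison of the ratio $v_{n+1}^{-1}(v_{n+1}-v_\infty)\big/\bigl(v_n^{-1}(v_n-v_\infty)\bigr)=\kappa(v_n)/h(v_n)$ with $\kappa(v_\infty)$, which the paper settles by direct algebra rather than by computing $F'$; both computations check out, as does your formula $\Phi'=\tfrac{(1-\alpha)}{(1+v)^2\eta h}\bigl(\lambda^{-1}\eta-h\bigr)$. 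Your choice to weight $u$ by $(v_n/v_0)^{1/(1+\alpha)}$, rather than by $\bigl(\abs{v_n-v_\infty}/\abs{v_0-v_\infty}\bigr)^{1/(1+\alpha)}$ as the paper does with its function $h_\alpha(v)=\eta(v)\kappa(v)^{-1/(1+\alpha)}$, buys a small simplification: you only need $v_\infty\le v_n\le v_0$ (or the reverse), whereas the paper must feed its already-proven geometric bound \eqref{eq:convergence_v} back into the $u$ estimate. More substantially, in the optimization case your exact affine recursion $p_{n+1}=p_n+\tfrac{1-\alpha^2}{1+\alpha^2v_n}$ with the two-sided increment bounds $\tfrac{1-\alpha^2}{1+v_0}\le\tfrac{1-\alpha^2}{1+\alpha^2v_n}\le 1-\alpha^2$ yields both inequalities in \eqref{eq:convergence_v_opti} by summation alone; the paper instead obtains the upper bound by majorizing $\gamma(v_n)\le\gamma(\underline v_n)$, forming the product $\Pi_{n-1}$, and invoking $\log(1-\epsilon)\le-\epsilon$ together with an integral comparison. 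Your $u$-bounds in that case then agree with the paper's: the lower bound is the paper's $h_\alpha(v_n)\ge h_\alpha(0)=1$ (your $\Phi\ge 0$ on $[0,\infty)$), and the upper bound is the same logarithmic-inequality-plus-Riemann-sum argument, merely performed in the variable $p_k$ instead of $\underline v_k$.
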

\begin{proof}
    \textbf{Case $\lambda \in (0, 1)$.~}
    Rearranging the equation for $\{v_n\}_{n=0, \dotsc}$, we obtain
    \begin{equation}
        \label{eq:rearranged_v}
        v_{n+1} - v_{\infty} = \gamma(v_n) (v_{n} - v_{\infty}),
        \qquad \gamma(s) := \frac{1+\alpha^2 s}{1+s}.
    \end{equation}
    If $v_0 \geq v_{\infty}$, then clearly $v_0\ge v_n \geq v_{\infty}$ for all $n \in \nat$.
    Therefore, since $0<\gamma(\dummy)<1$ is a strictly decreasing function on $[0, \infty\bigr)$,
    it holds that $0 \leq v_{n+1} - v_{\infty} \leq \gamma(v_\infty)  (v_{n} - v_{\infty})$
    which leads directly to the convergence estimate
    \[
        \abs{v_{n} - v_{\infty}} \leq \gamma(v_\infty)^n \abs*{v_0 - v_{\infty}}.
    \]
    Similarly, for $v_0<v_\infty$, we obtain $v_0 < v_n < v_{\infty}$ for all $n \in \nat$,
    which leads to the lower bound $\abs{v_{n} - v_{\infty}} \geq \gamma(v_\infty)^n \abs*{v_0 - v_{\infty}}$.
    For the opposite bounds, we calculate using~\eqref{eq:rearranged_v} that
    \begin{align*}
        \frac{v_{n+1}^{-1}(v_{n+1} - v_{\infty})}{v_{n}^{-1}(v_{n} - v_{\infty})}
        &= \left(\frac{1 + \alpha^2 v_n}{1 + v_n}\right) \frac{v_n}{v_{\infty} + \frac{1 + \alpha^2 v_n}{1 + v_n} (v_n - v_{\infty})} \\
      &= \frac{1 + \alpha^2 v_n}{(1 - \alpha^2) v_{\infty} + 1 + \alpha^2 v_n}
      = \frac{1 + \alpha^2 v_{\infty} + \alpha^2(v_n-v_{\infty})}{1 + v_{\infty} + \alpha^2 (v_n-v_{\infty})}\,.
    \end{align*}
    Hence, if $v_0 > v_\infty$, then
    \begin{align*}
     \frac{v_{n+1}^{-1}(v_{n+1} - v_{\infty})}{v_{n}^{-1}(v_{n} - v_{\infty})}
      &\ge \gamma(v_\infty) \quad \Longrightarrow \quad
      \frac{\abs{v_{n+1}-v_\infty}}{v_{n+1}} \ge  \gamma(v_\infty) \frac{\abs{v_n-v_\infty}}{v_n}\,,
    \end{align*}
    with the inequalities reversed in the case $v_0<v_\infty$.
    Iterating the last inequality, combining the above estimates and
    noting that $\gamma(v_\infty)= (1-\alpha^2)\lambda+\alpha^2$ gives~\eqref{eq:convergence_v}.

    Next, notice that the equation for $u_n$ can be rewritten as
    \begin{align}
        \label{eq:equation_un_rearranged}%
        u_{n+1} =  \tilde\gamma(v_n) u_n\,,\qquad \widetilde\gamma(s) := \frac{1+\alpha s}{1+s},
    \end{align}
    where $\widetilde\gamma$ is strictly decreasing on $[0, \infty)$.
   Clearly, if $v_0 \geq v_{\infty}$,
    then we have
    \begin{align}
        \abs{u_{n}}\leq  \widetilde\gamma(v_\infty)^n \abs{u_0}\,,
    \end{align}
    with the reversed inequality holding for $v_0<v_\infty$.
    Noting that $u_n$ and $v_n-v_\infty$ do not change sign with $n$,
    we calculate by analogy with the continuous-time case \cref{lemma:cv-alpha1-1D} that
    \begin{align}
        \frac{\abs{u_{n+1}}}{\abs{u_n}} \left( \frac{\abs{v_{n}-v_\infty}}{\abs{v_{n+1}-v_\infty}} \right)^{\frac{1}{1+\alpha}}
        &=\frac{u_{n+1}}{u_n} \left( \frac{v_{n}-v_\infty}{v_{n+1}-v_\infty} \right)^{\frac{1}{1+\alpha}}
        \notag\\
        &= \left(\frac{1+ \alpha v_n} {1 + v_{n}}\right) \left(\frac{1 + v_{n}} {1+ \alpha^2 v_n}\right)^{\frac{1}{1+\alpha}}
        =: h_{\alpha}(v_n).
        \label{eq:h-alpha}
    \end{align}
    Since  $h_{\alpha}'(s) \geq 0$ for all $\alpha \in (0, 1)$ and all $s> 0$, we deduce for $v_0<v_\infty$,
    \[
    \frac{\abs{u_{n+1}}}{\abs{u_n}} \left( \frac{\abs{v_{n}-v_\infty}}{\abs{v_{n+1}-v_\infty}} \right)^{\frac{1}{1+\alpha}}\le h_\alpha(v_\infty),
    \]
    and iterating this inequality, then using~\eqref{eq:convergence_v}, we have
    \begin{align*}
        |u_n|&\le |u_0| h_\alpha(v_\infty)^n\left(\frac{\abs{v_{n}-v_\infty}}{\abs{v_0-v_\infty}} \right)^{\frac{1}{1+\alpha}}
        \le  |u_0| \left(\frac{v_\infty}{v_0}\right)^{\frac{1}{1+\alpha}}\left(h_\alpha(v_\infty) \gamma(v_\infty)^{\frac{1}{1+\alpha}}\right)^n\\
        &=  |u_0| \left(\frac{v_\infty}{v_0}\right)^{\frac{1}{1+\alpha}}\widetilde \gamma(v_\infty)^n\,.
    \end{align*}
    with reversed inequality of $v_0 > v_\infty$.
    Since $\widetilde \gamma(v_{\infty}) = (1-\alpha)\lambda+\alpha$,
    this concludes the proof of~\eqref{eq:convergence_u}.

    \textbf{Case $\lambda = 1$.~}%
    Rearranging the equation for $v_n$, we have
    \begin{equation}
        \label{eq:bound_v_opti}
        v_{n+1}^{-1}  = \left( \frac{1 + v_n}{1 + \alpha^2 v_n} \right) v_n^{-1} = \gamma(v_n)^{-1}v_n^{-1}\,.
    \end{equation}
    Since clearly
    \begin{equation}
        \label{eq:aux_inequality}
        \forall (x, y) \in \real_+^2, \qquad
        \frac{1 + x}{1 + y}  \leq 1+ \abs{x - y},
    \end{equation}
    we have
    \[
        v_{n+1}^{-1} \leq \left( 1 + (1 - \alpha^2) v_n \right) v_{n}^{-1} \leq v_{n}^{-1} + (1 - \alpha^2),
    \]
    so we obtain a lower bound on $v_n$:
    \begin{equation}
        \label{eq:upper_bound_inverse_cov}
        \forall n \in \nat, \qquad
        v_{n}^{-1} \leq v_{0}^{-1} + (1 - \alpha^2) n =: \underline v_{n}^{-1}.
    \end{equation}
    In order to obtain an upper bound for $v_n$,
    we note that
    \[
        v_{n+1} = \left( \frac{1 + \alpha^2 v_n}{1 + v_n} \right) v_n \leq \left( \frac{1 + \alpha^2 \underline v_n}{1 + \underline v_n} \right) v_n
        = \left( \frac{v_0^{-1} + n (1 - \alpha^2) + \alpha^2}{v_0^{-1} + n (1 - \alpha^2) + 1} \right) v_n.
    \]
    Therefore we deduce
    \[
        v_n \leq \prod_{k=0}^{n-1} \left( 1 - \frac{1 - \alpha^2}{v_0^{-1} + k(1 - \alpha^2) + 1} \right) v_0 =: \Pi_{n-1} v_0
    \]
    Using $\log(1-\epsilon)\le -\epsilon$ for all $\epsilon\in(0,1)$, we have
    \begin{align*}
        \log \Pi_{n-1}
        &\leq - \sum_{k=0}^{n-1}\frac{1 - \alpha^2}{v_0^{-1} + k(1 - \alpha^2) + 1}
        \leq - \int_{0}^{n}\frac{1 - \alpha^2}{v_0^{-1} + x(1 - \alpha^2) + 1} \, \d x \\
        &= - \log \left( \frac{v_0^{-1} + n(1 - \alpha^2) + 1}{v_0^{-1} + 1}\right),
    \end{align*}
    so we conclude that the upper bound in~\eqref{eq:convergence_v_opti} holds.
    A similar reasoning with the inequality
    \[
        \abs{u_{n+1}} \leq \left(\frac{1 + \alpha \underline v_n}{1 + \underline v_n} \right) \abs{u_n}
    \]
    can be employed in order to show the upper bound on $u_n$ in \eqref{eq:convergence_v_opti_1}.
    To obtain the lower bound on $u_n$,
    we use the fact that $h_\alpha$ is increasing to estimate from \eqref{eq:h-alpha} that
    \begin{align*}
        \abs{\frac{u_{n+1}}{u_n}} \abs{\frac{v_n}{v_{n+1}}}^{\frac{1}{1+\alpha}}
        =h_{\alpha}(v_n)\ge h_\alpha(0)=1
        \quad \Leftrightarrow \quad
        \frac{\abs{u_{n+1}}}{v_{n+1}^{\frac{1}{1+\alpha}}} \geq \frac{\abs{u_{n}}}{v_{n}^{\frac{1}{1+\alpha}}}.
    \end{align*}
    By iterating this inequality and using \eqref{eq:upper_bound_inverse_cov}, we conclude
    \[
        \abs{\frac{u_n}{u_0}} \ge \abs{\frac{v_n}{v_0}}^{\frac{1}{1+\alpha}}
        \ge \left(\frac{1}{1+v_0(1-\alpha^2)n}\right)^{\frac{1}{1+\alpha}},
    \]
    which is the result.
\end{proof}

\begin{lemma}
    \label{lemma:cv-alpha1-1D}
    Let $\lambda \in (0, 1]$,
    and let $\bigl(u(t),v(t)\bigr)$ denote the unique global solution to the ODE system~\eqref{eq:1Dmoments-alph1} with initial condition $(u_0,v_0)$ and $v_0 > 0$.
    \begin{itemize}
        \item[(i)] Case $\lambda \in (0, 1)$.
            It holds that
            \begin{subequations}
            \begin{align}
                \label{eq:convergence_u_continuous}
                \min \left(1,  \left(\frac{v_{\infty}}{v_0}\right)^{\lambda/2} \right) \e^{-(1 - \lambda) t} \le
                \abs{\frac{u(t)}{u_0}} &\leq \max \left(1,  \left(\frac{v_{\infty}}{v_0}\right)^{\lambda/2} \right) \e^{-(1 - \lambda) t}, \\
                \label{eq:convergence_v_continuous}
                \min \left( 1,  \left(\frac{v_{\infty}}{v_0}\right)^{\lambda} \right) \e^{-2 (1 - \lambda) t} \le
                    \abs{\frac{v(t) - v_{\infty}}{v_0 - v_{\infty}}} &\leq
                \max \left( 1,  \left(\frac{v_{\infty}}{v_0}\right)^{\lambda} \right) \e^{-2 (1 - \lambda) t}.
            \end{align}
            \end{subequations}
        \item[(ii)] Case $\lambda=1$.
            For all $t \geq 0$, it holds that
            \begin{subequations}
            \begin{align}
                \label{eq:convergence_u_opti_cont}
                \left( \frac{1}{1 + 2v_0 t} \right)^{\frac{1}{2}} \leq \abs{\frac{u(t)}{u_0}} &\leq \left(  \frac{1 + v_0}{1 + v_0 + 2 v_0 t} \right)^{\frac{1}{2}}
            \end{align}
            and
            \begin{align}
             \label{eq:convergence_v_opti_cont}
                \frac{1}{1 + 2v_0 t}\le
                \frac{v(t)}{v_0} &\leq \frac{1 + v_0}{1 + v_0 + 2v_0 t}.
            \end{align}
            \end{subequations}
    \end{itemize}
\end{lemma}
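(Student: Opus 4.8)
The plan is to exploit the fact that the system \eqref{eq:1Dmoments-alph1} admits explicit first integrals, which reduce the whole problem to controlling the scalar function $v(t)$. First I would record the qualitative behaviour of $v$: since $v=0$ and $v=v_{\infty}$ are the only equilibria of $\dot v = -2\frac{v}{1+v}(v-v_{\infty})$ and the right-hand side is smooth for $v>0$, uniqueness guarantees that $v(t)>0$ for all $t$ and that $v$ is monotone, moving from $v_0$ toward $v_{\infty}$. In particular $v(t)$ stays in the closed interval with endpoints $v_0$ and $v_{\infty}$ (and in $(0,v_0]$ when $\lambda=1$, where $v_{\infty}=0$). This boundedness, together with the linearity of the $u$-equation, yields global existence of $\bigl(u(t),v(t)\bigr)$.

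The key observation is that $u$ and $v$ are rigidly tied together. Writing $\frac{\d}{\d t}\log|u| = -\frac{v}{1+v} = \frac{1}{2(v-v_{\infty})}\dot v = \frac{1}{2}\frac{\d}{\d t}\log|v-v_{\infty}|$ and integrating gives the exact relation
\begin{equation*}
    \frac{|u(t)|}{|u_0|} = \left(\frac{|v(t)-v_{\infty}|}{|v_0-v_{\infty}|}\right)^{1/2},
\end{equation*}
valid in both regimes (for $\lambda=1$ this reads $|u(t)|/|u_0| = (v(t)/v_0)^{1/2}$). Consequently the bounds \eqref{eq:convergence_u_continuous} and \eqref{eq:convergence_u_opti_cont} follow by taking square roots of the corresponding bounds on $v$, using $\max(1,x)^{1/2}=\max(1,x^{1/2})$ for $x>0$; so it remains only to establish \eqref{eq:convergence_v_continuous} and \eqref{eq:convergence_v_opti_cont}.

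For the case $\lambda\in(0,1)$ I would integrate the $v$-equation by separation of variables. A partial-fraction decomposition of $\frac{1+v}{v(v-v_{\infty})}$, together with $1+v_{\infty}=1/\lambda$ and $v_{\infty}/(1+v_{\infty})=1-\lambda$, yields the first integral
\begin{equation*}
    \frac{|v(t)-v_{\infty}|}{v(t)^{\lambda}} = \frac{|v_0-v_{\infty}|}{v_0^{\lambda}}\,\e^{-2(1-\lambda)t}.
\end{equation*}
Rearranging and bounding $v(t)^{\lambda}$ between $\min(v_0,v_{\infty})^{\lambda}$ and $\max(v_0,v_{\infty})^{\lambda}$ (by monotonicity of $v$) produces
\begin{equation*}
    \min\left(1,\left(\tfrac{v_{\infty}}{v_0}\right)^{\lambda}\right)\e^{-2(1-\lambda)t} \le \frac{|v(t)-v_{\infty}|}{|v_0-v_{\infty}|} \le \max\left(1,\left(\tfrac{v_{\infty}}{v_0}\right)^{\lambda}\right)\e^{-2(1-\lambda)t},
\end{equation*}
which is \eqref{eq:convergence_v_continuous}. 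For $\lambda=1$ (so $v_{\infty}=0$) the cleaner route is to note $\frac{\d}{\d t}(1/v) = 2/(1+v)$; since $v$ decreases from $v_0$ to $0$, the coefficient $2/(1+v)$ lies in $[2/(1+v_0),\,2]$, so integrating these two constant bounds sandwiches $1/v(t)$ between $v_0^{-1}+2t(1+v_0)^{-1}$ and $v_0^{-1}+2t$, which rearranges exactly to \eqref{eq:convergence_v_opti_cont}.

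I expect the only delicate bookkeeping to be in the case $\lambda\in(0,1)$, where one must track the two sub-cases $v_0>v_{\infty}$ and $v_0<v_{\infty}$ to decide which of $1$ and $(v_{\infty}/v_0)^{\lambda}$ plays the role of the minimum and which the maximum. The first integral makes this distinction completely transparent, so I anticipate no substantive obstacle; the argument is essentially the continuous-time mirror of the computation in \cref{lemma:convergence_recursion_equations}, and the coupling $|u|/|u_0|=(|v-v_{\infty}|/|v_0-v_{\infty}|)^{1/2}$ supplies the mean estimate for free.
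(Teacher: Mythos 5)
Your proposal is correct and takes essentially the same route as the paper's proof: your exact coupling $|u(t)|/|u_0| = \bigl(|v(t)-v_\infty|/|v_0-v_\infty|\bigr)^{1/2}$ is precisely the paper's relation \eqref{eq:relation_u_v}, and your first integral for $v$ in the case $\lambda \in (0,1)$ is the paper's identity \eqref{eq:continuous_time_c} written in the variable $v$ instead of $x = v/v_\infty$. The only minor deviation is for $\lambda = 1$, where you sandwich $\frac{\d}{\d t}(1/v) = 2/(1+v)$ between the constants $2/(1+v_0)$ and $2$, whereas the paper derives the lower bound first and then applies Gr\"onwall's inequality with a time-dependent rate; both yield the identical bounds \eqref{eq:convergence_v_opti_cont}.
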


\begin{proof}
Note that solutions to \eqref{eq:1Dmoments-alph1} are unique, and exist globally in time.

    \textbf{Case $\lambda \in (0, 1)$.~}
    We begin with the sampling case (i) when $\lambda\neq 1$,
    The second equation in \eqref{eq:1Dmoments-alph1} can be rewritten as
    \begin{align*}
        \derivative*{1}{t} (v - v_{\infty}) = -2 \left( \frac{v}{v + 1} \right) \left(v- v_{\infty} \right).
    \end{align*}
    For $x = v/v_{\infty}$, we obtain
    \begin{align*}
        \dot x = -2 \left( \frac{(x - 1)x}{v_{\infty}^{-1} + x} \right)
        = -2 \left( \frac{1 + v_{\infty}^{-1}}{x - 1} - \frac{v_{\infty}^{-1}}{x}\right)^{-1} = -2(1 - \lambda) \left( \frac{-1}{1 - x} - \frac{\lambda}{x}\right)^{-1}.
    \end{align*}
    We can rewrite this equation as
    \begin{equation}
        \label{eq:continuous_time_c}
        \derivative*{1}{t} \Bigl( \log \bigl(1 - x(t)\bigr) - \lambda \log \bigl(x(t)\bigr) \Bigr) = - 2(1 - \lambda),
    \end{equation}
    leading to
    \[
        |1 - x(t)|
        = \left( \frac{x(t)}{x(0)} \right)^{\lambda}  \e^{-2 (1 - \lambda) t} |1 - x(0)|.
    \]
    Since $v(t)$ is decreasing if $v_0 > v_{\infty}$ and increasing if $v_0 < v_{\infty}$,
    estimate ~\eqref{eq:convergence_v_continuous} directly follows.

   Next, we consider the first equation in \eqref{eq:1Dmoments-alph1},
   and note that it can be rewritten as
   \[
       \frac{\dot u}{u} =  \frac{1}{2} \left( \frac{1}{v - v_{\infty}} \right) \derivative*{1}{t} (v - v_{\infty}).
   \]
   This implies
   \begin{equation}
       \label{eq:relation_u_v}
       \log \left( \frac{u(t)}{u_0} \right) = \frac{1}{2} \log \left( \frac{v(t)-v_{\infty}}{v_0-v_{\infty}}\right),
   \end{equation}
   where it is not difficult to verify that the arguments of the logarithms are positive for all times.
   Applying~\eqref{eq:convergence_v_continuous}, we conclude that~\eqref{eq:convergence_u_continuous} holds.

   \textbf{Case $\lambda = 1$.~}
    The argument follows analogously;
    the second equation in \eqref{eq:1Dmoments-alph1} reads
    \begin{align}
        \label{eq:rearranged_continuous_eq_opti}
        \dot v = -2 \left( \frac{v}{v + 1} \right) v,
    \end{align}
    Since the right-hand is bounded from below by $-2 v^2$,
    we directly deduce that
    \begin{equation}
        \label{eq:lower_bound_v_continuous}
        \forall t > 0, \qquad v(t) \geq \frac{1}{v_0^{-1} + 2 t} := \underline v(t).
    \end{equation}
    Now, since the function $s \mapsto \frac{s}{1 + s}$ is increasing,
    it is clear that $v(t)$ satisfies
    \[
        \dot v(t) \leq -2 \left( \frac{\underline{v}(t)}{\underline{v}(t) + 1} \right) v(t).
    \]
    Using Gr\"onwall's inequality, we obtain the upper bound in~\eqref{eq:convergence_v_opti_cont}.

    The bounds on $u(t)$ are then obtained from~\eqref{eq:relation_u_v} and the bounds on $v(t)$.
\end{proof}

\begin{remark}
    Notice that, by letting $\alpha = \e^{-t/n}$ in the bounds obtained in \cref{lemma:convergence_recursion_equations}
    and taking the limit $n \to \infty$,
    we recover the bounds in \cref{lemma:cv-alpha1-1D}.
\end{remark}
\begin{remark}
    It is possible to slightly improve the upper bounds in~\eqref{eq:convergence_v_opti} and~\eqref{eq:convergence_v_opti_cont}.

\begin{itemize}
    \item
    In the discrete-time case,
    rearranging the equation for $v_{n+1}$ and using that $\log(1 + \varepsilon) \geq \frac{\varepsilon}{1 + \varepsilon}$ for all $\varepsilon > 0$,
    we have
    \begin{align}
        \notag
        &v_{n+1}^{-1} - \log(v_{n+1}) - v_{n}^{-1} + \log(v_n) \\
        &\qquad = \frac{1 - \alpha^2}{1 + \alpha^2 v_n} + \alpha^2 \log\left( \frac{1 + v_n}{1 + \alpha^2 v_n} \right)
        \notag
        = \frac{1 - \alpha^2}{1 + \alpha^2 v_n} + \log\left( 1 + \frac{(1 - \alpha^2)v_n}{1 + \alpha^2 v_n} \right) \\
        &\qquad \geq \frac{1 - \alpha^2}{1 + \alpha^2 v_n} + \frac{(1 - \alpha^2) v_n}{1 + v_n}
        \label{eq:crude_estimate_vn}
        \geq (1 - \alpha^2) \left( \frac{1}{1 + \alpha^2 v_n} + \frac{ v_n}{1 + v_n} \right) \geq 1 - \alpha^2.
    \end{align}
    Since $v_n$ is decreasing with $n$,
    this directly implies, using the lower bound~\eqref{eq:upper_bound_inverse_cov},
    \[
        v_{n}^{-1} \geq v_0^{-1} + n (1 - \alpha^2) + \log \left( \frac{v_n}{v_0} \right)
        \geq v_0^{-1} + (1 - \alpha^2) n - \log \left( 1 + v_0 (1-\alpha^2) n \right).
    \]
    so we deduce the following inequality:
    \[
        \frac{v_n}{v_0} \leq \frac{1}{1 + v_0 (1 - \alpha^2) n - v_0\log \left( 1 + v_0 (1-\alpha^2) n \right)}
    \]
    which holds for $n\in\nat$ large enough to ensure that the right-hand side is strictly positive.

    \item
    In the continuous-time case,
    one may rewrite~\eqref{eq:rearranged_continuous_eq_opti} as
    \[
        \derivative*{1}{t} \left( \log v(t) - \frac{1}{v(t)} \right)
        = -2
    \]
    Integrating, rearranging and taking reciprocals,
    we obtain
    \[
        v(t) = \frac{1}{v_0^{-1} + 2t + \log \left( \frac{v}{v_0} \right) }.
    \]
    Using the lower bound~\eqref{eq:lower_bound_v_continuous} to bound the argument of the logarithm,
    we obtain
    \[
        v(t) \leq \frac{v_0}{1 + 2v_0 t - v_0\log(1 + 2 v_0 t)}.
    \]
    \end{itemize}
Though slightly better in the long time limit,
these bounds are more cumbersome to manipulate than the ones presented in~\cref{lemma:convergence_recursion_equations,lemma:cv-alpha1-1D}.
\end{remark}

\begin{lemma}
    \label{lemma:auxiliary_lemma_tilted_distribution}
    Assume that $\mu$ is a probability measure on $\bigl(\real, \mathcal B(\real)\bigr)$,
    with $\mathcal B(\real)$ the Borel $\sigma$-algebra on $\real$,
    and that $f:\real\to\real$ is a positive and nondecreasing (resp.\ nonincreasing) function.
    Let $\widetilde \mu$ be the probability measure defined by
    \[
        \widetilde \mu: \mathcal B(\real) \ni A \mapsto \frac{\int_{A} f(x) \d \mu(x)}{\int_{\real} f(x) \, \d \mu(x)}.
    \]
    Then it holds $\expect_{X \sim \widetilde \mu}(X) \geq \expect_{X \sim \mu}(X)$
    (resp.\ $\expect_{X \sim \widetilde \mu}(X) \leq \expect_{X \sim \mu}(X)$).
\end{lemma}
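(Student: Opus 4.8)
The plan is to recognize the assertion as a one-dimensional correlation inequality of FKG type and to prove it by a symmetrization argument. Suppose first that $f$ is nondecreasing. Since $f$ is positive, the constant $Z := \int_{\real} f \, \d\mu$ is strictly positive, and dividing in the definition of $\widetilde\mu$ shows that the claim $\expect_{X \sim \widetilde\mu}(X) \geq \expect_{X \sim \mu}(X)$ is equivalent, after multiplication by $Z > 0$, to
\[
    \int_{\real} x \, f(x) \, \d\mu(x) \geq \left( \int_{\real} x \, \d\mu(x) \right) \left( \int_{\real} f(x) \, \d\mu(x) \right),
\]
that is, to the nonnegativity of the covariance of $X$ and $f(X)$ under $\mu$.

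To establish this, I would introduce two independent copies $X$ and $Y$ of $\mu$ on a common probability space. Expanding and using independence, one obtains the identity
\[
    \expect\bigl[ (X - Y) \bigl( f(X) - f(Y) \bigr) \bigr]
    = 2 \left( \int_{\real} x \, f \, \d\mu - \int_{\real} x \, \d\mu \int_{\real} f \, \d\mu \right).
\]
Since $f$ is nondecreasing, the two factors $X - Y$ and $f(X) - f(Y)$ always share the same sign, so the integrand on the left is nonnegative pointwise; taking expectations gives the desired inequality. The nonincreasing case is handled identically, except that the two factors now have opposite signs, so the product is nonpositive and the reversed inequality $\expect_{X \sim \widetilde\mu}(X) \leq \expect_{X \sim \mu}(X)$ follows.

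The only delicate point, and the one I expect to require the most care, is integrability: the symmetrization identity and the two expectations only make sense once $x \mapsto x$ and $x \mapsto x f(x)$ are $\mu$-integrable. In every use of this lemma the measure $\mu$ has Gaussian tails and $f$ is controlled by a ratio of Gaussian densities, so these moments are finite; I would therefore record integrability as a standing hypothesis, noting that if $\int_{\real} x \, \d\mu$ is infinite one interprets the inequality in $[-\infty, +\infty]$, the sign of the symmetrized integrand remaining unambiguous in any case.
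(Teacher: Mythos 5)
Your proof is correct, but it takes a genuinely different route from the paper. You prove the equivalent covariance inequality $\int x f \, \d\mu \geq \int x \, \d\mu \int f \, \d\mu$ by the classical symmetrization trick: take two independent copies $X, Y \sim \mu$ and observe that $(X-Y)\bigl(f(X)-f(Y)\bigr)$ has a definite sign when $f$ is monotone. This is the standard proof of Chebyshev's association inequality, and it is clean and elementary. The paper instead proves the stronger statement that $\widetilde\mu$ first-order stochastically dominates $\mu$: writing $F$ and $\widetilde F$ for the two CDFs, it shows $\widetilde F(x) \leq F(x)$ for all $x$ by comparing the odds ratios $\bigl(1-\widetilde F(x)\bigr)/\widetilde F(x)$ and $\bigl(1-F(x)\bigr)/F(x)$, and then concludes via the layer-cake formula $\expect_\nu(X) = \int_0^\infty \bigl(1 - F_\nu(x) - F_\nu(-x)\bigr)\,\d x$. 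The trade-off is this: your argument needs the explicit integrability of $x$ and $x f(x)$ under $\mu$ for the cross terms $\expect[Xf(Y)]$ in the expansion to be finite (you correctly flag this, and it holds in every application in the paper, where $\mu$ is a truncated Gaussian reweighted by $\e^{-\beta f}$); the paper's dominance argument is more robust in the extended-valued setting and, as a by-product, gives $\expect_{\widetilde\mu}\bigl[g(X)\bigr] \geq \expect_\mu\bigl[g(X)\bigr]$ for every nondecreasing $g$, not just $g(x)=x$ — a strengthening the paper does not actually need, since the lemma is only ever invoked for the mean.
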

\begin{proof}
    Let us assume that $f$ is nondecreasing,
    and let us denote the cumulative distribution functions (CDFs) by
    $F(x) := \proba_{X \sim \mu} (X \leq x)$ and $\widetilde F(x) := \proba_{X \sim \widetilde \mu} (X \leq x)$.
    For any probability measure $\nu$ with CDF $F_{\nu}$, it holds
    \[
        \expect_{X \sim \nu}(X) = \int_{0}^{\infty} 1 - F_{\nu}(x) - F_{\nu}(-x) \, \d x,
    \]
    so it is sufficient to show $\widetilde F(x) \leq F(x)$ for all $x \in \real$.
    If $\widetilde F(x) = 0$, this inequality is clearly satisfied,
    so let us verify the inequality for any $x$ such that $\widetilde F(x) > 0$.
    For such a value of $x$,
    employing the fact that $f$ is nondecreasing, we obtain
    \[
        \frac{1 - \widetilde F(x)}{\widetilde F(x)}
        = \frac{\int_{(x, \infty)} f(y) \, \d \mu(y)}{\int_{(-\infty, x]} f(y) \, \d \mu(y)}
        \geq \frac{\int_{(x, \infty)} f(x) \, \d \mu(y)}{\int_{(-\infty, x]} f(x) \, \d \mu(y)}
        = \frac{\mu\bigl((x, \infty)\bigr)}{\mu\bigl((-\infty, x]\bigr)}
        = \frac{1 - F(x)}{F(x)}.
    \]
    Applying the function $y \mapsto \frac{1}{1+y}$ to both sides of this inequality,
    and flipping the direction of the inequality accordingly (because this function is decreasing over $[0, \infty)$),
    we obtain the desired inequality $\widetilde F(x) \leq F(x)$.
\end{proof}

\begin{lemma}
    \label{lemma:technical_inequality_opti}
    Let $r > 2$ be given.
    There exists $\gamma > 0$ sufficiently large such that
    \[
        \forall \widetilde C > 0, \qquad
        h(\widetilde C; \gamma) :=
        \frac{\left(1 + r \widetilde C \right)^{\frac{1}{r}}} {1 + \widetilde C}
        \left( 1 + 2 \frac{ \phi\left(\frac{\gamma\widetilde C^{\frac{1}{r}}}{\sqrt{\widetilde C(1 + \widetilde C)}} \right) }{\frac{\gamma\widetilde C^{\frac{1}{r}}}{\sqrt{\widetilde C(1 + \widetilde C)}} } \right)
        \leq 1,
    \]
    where $\phi$ denotes the density of the standard normal distribution,
    i.e.\ $\phi = g(\dummy; 0, 1)$.
\end{lemma}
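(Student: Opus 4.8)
The plan is to first recast the claimed inequality in a form where the dependence on the large parameter $\gamma$ is transparent. Writing $x := \widetilde C$ and $A(x) := (1+rx)^{1/r}/(1+x)$, Bernoulli's inequality $(1+x)^r \ge 1+rx$ (valid since $r>1$) gives $A(x)\le 1$, strictly for $x>0$. Since $h(x;\gamma)=A(x)+2A(x)\phi(z)/z$ with $z=\gamma x^{1/r}/\sqrt{x(1+x)}$, the bound $h\le 1$ is \emph{equivalent} to $2A(x)\phi(z)/z\le G(x)$, where $G(x):=1-A(x)>0$. Substituting $\phi(z)/z=e^{-z^2/2}/(z\sqrt{2\pi})$ and $z^2=\gamma^2 x^{2/r-1}/(1+x)$, this becomes
\[
    Q(x)\, e^{-\gamma^2 \kappa(x)} \le \gamma, \qquad
    Q(x) := \frac{2 A(x)\sqrt{x(1+x)}}{\sqrt{2\pi}\, G(x)\, x^{1/r}}, \quad
    \kappa(x) := \frac{x^{2/r - 1}}{2(1+x)},
\]
where crucially both $Q$ and $\kappa$ are independent of $\gamma$.

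Second, I would record the asymptotics of these two $\gamma$-free functions. A Taylor expansion gives $A(x)=1-\tfrac{r-1}{2}x^2+O(x^3)$, hence $G(x)\sim\tfrac{r-1}{2}x^2$ as $x\to 0$; combined with $A\to 1$ this yields $Q(x)=O(x^{-\mu})$ with $\mu=\tfrac32+\tfrac1r$ near the origin, while $Q(x)\to 2r^{1/r}/\sqrt{2\pi}$ as $x\to\infty$. Thus $Q$ is continuous and positive on $(0,\infty)$ and bounded on every $[x_*,\infty)$. The function $\kappa$ is strictly decreasing with $\kappa(x)\sim\tfrac12 x^{2/r-1}\to\infty$ as $x\to 0$ and $\kappa(x)\to 0$ as $x\to\infty$; in particular $\kappa(x)\ge (2(1+x_*))^{-1}x^{-(1-2/r)}$ for $0<x\le x_*$.

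Third, I would fix a cutoff $x_*>0$ independent of $\gamma$ and treat two regions separately. On $[x_*,\infty)$ one uses $e^{-\gamma^2\kappa(x)}\le 1$ and $Q(x)\le Q_{\max}(x_*)<\infty$, so the inequality holds once $\gamma\ge Q_{\max}(x_*)$. On $(0,x_*]$ one inserts $Q(x)\le c_2 x^{-\mu}$ and $\kappa(x)\ge c_1 x^{-(1-2/r)}$ and performs the substitution $u=x^{-(1-2/r)}\ge u_*:=x_*^{-(1-2/r)}$, reducing the left-hand side to $c_2\,u^{p}e^{-\gamma^2 c_1 u}$ with $p=\mu/(1-2/r)>0$. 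The unconstrained maximizer of $u\mapsto u^{p}e^{-\gamma^2 c_1 u}$ is $u=p/(\gamma^2 c_1)$, which drops below $u_*$ once $\gamma^2\ge p/(c_1 u_*)$; for such $\gamma$ the expression is decreasing on $[u_*,\infty)$ and its supremum $c_2 u_*^{p}e^{-\gamma^2 c_1 u_*}$ tends to $0$, hence is $\le\gamma$. Taking $\gamma$ larger than the finitely many thresholds from the two regions yields $h(\cdot;\gamma)\le 1$ on all of $(0,\infty)$.

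The main obstacle is the coupling between $\gamma$ and $z$: the range of $x$ on which the Gaussian factor $e^{-z^2/2}$ is effective moves as $\gamma$ grows, so one cannot simply compactify and pass to a limit. The device that resolves this is the monomial substitution $u=x^{-(1-2/r)}$ near the origin, which turns the competition between the polynomial blow-up of $Q$ and the super-polynomial decay of $e^{-\gamma^2\kappa}$ into the textbook estimate for $u^{p}e^{-au}$ with $a=\gamma^2 c_1$; the remaining bookkeeping (the Taylor coefficient $\tfrac{r-1}{2}$, and continuity and boundedness of $Q$ on $[x_*,\infty)$) is routine.
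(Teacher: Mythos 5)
Your proof is correct, and it shares the paper's overall skeleton --- a split of $(0,\infty)$ at a fixed, $\gamma$-independent cutoff, with the region away from the origin handled by crude bounds and the near-origin region requiring the Gaussian decay to beat the quadratic degeneracy of $1-A(x)$ --- but the mechanism you use in the hard region is genuinely different. The paper splits at $\widetilde C=1$; for $\widetilde C\ge 1$ it bounds $\phi$ by $\phi(0)$ and uses that $(1+r\widetilde C)^{1/r}/(1+\widetilde C)\le(1+r)^{1/r}/2<1$, which is the same content as your boundedness of $Q$ on $[x_*,\infty)$. For $\widetilde C<1$ it works with $\log h$: it proves the global integral bound $\tfrac1r\log(1+r\widetilde C)-\log(1+\widetilde C)\le-\tfrac{r-1}{4(r+1)}\widetilde C^2$, then majorizes the Gaussian density by the polynomial $K(1+y)^{-4r/(r-2)}$, whose exponent is chosen exactly so that the noise term also scales as $\widetilde C^2$ with prefactor $\mathcal O\bigl(\gamma^{-1-4r/(r-2)}\bigr)$; the conclusion is then a sign comparison of two $\widetilde C^2$ coefficients. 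You instead isolate $\gamma$ algebraically ($h\le 1$ iff $Q(x)\,\e^{-\gamma^2\kappa(x)}\le\gamma$ with $\gamma$-free $Q,\kappa$) and resolve the near-origin competition by the substitution $u=x^{-(1-2/r)}$ and the elementary maximization of $u^p\e^{-au}$ at $u=p/a$. Your route avoids the paper's somewhat ad hoc choice of polynomial majorant and yields explicit thresholds for $\gamma$; the paper's route avoids your asymptotic-plus-compactness step (its $\widetilde C^2$ lower bound on $-\log A$ holds on all of $(0,1]$, not just near the origin) and never needs the Taylor expansion of $A$. The steps you flag as routine --- the expansion $A(x)=1-\tfrac{r-1}{2}x^2+\mathcal O(x^3)$, the limit $Q(x)\to 2r^{1/r}/\sqrt{2\pi}$ as $x\to\infty$, and the monotonicity of $u^p\e^{-au}$ beyond its maximizer --- all check out, so the argument is complete.
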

\begin{proof}
    If $\widetilde C \geq 1$, then
    \[
        h(\widetilde C, \gamma) \leq
            \frac{\left(1 + r \widetilde C \right)^{\frac{1}{r}}} {1 + \widetilde C}
            \left( 1 + 2 \frac{ \phi\left(0\right) }{\frac{\gamma\widetilde C^{\frac{1}{r}}}{\sqrt{\widetilde C(1 + \widetilde C)}} } \right)
        \leq \frac{\left(1 + r \widetilde C \right)^{\frac{1}{r}}} {1 + \widetilde C}
        + \frac{2 \phi(0)}{\gamma}  \frac{\left(1 + r \widetilde C \right)^{\frac{1}{r}}}{\widetilde C^{\frac{1}{r}}}
        \sqrt{\frac{\widetilde C}{1 + \widetilde C}}.
    \]
    By concavity of $\widetilde C \mapsto (1 + r \widetilde C)^{\frac{1}{r}}$,
    and the fact that the first term is strictly decreasing, we have
    \[
        h(\widetilde C, \gamma)
        \leq \frac{\left(1 + r \right)^{\frac{1}{r}}} {2}
        + \frac{2 \phi(0)}{\gamma}  \left(\frac{1 +  (r\widetilde C)^{\frac{1}{r}}}{\widetilde C^{\frac{1}{r}}}\right)
        \leq \frac{\left(1 + r \right)^{\frac{1}{r}}} {2}
        + \frac{2 \phi(0)}{\gamma}  \left(1 + r^{\frac{1}{r}}\right).
    \]
    Since the first term is strictly less than 1,
    there exists $\gamma$ sufficiently large such that the right-hand side is bounded from above by 1.

    If $0<\widetilde C <1$,
    on the other hand,
    we have
    \[
        h(\widetilde C, \gamma) \leq
        \frac{\left(1 + r \widetilde C \right)^{\frac{1}{r}}} {1 + \widetilde C}
        \left( 1 + \frac{4}{\gamma}  \phi\left(\frac{\gamma\widetilde C^{\frac{1}{r}}}{\sqrt{2\widetilde C}} \right) \right).
    \]
    Therefore,
    \[
        \log\bigl(h(\widetilde C, \gamma)\bigr)
        \leq
        \frac{1}{r} \log(1 + r \widetilde C) - \log(1 + \widetilde C)
        + \log \left( 1 + \frac{4}{\gamma}  \phi\left(\frac{\gamma\widetilde C^{\frac{1}{r}}}{\sqrt{2\widetilde C}} \right) \right).
    \]
    The sum of the first two terms is bounded as follows (where we employ that $\widetilde C \leq 1$):
    \begin{align*}
        \frac{1}{r} \log(1 + r \widetilde C) - \log(1 + \widetilde C)
        &= \int_{0}^{\widetilde C} \left( \frac{1}{1 + r x} - \frac{1}{1 + x} \right) \, \d x \\
        &\leq - (r - 1) \int_{0}^{\widetilde C} \frac{x}{2(1+r)} \, \d x
        = - \frac{1}{4} \left( \frac{r-1}{r+1} \right) \widetilde C^2.
    \end{align*}
    Employing this estimate together with the elementary bound $\log(1 + \varepsilon) \leq \varepsilon$,
    we have
    \begin{align*}
        \log\bigl(h(\widetilde C, \gamma)\bigr)
        \leq
        - \frac{1}{4} \left( \frac{r-1}{r+1} \right) \widetilde C^2
    + \frac{4}{\gamma}  \phi\left(\frac{\gamma}{\sqrt{2}} \widetilde C^{- \frac{r - 2}{2r}} \right).
    \end{align*}
    Clearly, there exists $K$ such that $\phi(x) \leq K (1 + x)^{- \frac{4r}{r-2}}$ uniformly,
    so we deduce
    \begin{align*}
        \log\bigl(h(\widetilde C, \gamma)\bigr)
        \leq
        - \frac{1}{4} \left( \frac{r-1}{r+1} \right) \widetilde C^2
        + \frac{4K}{\gamma} \left( \frac{\sqrt{2}}{\gamma}\right)^{\frac{4r}{r-2}} \widetilde C^{2}.
    \end{align*}
    It is possible to choose $\gamma$ sufficiently large such that the right-hand side of this equation is bounded from above by 0 for $\widetilde C \in (0, 1]$,
    and the statement then follows easily.
\end{proof}

\begin{lemma}
    \label{lemma:auxiliary_convergence_opti}
    Assume that $\alpha \in [0, 1]$ and that $\widehat C_{\beta}$, $\widehat C_n$, $\widehat m_{\beta}$ and $\widehat u$ are nonnegative real numbers satisfying
    $0 < \widehat C_{\beta} \leq \widehat C_n$ and
    \[
        \frac{\widehat m_{\beta}}{\widehat C_{\beta}^{1/r}} \leq \frac{\widehat u}{\widehat C_{n}^{1/r}}
    \]
    for some $r \geq 2$.
    Then $(\widehat m_{n+1}, \widehat C_{n+1})$ defined by
    \begin{align*}
        \widehat m_{n+1} &= (1 - \alpha) \widehat m_{\beta} + \alpha \widehat u, \\
        \widehat C_{n+1} &= (1 - \alpha^2) \widehat C_{\beta} + \alpha^2 \widehat C_n
    \end{align*}
    satisfy
    \[
        \frac{\widehat m_{n+1}}{\widehat C_{n+1}^{1/2r}} \leq \frac{\widehat u}{\widehat C_{n}^{1/2r}}.
    \]
\end{lemma}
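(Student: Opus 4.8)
The plan is to reduce the statement to a single scalar inequality, and then to prove that inequality by chaining a quadratic estimate with an application of Jensen's inequality. First I would dispose of the degenerate case $\widehat u = 0$: the hypothesis then forces $\widehat m_\beta = 0$ (since $\widehat C_\beta > 0$), hence $\widehat m_{n+1} = 0$, and the conclusion holds trivially. Assuming $\widehat u > 0$, I would introduce the dimensionless ratio $x := (\widehat C_\beta / \widehat C_n)^{1/r} \in (0, 1]$, which is well defined because $0 < \widehat C_\beta \le \widehat C_n$. Rewriting the hypothesis as $\widehat m_\beta \le \widehat u \, x$ and substituting into the definition of $\widehat m_{n+1}$ gives $\widehat m_{n+1} \le \widehat u\bigl[(1-\alpha)x + \alpha\bigr]$, while by definition $\widehat C_{n+1} = \widehat C_n\bigl[(1-\alpha^2)x^r + \alpha^2\bigr]$. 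Since every quantity is nonnegative, the target bound is equivalent, after raising to the power $2r$, to $\widehat m_{n+1}^{2r}\,\widehat C_n \le \widehat u^{2r}\,\widehat C_{n+1}$, which in turn follows from the purely scalar claim
\[
    \bigl[(1-\alpha)x + \alpha\bigr]^{2r} \le (1-\alpha^2)x^r + \alpha^2,
    \qquad x \in (0,1], \quad \alpha \in [0,1], \quad r \ge 2.
\]

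To prove this scalar inequality I would compose two elementary estimates. The first is the quadratic bound $\bigl[(1-\alpha)x + \alpha\bigr]^2 \le (1-\alpha^2)x + \alpha^2$, which I would verify by direct expansion: the difference of the two sides factors as $(1-\alpha)^2\, x\, (1 - x)$, nonnegative precisely because $x \in [0,1]$. Raising this to the power $r \ge 1$, which is order-preserving on nonnegative reals, yields $\bigl[(1-\alpha)x+\alpha\bigr]^{2r} \le \bigl[(1-\alpha^2)x + \alpha^2\bigr]^r$. The second estimate is $\bigl[(1-\alpha^2)x + \alpha^2\bigr]^r \le (1-\alpha^2)x^r + \alpha^2$, which is exactly Jensen's inequality applied to the convex map $z \mapsto z^r$ (convex since $r \ge 1$) at the points $x$ and $1$ with weights $1-\alpha^2$ and $\alpha^2$, using that the weights sum to one and that $1^r = 1$. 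Composing the two estimates gives the scalar claim, and substituting back $x^r = \widehat C_\beta/\widehat C_n$ turns $(1-\alpha^2)x^r + \alpha^2$ into $\widehat C_{n+1}/\widehat C_n$, which closes the argument after taking $2r$-th roots.

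I expect the only genuine subtlety to be recognizing that the exponent must be split as $2 \times r$ rather than attacked head-on. A naive attempt to match exponents — bounding $\bigl[(1-\alpha)x+\alpha\bigr]^2$ against $(1-\alpha^2)x^2 + \alpha^2$ — fails, since that difference equals $2\alpha(1-\alpha)\,x\,(1-x) \ge 0$ and therefore runs in the \emph{wrong} direction. It is exactly the mismatch between the exponent $2r$ on the left and the exponent $r$ carried by $x$ in the covariance term that makes the quadratic-estimate-then-Jensen decomposition succeed. Once this structural point is isolated, each individual step is a one-line verification.
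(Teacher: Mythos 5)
Your proof is correct, and it proves the key scalar inequality by a genuinely different route than the paper. Both arguments begin with the same normalization: dividing the means by $\widehat u$ and the covariances by $\widehat C_n$, everything reduces to showing
\[
    \bigl[(1-\alpha)x + \alpha\bigr]^{2r} \;\leq\; (1-\alpha^2)\,x^r + \alpha^2,
    \qquad x := (\widehat C_\beta/\widehat C_n)^{1/r} \in (0,1],
\]
which the paper phrases as $h(x,\alpha) \leq 1$ for $h(x,\alpha) = \bigl[(1-\alpha)x+\alpha\bigr]^{2r}\big/\bigl[(1-\alpha^2)x^r+\alpha^2\bigr]$. From there the paper proceeds by calculus: it shows $\partial_x h(y,\alpha) \geq 0$ on $(0,1]\times[0,1)$ and concludes $h(x,\alpha) \leq h(1,\alpha) = 1$; the sign computation reduces to the nonnegativity of a quadratic in $\alpha$, namely $g(y,\alpha) = \alpha^2(2 - y^r - y^{r-1}) - \alpha y^{r-1} + y^r$, which requires locating its minimizer $\alpha_*(y)$ and a case distinction on whether $\alpha_*(y) \geq 1$, and then on whether $y \leq 1/2$. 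Your decomposition — the exact factorization $(1-\alpha^2)x + \alpha^2 - \bigl[(1-\alpha)x+\alpha\bigr]^2 = (1-\alpha)^2 x(1-x) \geq 0$, raised to the power $r$, followed by Jensen's inequality for the convex map $z \mapsto z^r$ at the points $x$ and $1$ with weights $1-\alpha^2$ and $\alpha^2$ — replaces all of this with two one-line estimates, avoiding derivatives and case analysis entirely. It is also slightly more general: your argument needs only $r \geq 1$ (convexity of $z^r$), whereas the lemma is stated for $r \geq 2$. The only thing the paper's route yields beyond the inequality itself is the monotonicity of $h$ in $x$, which is never used elsewhere; your observation that the naive exponent-matching attempt runs in the wrong direction (the difference being $2\alpha(1-\alpha)x(1-x) \geq 0$ with the opposite sign from what one would want) correctly identifies why the $2\times r$ splitting is the right structural move.
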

\begin{proof}
    Letting $m_{n+1} = \widehat m_{n+1} / \widehat u$, $C_{n+1} = \widehat C_{n+1} / \widehat C_n$,
    $m_{\beta} = \widehat m_{\beta} / \widehat u$, and $C_{\beta} = \widehat C_{\beta} / \widehat C_n$,
    we can rewrite the equations for $\widehat m_{n+1}$ and $\widehat C_{n+1}$ as
    \begin{align*}
         m_{n+1} &= (1 - \alpha)  m_{\beta} + \alpha, \\
         C_{n+1} &= (1 - \alpha^2)  C_{\beta} + \alpha^2.
    \end{align*}
    By the assumptions, it holds that $C_{\beta} \le  1$ and
    \(
        m_\beta \leq C_{\beta}^{1/r},
    \)
    and so
    \[
        \frac{m_{n+1}^{2r}}{C_{n+1}}
        = \frac{\bigl((1 - \alpha)  m_{\beta} + \alpha\bigr)^{2r}}{(1 - \alpha^2)  C_{\beta} + \alpha^2}
        \leq \frac{\bigl((1 - \alpha)  x + \alpha\bigr)^{2r}}{(1 - \alpha^2)  x^r + \alpha^2} =: h(x, \alpha),
        \qquad x := C_{\beta}^{1/r} \in (0, 1].
    \]
    We claim that
    \begin{equation}
        \label{eq:claim}
        \forall (y, \alpha) \in (0, 1] \times [0, 1), \qquad
        \partial_{x} h(y, \alpha) \geq 0.
    \end{equation}
    This will imply that
    \(
        h(x, \alpha) = h(1, \alpha) - \int_{x}^{1} \partial_{x} h(y, \alpha) \, \d y  \leq h(1, \alpha) = 1
    \)
    and thus $m_{n+1}^{2r} \leq C_{n+1}$, giving the statement.
    Let us now prove~\eqref{eq:claim}.
    A simple calculation gives
    \begin{align*}
        \sign\bigl(\partial_{x} h(y, \alpha)\bigr)
        &= \sign \Bigl( 2r (1 - \alpha) \bigl((1 - \alpha^2)  y^r + \alpha^2\bigr)- r (1-\alpha^2) y^{r-1}\bigl((1 - \alpha)  y + \alpha\bigr) \Bigr) \\
        &= \sign \Bigl( 2 \bigl((1 - \alpha^2)  y^r + \alpha^2\bigr)- (1+\alpha) y^{r-1}\bigl((1 - \alpha)  y + \alpha\bigr) \Bigr) \\
        &= \sign \Bigl( \alpha^2 \left( 2 - y^r - y^{r-1}\right) - \alpha  y^{r-1} + y^r\Bigr) =: \sign\bigl(g(y, \alpha)\bigr).
    \end{align*}
    The argument of the sign function in the last line, i.e.\ $g(y, \alpha)$,
    is a quadratic function of $\alpha$ with a minimizer at $\alpha_*(y) = \frac{1}{2}y^{r-1} (2 - y^r - y^{r-1})^{-1}$.
    If $\alpha_*(y) \geq 1$, then $g(y, \alpha) \geq g(y, 1) \geq 0$.
    On the other hand, for any $y$ such that $\alpha_*(y) \leq 1$,
    it holds
    \[
        \forall \alpha \in [0, 1], \qquad
        g(y, \alpha) \geq g(y, \alpha_*)
        =  y^r \left( 1 - \frac{1}{2y} \left(\frac{\frac{1}{2}y^{r-1}}{2 - y^r - y^{r-1}}\right) \right).
    \]
    If $y \in (0, \frac{1}{2}]$,
    a direct bound of the right-hand side of the previous equation shows that $g(y, \alpha_*)~\geq~0$,
    and if $y \geq 1/2$ we have by the constraint $\alpha_*(y) \leq 1$ that
    \[
        g(y, \alpha) \geq g(y, \alpha_*)
        \geq  y^r \left( 1 - \frac{1}{2y}\right) \geq 0,
    \]
    which concludes the proof of~\eqref{eq:claim}.
\end{proof}

\begin{lemma}
    [Generalization of Watson's lemma with bound on remainder]
    \label{lemma:watson}
    Assume that $\phi$ is a smooth function satisfying
    \begin{equation}
        \label{eq:assumption_phi}%
            M := \norm{\e^{- \beta_0 \theta^2} \derivative*{2N + 2}[\phi]{\theta}(\theta)}[\infty] < \infty.
    \end{equation}
    for some constant $\beta_0 \in \real$ and $N \in \nat$.
    Then for $\beta > \beta_0$ it holds
    \[
        I_{\beta} :=  \int_{-\infty}^{\infty} \e^{- \beta \theta^2} \, \phi(\theta) \, \d \theta =
        \sum_{n=0}^{N} \phi_{2n} \, \frac{\Gamma(n + 1/2)}{\beta^{n+1/2}} + R_{\beta},
        \qquad \phi_{2n} := \frac{\derivative*{2n}[\phi]{\theta}(0)}{(2n)!},
    \]
    where the remainder $R_{\beta}$ satisfies the bound
    \begin{align*}
        \abs{R_{\beta}} \leq \frac{M}{(2N + 2)!} \, \frac{\Gamma(N + 3/2)}{(\beta - \beta_0)^{N + 3/2}}.
    \end{align*}
\end{lemma}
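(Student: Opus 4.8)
The plan is to expand $\phi$ in a Taylor polynomial of degree $2N+1$ about the origin and integrate term by term against the Gaussian weight $\e^{-\beta\theta^2}$. Writing $\phi(\theta) = \sum_{k=0}^{2N+1}\frac{\phi^{(k)}(0)}{k!}\theta^k + r(\theta)$, I would first observe that, because $\e^{-\beta\theta^2}$ is even, every odd-degree monomial integrates to zero, so the polynomial contributes only its even-degree terms. For these I would use the standard Gaussian moment identity $\int_{\real}\theta^{2n}\e^{-\beta\theta^2}\,\d\theta = \Gamma(n+1/2)\,\beta^{-(n+1/2)}$ (obtained by the substitution $u=\sqrt{\beta}\,\theta$), which reproduces exactly the series $\sum_{n=0}^N\phi_{2n}\,\Gamma(n+1/2)\,\beta^{-(n+1/2)}$ with $\phi_{2n}=\phi^{(2n)}(0)/(2n)!$. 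Thus $R_{\beta} = \int_{\real}\e^{-\beta\theta^2}\,r(\theta)\,\d\theta$, and the whole problem reduces to estimating this single integral.

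The crux is the pointwise control of the Taylor remainder $r$. Using the Lagrange form $r(\theta) = \frac{\phi^{(2N+2)}(\xi)}{(2N+2)!}\theta^{2N+2}$ for some $\xi$ lying between $0$ and $\theta$, the hypothesis $|\phi^{(2N+2)}(\xi)| \le M\,\e^{\beta_0\xi^2}$ together with $|\xi|\le|\theta|$ yields $|r(\theta)| \le \frac{M}{(2N+2)!}\,\e^{\beta_0\theta^2}\,|\theta|^{2N+2}$, where the step $\e^{\beta_0\xi^2}\le\e^{\beta_0\theta^2}$ uses the monotonicity of $s\mapsto\e^{\beta_0 s}$ on $[0,\theta^2]$ (clean in the regime $\beta_0\ge 0$ that is relevant in applications). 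The integral form of the remainder gives the identical bound and sidesteps any appeal to the intermediate point $\xi$, so I would likely present it that way. Substituting this estimate and computing the Gaussian-type integral $\int_{\real}|\theta|^{2N+2}\,\e^{-(\beta-\beta_0)\theta^2}\,\d\theta = \Gamma(N+3/2)\,(\beta-\beta_0)^{-(N+3/2)}$ — which converges precisely because $\beta>\beta_0$ — delivers the claimed bound $|R_{\beta}|\le \frac{M}{(2N+2)!}\,\frac{\Gamma(N+3/2)}{(\beta-\beta_0)^{N+3/2}}$. The shift from $\beta$ to $\beta-\beta_0$ in the denominator is exactly the effect of the factor $\e^{\beta_0\theta^2}$ inherited from the derivative bound.

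The main obstacle is making the remainder estimate uniform over all of $\real$ rather than merely near the origin: the growth factor $\e^{\beta_0\theta^2}$ forced by the hypothesis must be absorbed by the Gaussian weight, which is why the condition $\beta>\beta_0$ is exactly what is needed. As a preliminary step I would also verify that $I_{\beta}$ is well defined, since integrating the bound on $\phi^{(2N+2)}$ up $2N+2$ times shows that $\phi(\theta)$ grows at most like a polynomial multiple of $\e^{\beta_0\theta^2}$, so $\e^{-\beta\theta^2}\phi(\theta)$ is absolutely integrable for $\beta>\beta_0$. Assembling the even-moment computation of the main terms with the remainder bound then completes the proof.
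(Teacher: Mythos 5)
Your proof is correct and follows essentially the same route as the paper: Taylor expansion to order $2N+2$, term-by-term Gaussian integration of the even monomials, and a remainder bound that absorbs the growth factor $\e^{\beta_0\theta^2}$ into the weight using $\beta > \beta_0$. The paper merely organizes the bookkeeping differently---symmetrizing to the even part $\psi(\theta)=\tfrac12\bigl(\phi(\theta)+\phi(-\theta)\bigr)$ and substituting $\sigma=\theta^2$ to land on Gamma integrals over $[0,\infty)$---whereas you let the parity of the Gaussian weight annihilate the odd Taylor terms directly; both arguments hinge on the identical remainder estimate, including the implicit restriction $\beta_0\ge 0$ in the step $\e^{\beta_0\xi^2}\le\e^{\beta_0\theta^2}$, which you rightly flag.
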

\begin{proof}
    We follow here the approach of~\cite[Chapter 2]{MR2238098}.
    We first notice that
    \begin{align*}
        I_{\beta}  &= 2 \, \int_{0}^{\infty} \e^{- \beta \theta^2} \, \left( \frac{\phi(\theta) + \phi(-\theta)}{2} \right) \, \d \theta
        =: 2 \int_{0}^{\infty} \e^{- \beta \theta^2} \, \psi(\theta) \, \d \theta\,.
    \end{align*}
    The function $\psi$ is even and smooth,
    all its odd derivatives vanish at $\theta = 0$.
    Therefore, by Taylor's theorem,
    for any $\theta \geq 0$ there exists $\xi(\theta) \in [0, \theta]$ such that
    \[
        \psi(\theta) = \sum_{n=0}^{N} \phi_{2n} \, \theta^{2n} + \frac{\derivative*{2N + 2}[\psi]{\theta}\bigl(\xi(\theta)\bigr)}{(2N + 2)!} \, \theta^{2N + 2}.
    \]
    With a change of variables $\sigma=\theta^2$,
    this leads to
    \begin{align*}
        I_{\beta} &= \sum_{n=0}^{N} \, \phi_{2n} \, \int_{0}^{\infty} \e^{- \beta \sigma} \, \sigma^{n - 1/2} \, \d \sigma + R_{\beta} = \sum_{n=0}^{N} \phi_{2n} \, \frac{\Gamma(n + 1/2)}{\beta^{n+1/2}} + R_{\beta},
    \end{align*}
    where, by~\eqref{eq:assumption_phi} and for $\beta>\lambda_0$, the remainder term is bounded from above as follows:
    \begin{align*}
        \abs{R_{\beta}}
                          &\leq \frac{M}{(2N + 2)!} \, \int_{0}^{\infty} \e^{-(\beta - \beta_0) \sigma} \, \sigma^{N+1/2} \, \d \sigma
                          = \frac{M}{(2N + 2)!} \, \frac{\Gamma(N + 3/2)}{(\beta - \beta_0)^{N + 3/2}},
    \end{align*}
    which concludes the proof.
\end{proof}

\begin{lemma}
    \label{lemma:change_of_variable}
    Suppose that \cref{assumption:convexity_potential,assumption:assumption_f} are satisfied.
    Then there exists a unique smooth and increasing function $\tau(\theta)$ such that
    \[
        \forall \theta \in \real, \qquad
        f\bigl(\theta_* + \tau(\theta)\bigr) = f(\theta_*) + \theta^2.
    \]
    In addition, the function $\tau$ and all its derivatives are bounded from above by the reciprocal of a Gaussian,
    in the sense that for all $i \in \{0, 1, 2, \dotsc\}$ there exists $\mu_i \in \real$ such that
    \begin{equation*}
        \norm{\e^{- \mu_i \theta^2} \derivative*{i}[\tau]{\theta}(\theta)}[\infty] < \infty.
    \end{equation*}
\end{lemma}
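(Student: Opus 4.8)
The plan is to reduce the implicit equation to the inversion of a single smooth, strictly increasing function, constructed so that the apparent square-root singularity at the minimizer is removed from the outset. First I would set $h(s) := f(\theta_* + s) - f(\theta_*)$, so that by \cref{assumption:convexity_potential} we have $h(0) = h'(0) = 0$ and $h''(s) = f''(\theta_* + s) \geq \lhess > 0$; in particular $h$ is strictly convex, coercive, and $h(s) \geq \frac{\lhess}{2} s^2$. By Hadamard's lemma (Taylor's theorem with integral remainder), the vanishing of $h$ and $h'$ at $0$ yields the factorization $h(s) = s^2 k(s)$ with $k(s) = \int_0^1 (1 - t) h''(ts) \, \d t$. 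The function $k$ is smooth because $f$ is, it satisfies $k(s) = h(s)/s^2 \geq \lhess/2 > 0$ for all $s$ (with $k(0) = \frac{1}{2} f''(\theta_*)$), and, differentiating under the integral sign, each $k^{(j)}$ inherits the Gaussian-reciprocal growth bounds of $f^{(j+2)}$ from \cref{assumption:assumption_f}.

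Next I would define $w(s) := s \sqrt{k(s)}$. Since $k$ is smooth and bounded below by $\lhess/2$, the map $w$ is smooth, and $w(s)^2 = h(s)$ with $\operatorname{sign}(w(s)) = \operatorname{sign}(s)$. Differentiating to get $2 w(s) w'(s) = h'(s)$ and using that $h'$ is increasing gives, for $s > 0$, the bound $h(s) \leq s \, h'(s)$, whence $w'(s) = h'(s)/(2\sqrt{h(s)}) \geq \sqrt{h'(s)}/(2\sqrt s) \geq \sqrt{\lhess}/2$; the symmetric computation handles $s < 0$, and $w'(0) = \sqrt{k(0)} \geq \sqrt{\lhess/2}$. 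Thus $w' \geq \sqrt{\lhess}/2 > 0$ \emph{uniformly}, so $w$ is a smooth increasing diffeomorphism of $\real$ onto $\real$. Setting $\tau := w^{-1}$ then makes $\tau$ smooth and increasing with $\tau(0) = 0$, and the identity $w(\tau(\theta)) = \theta$ gives $f(\theta_* + \tau(\theta)) - f(\theta_*) = h(\tau(\theta)) = w(\tau(\theta))^2 = \theta^2$, as required. Uniqueness follows because any increasing solution must satisfy $\tau(0) = 0$ (as $\theta_*$ is the unique minimizer of the strictly convex $f$) and must stay on the branch $w^{-1}(\theta)$ rather than $w^{-1}(-\theta)$.

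It remains to establish the growth bounds, which is the technical heart of the argument. From $h(\tau(\theta)) = \theta^2$ and $h(s) \geq \frac{\lhess}{2} s^2$ I immediately get the linear bound $|\tau(\theta)| \leq \sqrt{2/\lhess}\,|\theta|$, so $\tau(\theta)^2 \leq (2/\lhess)\,\theta^2$; this is the device that converts any Gaussian-in-$s$ estimate evaluated at $s = \tau(\theta)$ into a Gaussian-in-$\theta$ estimate, and it settles the case $i = 0$. For the higher derivatives I would use the Fa\`a di Bruno formula for the derivatives of an inverse function, which expresses $\tau^{(n)}(\theta)$ as a universal polynomial in $w'(\tau(\theta)), \dotsc, w^{(n)}(\tau(\theta))$ divided by a power of $w'(\tau(\theta))$. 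The uniform lower bound $w' \geq \sqrt{\lhess}/2$ controls the denominator with no loss, while the numerator is controlled once one checks that every $w^{(j)}$ grows at most like the reciprocal of a Gaussian: this propagates from the bounds on $k^{(j)}$ through the composition with $x \mapsto \sqrt x$ (legitimate because $k \geq \lhess/2$, so no small denominators arise) and through the product rule defining $w$. Combining these with the linear bound on $\tau$ yields $|\tau^{(i)}(\theta)| \leq C_i \e^{\mu_i \theta^2}$ for suitable $\mu_i$.

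The main obstacle I anticipate is the bookkeeping in this last step: one must verify that the class of functions with Gaussian-reciprocal growth is stable under the operations used (products, composition with $\sqrt{\dummy}$ on $[\lhess/2, \infty)$, and the Fa\`a di Bruno substitution), and track how the exponential constant $\mu_i$ degrades with the order $i$. The conceptual difficulties, namely smoothness at the minimizer and well-posedness of the inversion, are entirely dissolved by the Hadamard factorization and the uniform lower bound on $w'$, so that what remains is careful but routine estimation.
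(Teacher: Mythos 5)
Your proposal is correct, and it takes a genuinely different route from the paper's proof. The paper constructs $\tau$ by gluing the two monotone branches $g^{-1}_{\pm}(\theta^2)$ of the preimage of $g(\theta) := f(\theta + \theta_*) - f(\theta_*)$, gets smoothness away from the origin from the inverse function theorem applied to $g$, and then needs a separate argument at $\theta = 0$: an implicit function theorem applied to $G(u,\theta) = g(u\theta)/\theta^2 - 1$, suitably extended at $\theta=0$ — which is exactly your Hadamard quotient in disguise, since $g(u\theta)/\theta^2 = u^2 k(u\theta)$. You instead put the factorization $h(s) = s^2 k(s)$ up front, build the globally defined map $w(s) = s\sqrt{k(s)}$, and prove the uniform bound $w' \geq \sqrt{\lhess}/2$ (your verification via $h(s) \leq s\,h'(s)$, $|h'(s)| \geq \lhess |s|$ on each half-line, together with $w'(0) = \sqrt{k(0)}$, is sound); this makes $\tau = w^{-1}$ smooth and increasing on all of $\real$ in one stroke, with no special treatment of the origin. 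The same structural difference reappears in the growth estimates: the paper differentiates $g\bigl(\tau(\theta)\bigr) = \theta^2$ implicitly and divides by $g'\bigl(\tau(\theta)\bigr)$, whose lower bound $\lhess\abs{\tau(\theta)}$ degenerates at the origin, forcing a split between $\abs{\theta} \leq 1$ (handled by continuity and compactness) and $\abs{\theta} \geq 1$ (where $\abs{\tau(\theta)}$ is bounded below); your inverse-function Fa\`a di Bruno formula has denominator $w'\bigl(\tau(\theta)\bigr)^{2n-1} \geq (\sqrt{\lhess}/2)^{2n-1}$, uniformly bounded below, so no case split is needed. What your route buys is a cleaner, origin-free argument in which a single quantitative fact (the uniform lower bound on $w'$) powers both the global inversion and the derivative bounds; the price is the bookkeeping you flag — stability of the Gaussian-reciprocal class under products, under composition with $x \mapsto \sqrt{x}$ on $[\lhess/2, \infty)$, and under the substitution $s = \tau(\theta)$ via $\abs{\tau(\theta)} \leq \sqrt{2/\lhess}\,\abs{\theta}$ — which is genuinely routine (exponents add, and all derivatives of the square root are bounded on $[\lhess/2,\infty)$) and no heavier than the paper's handling of its polynomials $p_i$.
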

\begin{proof}
    Introducing $g(\theta) := f(\theta + \theta_*) - f(\theta_*)$,
    we must prove the existence of a function $\tau$ satisfying
    \begin{equation}
        \label{eq:equation_tau}
        \forall \theta \in \real,
        \qquad g\bigl(\tau(\theta)\bigr) = \theta^2.
    \end{equation}
    By assumption $\derivative*{2}[g]{\theta}(\theta) \geq \lhess$,
    so $g(\theta) \geq \lhess \, \theta^2/2$ and $\abs{\derivative*{1}[g]{\theta}(\theta)} \geq \lhess |\theta|$ for all $\theta \in \real$.
    This implies that the preimage set $g^{-1}(\theta^2)$ contains exactly two elements for any value of $\theta \neq 0$,
    a positive one $g^{-1}_{+}(\theta^2)$ and a negative one $g^{-1}_{-}(\theta^2)$. Further, the preimage $g^{-1}(0)$ is simply $\{0\}$.
    If $\tau$ satisfies~\eqref{eq:equation_tau} and is increasing,
    then it holds necessarily that
    \[
        \tau(\theta) =
        \begin{cases}
            g^{-1}_-(\theta^2) &\qquad \text{ if $\theta < 0$}, \\
            0 &\qquad \text{ if $\theta = 0$}, \\
            g^{-1}_+(\theta^2) &\qquad \text{ if $\theta > 0$}.
        \end{cases}
    \]
    By the inverse function theorem,
    we observe that $g^{-1}_+$ and $g^{-1}_-$ are smooth on $(0, + \infty)$,
    because~$g$ is smooth and strictly monotonic over $(-\infty, 0)$ and $(0, \infty)$,
    and consequently $\tau$ is smooth on $(- \infty, 0)$ and $(0, \infty)$.
    Therefore, in order to show that $\tau$ is a smooth function over $\real$,
    it is sufficient to verify that $\tau$ is also infinitely differentiable in a neighborhood of $\theta = 0$.
    To this end, we define, analogously to~\cite[Chapter 3]{MR2238098},
    \[
        G(u, \theta) =
        \begin{cases}
            \frac{g(u \theta)}{\theta^2} - 1 \quad & \text{if $\theta \neq 0$}, \\
            \frac{u^2}{2} g''(0) - 1 & \text{if $\theta = 0$}.
        \end{cases}
    \]
    The function $G$ is smooth over $\real^2$
    and it is simple to verify that $G(u^*,0) = 0$ for $u^* = \sqrt{2/g''(0)}$ and $\partial_u G(u^*, 0) = u^* g''(0) > 0$.
    Therefore, the implicit function theorem implies the existence of a unique smooth function $\hat u(\theta)$,
    defined on an interval~$(-\varepsilon, \varepsilon)$,
    such that $\hat u(0) = u^*$ and $G\bigl(\hat u(\theta), \theta\bigr) = 0$ for any $\theta \in (-\varepsilon, \varepsilon)$.
    Since the function $\hat \tau : (-\varepsilon, \varepsilon) \ni \theta \mapsto \hat u(\theta) \theta$ satisfies $g(\hat \tau(\theta)) = \theta^2$ by construction,
    and since it is increasing for $\varepsilon$ sufficiently small because $\hat u(0) > 0$,
    this function must necessarily coincide with $\tau$ on the interval $(- \varepsilon, \varepsilon)$,
    implying that $\tau$ is indeed smooth over~$\real$.

    Now note that, since the function $f$ and its derivatives are bounded by the reciprocal of a Gaussian by assumption,
    then clearly so are the function $g$ and its derivatives;
    for any $i \in \{0, 1, 2, \dotsc\}$,
    there exists $r_i$ such that
    \[
        \norm{\e^{-r_i \theta^2} \derivative*{i}[g]{\theta}(\theta)}[\infty] < \infty.
    \]
    Differentiating~\eqref{eq:equation_tau} repeatedly,
    we obtain
    \begin{subequations}
        \label{eq:derivatives_of_tau}
        \begin{align}
        \label{eq:derivatives_of_tau_1}
            &\derivative*{1}[g]{\theta}\bigl(\tau(\theta)\bigr) \, \derivative*{1}[\tau]{\theta}(\theta) = 2 \theta \\
        \label{eq:derivatives_of_tau_2}
        &\derivative*{2}[g]{\theta^2}(\tau(\theta)) \, \abs{\derivative*{1}[\tau]{\theta}(\theta)}^2 + \derivative*{1}[g]{\theta}\bigl(\tau(\theta)\bigr) \, \derivative*{2}[\tau]{\theta^2}(\theta) = 2, \\
        & p_i\left(\derivative*{1}[g]{\theta}\bigl(\tau(\theta)\bigr), \dots, \derivative*{i}[g]{\theta^i}(\tau(\theta)),
        \derivative*{1}[\tau]{\theta}(\theta), \dots, \derivative*{i-1}[\tau]{\theta^{i-1}}(\theta) \right) + \derivative*{1}[g]{\theta}\bigl(\tau(\theta)\bigr) \, \derivative*{i}[\tau]{\theta^i}(\theta) = 0, \qquad i = 3, \dots
    \end{align}
    \end{subequations}
    where $p_i$ are polynomials.
    Recalling that $\abs{\derivative*{1}[g]{\theta}(\theta)} \geq \lhess |\theta|$ for all $\theta \in \real$, we can therefore divide the equations in~\cref{eq:derivatives_of_tau} by $\derivative*{1}[g]{\theta}(\tau(\theta))$ in order to obtain expressions for the derivatives $\derivative*{i}[\tau]{\theta}(\theta)$
    which are valid when $\theta \neq 0$.
    From these expressions, it is then easy to obtain the desired bounds.
    For example,
    if we have already shown that~$\norm{\e^{-\mu_1 \theta^2} \derivative*{1}[\tau]{\theta}}[\infty] < \infty$, which follows from~\eqref{eq:derivatives_of_tau_1},
    then from~\eqref{eq:derivatives_of_tau_2} we obtain,
    using the fact that $\theta^2 = g(\tau(\theta)) \geq \frac{\ell}{2} |\tau(\theta)|^2$,
    \begin{align*}
        \abs{\derivative*{2}[\tau]{\theta^2}(\theta)}
        &\leq \frac{2 + \abs*{\derivative*{2}[g]{\theta^2}(\tau(\theta))} \, \abs*{\derivative*{1}[\tau]{\theta}(\theta)}^2}{\abs*{\derivative*{1}[g]{\theta}(\tau(\theta))}}
        \leq \frac{2 + C \e^{r_2 \abs{\tau(\theta)}^2} \, \e^{2\mu_1 \theta^2}}{\lhess \abs{\tau(\theta)}} \\
        &\leq \frac{2 + C \e^{\frac{2r_2}{\ell} \theta^2} \, \e^{2\mu_1 \theta^2}}{\lhess \abs{\tau(\theta)}}
        \leq C \e^{\left( \frac{2r_2}{\ell} + 2 \mu_1 \right) \theta^2} \qquad \text{if $\abs{\theta} \geq 1$},
    \end{align*}
    where $C$ is a constant changing from occurrence to occurrence.
    The last inequality is justified because $\max_{\abs{\theta} \geq 1} \abs{\tau(\theta)} > 0$.
    Since $\derivative*{2}[\tau]{\theta^2}$ is continuous and the set $\{\theta: \abs{\theta} \leq 1\}$ is compact,
    this shows the existence of $\mu_2 \in \real$ that $\norm*{\derivative*{2}[\tau]{\theta^2}(\theta) \, \e^{-\mu_2 \theta^2}}[\infty] < \infty$.
\end{proof}

\section*{Acknowledgements}
The authors are grateful to Zehua Lai for pointing out that the Poincaré inequality could be employed for proving~\cref{lemma:bound_second_moment}.
JAC was supported by the Advanced Grant Nonlocal-CPD (Nonlocal PDEs for Complex Particle Dynamics: 	Phase Transitions, Patterns and Synchronization) of the European Research Council Executive Agency (ERC) under the European Union's Horizon 2020 research and innovation programme (grant agreement No. 883363) and by EPSRC grant number EP/T022132/1.
JAC and UV were also supported by EPSRC grant number EP/P031587/1.
FH was funded by the
Deutsche Forschungsgemeinschaft (DFG, German Research Foundation) under
Germany's Excellence Strategy - GZ 2047/1, Projekt-ID 390685813.
AMS is supported by NSF (award AGS-1835860), by NSF (award DMS-1818977) and by the Office of Naval Research (award N00014-17-1-2079).
UV was also supported by the Fondation Sciences Mathématiques de Paris (FSMP),
through a postdoctoral fellowship in the ``mathematical interactions'' program.

\bibliographystyle{abbrv}

\end{document}